\documentclass[11pt]{amsart}
\usepackage{amsmath, amsthm, amssymb}
\usepackage{verbatim}
\usepackage{enumitem}
\usepackage{xcolor}
\allowdisplaybreaks
\usepackage{url}

\newcommand{\dbar}{\overline{\partial}}
\newcommand{\ddbar}{\sqrt{-1}\partial\overline{\partial}}

\theoremstyle{plain}
\newtheorem{thm}{Theorem}[section]
\newtheorem{prop}[thm]{Proposition}
\newtheorem{lem}[thm]{Lemma}
\newtheorem{cor}[thm]{Corollary}

\newtheorem{ex}[thm]{Example}

\newtheorem{theorem}{Theorem}

\theoremstyle{definition}

\newtheorem{rem}[thm]{Remark}

\renewcommand{\[}{\begin{equation}}
\renewcommand{\]}{\end{equation}}

\newcommand{\al}{\alpha}
\newcommand{\be}{\beta}
\newcommand{\ga}{\gamma}
\newcommand{\la}{\lambda}
\newcommand{\ka}{\kappa}
\newcommand{\ep}{\epsilon}

\newcommand{\Ga}{\Gamma}

\newcommand{\vep}{\varepsilon}

\newcommand{\n}{\Vert}
\newcommand{\Ar}{\mathrm{Area}}
\newcommand{\Lip}{\mathrm{Lip}}

\newcommand{\NN}{\mathbb{N}}
\newcommand{\ZZ}{\mathbb{Z}}

\newcommand{\RR}{\mathbb{R}}
\newcommand{\CC}{\mathbb{C}}

\newcommand{\PP}{\mathbb{P}}

\newcommand{\Rm}{\mathrm{Rm}}
\newcommand{\Rc}{\mathrm{Rc}}

\newcommand{\mult}{\mathrm{mult}}

\newcommand{\sO}{\mathcal{O}}

\newcommand{\PSH}{\mathrm{PSH}}

\numberwithin{equation}{section}

\usepackage{hyperref}
\hypersetup{colorlinks,citecolor=blue,plainpages=false,hypertexnames=false}

\title[Uniformisation of  K\"ahler surfaces with $sec >0$]{Uniformisation of complete K\"ahler surfaces with positive sectional curvature}
\author[V. Datar]{Ved Datar}
\author[V. P. Pingali]{Vamsi Pritham Pingali}
\author[H. Seshadri]{Harish Seshadri}

\address{Department of Mathematics, Indian Institute of Science, Bengaluru, India}
\email{vvdatar@iisc.ac.in}
\email{vamsipingali@iisc.ac.in}
\email{harish@iisc.ac.in}
\thanks{The first author (Datar) is supported in part by ANRF MATRICS grant MTR/2022/000260 and an INSA Young Associate fellowship.}

\begin{document}

\maketitle

\begin{abstract}
We prove that any complete non-compact K\"ahler surface with positive sectional curvature is biholomorphic to $\CC^2$, establishing the two dimensional case of the weaker form of Yau's uniformisation conjecture. In contrast to all previous results, no assumptions are made on the geometry at infinity.

The proof introduces a new approach towards Yau-type uniformisation problems, based on uniformly Lipschitz plurisubharmonic weight functions with finite Monge-Amp\`ere mass, and weighted $L^p$ holomorphic functions. A central difficulty is that these weights are neither smooth nor proper. As a consequence of the method, we also obtain B\'ezout-type intersection and multiplicity estimates in considerable generality. In a different direction, we also prove a new obstruction to the existence of complete K\"ahler metrics with non-negative bisectional curvature on non-compact K\"ahler manifolds, and use it to construct new examples admitting no such metrics. We conclude by discussing possible extensions of our methods to higher dimensions and related open problems.\end{abstract}

\tableofcontents
\section{Introduction}

A recurring theme in K\"ahler geometry is that curvature positivity imposes strong holomorphic rigidity. In the compact setting, this principle is strikingly illustrated by the Frankel conjecture, resolved independently by Mori \cite{Mo} and Siu-Yau \cite{SY}. To state their result, recall that a K\"ahler manifold $(X,\omega)$ is said to have positive bisectional curvature (which we denote by  $BK>0$) if for all real unit tangent vectors $u,v\in TX$, we have $$BK(u,v):= \Rm(u,v,v,u) + \Rm(u,Jv,Jv,u) > 0,$$ where $J$ is the complex structure and $\Rm$ is the Riemann curvature tensor of the induced Riemannian metric $g(\cdot,\cdot) = \omega(\cdot,J\cdot)$. The Siu-Yau-Mori theorem asserts that a compact K\"ahler $n$-manifold with $BK>0$ must be biholomorphic to the complex projective space of dimension $n$. In contrast, the non-compact case remains far more mysterious. A deep conjecture of Yau \cite{Y} predicts that any complete non-compact K\"ahler manifold with $BK>0$ must be biholomorphic to $\mathbb{C}^n$.  A weaker conjecture, due to Green and Wu \cite{GW}, is that the same conclusion holds under the stronger hypothesis of positive sectional curvature. While the soul theorem of Gromoll--Meyer shows that such manifolds are necessarily diffeomorphic to $\mathbb{R}^{2n}$, the problem of determining their global complex structure has remained wide open, except in dimension one.  Even for Riemann surfaces, Yau's conjecture is non-trivial and follows from the classical uniformization theorem in combination with the results of Cohn-Vossen \cite{CV} and Huber \cite{Hub}. It is worth emphasizing that $\CC^n$ itself does admit many (in fact rotationally symmetric) complete K\"ahler metrics with positive bisectional, or even sectional curvature. The first such example, with $BK>0$, appears to be due to Klembeck \cite{K}. Subsequently, Cao discovered two gradient shrinking solitons \cite{Cao1,Cao2} on $\CC^n$ with positive sectional curvatures, and Wu-Zheng in \cite{WZ} obtained a complete classification of rotationally symmetric complete K\"ahler metrics on $\CC^n$ with positive bisectional or sectional curvature.

The Yau conjecture has been settled under additional hypotheses, typically involving volume growth, curvature decay and finiteness of certain curvature integrals starting with the pioneering works of Mok (cf. \cite{M,Mok89,Mok90,MZ}). The sharpest result in this direction, obtained independently by Liu \cite{L4} and Lee-Tam \cite{LT} using very different methods, states that if $(X,\omega)$ has $BK>0$ and has maximal volume growth in the sense that there exists a constant $\ka>0$ and a point $o\in X$ such that $$|B(o,r)| \geq \ka r^{2n}$$ for all $r>0$, then $X$ is biholomorphic to $\CC^n$. Here, for any subset $A\subset X$ we denote its volume by $|A|$. Lee and Tam proved this result using the K\"ahler-Ricci flow, building on earlier work of Chau-Tam \cite{CT}; the maximal volume growth plays a key role in the proof of existence of the K\"ahler-Ricci flow for a definite time. In contrast, Liu's method is closer in spirit to Mok's original approach. A key step in Liu's proof is the embedding of $X$ into the complex Euclidean space by using polynomial growth holomorphic functions. The limitations of this strategy are highlighted by another striking result of Liu, which shows that the space of polynomial growth holomorphic functions on $X$ is nontrivial if and only if $(X,\omega)$ has maximal volume growth. In particular, this precludes any direct extension of Liu’s method to the non-maximal growth setting.

 Interestingly, in \cite{CZ2}, Chen-Zhu proved that if $X$ is a complete non-compact K\"ahler manifold with $BK>0$ and $o \in X$, then there exists $\ka>0$ such that $$|B(o,r)| \ge \ka r^{n},$$ where $n =dim_{\CC}X$, and it is well known this bound is sharp. Indeed Klembeck's example \cite{K}, as well as one of Cao's solitons \cite{Cao1}, have this minimal volume growth at infinity. In  \cite{CZ3}, Chen-Zhu proved that if $X$ has positive {\it sectional} curvature and minimal volume growth, then it is biholomorphic to an affine algebraic variety. If $dim_{\CC}X=2$, then it is biholomorphic to $\CC^2$ thanks to a result of Ramanujam \cite{R}. Moreover, in \cite{CZ2} it was proven that if $$\int_X Ric^n < \infty,$$ then $X$ is biholomorphic to a quasi-projective variety. In a recent work \cite{DPS}, we proved that if $X$ is a surface with positive  sectional curvature, then the above integral bound on the square of the Ricci form always holds. If, in addition, the sectional curvature is bounded above,  $X$ is biholomorphic to $\CC^2$ by \cite{CZ2,R}.

The main contribution of the present work is to remove the requirement of an upper bound on the sectional curvature, thereby completely  resolving the weak form of Yau's conjecture for K\"ahler surfaces. Namely, we prove the following:
\begin{theorem} \label{thm:main}
Let $(X, \omega)$ be a complete non-compact K\"ahler surface with positive sectional curvature. Then $X$ is biholomorphic to $\CC^2$. \end{theorem}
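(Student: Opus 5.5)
The plan is to use the abstract's strategy: uniformly Lipschitz plurisubharmonic weights with finite Monge--Amp\`ere mass, weighted $L^2$ holomorphic functions, and an embedding step. The key input from \cite{DPS} is that positive sectional curvature on a surface gives $\int_X \Rc^2<\infty$, where $\Rc$ is the Ricci form. First I would construct a weight. Let $\rho=d(o,\cdot)$ be the distance from a point $o$. Positive sectional curvature makes the Busemann function $b$ convex and proper, hence an exhaustion, and $\rho$ is $1$-Lipschitz. I would build $\varphi\in\PSH(X)$, Lipschitz, with $\int_X(\ddbar\varphi)^2<\infty$, by solving a Poincar\'e--Lelong type equation $\ddbar\varphi=\Rc$ in the spirit of Mok--Siu--Yau. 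The Green's function estimate needed for this should follow from positive Ricci curvature together with the Chen--Zhu volume bound $|B(o,r)|\ge \ka r^2$. An alternative, if that estimate is hard, is to regularise $\log(1+b)$ or $\rho$ itself. The Monge--Amp\`ere mass is then finite precisely because $\int\Rc^2<\infty$. However, $\varphi$ need be neither smooth nor proper, and this is the central difficulty flagged in the abstract.

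Next, H\"ormander's $L^2$ estimates apply, since $\Rc>0$ gives a complete K\"ahler manifold with positive Ricci curvature. Using weights $e^{-k\varphi}$ twisted by singular weights $\log|z-p|$ at chosen points, I would produce many holomorphic functions in the spaces $\sO_k=\{f:\int|f|^2e^{-k\varphi}<\infty\}$ that separate points and give local coordinates. The key quantitative step is a B\'ezout-type estimate: for $f_1,f_2\in\sO_k$ with a discrete common zero set, the total intersection multiplicity is at most $Ck^2\int(\ddbar\varphi)^2$. I would prove this via Demailly's comparison of Lelong numbers with the Monge--Amp\`ere mass of $\max(\log|f|,k\varphi)$-type functions, combined with the Lipschitz bound to control $\log|f|$ by $k\varphi+C$ through a mean value inequality on balls of fixed size, which is available since $\Rc\ge0$. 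This yields $\dim\sO_k\le Ck^2$, and the ring $\bigcup_k\sO_k$ is then finitely generated of transcendence degree $2$.

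From here I would follow the Mok/Chen--Zhu route. The generators give a holomorphic map $F:X\to\CC^N$, and the multiplicity bound makes $F$ generically finite onto an affine surface. The point-separation and local-coordinate properties should make $F$ injective and immersive, using non-properness of nothing beyond local estimates. Surjectivity onto a normal affine variety $V$ is the hard part, because without properness of $\varphi$ one cannot rule out that $F(X)$ misses a subvariety. I would first try to show that $F$ is proper using the convex exhaustion $b$ together with the B\'ezout bound. If that fails, I would use the fact that $X$ is contractible and has Stein manifold structure, so $F$ is an open embedding into $V$, and then apply a Zariski main theorem/Ramanujam argument. Once $X\cong V$ is a contractible smooth affine surface that is simply connected at infinity (as it is diffeomorphic to $\RR^4$), Ramanujam \cite{R} gives $X\cong\CC^2$.
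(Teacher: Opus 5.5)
The decisive gap is the final step, quasi-surjectivity. Your fallback ($X$ contractible and Stein, so $F$ is an open embedding into $V$, then Zariski's main theorem or Ramanujam) does not work. Both results require $X$ to be an algebraic variety already, and a contractible Stein domain embedded openly in an affine surface need not be Zariski open: a ball in $\CC^2$ is a counterexample. Properness of $F$ is also out of reach, since nothing relates the size of your generators to $b$ and the weight is not known to be an exhaustion.

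The paper's mechanism is different:
\begin{itemize}
\item It applies Simha's criterion on a polydisc around $\zeta_0\in Z\setminus F(U)$. Here $D=\Delta^2\cap F(U)$ is a domain of holomorphy because $F$ is biholomorphic on the Stein set $U=X\setminus\{f=0\}$.
\item The vertical slices are hyperplane sections $S_0$. Finiteness of $S_0\setminus\Omega_0$ comes from the Huber-type Proposition \ref{prop:huber}, proved with Mok's integrable quadratic differentials and Riemann--Roch.
\item That proposition needs finite total Gauss curvature of the curves $\{t=0\}$ with $t\in R_q$. This is proved by a second B\'ezout estimate on $\PP(T_X)$ with the K\"ahler form $\nu=\xi+2\pi^*Ric$ (Propositions \ref{prop:Bezout-proj} and \ref{prop:finiteGB}).
\item The whole argument is then run again for an everywhere-defined Kodaira map. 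This requires proving $V_1\cap V_2$ finite by B\'ezout and lifting to the normalization $\tilde Z$, which is needed for injectivity because $Z$ need not be locally irreducible along $\Phi(X)$. Lemma \ref{lem:algebro-geometric} ensures the slices are still cut out by sections of some $R_q$.
\end{itemize}

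The analytic steps also fail as stated.

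(i) Solving $\ddbar\varphi=Ric$ needs Green's function and curvature-decay hypotheses, as in Mok--Siu--Yau or Ni--Shi--Tam, that you do not have. The bound $|B(o,r)|\ge\kappa r^2$ in real dimension four does not even make $X$ non-parabolic, since $\int^\infty r\,dr/|B(o,r)|$ may diverge. Regularising $\rho$ or $b$ gives no control of $\int(\ddbar\varphi)^2$ either; on flat $\CC^2$, $(\ddbar|z|)^2\sim|z|^{-2}\,dV$ is not integrable. The paper instead takes the weight $\phi$ of Theorem \ref{thm:main-ma}, built from the Greene--Wu strongly Stein exhaustion, together with its heat-flow smoothing $\psi$ and the bound $\psi\le\phi+C$ (Lemma \ref{lem:heat-est}).

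(ii) Since $\varphi$ is not known to be an exhaustion, the mean value inequality gives only $|f|^2e^{-k\varphi}\le C(1+d(o,x))^4$, not $\log|f|\le k\varphi/2+C$. The factor $|B(x,1)|^{-1}$ is only polynomially bounded, and $\log d(o,x)$ cannot be absorbed into $k\varphi$. Two consequences follow:
\begin{itemize}
\item A Demailly-type mass comparison via $\max(\log|f|,k\varphi)$ is unavailable, because it needs $\varphi$ of exhaustion type.
\item $\bigcup_k\sO_k$ need not be closed under multiplication.
\end{itemize}
The paper replaces these with $R_q=\mathrm{span}\bigcup_k\Gamma^{2/k}_{q\psi}$, which is an algebra by H\"older. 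It then proves a B\'ezout estimate (Proposition \ref{prop:Bezout}) using cut-offs with $|\nabla\chi_R|,|\ddbar\chi_R|\le A/R$, Bedford--Taylor integration by parts, and the $R^{1/4}$ bound of Lemma \ref{lem:v-log-est}, which absorbs exactly this polynomial growth.

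(iii) The bound $\dim\le Ck^2$ together with Siegel's argument yields finite generation of the function field, not of the ring. Quadratic growth does not force finite generation, as Zariski's classical non-finitely generated section ring on a surface shows. So you do not obtain an injective immersion $F:X\to\CC^N$. The paper only gets a birational map at first, which is why the normalization and Simha steps above are needed.
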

As remarked above, if $X$ has positive sectional curvature, then $X$ is diffeomorphic to $\RR^4$ by  the Gromoll-Meyer theorem \cite{GM}. Moreover, by the work of Green-Wu \cite{GW}, it also follows that $X$ is strongly Stein in the sense that there exists a smooth plurisubharmonic exhaustion function $\rho:X\rightarrow\RR$ with $|\nabla \rho| \leq 1$.  It is important to note that neither of these results is known to hold under the weaker assumption of $BK>0$.  Indeed, little is known if one replaces positive sectional curvature with positive holomorphic bisectional curvature (without additional global hypotheses such as volume growth, curvature decay, etc). For instance, it appears to be unknown if the complex surface $\CC \times \CC^\ast$ admits a complete K\"ahler metric with $BK>0$. In the case of $BK>0$, the proof of our main theorem actually shows the following stronger result.

\begin{theorem}\label{thm:main-strong}
Let $(X,\omega)$ be a complete non-compact K\"ahler surface with $BK>0$ and satisfying the following two additional conditions:
\begin{enumerate}
\item $X$ is strongly Stein in the above sense.
\item  $X$ is contractible.
\end{enumerate}
Then $X$ is quasi-projective. In particular, if $X$ is also simply-connected at infinity, by Ramanujam's criterion \cite{R}, it follows that $X$ is biholomorphic to $\CC^2$.
\end{theorem}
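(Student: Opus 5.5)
The goal is to show that $X$ is quasi-projective; the final clause then follows from Ramanujam's criterion \cite{R}. The plan is to produce enough holomorphic functions of controlled growth to give a finite holomorphic map to $\CC^2$, and then to show this map is an open embedding with algebraic complement. Following the strategy announced in the abstract, the growth control will not be polynomial (which, by Liu's result, is unavailable without maximal volume growth). Instead it will come from a plurisubharmonic weight.

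\textbf{Step 1: the weight.} Let $\rho$ be the strongly Stein exhaustion, so $\rho$ is plurisubharmonic with $|\nabla\rho|\le 1$. From the Ricci form and $\rho$ I would build a uniformly Lipschitz plurisubharmonic function $\varphi$ with finite Monge--Amp\`ere mass
\[\int_X (\ddbar\varphi)^2<\infty.\]
The natural candidate is a Poisson-type solution $\ddbar\varphi\approx \Rc$. The required mass bound would come from the bound $\int_X \Rc^2<\infty$ of \cite{DPS}. That bound was proved under positive sectional curvature, so in the $BK>0$ setting I would redo the argument using Chen--Zhu type estimates, with contractibility and the Stein property supplying the missing topological input.

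Because $\varphi$ is neither smooth nor proper, I would work with $\varphi+\epsilon\rho$ and truncations. H\"ormander $L^2$ estimates with weight $e^{-k\varphi}$, which apply since $X$ is Stein, then produce weighted $L^2$ holomorphic functions $f$ with prescribed jets at chosen points.

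\textbf{Step 2: dimension and Bézout-type counting.} Let $\mathcal{O}_k$ be the space of holomorphic $f$ with $\int_X |f|^2 e^{-2k\varphi}<\infty$ (or the $L^p$ analogue). I would show $\dim \mathcal{O}_k\le Ck^2$. The approach is a multiplicity bound: for $f,g\in\mathcal{O}_k$ without common factor, the number of common zeros counted with multiplicity is at most $Ck^2\int_X(\ddbar\varphi)^2$. This would be proved via Lelong--Jensen / King formulas for $\ddbar\log|f|$, using the Lipschitz bound on $\varphi$ to compare $\log|f|$ with $k\varphi$ on balls. The lower bound $\dim\mathcal{O}_k\ge ck^2$ comes from Step 1. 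Together these show that $\bigoplus_k \mathcal{O}_k$ is a finitely generated graded algebra of transcendence degree $2$, in the style of Mok's argument.

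\textbf{Step 3: algebraicity.} Choose generators $f_1,\dots,f_N$ and set $F=(f_1,\dots,f_N):X\to\CC^N$. By Step 1, $F$ separates points and tangents, so it is an injective immersion. The multiplicity bounds show $F$ is finite onto its image $V$, and the degree bound shows $V$ lies in an affine surface of bounded degree. Contractibility and the Stein property then let one argue, as in \cite{CZ2}, that $F(X)$ is Zariski open in its closure. In particular $X$ is quasi-projective.

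\textbf{Main obstacle.} I expect the hardest step to be the non-properness of $\varphi$: weighted $L^2$ functions could concentrate where $\varphi$ stays bounded. This makes separation of points at infinity and the Jensen-type counting in Step 2 delicate. My first attempt would be to show that $\varphi-\epsilon\rho$ is bounded below on the sublevel sets used, so that the exhaustion by $\rho$ controls the error terms.
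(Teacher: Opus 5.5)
Your outline has the right architecture, but Step~1 is where hypothesis (1) is actually used, and the route you propose there does not work. Since $X$ is Stein and contractible, $K_X$ is holomorphically trivial. So every global potential of $\Rc$ has the form $\varphi=\log\|\sigma\|^2_\omega$ for a nowhere-vanishing holomorphic $2$-form $\sigma$, and weighting by $e^{-k\varphi}$ is exactly Chen--Zhu's $L^2$ theory for $K_X^{-k}$. Its Monge--Amp\`ere mass is $\int\Rc^2$, but every later step needs a \emph{uniform Lipschitz bound} on the weight. That bound gives $u_t\le\phi+A\sqrt t$ (Lemma~\ref{lem:heat-est}), hence $e^{-q\phi}\lesssim e^{-q\psi}$ for the heat-smoothed $\psi$ with $\ddbar\psi\le A\omega$, hence the polynomial bounds on $\|s\|_{q\phi}$ and $\|ds\|\,\|s\|^N_{q\phi}$ on which all the B\'ezout estimates rest. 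Nothing in your sketch gives such a bound for a Ricci potential, and the anticanonical approach is precisely the one that needed an upper curvature bound in \cite{CZ2}. The logic also runs the other way: here $\int\Rc^2<\infty$ is an \emph{output} of the B\'ezout estimate (Corollary~\ref{lem:ric-zeta-upperbound} with Proposition~\ref{prop:Bezout}), not an input. The needed input is Theorem~\ref{thm:main-ma}, which produces a uniformly Lipschitz strictly plurisubharmonic $\phi$ with $\int_X(\ddbar\phi)^2<\infty$ under exactly $BK>0$ plus strong Steinness. Contractibility enters elsewhere. It trivializes $K_X$, so that $U=X\setminus\{f=0\}$ is Stein, as Simha's criterion requires. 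It gives finite topology, hence finitely many components of $V_1$. And it feeds Ramanujam's criterion.

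Steps~2 and~3 also have concrete gaps. Weighted $L^2$ spaces are not closed under multiplication (you have only polynomial growth, not a sup bound), so $\bigoplus_k\mathcal O_k$ is not an algebra. The paper instead uses $R_q=\mathrm{span}\bigcup_k\Gamma^{2/k}_{q\psi}$ with H\"older, which forces the B\'ezout estimate to handle sums of sections in different $L^{p_i}$ spaces. The counting cannot be a Jensen/King formula on balls: that needs an exhaustion with controlled Monge--Amp\`ere mass, and $\rho$ is proper but uncontrolled while $\phi$ is controlled but not known to be proper. It is instead a global Bedford--Taylor integration by parts of $\int\chi_R^3\zeta_1\wedge\zeta_2$. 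Its boundary terms decay only because the potentials grow sublinearly on $\mathrm{supp}\,\chi_R$, again via the polynomial growth above. The bound $\dim R_q\le Cq^2$ comes from a bound on $\mathrm{mult}_o(s)$ for a single section, proved using $\Rc>0$ and pairing with a fixed $\sigma\in\Gamma^2_{\lambda\psi}(K_X)$; a two-function count does not directly give this. Mok's Siegel argument also yields finite generation of the function field $M$ only, not of the ring.

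Most seriously, your Step~3 claim that finitely many generators separate points and tangents does not follow from Step~1. H\"ormander only gives, for each pair of points, a separating function in a degree depending on the pair. The paper proceeds as follows:
\begin{enumerate}
\item It obtains injectivity only on a Zariski open Stein set $U$, because $M$ separates points.
\item It proves quasi-surjectivity by Simha's criterion plus a Huber/Mok finiteness argument, which needs finite total curvature of the curves $\{t=0\}$ (Proposition~\ref{prop:finiteGB}, via B\'ezout estimates on $\PP(T_X)$).
\item It uses finiteness of $V_1\cap V_2$ to make a single Kodaira map an immersion.
\item It lifts this map to the normalization of its image, where injectivity follows from density of $U_1$.
\end{enumerate}
None of this is in your Step~3, and ``as in \cite{CZ2}'' does not supply it.
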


 \subsection{An outline of the proof} As noted above, if $(X,\omega)$ is a complete, non-compact K\"ahler surface with positive sectional curvature, then $X$ is diffeomorphic to $\RR^4$. Moreover, by a classical criterion of Ramanujam \cite{R}, the proof of the main theorem then reduces to proving that $X$ is quasi-projective. Our basic strategy in proving quasi-projectivity is motivated by the strategy employed by Chen-Zhu in \cite{CZ2}, which itself is heavily inspired by several papers of Mok (cf. \cite{Mok90}  and references therein). There are however several significant differences, and additional technical challenges that need to be overcome.

Recall that in \cite{CZ2}, the projective embedding is produced using sections of the anti-canonical bundle $K_X^{-1}$. Inspired by an idea of Demailly (cf.\ \cite{D}), we instead work with weighted $L^p$ integrable functions,  which we view as sections of tensor powers of the trivial line bundle.  Our starting point is the Lipschitz-continuous plurisubharmonic weight function $\phi$ constructed in \cite{DPS} with finite Monge-Amp\`ere mass. We then consider the tensor powers $L^q$ of the trivial line bundle $L$, endowed with the Hermitian metrics $h_{q\phi} = e^{-q\phi}$  and $h_{q\psi}  = e^{-q\psi}$, where $\psi = P_1\phi$, the time one heat flow smoothing of $\phi$. By a well known result of Ni-Tam \cite{NT}, $\psi$ is also plurisubharmonic. We denote the corresponding norms by $\n s\n_{q\phi}$ and $\n s\n_{q\psi}$ respectively. We now let $R_q$ denote the subspace of $H^0(X,L^q)$ generated by all sections $s\in H^0(X,L^q)$  satisfying $$\int_X |s|^{\frac{2}{k}} e^{-q\psi/k}\omega^2 < \infty \text{ for some $k \in \NN$},$$  and we set  $R = \oplus_{q\in \NN}R_q.$ By H\"ormander-type $L^2$ estimates (applied to $k=1$), the space $R_q$ is non-empty if $q>>1$.  Next, by an elementary application of H\"older's inequality one can show that $R_{q}\cdot R_{q'} \subset R_{q+q'}$ and hence $R$ is a graded algebra. We denote by $M$  the function field of $R$ ie.\ the degree zero piece of the fraction field of $R$.

Before continuing with the proof outline, we point out the key new technical tool of this paper: B\'ezout estimates (cf. Proposition \ref{prop:Bezout} and Proposition \ref{prop:Bezout-proj}) based on the uniformly Lipschitz weight $\phi$. While these estimates are motivated by analogous estimates in \cite{CZ2} and many of Mok's papers (cf. \cite{MZ}), and crucially depend on the finiteness of the Monge-Amp\`ere mass, there are several new technical difficulties that crop up in our formulation. The B\'ezout estimates are used at several places to establish various finiteness results. We only mention two important consequences here. First, a direct consequence of Proposition \ref{prop:Bezout} is that there exists a constant $C>0$ such that for any $s\in R_q$ and any $q,\ep>0$ we have $$\int_X \Big(\ddbar\log(|s|^2+ \ep^2) + q\ddbar\phi\Big)\wedge Ric \leq Cq.$$ Using the Poincar\'e-Lelong formula, the integral estimate above implies that the vanishing order of any $s\in R_q$ at a given point $o$ is bounded above by $Cq$ for some constant $C$ independent of $q$ and $s$.  Following Mok, one can then prove that  $M$ is finitely generated and the spaces $R_q$ are finite dimensional.

 Once these are established, the proof of Theorem \ref{thm:main} proceeds in two main stages.
 \begin{itemize}
 \item (an almost embedding) We first construct a birational embedding $F:X\dashrightarrow \PP^N$ of the form $$F(x) = [s_0:s_1:\cdots:s_N],$$ where each $s_i$ lies in a fixed $R_q$ and $F$ is a biholomorphism from $U = X\setminus V$ onto its image, where $V$ is an analytic set. Next, we prove that $F(U)$ is Zariski open in a projective surface $Z\subset \PP^N$ using a criterion due to Simha \cite{Simha}. A key point is that $V$ is cut-out by holomorphic functions, and hence $U$ is Stein.
 \item (a full embedding) By using the B\'ezout estimates again, we then prove that by increasing $q$, we obtain a well defined Kodaira map: $\Phi:X\rightarrow \PP^N$ using sections of $R_q$ ie. $$\Phi(x) = [s_0(x):\cdots :s_N(x)],$$ where we abuse notation and label $\dim R_q - 1$ by $N$ again.  Furthermore, by choosing a $q$ large enough we can also ensure that $\Phi$ is a global immersion and that $\Phi(X)$ is contained in  a projective surface $Z\subset\PP^N$. One would now like to prove injectivity of $\Phi$ by following the argument in the proof of Proposition 3.3 in \cite{Mok90}. The argument there however does not directly apply since $\Phi(X)$ may intersect the singular locus of $Z$ and it may happen that two disjoint neighbourhoods in $X$ may map to separate branches that intersect at a single point. This is because $Z$ may not be locally irreducible. Our final trick is to consider the normalization $\tilde Z$ of $Z$ and lift the map $\Phi$ to a map $\tilde\Phi : X\rightarrow \tilde Z \subset \PP^{\tilde N}$ which is still an immersion. The argument in \cite{Mok90} then implies that $\tilde\Phi$ is injective and hence an embedding. The final step is to prove quasi-surjectivity by once again using Simha's criterion. The fact that Simha's criterion can still be applied to a map to the normalization relies on an elementary algebro-geometric observation due to ChatGPT pro 5.2 (cf.\ Lemma \ref{lem:algebro-geometric}). Applying Simha's criterion is slightly delicate in our work and we clarify some of the terse arguments in \cite{Mok89}.
 \end{itemize}

 We now make some remarks on the B\'ezout estimates. First, apart from their applications mentioned above, it is used to show the finiteness of the Gauss-Bonnet integrals of Riemann surfaces cut out by sections in $R_q$. This estimate plays a key role in the proof of quasi-surjectivity using a criterion of Simha \cite{Simha} in combination with some ideas from Mok's works.  Our next remark is about the proof of the B\'ezout estimates: The proof involves a careful integration by parts argument, and the use of Bedford-Taylor theory (on account of a lack of regularity of $\phi$). An added technical complication is that the sections in $R_q$ are sums of sections which lie in different $L^p$ spaces. A key ingredient is that if $s\in H^0(X,L^q)$ is $L^p$-integrable, then $\n s\n_{q\phi}$ as well as $\n s^Nds\n_{q\phi}$ (for $N+1>p$) are of polynomial growth. This is proved first for the norms measured with respect to the weight $q\psi$, using the Bochner inequality combined with the upper bound $\ddbar\psi \leq A\omega$, and a mean value inequality. To go back to the weight $q\phi$ we then rely on the basic estimate (cf. Lemma \ref{lem:heat-est}) that $e^{-q\phi}\lesssim e^{-q\psi}$. Note that by the aforementioned result of Liu \cite{L2}, if $(X,\omega)$ has positive bisectional curvature, there are no non-trivial holomorphic functions with polynomial growth unless $(X,\omega)$ has maximal volume growth. In contrast, once we measure the norms with respect to the weight $q\phi$, we automatically have plenty of polynomial growth holomorphic functions.
\subsection{Comparison to previous works} We now compare our approach with some important previous works.
 \begin{itemize}
 \item {\bf Comparison with the work of Chen-Zhu \cite{CZ2}:} As noted above, Chen-Zhu proved (inspired heavily by Mok \cite{Mok89, Mok90, MZ}) that positive and bounded sectional curvature, along with an integral bound on $Ric^2$  implies that $X\cong \mathbb{C}^2$. In a previous work \cite{DPS}, we used this criterion to prove Yau's conjecture under the assumption of positive and bounded sectional curvature. The upper bound on the sectional curvature was used by Chen-Zhu in a few key steps:
\begin{enumerate}
\item An injectivity radius bound implies that $L^2$ sections of $-qK_X$ form an algebra, because they are bounded. In fact, it was proven that even their gradient was bounded.
\item The image under a Kodaira-type map to $\mathbb{P}^N$ is locally Stein (a lemma due to Mok).
\item Several intersection numbers are finite thanks to Mok's B\'ezout estimates.
\end{enumerate}
In our case, we choose to embed using sections of a trivial bundle equipped with a heat-kernel smoothing of a solution to a Monge-Amp\`ere equation. It is non-trivial (cf. Lemma \ref{lem:heat-est}, which is the same as Lemma 6 in \cite{DPS}, but included here with proof for the convenience of the reader) to compare the smoothing with the solution itself. The local Steinness of the image follows from the fact that complements of zero loci of holomorphic functions on Stein manifolds are Stein. The problem with $L^2$ sections not forming an algebra is remedied by considering $L^p$ sections as we explain below. The key point is that the B\'ezout estimates are technically challenging and not a routine adaptation of Mok's estimates.
 \item {\bf Comparison with Demailly's work \cite{D}:} Demailly proved a purely function-theoretic characterisation of affine varieties in \cite{D}. He got around the problem of $L^2$ sections being unbounded by considering $L^p$ sections (with a slightly unusual $L^p$ norm). However, \cite{D} assumed that the weight function was an exhaustion.  This strong assumption was used to obtain natural inclusions between $L^p$ spaces. As a consequence, taking inspiration from polynomials for nomenclature, Demailly's (like Chen-Zhu and Mok) spaces of sections are indexed by only one number - the ``degree". We cannot prove (yet) that the solution to the Monge-Amp\`ere equation is indeed an exhaustion. Therefore a priori, we would have had a bidegree - $(p,q)$ corresponding to $L^p$ sections of the trivial bundle with $e^{-q\psi}$ as the weight. This would have been disastrous towards the end of the proof where Mok's Segre and Veronese embedding tricks are used to bring the degrees (of embeddings on various open sets) to the same level. We employ a novel idea of considering \emph{finite sums} of various $L^p$ sections (fixing $q$ as the degree). As pointed out above, this in turn makes the B\'ezout estimates even more complicated than they would have been otherwise.
 \end{itemize}

 \subsection{Organisation of the paper} In Section \ref{sec:background} we collect some well known results and prove some preliminary estimates that will be fundamental to what follows. In Section \ref{sec:Bezout} we prove our B\'ezout estimates. This section is the technical heart of the paper. In Section \ref{sec:Bezout-consequences} we derive two consequences of the B\'ezout estimates. The first is a multiplicity estimate that plays a key role in proving the finite generation of $M$ and the finite-dimensionality of $R_q$. The second consequence that we prove is an estimate on the Gauss-Bonnet integral of a Riemann surface arising as a zero locus of a section. We then use these estimates to complete the proof of the main theorem in Section \ref{sec:proof}. Finally, in Section \ref{sec:discussion} we discuss some open questions and partial results. In particular, we prove a new obstruction to the existence of complete K\"ahler metrics with non-negative bisectional curvature on non-compact K\"ahler manifolds, and provide new examples of manifolds that admit no such metrics.

\section{Background and basic results}\label{sec:background}

We collect some basic results in this section that will be used throughout the rest of the paper.

\subsection{Existence of $L^2$-integrable holomorphic sections} As a first step in our proof of the main theorem, we will consider the Kodaira map of $X$ into $\CC\PP^N$ using sections of a holomorphic line bundle. We recall the following well known result of Andreotti-Vesentini to construct holomorphic sections based on the H\"ormander technique.

\begin{thm}[H\"ormander, Andreotti-Vesentini]\label{thm:l2-existence} Let $(X,\omega)$ be a complete K\"ahler manifold, let $u$ be a smooth function on $X$, and let $L$ be a holomorphic line bundle equipped with a smooth Hermitian metric $h$ such that the curvature satisfies $$\ddbar u + \sqrt{-1}\Theta_h + Ric \geq c(x)\omega,$$ for some continuous function $c:X\rightarrow (0,\infty)$. Suppose we have an $L$-valued $(0,1)$ form $\be$ satisfying $\dbar \be = 0$ and $$\int_X \frac{||\be||^2}{c}e^{-u} \omega^n < \infty.$$ Then there exists a $\xi \in \Ga(L)$ satisfying $\dbar \xi = \be$ and the $L^2$-estimate $$\int_X\frac{|\xi|^2}{c}e^{-u}\omega^n \leq \int_X \frac{||\be||^2}{c}e^{-u} \omega^n.$$
\end{thm}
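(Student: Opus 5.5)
This is the standard H\"ormander $L^2$ estimate with a twisted weight, and I would prove it by the usual Hilbert space duality argument. The first step is to recast the data as a single line bundle problem. The bundle $L\otimes K_X^{-1}$ carries the metric $h\,e^{-u}\,(\det\omega)^{-1}$, whose curvature is $\sqrt{-1}\Theta_h+\ddbar u+Ric$. By hypothesis this curvature is bounded below by $c\,\omega$.

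Sections of $L$ are identified with $L\otimes K_X^{-1}$-valued $(n,0)$-forms, and $L$-valued $(0,1)$-forms with $(n,1)$-forms. Under this identification the weighted integrals $\int |\cdot|^2 e^{-u}\omega^n$ become the natural $L^2$ norms of the twisted bundle $E:=L\otimes K_X^{-1}$ with the metric $h e^{-u}$. So the statement reduces to solving $\dbar\xi=\be$ for $E$-valued $(n,1)$-forms on a complete K\"ahler manifold, where $i\Theta_E\geq c\,\omega$.

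The core is the Bochner--Kodaira--Nakano identity. For compactly supported smooth $E$-valued $(n,1)$-forms $v$, it gives
$$\|\dbar v\|^2+\|\dbar^* v\|^2\ \geq\ \int_X \langle [i\Theta_E,\Lambda]v,v\rangle \ \geq\ \int_X c\,|v|^2 .$$
On $(n,1)$-forms the operator $[i\Theta_E,\Lambda]$ is bounded below by the smallest eigenvalue of the curvature, hence by $c$. Completeness of $\omega$ is used, via Andreotti--Vesentini cutoff functions, to show that the compactly supported forms are dense in $\mathrm{Dom}\,\dbar\cap\mathrm{Dom}\,\dbar^*$ in the graph norm. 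This extends the inequality to all forms in that domain, with $\dbar^*$ the Hilbert adjoint.

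For the duality step, take $v\in\mathrm{Dom}\,\dbar^*$ and write $v=v_1+v_2$, with $v_1\in\ker\dbar$ and $v_2\perp\ker\dbar$. Since $\be\in\ker\dbar$, we get $\langle \be,v\rangle=\langle\be,v_1\rangle$. Also $\dbar^*v_2=0$, because $(\ker\dbar)^\perp\subset\ker\dbar^*$. Applying Cauchy--Schwarz with the weight $c$ and then the Bochner inequality to $v_1$ gives
$$|\langle\be,v\rangle|^2\leq \int_X\frac{|\be|^2}{c}\int_X c|v_1|^2\leq \int_X\frac{|\be|^2}{c}\;\|\dbar^*v\|^2 .$$
Hence $\dbar^*v\mapsto\langle v,\be\rangle$ is a well-defined bounded functional of norm at most $(\int|\be|^2/c)^{1/2}$. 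By Hahn--Banach and Riesz, there is $\xi$ with $\dbar\xi=\be$ weakly and $\int|\xi|^2e^{-u}\omega^n\leq\int c^{-1}|\be|^2e^{-u}\omega^n$.

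This gives a solution estimated in the unweighted-by-$c$ norm, whereas the statement puts $1/c$ on the left as well. I would handle this the standard way: before running the argument, replace $u$ by $u+\log c$, after smoothing $c$. Alternatively, I would apply the twisted Donnelly--Fefferman/Ohsawa--Takegoshi form of the identity. Either way the convention has to be checked against the formulation in Andreotti--Vesentini, and I expect this bookkeeping to be the main obstacle.

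The other delicate points are the density argument on a complete manifold and the fact that $c$ is merely continuous and not bounded below globally. I would deal with these by working on an exhaustion by relatively compact domains, where $c$ is bounded below. There I would obtain solutions with uniform bounds, take a weak $L^2_{loc}$ limit, and use lower semicontinuity of the norm. Finally, $\xi$ is smooth because $\be$ is smooth and $\dbar$ is elliptic in degree $0$ modulo holomorphic functions.
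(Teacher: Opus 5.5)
Your Bochner--Kodaira--Nakano and duality argument on $E=L\otimes K_X^{-1}$ is sound, but it proves the classical estimate
$$\int_X|\xi|^2e^{-u}\omega^n\le\int_X\frac{\|\be\|^2}{c}\,e^{-u}\omega^n .$$
The step you yourself flagged, putting $1/c$ on the left as well, is a genuine gap. Replacing $u$ by $u+\log c$ changes the curvature by $\ddbar\log c$. This term has no sign: for a smoothing of a merely continuous $c$ it can be arbitrarily negative compared with $c\,\omega$, so the curvature hypothesis is lost. Even if the new weight satisfied a bound $\ge c'\omega$, you would only get $\int|\xi|^2c^{-1}e^{-u}\le\int\|\be\|^2(cc')^{-1}e^{-u}$. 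That is the desired inequality only when $c'\ge 1$, i.e.\ when $c\,\omega+\ddbar\log c\ge\omega$, and nothing in the hypotheses gives this.

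In fact no bookkeeping, twisted identities included, can close this gap, because the displayed inequality is false whenever $c<1$. Take $X=\CC$ with $\omega=\frac{\sqrt{-1}}{2}dz\wedge d\bar z$, $L$ trivial with $h\equiv 1$, and $u=\ep|z|^2$. Then $\ddbar u=2\ep\,\omega$, so $c\equiv 2\ep$ is admissible. Let $\be=d\bar z$; then $\|\be\|^2=2$ and the right-hand side equals $\pi/\ep^2$. Every solution has the form $\xi=\bar z+f$ with $f$ entire. If the left-hand side is finite, then $f\in L^2(e^{-\ep|z|^2})$ and $f\perp\bar z$ by rotational symmetry. Hence the left-hand side is at least $\frac{1}{2\ep}\int_\CC|z|^2e^{-\ep|z|^2}=\frac{\pi}{2\ep^3}$, which exceeds $\frac{\pi}{\ep^2}$ for $\ep<1/2$. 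By contrast, the classical estimate holds with equality here for $\xi=\bar z$.

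So the statement as printed contains a misprint, and the classical form is the one to prove. It is also all the paper uses: in Proposition \ref{prop:section-existence-hormander} one only needs $\int_X|\xi|^2e^{-\tilde\psi}\omega^n<\infty$, which the classical estimate gives directly. The remark there that one may take $\tilde c\le 1$ then becomes unnecessary.

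Finally, drop the exhaustion by relatively compact domains. On such a domain the restricted metric is not complete, so the Andreotti--Vesentini density step fails there. It is also not needed. The weighted Cauchy--Schwarz step and the inequality $\int c|v|^2\le\|\dbar v\|^2+\|\dbar^*v\|^2$ are pointwise in $c$, and they pass to the graph-norm closure by Fatou's lemma, with no global lower bound on $c$.
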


Note that $\xi$ is uniquely determined if an additional orthogonality condition is imposed. In applications one requires the above theorem for possibly singular hermitian metrics $h$. This is standard, and is achieved by a smoothing argument. We need the following basic consequence.

\begin{prop}\label{prop:section-existence-hormander}
Let $(X^n,\omega)$ be a complete non-compact K\"ahler manifold with $Ric\geq 0$. Let $\psi \in C^\infty(X)\cap \PSH(X)$ that is strictly $\PSH$ in the sense that $$\ddbar\psi(x) \geq c(x)\omega$$ for some continuous function $c:X\rightarrow (0,\infty)$. Then
\begin{enumerate}
\item There exists a $q_0$ such that for any $q \geq q_0$ there exists a non-trivial section $\sigma\in H^0(X,K_X)$ such that $$\int_X \sigma \wedge \bar\sigma~ e^{-q\psi} < \infty. $$
\item Given any two points $x_1,x_2\in X$, there exists a $q_0>0$ such that for any $q>q_0$, there exists a holomorphic function $s$ on $X$ such that $s(x_1) = 0$ but $s(x_2) \neq 0.$  Moreover, $$\int_X |s|^2e^{-q\psi}<\infty.$$
\item Fix a point $o\in X$. Then there exists a $q_0 > 0$ such that for any $q \geq q_0$,  there exist $n$ holomorphic functions $s_1,\cdots,s_n$ on $X$ such that $s_1(o) = \cdots = s_n(o) = 0$ and $$ds_1 \wedge\cdots\wedge ds_n(o) \neq 0.$$In particular $\{s_1,\cdots,s_n\}$ form coordinates near $o$. Moreover, each $s_i$ satisfies $$\int_X |s_i|^2e^{-q\psi}<\infty.$$

\end{enumerate}

\end{prop}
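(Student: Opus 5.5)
The plan is the standard H\"ormander peak-section argument. We work with the trivial bundle $L$, or with $K_X$ for part (1), and use singular weights with logarithmic poles at the relevant points, applying Theorem \ref{thm:l2-existence} together with the smoothing remark that follows it.

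For part (1), we fix a point $o$ and a local holomorphic $n$-form $\sigma_0$ near $o$. Let $\chi$ be a cutoff equal to $1$ near $o$. Then $\be=\dbar\chi\wedge\sigma_0$ is a $\dbar$-closed $K_X$-valued $(0,1)$-form with compact support in an annulus around $o$. We view $K_X$-valued $(0,1)$ forms as $(n,1)$ forms, so the relevant curvature term is just $\ddbar u$ (this is where $Ric$ cancels), and the estimate only needs $\ddbar u\ge c\omega$. Take
\[u=q\psi+2n\,\theta\log|z|,\]
where $\theta$ is a cutoff near $o$ in local coordinates $z$ and $|z|^{2n}$ is non-integrable at $o$. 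The negative part of $\ddbar(\theta\log|z|)$ is supported where $d\theta\neq0$, a compact set, so $q\ddbar\psi$ absorbs it once $q\ge q_0$, using a positive lower bound for $c$ on that compact set. Away from that set we keep $\ddbar u\ge c\omega/2$, say. Since $\be$ has compact support away from $o$, the right-hand side of the estimate is finite.

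The solution $\xi$ satisfies $\int|\xi|^2e^{-u}<\infty$, and this forces $\xi(o)=0$. Therefore $\sigma=\chi\sigma_0-\xi$ is holomorphic, equals $\sigma_0(o)\neq0$ at $o$ (hence is nontrivial), and lies in $L^2(e^{-q\psi})$. This uses that $e^{-u}$ and $e^{-q\psi}$ differ by a bounded factor away from $o$, and that $\chi\sigma_0$ has compact support.

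Parts (2) and (3) use the same scheme on the trivial bundle, again with $h=1$. Now the curvature condition reads $\ddbar u+Ric\ge c\omega$, and $Ric\ge0$ lets us drop the Ricci term.

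1. For (2), choose disjoint coordinate balls around $x_1$ and $x_2$. Take $f=\chi_2$, a cutoff equal to $1$ near $x_2$, and set
\[u=q\psi+2(n+1)\bigl(\theta_1\log|z-x_1|+\theta_2\log|z-x_2|\bigr).\]
Solving $\dbar\xi=\dbar\chi_2$ with finite weighted $L^2$ norm forces $\xi$ to vanish to order at least one at $x_1$ and $x_2$, because $|z|^{-2(n+1)}$ is non-integrable. Then $s=\chi_2-\xi$ has $s(x_1)=0$ and $s(x_2)=1$.
2. For (3), take $f_i=\chi z_i$ and the weight $u=q\psi+2(n+1)\theta\log|z|$. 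The weight forces $\xi_i$ to vanish to order two at $o$, since $\int|\xi|^2|z|^{-2n-2}<\infty$ kills both the constant and the linear terms. Then $s_i=\chi z_i-\xi_i$ satisfies $ds_i(o)=dz_i$, so the $s_i$ give coordinates near $o$.

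In both cases finiteness of $\int|s|^2e^{-q\psi}$ follows as in part (1). The threshold $q_0$ is fixed by the compact region where $d\theta\neq0$, and the estimate holds for all $q\ge q_0$ since larger $q$ only adds positivity.

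The main obstacle is the technical point that the weight $u$ is singular, while Theorem \ref{thm:l2-existence} is stated for smooth weights. The plan is to approximate $\log|z|$ by $\tfrac12\log(|z|^2+\ep^2)$, whose complex Hessian is still non-negative on the ball, to solve with the uniform estimate, and to extract a weak limit as $\ep\to0$. Fatou's lemma then gives the singular weighted bound, and a limit of solutions of $\dbar\xi_\ep=\be$ still solves $\dbar\xi=\be$. Because $\be$ vanishes near the poles, $\xi$ is holomorphic there, so the vanishing-order conclusion is legitimate. A minor point is that $c$ is only positive pointwise. This is harmless because the weight $1/c$ can be replaced by a fixed multiple of $\omega$ restricted to the compact support of $\be$, together with $\ddbar u\ge c\omega/2$ globally.
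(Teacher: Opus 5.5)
Your proposal is correct and follows essentially the same route as the paper. In each part you cut off a local holomorphic object ($dz^1\wedge\cdots\wedge dz^n$, the cutoff $\chi_2$, or $\chi z^i$) and subtract a H\"ormander solution $\xi$ with weight $q\psi$ plus a cut-off logarithmic pole. The non-integrability of the pole forces the required vanishing of $\xi$, and $q\ge q_0$ absorbs the negative part of the cut-off log term; the $Ric$ term cancels for $K_X$ with the metric $(\omega^n)^{-1}$ and is dropped using $Ric\ge 0$ for the trivial bundle.

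The differences are cosmetic. You use a separate cutoff $\theta$ for the pole, and in part (1) you use the exponent $n$ instead of $n+1$. You also spell out the $\epsilon$-regularisation of the singular weight and how $q_0$ is determined, both of which the paper leaves implicit.
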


\begin{proof}
The proofs are standard, but we include them here for completeness. Fix a point $o\in X$. By scaling the metric we may assume that we have holomorphic coordinates $(z^1,\cdots,z^n)$ on the ball $B(o,2)$. Let $\chi \in C^\infty(X)$ be a cut-off function such that $0\leq \chi\leq 1$, $\chi\equiv 1$ on $B(o,1)$ and the support of $\chi$ is contained in $B(o,2)$.
\begin{enumerate}
\item Let $\tau = \chi dz^1\wedge\cdots\wedge dz^n$ and $\be = \dbar_{K_X} \tau$, where $\dbar$ operator is on the line bundle $K_X$. In other words, locally $$\be = (\dbar \chi) dz^1\wedge\cdots\wedge dz^n.$$ We now apply the above theorem with the Hermitian metric $(\omega^n)^{-1}$ and the weight $$\tilde\psi = q\psi + (n+1)\chi \log|z|^2.$$ Then it is easy to see that there exists a function $\tilde c \in C^0(X)$ such that $\tilde c>0$ and $\ddbar\tilde\psi \geq \tilde c.$ Since $\sqrt{-1}\Theta_h = -Ric$ in this case, we have that $$\ddbar\tilde \psi + \sqrt{-1}\Theta_h + Ric \geq \tilde c\omega.$$ Moreover, since $\be$ is compactly supported, $$\int_X \frac{\n \be\n^2}{\tilde c}e^{-\tilde\psi}\omega^n < \infty.$$ Then by the theorem above there exists a section $\xi\in \Ga(K_X)$ such that $\dbar_{K_X}\xi = \be$ and $\xi$ satisfies $$\int_X \frac{\n \xi\n^2}{\tilde c} e^{-q\psi - (n+1)\chi\log|z|}\omega^n < \int_X \frac{\n \be\n^2}{\tilde c}e^{-\tilde\psi}\omega^n<\infty.$$ Clearly $\xi(o) = 0$. We now let $\sigma = \tau - \xi.$ Clearly $\sigma \in H^0(X,K_X)$ Next, since $\tau(o) \neq 0$, $\sigma(o) \neq 0$ and hence $\sigma$ is a non-trivial section. Finally one can choose $\tilde c\leq 1$ and so by the compactness of the support of $\chi$ one can also easily see that $$\int_X \sigma\wedge\bar\sigma ~e^{-q\psi} < \infty.$$
\item Let $z_j:= (z^1_j,\cdots,z_j^n)$ be holomorphic coordinates near $x_j$. We let $\chi_j$ be a cut-off function as above which is identically one near $x_j$. Moreover $x_1 \notin\mathrm{Supp}(\chi_2)$ and vice versa. We then let $\be = \dbar\chi_2$ and consider the weight $$\tilde\psi = q\psi + (n+1)\chi_1\log|z_1|^2 + (n+1)\chi_2\log|z_2|^2.$$ Once again, by compactness of the supports of the cut-off function, there exists a $\tilde c >0$ such that $$\ddbar\tilde\psi + Ric \geq \tilde c\omega.$$ By arguing as above, there exists a function $\xi$ such that $\dbar\xi = \be$. Moreover, by the choice of our weights, $\xi(x_1) = \xi(x_2) = 0$. Then $s = \chi_2-\xi$ is the required holomorphic function.
\item Let $t_i = z^i\chi$, and let $\be_i = \dbar t_i.$ Note that $dt_i (o) = dz^i (o).$ We now let $L$ be the trivial bundle endowed with the trivial Hermitian metric $h$. Then, since the Ricci curvature is non-negative we still have $$\ddbar\tilde \psi + \sqrt{-1}\Theta_h + Ric \geq \tilde c\omega.$$ The same argument as above then shows that we have a smooth functions $\{\xi_i\}_{i=1}^n$ such that $\dbar\xi_i = \be_i$, $\xi_i(o) =0$ and $d\xi_i(o) = 0$. We now let $s_i = t_i - \xi_i$. Then $s_i(o) = 0$ and $ds_i(o) = dz^i$, and this completes the proof.
\end{enumerate}
\end{proof}

\subsection{Currents, distributional inequalities and intersection numbers}

Let $L$ be a Hermitian line bundle with a smooth Hermitian metric $h$. We consider hermitian metrics $h_u= e^{-u}h$ where $u$ is a Lipschitz function. We now fix the following notation: For $s\in H^0(X,L)$ and $\vep,p>0$ we set $$\zeta_{\ep}(s,p,u,h):= \ddbar\log(\n s\n_{h_u}^p+ \ep^2) + \frac{p}{2}\ddbar u,$$ where the norm is defined by $$\n s\n_{h_u}^2:= s\bar s e^{-u}h. $$ On some occasions we will have use for the following modification: $$\tilde{\zeta}_{\ep}(s,p,u,h):= \ddbar\log(\n s\n_{h_u}^p+ \ep^2) + \frac{p}{2}\ddbar u + \frac{p}{2}\sqrt{-1}\Theta_h.$$We will be mainly concerned with either the trivial line bundle $L$ endowed with the Hermitian metric $e^{-qu}$ (ie. $h \equiv 1$) for some $q>0$ and we will use the notation $L_{qu}$ for this data, or the canonical bundle $K_X$ endowed with the Hermitian metric $e^{-qu}\omega^{-2}$ (ie. $h = \omega^{-2}$). It may sometimes be convenient to consider the latter as the bundle $L_{qu}\otimes K_X$ where the $K_X$ factor has the fixed metric $\omega^{-2}$. In either case, for the above choices of reference hermitian metrics, we will use the abridged notation $\zeta_\ep(s,p,u)$ and $\tilde\zeta_\ep(s,p,u)$ for the $(1,1)$ currents, and $\n s\n_u$  for the norms.

\begin{lem}\label{lem:currents}
Suppose $h_u=e^{-u}h$ where $u$ is Lipschitz and the curvature satisfies $\sqrt{-1}\Theta_{h_u}\geq \al$  as an order-zero current, where $\al$ is some smooth closed real $(1,1)$ form (not necessarily positive). Let $\ga>0$ be a real number. Then in the sense of currents, we have the following identities:
\begin{align*}
 \partial \Vert f \Vert_{h_u}^{2\ga} &= -\ga\Vert f \Vert_{h_u}^{2\ga} \partial u +\ga\Vert f \Vert_{h_u}^{2\ga-2} \langle f, \partial f\rangle_{h_u}, \\
  \partial \log(\epsilon^2+ \Vert f \Vert_{h_u}^{2\ga}) &= \frac{-\ga\Vert f \Vert_{h_u}^{2\ga} \partial u +\ga\Vert f \Vert_{h_u}^{2\ga-2} \langle f, \partial f\rangle_{h_u}}{\epsilon^2+\Vert f \Vert_{h_u}^{2\ga}},\\
 \ddbar \log (\Vert f \Vert_{h_u}^{2\ga} +\ep^2) + \ga \sqrt{-1}\Theta_{h_u} &= \ga\frac{ \epsilon^2 \sqrt{-1}\Theta_{h_u}}{ \Vert f \Vert^{2\ga}_{h_u} + \ep^2} +\ga^2\frac{\epsilon^2\Vert f \Vert_{h_u}^{2\ga-2} \nabla^{1,0} f \wedge \nabla^{0,1} f^*}{(\epsilon^2+\Vert f \Vert_{h_u}^{2\ga})^2},
\end{align*}
 and
  $$\ddbar \Vert f \Vert_{h_u}^{2\ga} + \ga\Vert f \Vert_{h_u}^{2\ga} \sqrt{-1}\Theta_{h_u} =  \ga^2 \Vert f \Vert_{h_u}^{2\ga-2}  \nabla^{1,0} f \wedge \nabla^{0,1} f^*\geq 0,$$ where $f^*$ denotes the conjugate of $f$ with respect to $h_u$. In particular, if $\sqrt{-1}\Theta_h\geq 0$, then $$\tilde \zeta(f, \ga,u,h)\geq 0.$$
    \end{lem}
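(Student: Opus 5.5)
The plan is to reduce everything to a local computation in a trivialising frame. We then pass from smooth to Lipschitz $u$ by regularisation, using Bedford--Taylor type continuity of the products involved.

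\textbf{Smooth case.} Fix a local holomorphic frame $e$ of $L$ and write $f = f_0 e$ and $\n f\n^2_{h_u} = |f_0|^2 e^{-u}h(e,e) = |f_0|^2 e^{-\varphi}$, where $\varphi = u - \log h(e,e)$. Then $\sqrt{-1}\Theta_{h_u} = \ddbar\varphi$. The first identity follows from $\partial(|f_0|^{2\ga}e^{-\ga\varphi})$ and the chain rule, since $\langle f,\partial f\rangle_{h_u}$ corresponds to $\bar f_0\,\partial f_0\, e^{-\varphi}$ together with the $\partial\log h$ term absorbed into the Chern connection. More precisely, the $\partial u$ term is separated out, and $\nabla^{1,0}f = (\partial f_0 - f_0\partial\log h(e,e))e$ is the Chern connection of $h$, so the terms coming from $\varphi$ split into the $-\ga\partial u$ piece and the connection piece. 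The second identity is immediate from the first by the chain rule for $\log(\ep^2+\cdot)$.

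For the third identity, I would first compute away from $\{f=0\}$. There $\log\n f\n^{2\ga}_{h_u} = \ga\log|f_0|^2 - \ga\varphi$, so $\ddbar\log\n f\n^{2\ga} = -\ga\sqrt{-1}\Theta_{h_u}$ off the zero set. Writing $F = \n f\n^{2\ga}$, we have
\[\ddbar\log(F+\ep^2) = \frac{\ddbar F}{F+\ep^2} - \frac{\sqrt{-1}\partial F\wedge\dbar F}{(F+\ep^2)^2}.\]
The fourth identity, $\ddbar F = -\ga F\sqrt{-1}\Theta_{h_u} + \ga^2 F\,\sqrt{-1}\nabla^{1,0}f\wedge\overline{\nabla^{1,0}f}/\n f\n^2$, is the standard Bochner-type computation for $|f_0|^{2\ga}e^{-\ga\varphi}$, again done off the zero set. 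The gradient term equals $\ga^2 F^2/\n f\n^2\,(\ldots)$. Substituting these and simplifying $\frac{F}{F+\ep^2}-\frac{F^2}{(F+\ep^2)^2} = \frac{\ep^2F}{(F+\ep^2)^2}$ gives the stated formula, including the $\ga\ep^2\Theta/(F+\ep^2)$ term.

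\textbf{Across the zero set.} One must check that the identities hold across $\{f=0\}$ in the sense of currents. For the $\ep$-regularised expressions everything is $C^{1,1}$-type, with at worst $|f_0|^{2\ga-2}|\partial f_0|^2$ singularities. These are locally integrable because $\{f=0\}$ is a divisor and $2\ga-2 > -2$. So the integrable densities extend as $L^1_{loc}$ forms, and no mass is concentrated on the divisor. For $\ga$ small I would verify this using the fact that $F$ is continuous and that $|f_0|^{2\ga}$ is psh.

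\textbf{Lipschitz $u$.} Take local smooth approximations $u_j \to u$ by convolution, with $du_j \to du$ in $L^p_{loc}$ and a.e. and $\ddbar u_j \to \ddbar u$ weakly. Apply the identities to $u_j$ and pass to the limit. The first-order identities converge in $L^1_{loc}$. In the third identity, the term $\ep^2\ddbar u_j/(F_j+\ep^2)$ is a product of a weakly converging measure with a uniformly converging continuous function, so it converges weakly. The gradient term converges because $\nabla^{1,0}f$ involves only $h$, while $F_j\to F$ locally uniformly.

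\textbf{Positivity.} Finally, the positivity statement follows from the third identity: $\tilde\zeta$ with $\ga = p/2$ is the left side, and the right side is a nonnegative current whenever $\sqrt{-1}\Theta_{h_u}\ge 0$, since the gradient term is semipositive. Here I read the hypothesis as $\ddbar u + \sqrt{-1}\Theta_h \ge 0$.

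\textbf{Main obstacle.} The main obstacle is the limiting step for the product $(F+\ep^2)^{-1}\ddbar u$ with $u$ only Lipschitz and $\Theta_{h_u}\ge\al$ only as a current. This requires uniform convergence of the continuous coefficient, which holds here, together with local mass bounds for $\ddbar u_j$. Those bounds come from $\ddbar u_j \ge \al_j$ and the resulting uniform mass control of positive currents.
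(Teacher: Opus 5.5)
Your handling of the zero set differs from the paper's but is workable. The paper never computes off $\{f=0\}$: it regularises $\n f\n^2$ to $\n f\n^2+\eta^2$ and shows the remainders $R^{(1)},R^{(2)}$ vanish as $\eta\to0$. Your claim that ``no mass is concentrated on the divisor'' still needs an actual argument. One option is the characterisation of $W^{1,1}_{loc}$ by absolute continuity on almost every line: almost every line misses $\{f=0\}$, and the pointwise derivatives are $L^1_{loc}$.

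The genuine gap is in the Lipschitz limit. There you assert that the gradient term converges because $\nabla^{1,0}f$ involves only $h$, and that is false. For the third and fourth identities to hold, $\nabla^{1,0}f$ must be the $(1,0)$-part of the Chern connection of $h_u$. In a frame with $h\equiv1$ this is $\partial f-f\,\partial u$, which is the paper's $\nabla^{1,0}_\delta f$ and also what your own Bochner computation of $\ddbar(|f_0|^{2\ga}e^{-\ga\varphi})$ produces. With the connection of $h$ alone, the fourth identity already fails off the zero set, because the terms containing $\bar f\,\partial f\wedge\dbar u$ and $|f|^2\partial u\wedge\dbar u$ are missing. So for $u_j$ the gradient term is
\begin{equation*}
\ga^2\frac{\ep^2|f|^{2\ga-2}e^{-\ga u_j}}{(\ep^2+F_j)^2}\,\sqrt{-1}(\partial f-f\partial u_j)\wedge\overline{(\partial f-f\partial u_j)}.
\end{equation*}
Here $\partial u_j$ converges only a.e.\ and in $L^p_{loc}$, and it is paired with the weight $|f|^{2\ga-2}$, which is unbounded near $\{f=0\}$ when $\ga<1$. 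This is precisely the term on which the paper spends most of its effort ($Q_{\delta,0}\to Q_{0,0}$ via the estimates for $I$ and $J$). The measure term you single out as the main obstacle is the easy one.

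The repair uses only what you already listed. Since $|\partial u_j|\le\Lip(u)$ and $\partial u_j\to\partial u$ a.e., the coefficients of the form above converge a.e. They are dominated, uniformly in $j$, by $C\big(|f|^{2\ga-2}|\partial f|^2+|f|^{2\ga}\big)$. The prefactor is continuous and converges locally uniformly. Dominated convergence then gives $L^1_{loc}$ convergence of the gradient term. This is simpler than the paper's H\"older argument, which uses $L^p$ integrability for some $p>1$ together with $W^{1,p^*}$ convergence of $u_\delta$.

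This does require $|f|^{2\ga-2}|\partial f|^2\in L^1_{loc}$ also near singular or non-reduced points of $\{f=0\}$, and ``$2\ga-2>-2$'' does not give that by itself. The paper argues via Weierstrass polynomials and Fubini. Alternatively, off $\{f=0\}$ this density is bounded by the trace of the positive measure $\ddbar|f_0|^{2\ga}$, which has locally finite mass because $|f_0|^{2\ga}$ is plurisubharmonic.
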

  \begin{proof}
  The proof is standard, but given the important role that these identities play in our proof of the B\'ezout estimates, we include a short argument. We begin with the observation that the right hand sides in the above identities are all locally integrable. The key point is  that $|f|^{2\ga-2}\partial f\wedge \overline{\partial f} \wedge\omega_{Euc}^{n-1}$ is locally in $L^1$ if $\ga>0$. This is of course trivial if $\ga\geq 1$. If $\ga\in (0,1)$, one can prove this by replacing $f$ with a Weierstrass polynomial, and using Fubini's theorem. In fact one can prove that $|f|^{2\ga - 2}|\partial f|^2$ is in $L^{p}_{loc}$ for some $p = p(\ga)>1$. By a similar argument one can show that $|f|^{2\ga - 1}|\partial f|$ is also in $L^p_{loc}$ for some $p>1$.

  We will now prove the third identity; the rest of the identities can be similarly proved.  Without loss of generality we may assume that $\ep = 1$. Let $\chi$ be a $(n-1,n-1)$ form with compact support. In fact without loss of generality, for instance by using a partition of unity, we may also assume that $\chi$ is supported in a single coordinate neighbourhood $U$ over which $L$ can be trivialized. In particular it makes no difference to the proof if we simply assume $L$ is trivial to begin with and that $h\equiv 1$. Moreover, since $\al = \ddbar \theta$ on $U$ for some smooth function $\theta$, we may as well assume, by modifying the metric $h_u$, that $\sqrt{-1}\Theta_{h_u}\geq 0$.  We introduce the following notation: Let $v := \log (1+ \n f\n^{2\ga})$ where the norm is with respect to the metric $e^{-u}$ (recalling that $h\equiv 1$), and let $$\zeta := \ddbar v + \ga\sqrt{-1}\Theta_{h_u} = \ddbar v + \ga\ddbar u.$$Note that by Bedford-Taylor theory, $$\int_X \zeta \wedge\chi= \int_X (v + \ga u) \ddbar\chi.$$Now, there are two sources of non-smoothness: from $|f|^{2\ga}$, and of course from the metric potential $u$ itself. Let $u_\delta$ be a smoothing of $u$ in $U$ such that $u_\delta\rightarrow u$ uniformly on compact subsets of $U$,  and also in $W^{1,p}$ for all $p>1$. We now set $$h_\delta = e^{-u_\delta},~ f_{\delta,\eta}  = \sqrt{\n f\n_{h_\delta}^2 + \eta^2} \text { and } v_{\delta,\eta} = \log(1+ f_{\delta,\eta}^{2\ga}).$$ and let $\zeta_{\delta}$ denote the corresponding $(1,1)$ form given by $$\zeta_{\delta,\eta} = \ddbar v_{\delta,\eta} + \ga\ddbar u_\delta.$$ Then $$\int_X\zeta\wedge\chi = \lim_{\delta\rightarrow 0}\lim_{\eta\rightarrow 0} \int_X (v_{\delta,\eta} + \ga u_\delta)\wedge \ddbar\chi = \lim_{\delta\rightarrow0}\lim_{\eta\rightarrow 0}\int_X\zeta_{\delta,\eta}\wedge\chi .$$ By an elementary calculation we can write $$\int_X\zeta_{\delta,\eta}\wedge \chi = P_{\delta,\eta} + Q_{\delta,\eta} + R^{(1)}_{\delta,\eta} + R^{(2)}_{\delta.\eta},$$ where
  \begin{align*}
  P_{\delta,\eta}&= \ga\int_X \frac{1}{1+f_{\delta,\eta}^{2\ga}}\ddbar u_{\delta} \wedge\chi,\\
  Q_{\delta,\eta} &= \ga^2\int_X \frac{f_{\delta,\eta}^{2\ga-2}}{(1+f_{\delta,\eta}^{2\ga})^2}\nabla^{1,0}_\delta f\wedge \nabla^{0,1}f^{*_{\delta}} \wedge\chi \text{ and }\\
  R^{(1)}_{\delta,\eta} &= \ga\eta^2\int_X\ga\frac{f_{\delta,\eta}^{2\ga-2}}{1+f_{\delta,\eta}^{2\ga}}\ddbar u_\delta\wedge\chi \\
  R^{(2)}_{\delta,\eta} &= \ga\eta^2 \int_X\frac{f_{\delta,\eta}^{2\ga-4}(f_{\delta,\eta}^{2\ga} + \ga-1)}{(1+f_{\delta,\eta}^{2\ga})^2}\nabla^{1,0}_\delta f\wedge \nabla^{0,1}f^{*_\delta}\wedge\chi,
  \end{align*}
  By Bedford-Taylor theory, since $u_\delta$ decreases, and converges uniformly, to $u$ we have that $$\lim_{\delta\rightarrow 0}\lim_{\eta\rightarrow0}P_{\delta,\eta} = \ga\int_X \frac{1}{1+\n f\n^{2\ga}}\ddbar u \wedge \chi = P_{0,0}.$$ Next we proved that $\lim_{\delta\rightarrow 0}\lim_{\eta\rightarrow 0}Q_{\delta,\eta} = Q_{0,0}$. Without loss of generality we may assume that $\ga\in (0,1)$, since if $\ga\geq 1$ then the integrands converge uniformly and the claim is trivial.  First we note that for a fixed $\delta$,  since $\ga \in (0,1)$ we have $$ \Big|\frac{f_{\delta,\eta}^{2\ga-2}}{(1+f_{\delta,\eta}^{2\ga})^2}\nabla^{1,0}_\delta f\wedge \nabla^{0,1}f^{*_{\delta}} \wedge\chi \Big| \leq  \Big|\frac{f_{\delta,0}^{2\ga-2}}{(1+f_{\delta,0}^{2\ga})^2}\nabla^{1,0}_\delta f\wedge \nabla^{0,1}f^{*_{\delta}} \wedge\chi \Big|.$$ The right hand side is precisely the integrand of $Q_{\delta,0}$ and is  integrable by the remarks made at the beginning of the proof, and hence by dominated convergence theorem we have that $$\lim_{\eta\rightarrow 0}Q_{\delta,\eta} = Q_{\delta,0}.$$  Next, $$ |Q_{\delta,0} - Q_{0,0}| \leq \ga^2(I + J),$$
  where
  \begin{align*}
  I &= \int_X \Big|\frac{f_{\delta,0}^{2\ga-2}}{(1+f_{\delta,0}^{2\ga})^2}-\frac{\n f\n^{2\ga-2}}{(1+\n f\n^{2\ga})^2}\Big||\nabla^{1,0}_\delta f\wedge \nabla^{0,1}f^{*_{\delta}} \wedge\chi |\text{ and }\\
  J &= \int_X \Big|\frac{\n f\n^{2\ga-2}}{(1+\n f\n^{2\ga})^2}\Big| |\nabla^{1,0}_\delta f\wedge \nabla^{0,1}f^{*_{\delta}} \wedge\chi - \nabla^{1,0}f\wedge \nabla^{0,1}f^* \wedge\chi |
  \end{align*}
  To estimate the first integral, we recall that  $u_\delta$ converges uniformly to $u$ and decreases to $u$. It is then easy to see that for any $\ep >0$ there exists a $\delta(\ep)>0$ such that for all $\delta < \delta(\ep)$ we have  $$ \Big|\frac{f_{\delta,0}^{2\ga-2}}{(1+f_{\delta,0}^{2\ga})^2}-\frac{\n f\n^{2\ga-2}}{(1+\n f\n^{2\ga})^2}\Big| \leq \ep \n f\n^{2\ga-2}. $$ Indeed, if we let $b = f_{\delta,0}^2 = \n f\n^2_{\delta}$ and $a = \n f\n^2$. Then
  \begin{align*}
  \Big|\frac{f_{\delta,0}^{2\ga-2}}{(1+f_{\delta,0}^{2\ga})^2}-\frac{\n f\n^{2\ga-2}}{(1+\n f\n^{2\ga})^2}\Big| &= \Big|\frac{b^{\ga-1}}{(1+b^\ga)^2} - \frac{a^{\ga-1}}{(1+a^\ga)^2}\Big|\\
  &\leq |b^{\ga-1}(1+a^\ga)^2 - a^{\ga-1}(1+b^\ga)^2|\\
  &= a^{\ga - 1}\Big|\frac{a^{1-\ga}}{b^{1-\ga}}(1+a^\ga)^2 - (1+b^\ga)^2\Big|
  \end{align*}
  Now the RHS can be made smaller than $\ep$ uniformly on the support of $\chi$ in $U$ for all $\delta < \delta(\ep)$ since $u_\delta$ converges to $u$ uniformly.

  On the other hand,
  \begin{align*}
  |\nabla^{1,0}_\delta f \wedge \nabla^{0,1}f^{*_\delta} \wedge\chi| &= e^{-u_\delta}(\partial f - f\partial u_\delta)\wedge\overline{(\partial f - f\partial u_\delta)}\wedge\chi\\
  &\leq C(|\partial f|^2 +|f| |\partial f||\partial u_\delta| + |f|^2|\partial u_\delta|^2)\omega\wedge\chi.
  \end{align*}
  for some constant $C>0$ independent of $\delta$.  So $$I \leq C\ep(I_1+I_2+I_3),$$ where $$I_j = \int_X|f|^{2\ga-2}|f|^j|\partial u_\delta| |\partial f|^{2-j}\omega\wedge\chi.$$ We estimate $I_1$; the other two terms can be similarly estimated. By the discussion at the beginning of the proof there exists a $p>1$ such that  $|f|^{2\ga-1}|\partial f|$ is in $L^p_{loc}$. By Holder if $p^*$ is the conjugate exponent of $p$, then $$I_1\leq \Big(\int_{\mathrm{Supp}(\chi)}|f|^{(2\ga-1)p}|\partial f|^{p}\Big)^{1/p}\Big(\int_{\mathrm{Supp}(\chi)}|\partial u_\delta|\Big)^{1/p^*}.$$ The second integral on the right above is also bounded uniformly independent of $\delta$ since $u_{\delta}\rightarrow u$ in $W^{1,p^*}(U)$.  $J$ can also be similarly estimated. Finally, we prove that for each $\delta$, $$\lim_{\eta\rightarrow 0}R^{(j)}_{\delta,\eta} = 0, ~ j=1,2.$$ Once again, it is enough to prove this for $\ga \in (0,1)$ since if $\ga\geq 1$ then the proofs are even simpler. For a fixed $\delta$,  $$ R^{(1)}_{\delta,\eta} \leq C_{\delta}\eta^2\int_X \frac{1}{(\n f\n_{h_\delta}^2+\eta^2)^{1-\ga}}\omega\wedge\chi \leq C_\delta\eta^{2\ga}\xrightarrow{\eta\rightarrow 0}0.$$ For the second remainder term, for fixed $\delta>0$, we note (by an argument similar to above estimates used in bounding $I_1$) that $$|\nabla_\delta^{1,0}f\wedge\nabla^{0,1}f^{*_\delta}\wedge\chi| \leq C_\delta(|f|^2 + |\partial f|^2)\omega\wedge\chi.$$ Moreover by choosing $C_\delta$ large enough we also have that $$\n f\n_{h_\delta}^2 + \eta^2 \leq C_\delta(|f|^2 + \frac{\eta^2}{C_\delta}).$$ Then we have
  \begin{align*}
  R^{(2)}_{\delta,\eta} &\leq C_\delta\ga\eta^2\int_X (|f|^2 + \frac{\eta^2}{C_\delta})^{\ga-2}(|f|^2 + |\partial f|^2)\omega\wedge\chi \\
  &\leq C_\delta\ga\eta^2\Big(\int_X (|f|^2 + \frac{\eta^2}{C_\delta})^{\ga-1}\omega\wedge \chi + \int_X  (|f|^2 + \frac{\eta^2}{C_\delta})^{\ga-2}|\partial f|^2\omega\wedge\chi.
  \end{align*}
  The first integral above is obviously bounded above by $\eta^{2\ga - 2}$ since $\ga < 1$, and hence the first term is bounded by $\eta^{2\ga}$. On the other hand for the second integral, setting $\tilde\eta = \eta/C_\delta$, replacing  $f$ by its Weierstrass polynomial and using Fubini, it is not difficult to see that $$ \int_X  (|f|^2 + \tilde\eta^2)^{\ga-2}|\partial f|^2\omega\wedge\chi \leq C_\delta \eta^{2\ga - 2}.$$ And so for a possibly larger constant $C_\delta$, $$R_{\delta,\eta}^{(2)}\leq C_\delta\eta^{2\ga}\xrightarrow{\eta\rightarrow0}0.$$

  \end{proof}

  An important consequence that we shall use repeatedly is the following.

\begin{cor}\label{cor:zeta-positive}
Let $(L,h)$ be a Hermitian line bundle on $X$ and $u\in \mathrm{Lip}(X)\cap \mathrm{PSH}(X)$. Then for any $\ep,p,q>0$ and any $s\in H^0(X,L)$, $$\zeta_\ep(s, p,qu,h)\geq -\frac{\n s\n_{qu}^p}{\n s\n^p_{qu} + \ep^2}\sqrt{-1}\Theta_h.$$ In particular, if  $\sqrt{-1}\Theta_h\leq 0$ then $$\zeta_\ep(s, p,qu,h)\geq 0.$$
\end{cor}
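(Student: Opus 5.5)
We apply the third identity of Lemma \ref{lem:currents} with $h_{u}$ replaced by the metric $h_{qu} = e^{-qu}h$ and with $\ga = p/2$, so that $\n s\n_{qu}^{2\ga} = \n s\n_{qu}^p$. The curvature of $h_{qu}$ is
$$\sqrt{-1}\Theta_{h_{qu}} = q\ddbar u + \sqrt{-1}\Theta_h.$$
Since $u$ is Lipschitz and plurisubharmonic and $\Theta_h$ is smooth, this is an order-zero current bounded below by the smooth closed form $\al = \sqrt{-1}\Theta_h$. The hypotheses of Lemma \ref{lem:currents} are therefore satisfied. We use the convention that in $\zeta_\ep(s,p,qu,h)$ the weight term is $\frac{p}{2}\ddbar(qu)$; this matches $\ga\,q\ddbar u$.

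Rearranging the identity gives
$$\zeta_\ep(s,p,qu,h) + \ga\sqrt{-1}\Theta_h = \ga\frac{\ep^2\big(q\ddbar u + \sqrt{-1}\Theta_h\big)}{\n s\n_{qu}^p+\ep^2} + \ga^2\frac{\ep^2\n s\n_{qu}^{p-2}\nabla^{1,0}s\wedge\nabla^{0,1}s^*}{(\ep^2+\n s\n_{qu}^p)^2}.$$
We bound each term on the right from below.

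\begin{itemize}
\item The last term is a nonnegative $(1,1)$ current, being a positive multiple of $i\,\nabla^{1,0}s\wedge\overline{\nabla^{1,0}s}$ with a locally integrable coefficient, as shown in the proof of the lemma.
\item The term $\ga\ep^2 q\ddbar u/(\n s\n^p+\ep^2)$ is nonnegative, because $u$ is psh. It is a well-defined measure-valued current because the coefficient is a bounded continuous function multiplying a positive current, which is the Bedford--Taylor product already used in the lemma.
\end{itemize}

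Dropping both terms leaves
$$\zeta_\ep \geq -\ga\sqrt{-1}\Theta_h + \ga\frac{\ep^2}{\n s\n^p_{qu}+\ep^2}\sqrt{-1}\Theta_h = -\ga\frac{\n s\n_{qu}^p}{\n s\n_{qu}^p+\ep^2}\sqrt{-1}\Theta_h.$$
This is the claimed inequality up to the normalising factor $\ga = p/2$.

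The factor needs reconciling. With the paper's convention $\zeta_\ep = \ddbar\log(\cdot) + \frac p2\ddbar(qu)$, the argument gives exactly the form above, with coefficient $\frac{p}{2}\frac{\n s\n^p}{\n s\n^p+\ep^2}$. The stated version, without the $p/2$, should be read as this bound with the constant absorbed, or with the normalisation in which $\ga$ multiplies $\Theta_h$. The qualitative conclusion is unaffected by the constant: if $\sqrt{-1}\Theta_h\le 0$, the right-hand side is nonnegative and hence $\zeta_\ep\ge 0$.

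The main point requiring care is justifying that this lower-bound argument works at the level of currents. That is, we must check that multiplying the positive current $\ddbar u$ by the continuous bounded function $\ep^2/(\n s\n^p+\ep^2)$ preserves positivity. This holds because $u$ is Lipschitz, so $\ddbar u$ is a positive measure-coefficient current and multiplication by a nonnegative continuous function preserves positivity. All remaining regularity issues were handled in Lemma \ref{lem:currents}.
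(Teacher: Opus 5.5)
Your proof is correct and follows the paper's own argument. You apply the third identity of Lemma \ref{lem:currents} to $h_{qu}=e^{-qu}h$ with $\ga=p/2$, discard the nonnegative gradient term, and use $q\ddbar u\geq 0$ (the paper phrases this as $\sqrt{-1}\Theta_{h_{qu}}\geq \sqrt{-1}\Theta_h$).

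You are also right about the constant, and the discrepancy is a slip in the paper rather than a matter of normalisation. The paper's computation produces $-\frac{p}{2}\frac{\n s\n_{qu}^p}{\n s\n_{qu}^p+\ep^2}\sqrt{-1}\Theta_h$ and silently drops the $p/2$ in its last line. Because $\sqrt{-1}\Theta_h$ has no sign, the factor cannot be ``absorbed'': take $L$ trivial, $u=0$, $s\equiv 1$, and $h=e^{\mp|z|^2}$ near a point; then at that point the displayed bound fails for $p>2$ (upper sign) or $p<2$ (lower sign). So the correct statement carries the factor $p/2$. This is harmless for the rest of the paper, which uses the explicit bound only with $p=2$ (Corollary \ref{lem:ric-zeta-upperbound}) and otherwise only the ``in particular'' positivity.
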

\begin{proof}
From the Lemma above we see that
\begin{align*}
\zeta_\ep(s,p,qu):&= \ddbar\log(\n s\n_{qu}^p + \ep^2) + \frac{p}{2}\sqrt{-1}\Theta_{h_{qu}} - \frac{p}{2}\sqrt{-1}\Theta_h\\
&\geq \frac{\ep^2p}{2}\frac{\sqrt{-1}\Theta_{h_{qu}}}{\n s\n^p_{qu}+\ep^2}  - \frac{p}{2}\sqrt{-1}\Theta_h\\
&\geq -\frac{\n s\n_{qu}^p}{\n s\n^p_{qu} + \ep^2} \sqrt{-1} \Theta_h,
\end{align*}
where we used the fact that $\sqrt{-1} \Theta_{h_{qu}} \geq \sqrt{-1} \Theta_h$ since $u$ is psh.
\end{proof}
We will also need the following specific case.
\begin{cor}\label{lem:ric-zeta-upperbound}
Let $\sigma\in H^0(X,K_X)$ and $u\in \mathrm{Lip}(X)\cap \mathrm{PSH}(X)$. Then $$\zeta_\ep(\sigma,2,u) \geq \frac{\n \sigma\n^2_{u}}{\n \sigma\n_{u}^2 + \ep^2}Ric.$$
In particular, if $Ric\geq 0$, then $\zeta_\ep(\sigma,2,u)\geq 0$, and hence by Fatou's lemma, $$\int_XRic^2 \leq
\liminf_{\vep\rightarrow 0^+} \int_X\zeta_\ep(\sigma,2,qu)\wedge\zeta_\ep(\sigma,2,qu),$$ where the wedge product on the right is interpreted in the Bedford-Taylor sense.

\end{cor}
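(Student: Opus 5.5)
This is an application of Lemma \ref{lem:currents} with $L = K_X$, reference metric $h = \omega^{-2}$, exponent $\ga = 1$ (so $p = 2$), and weight $u$. The plan has three steps.

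\textbf{Step 1: the pointwise inequality.} The curvature of $h = \omega^{-2}$ on $K_X$ satisfies $\sqrt{-1}\Theta_h = -Ric$. Hence
$$\sqrt{-1}\Theta_{h_u} = \ddbar u - Ric \geq -Ric,$$
because $u$ is psh. So the hypothesis of Lemma \ref{lem:currents} holds with $\al = -Ric$, a smooth closed form. I apply Corollary \ref{cor:zeta-positive} with $p=2$ and $q=1$. This gives
$$\zeta_\ep(\sigma,2,u) \geq -\frac{\n\sigma\n_u^2}{\n\sigma\n_u^2+\ep^2}\sqrt{-1}\Theta_h = \frac{\n\sigma\n_u^2}{\n\sigma\n_u^2+\ep^2}Ric,$$
which is the first assertion. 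To double-check, I would also expand the third identity of Lemma \ref{lem:currents}:
$$\zeta_\ep = \frac{\ep^2(\ddbar u - Ric)}{\n\sigma\n_u^2+\ep^2} + Ric + (\text{positive gradient term}),$$
and drop the nonnegative terms $\ep^2\ddbar u/(\cdot)$ and the gradient term.

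\textbf{Step 2: positivity.} If $Ric\geq 0$, the right-hand side above is a nonnegative multiple of a nonnegative form, so $\zeta_\ep(\sigma,2,u)\geq 0$. The same holds with $u$ replaced by $qu$, since $qu$ is still Lipschitz and psh.

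\textbf{Step 3: the Fatou bound.} Set $v_\ep = \log(\n\sigma\n_{qu}^2+\ep^2) + qu$. Locally $v_\ep$ is a bounded psh function plus a smooth potential of $-Ric$. Since $\zeta_\ep\geq 0$ and the local potentials are continuous, hence locally bounded, Bedford--Taylor theory defines $\zeta_\ep\wedge\zeta_\ep$ as a positive measure.

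To compare it with $Ric^2$, I would use the decomposition from Step 1,
$$\zeta_\ep = f_\ep\, Ric + T_\ep, \qquad f_\ep = \frac{\n\sigma\n_{qu}^2}{\n\sigma\n_{qu}^2+\ep^2},$$
where $T_\ep\geq 0$. This yields $\zeta_\ep\wedge\zeta_\ep \geq f_\ep^2\, Ric^2$ as measures. The mixed terms are handled as follows:

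\begin{itemize}
\item Writing $\zeta_\ep\wedge\zeta_\ep = \zeta_\ep\wedge(f_\ep Ric + T_\ep) \geq f_\ep\,\zeta_\ep\wedge Ric$ requires care, because $T_\ep$ has only locally integrable coefficients. Here I would use the smoothing $u_\delta$ from the proof of Lemma \ref{lem:currents} and the continuity of Bedford--Taylor products under uniform convergence.
\item The term $\zeta_\ep\wedge Ric$ is a positive current wedged with a smooth semipositive form, which is unproblematic, and it is $\geq f_\ep\, Ric^2$.
\end{itemize}

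Finally, $\sigma\not\equiv 0$ gives $f_\ep\to 1$ almost everywhere (off the zero set of $\sigma$, which has measure zero) as $\ep\to 0$. Fatou's lemma applied to $f_\ep^2\, Ric^2$ then gives
$$\int_X Ric^2 \leq \liminf_{\ep\to 0}\int_X \zeta_\ep\wedge\zeta_\ep.$$

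The main obstacle is rigorously justifying $\zeta_\ep\wedge\zeta_\ep\geq f_\ep^2\, Ric^2$ for the singular, non-smooth weight, i.e. the regularization step in Step 3. All other steps are direct applications of results already stated.
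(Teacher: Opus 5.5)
Your proposal is correct and follows the paper's route: the paper obtains the inequality exactly by applying Corollary \ref{cor:zeta-positive} to $K_X$ with $h=\omega^{-2}$ and $\sqrt{-1}\Theta_h=-Ric$, and then invokes Fatou's lemma. Your regularization argument for $\zeta_\ep\wedge\zeta_\ep\geq f_\ep^2\,Ric^2$ (smooth psh $u_\delta$, the pointwise inequality for smooth forms, and Bedford--Taylor continuity under locally uniform convergence) supplies a step the paper leaves implicit, and it is genuinely needed: $T_\ep$ contains $\ep^2\ddbar(qu)/(\n\sigma\n_{qu}^2+\ep^2)$ and so may have singular measure coefficients rather than merely $L^1_{loc}$ ones; as you note, one must also assume $\sigma\not\equiv 0$.
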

 We also need a version of the final equality above for vector bundles of higher rank.
 \begin{lem}\label{lem:bochner-Weitzenbock-distributional}Let $(E,H)$ be a Hermitian holomorphic vector bundle and $\xi$ a holomorphic section of $E$. Suppose that for some $A>0$, the curvature of $H$ is less than $A\omega$ in the Griffiths sense, then $$\ddbar \Vert \xi \Vert \geq -A \Vert \xi \Vert\omega$$ in the distributional sense.
 \end{lem}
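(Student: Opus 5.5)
We will argue in the classical Kato-inequality way, first on the open set where $\xi\neq 0$ and then across the zero set $Z=\{\xi=0\}$. The starting point is the Bochner-type identity for a holomorphic section of a Hermitian holomorphic bundle with Chern connection $\nabla$:
$$\ddbar \n\xi\n^2 = \sqrt{-1}\langle \nabla^{1,0}\xi,\nabla^{1,0}\xi\rangle - \sqrt{-1}\langle \Theta_H \xi,\xi\rangle .$$
The first term is a nonnegative $(1,1)$ form. The Griffiths upper bound $\sqrt{-1}\langle\Theta_H\xi,\xi\rangle\le A\n\xi\n^2\omega$ then gives $\ddbar\n\xi\n^2\ge \sqrt{-1}\langle\nabla\xi,\nabla\xi\rangle - A\n\xi\n^2\omega$.

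Away from $Z$, we write $\n\xi\n=(\n\xi\n^2)^{1/2}$ and compute
$$\ddbar\n\xi\n = \frac{\ddbar\n\xi\n^2}{2\n\xi\n} - \frac{\sqrt{-1}\,\partial\n\xi\n^2\wedge\dbar\n\xi\n^2}{4\n\xi\n^3}.$$
Since $\partial\n\xi\n^2=\langle\nabla^{1,0}\xi,\xi\rangle$, Cauchy--Schwarz gives $\sqrt{-1}\partial\n\xi\n^2\wedge\dbar\n\xi\n^2\le \n\xi\n^2\sqrt{-1}\langle\nabla\xi,\nabla\xi\rangle$ as $(1,1)$ forms. To see this, evaluate both sides on a vector $v$: it reduces to $|\langle\nabla_v\xi,\xi\rangle|^2\le |\nabla_v\xi|^2\n\xi\n^2$. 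Substituting, the gradient terms combine to $\frac{1}{4\n\xi\n}\sqrt{-1}\langle\nabla\xi,\nabla\xi\rangle\ge0$, and we obtain $\ddbar\n\xi\n\ge -\frac{A}{2}\n\xi\n\omega\ge -A\n\xi\n\omega$ pointwise on $X\setminus Z$.

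To pass to a distributional inequality on all of $X$, we regularize by setting $f_\eta=(\n\xi\n^2+\eta^2)^{1/2}$, which is smooth everywhere. The same computation with $\n\xi\n^2$ replaced by $\n\xi\n^2+\eta^2$ gives
$$\ddbar f_\eta = \frac{\ddbar\n\xi\n^2}{2f_\eta}-\frac{\sqrt{-1}\partial\n\xi\n^2\wedge\dbar\n\xi\n^2}{4f_\eta^3}\ge \frac{1}{2f_\eta}\Big(1-\frac{\n\xi\n^2}{f_\eta^2}\Big)\sqrt{-1}\langle\nabla\xi,\nabla\xi\rangle - \frac{A\n\xi\n^2}{2f_\eta}\omega.$$
Because $\n\xi\n^2\le f_\eta^2$, the gradient term is nonnegative and the last term is at least $-\frac{A}{2}f_\eta\omega$. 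Hence $\ddbar f_\eta\ge -A f_\eta\,\omega$ holds smoothly on all of $X$, with no case distinction at $Z$.

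Finally, we let $\eta\to0$. The functions $f_\eta$ decrease to $\n\xi\n$ uniformly on compact sets. Pairing with a positive test form $\chi$ and integrating by parts, we get $\int f_\eta\,\ddbar\chi\ge -A\int f_\eta\,\omega\wedge\chi$, and the limit yields $\ddbar\n\xi\n\ge -A\n\xi\n\omega$ in the sense of distributions. The step that needs the most care is the Cauchy--Schwarz comparison of the gradient terms, which must hold as positive $(1,1)$ forms rather than only after taking traces, and the regularized version of it; checking it on each vector $v$, as above, handles both.
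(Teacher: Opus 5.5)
Your proposal is correct and follows essentially the same route as the paper: both regularize with $f_\eta=\sqrt{\Vert\xi\Vert^2+\eta^2}$ and combine the Bochner identity, the Griffiths bound and a pointwise Cauchy--Schwarz estimate on the gradient terms to get $\ddbar f_\eta\geq -A f_\eta\,\omega$ (or the paper's slightly sharper $-A\Vert\xi\Vert^2 f_\eta^{-1}\omega$); both then pair with positive test forms and let $\eta\to 0$. Two small differences: checking Cauchy--Schwarz on each vector $v$ replaces the paper's normal-trivialization argument, and your gradient coefficient $1-\Vert\xi\Vert^2/f_\eta^2$ is weaker than the exact $1-\Vert\xi\Vert^2/(2f_\eta^2)$ but still a valid nonnegative lower bound.
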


 \begin{proof}We will use the following standard Bochner-Weitzenb\"ock formula: $$\ddbar \n \xi \n^2 = \nabla^{1,0}\xi \wedge \nabla^{0,1}\xi^* - \langle \Theta\xi,\xi\rangle. $$
We need to show that for any positive $(n-1,n-1)$ form $\eta$, we have
\begin{equation}\label{lem:bochner-Weitzenbock-distributional-cut-off}
\int_X\n \xi \n\ddbar\eta \geq -A\int_X\n\xi\n\omega \wedge \eta.
\end{equation}

Let $f_\ep = \sqrt{\n\xi\n^2 + \ep^2}$. Then
\begin{align*}
\ddbar f_\ep &= \frac{\ddbar\n\xi\n^2}{2f_\ep} - \frac{\sqrt{-1}\partial\n\xi\n^2 \wedge\overline{\partial}\n\xi\n^2}{4f^3_\ep}\\
&= -\frac{\langle\Theta\xi,\xi\rangle}{2f_\ep} + \frac{2f^2_\vep\nabla^{1,0}\xi \wedge \nabla^{0,1}\xi^* -\sqrt{-1}\partial\n\xi\n^2 \wedge\overline{\partial}\n\xi\n^2 }{4f_\ep^3}\\
&\geq  -\frac{\langle\Theta\xi,\xi\rangle}{2f_\ep} + \frac{2\n\xi\n^2\nabla^{1,0}\xi \wedge \nabla^{0,1}\xi^* -\sqrt{-1}\partial\n\xi\n^2 \wedge\overline{\partial}\n\xi\n^2 }{4f_\ep^3}
\end{align*}
Computing at a point $p$ in a normal trivialization for the metric $H$ in which $H(p)$ is diagonal and $dH(p) = 0$, and at the same time computing in normal coordinates at $p$, we see that $\n\xi\n\nabla^{1,0}\xi \wedge \nabla^{0,1}\xi^*$ is a positive $(1,1)$ form and moreover by Cauchy-Schwarz inequality, $$\n\xi\n^2\nabla^{1,0}\xi \wedge \nabla^{0,1}\xi^* -\sqrt{-1}\partial\n\xi\n^2 \wedge\overline{\partial}\n\xi\n^2\geq 0,$$ and hence by our hypothesis on the curvature, $$\ddbar f_\ep \geq -A\frac{\n\xi\n^2}{\sqrt{\n\xi\n^2 + \ep^2}}\omega.$$ Multiplying by $\eta$ and integrating by parts $$\int_X f_\ep\ddbar\eta\geq -A\int_X\frac{\n\xi\n^2}{\sqrt{\n\xi\n^2 + \ep^2}}\omega\wedge \eta.$$ Letting $\ep\rightarrow0$ and using dominated convergence we prove \ref{lem:bochner-Weitzenbock-distributional-cut-off}

\end{proof}

Finally, we will need to control various intersection numbers using integrals involving $\zeta_\ep$ and $\tilde\zeta_\ep$. The discussion that follows extends to higher dimensions, but for simplicity we confine ourselves to the case of complex surfaces. Let $X$ be a (possibly non-compact) complex surface. The proof rests on the following two propositions, the first of which is a version of the Poincar\'e--Lelong formula.

\begin{prop}\label{prop:poincare-lelong} Let $(L,h)$ be a Hermitian line bundle and $u\in \Lip(X)\cap PSH(X)$. Let $s\in H^0(X,L)$ be such that $S = \{s = 0\}$ is an irreducible hypersurface with multiplicity $m$. Then for any $(n-1,n-1)$ continuous form $\eta$ with compact support, we have $$m\int_S \eta = \frac{1}{\pi p}\lim_{\ep\rightarrow 0} \int_X \tilde{\zeta}_{\ep}(s,p,u,h)\wedge\eta.$$
\end{prop}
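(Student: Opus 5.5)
The plan is to reduce the statement to the classical Poincar\'e--Lelong formula $\ddbar\log|f|^2 = 2\pi[\{f=0\}]$ (counted with multiplicity). We work first with smooth compactly supported test forms, and then pass to continuous $\eta$ using a uniform mass bound.

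\emph{Step 1 (smooth test forms).} Fix a trivialization of $L$ on a coordinate chart and write $s$ for the local holomorphic function representing the section. Then
$$\log\n s\n_{h_u}^2 = \log|s|^2 - u + \log h \quad\text{and}\quad \ddbar\log h = -\sqrt{-1}\Theta_h .$$
As $\ep\to0$, the functions $\log(\n s\n_{h_u}^p+\ep^2)$ decrease to $\frac p2\log\n s\n_{h_u}^2$. These functions are locally uniformly bounded above, and the limit is in $L^1_{loc}$ because $u$ is Lipschitz and $\log|s|$ is locally integrable. By monotone convergence, $\log(\n s\n_{h_u}^p+\ep^2)\to \frac p2\log\n s\n^2_{h_u}$ in $L^1_{loc}$, so for smooth compactly supported $\eta$ we get
$$\lim_{\ep\to0}\int_X\tilde\zeta_\ep(s,p,u,h)\wedge\eta = \int_X\Big(\tfrac p2\ddbar\log\n s\n_{h_u}^2+\tfrac p2\ddbar u+\tfrac p2\sqrt{-1}\Theta_h\Big)\wedge\eta = \tfrac p2\int_X \ddbar\log|s|^2\wedge\eta .$$
The expression on the right is independent of the trivialization. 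Since $S$ is irreducible and $s$ vanishes to order $m$ along it, the classical Poincar\'e--Lelong formula gives $\ddbar\log|s|^2 = 2\pi m[S]$. Hence the limit equals $\pi p\, m\int_S\eta$, which is the claimed identity for smooth $\eta$.

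\emph{Step 2 (uniform mass bound).} Apply Lemma \ref{lem:currents} with $\ga=p/2$ to the metric $h_u$. Its curvature is $\sqrt{-1}\Theta_{h_u}=\ddbar u+\sqrt{-1}\Theta_h \geq \sqrt{-1}\Theta_h$ because $u$ is psh, so the lemma applies with $\al = \sqrt{-1}\Theta_h$. The lemma then expresses $\tilde\zeta_\ep$ as
$$\tilde\zeta_\ep = \frac p2\,\frac{\ep^2}{\n s\n^{p}_{h_u}+\ep^2}\big(\ddbar u+\sqrt{-1}\Theta_h\big) + (\text{a positive current}).$$
The coefficient $\frac{\ep^2}{\n s\n^{p}_{h_u}+\ep^2}$ lies in $[0,1]$. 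On any compact set $K$ we also have $\sqrt{-1}\Theta_h\geq -C_K\omega$ with $C_K$ independent of $\ep$. Therefore $\tilde\zeta_\ep + \frac p2 C_K\omega\geq 0$ on $K$, so these currents are positive up to a fixed smooth form. The total mass of $\tilde\zeta_\ep + \frac p2 C_K\omega$ on $K$ is controlled by pairing with a fixed smooth nonnegative cutoff times $\omega^{n-1}$. By Step 1 that pairing converges as $\ep\to 0$, so the masses are uniformly bounded in $\ep$.

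\emph{Step 3 (continuous test forms).} Given a continuous compactly supported $\eta$, approximate it uniformly by smooth forms $\eta_j$ supported in a fixed compact set $K$. The error $\big|\int\tilde\zeta_\ep\wedge(\eta-\eta_j)\big|$ is bounded by (uniform mass)$\cdot\n\eta-\eta_j\n_{C^0}$, uniformly in $\ep$. The same bound holds for $\int_S(\eta-\eta_j)$, since $[S]$ has locally finite mass. A standard $3\epsilon$-argument then transfers the identity of Step 1 from $\eta_j$ to $\eta$.

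The main obstacle is this last step. Weak convergence of currents only tests against smooth forms, and $\eta$ is merely continuous. The ingredient that makes the extension work is the near-positivity of $\tilde\zeta_\ep$ supplied by Lemma \ref{lem:currents}, which gives uniform local mass bounds. One must also keep the lower-order term $\frac{\ep^2}{\n s\n^p+\ep^2}\ddbar u$ uniformly controlled; this is possible because its coefficient is bounded by $1$, regardless of whether $\ddbar u$ charges $S$.
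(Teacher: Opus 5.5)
Your proposal is correct and follows essentially the same route as the paper. You first establish the identity for smooth compactly supported $\eta$, where you replace the paper's smoothing of $u$ by the direct observation that $\log(\n s\n_{h_u}^p+\ep^2)$ decreases in $L^1_{loc}$ to $\frac p2\log\n s\n_{h_u}^2$, so the $u$- and $\Theta_h$-terms cancel and leave $\frac p2\ddbar\log|s|^2=\pi p m[S]$. You then pass to continuous $\eta$ by uniform approximation, and your Step 2 derives the required uniform local mass bound from the lower bound $\tilde\zeta_\ep\geq -\frac p2 C_K\omega$ given by Lemma \ref{lem:currents}; this justifies the bound $\Vert T_k\Vert\leq C$ that the paper only asserts.
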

When $u$ and $\eta$ are smooth, this result is of course standard. When $\eta$ is smooth, one can prove the above by smoothing. In the general case, we approximate $\eta$ uniformly by smooth forms with compact support $\eta_l$. If $T_k$ is any sequence of order zero currents converging weakly to $T$, then $\vert T_k(\eta)-T(\eta)\vert = \vert T_k(\eta_l)+T_k(\eta-\eta_l)-T(\eta_l)+T(\eta_l-\eta) \vert$. Since $\Vert T_k \Vert \leq C$, we can easily complete the proof. \\

Before we state the main result on estimating intersection numbers, we recall some basics of intersection theory of divisors on surfaces. Let $X$ be a smooth surface and $S = \{s = 0\}$ and $T = \{t = 0\}$ be two divisors. In general $s,t$ are sections of a line bundle, but for local intersection, we may assume that locally near a point $x\in X$, these are holomorphic functions. In particular $f,g$ represent elements in the local ring $\sO_{X,x}$. If $S$ and $T$ have no component in common, then the intersection multiplicity of $S$ and $T$ is defined by $$i_x(S,T) = \dim_\CC \sO_{X,x}\slash (f,g).$$ It is standard fact that this is well defined and finite. More concretely in our situation if locally near $x$,
\begin{equation}\label{eq:T-decomposition}
T = \sum_j a_j C_j
\end{equation} is the decomposition of $T$ into its prime components, and $\nu_j:\tilde C_j\rightarrow C_j$ are normalisations, then  $$i_x(S,T) = \sum_{j: x\in C_j}a_j \sum_{y\in \nu_j^{-1}(x)}\mathrm{ord}_{y}(\nu_j^*s),$$ where $y_j\in \tilde C_j$ is a point lying above $x$ and $\mathrm{ord}$ is the order of vanishing.

Note that since $S$ and $T$ have no common components, the set $\{x\in X~|~ i_x(S,T) \neq 0\}$ is a discrete set. So for any compact set $K\subset X$ the local intersection product and the set theoretic count given respectively by $$S\cdot_K T:= \sum_{x\in K}i_x(S,T)\text{ and } N_K(S,T) = \#(K\cap \mathrm{Supp}(S)\cap \mathrm{Supp}(T)).$$ Note that if $T$ is given as above, then $$S\cdot_K T = \sum_ja_j\deg(\mathrm{div}(\nu_j^* s)).$$We can similarly define the total intersection numbers and set theoretic counts $$S.T,~ N(S,T) \in [0,\infty].$$ We denote the currents of integration by $[S]$ and $[T]$ and define their wedge product by $$[S]\wedge [T] = \sum_{x\in X}i_x(S,T)\delta_x$$ as a measure. Here $\delta_x$ is the Dirac mass at $x$. Then for any compactly supported smooth function $\chi\in C^\infty_c(X)$, suppose $T$ is given by the expression \eqref{eq:T-decomposition}, then we have that
\begin{align*}
\int_X\chi [S]\wedge[T] :&= \sum_{x\in X}\chi(x)\sum_{j:x\in C_j} a_j\sum_{y\in \nu_j^{-1}(x)} \mathrm{ord}_{y}(\nu_j^*s)\\
&=\sum_{j}a_j\sum_{x\in C_j}\sum_{y\in \nu_j^{-1}(x)}\mathrm{ord}_{y}(\nu_j^*s)\chi(\nu_j(y)).
\end{align*}

\begin{prop}\label{prop:intersectionnumberestimates} Let $X$ be a smooth complex surface, and let $S=(s=0)$ and $T=(t=0)$ be effective divisors with no common irreducible component. Let $h$ and $k$ be smooth Hermitian metrics on $\sO(S)$ and $\sO(T)$ satisfying
\[
 -\sqrt{-1}\Theta_h \ge 0,
 \qquad
 -\sqrt{-1}\Theta_k \ge 0,
\]
and let $u,v\in \PSH(X)\cap \Lip(X)$. Set $h_u=e^{-u}h$ and $k_v=e^{-v}k$. Then the following estimate holds. $$N(S,T) \leq S.T \leq \frac{1}{\pi^2pm}\liminf_{\delta\rightarrow 0}\liminf_{\ep\rightarrow 0}\int_{X}\zeta_\ep(s,p,u,h)\wedge \zeta_\delta(t,m,v,k).$$

\end{prop}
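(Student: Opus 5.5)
The first inequality is immediate. Every point of $\mathrm{Supp}(S)\cap\mathrm{Supp}(T)$ has $i_x(S,T)\geq 1$, so summing over any compact $K$ gives $N_K(S,T)\leq S\cdot_K T$, and we then let $K$ exhaust $X$. For the second inequality it also suffices to work with a fixed compact $K$. Since $K\cap S\cap T$ is a finite set, I would choose a cut-off $\chi\in C^\infty_c(X)$ with $0\leq\chi\leq 1$ and $\chi\equiv 1$ near each point of $K\cap S\cap T$. Then $S\cdot_K T\leq\int_X\chi\,[S]\wedge[T]$. Because $-\sqrt{-1}\Theta_h\geq 0$, $-\sqrt{-1}\Theta_k\geq 0$ and $u,v$ are psh, Corollary \ref{cor:zeta-positive} gives $\zeta_\ep(s,p,u,h)\geq 0$ and $\zeta_\delta(t,m,v,k)\geq 0$. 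Hence $\int_X\chi\,\zeta_\ep\wedge\zeta_\delta\leq\int_X\zeta_\ep\wedge\zeta_\delta$. The task is therefore to show
$$\liminf_{\delta\to0}\liminf_{\ep\to 0}\int_X\chi\,\zeta_\ep(s,p,u,h)\wedge\zeta_\delta(t,m,v,k)\;\geq\;\pi^2pm\int_X\chi\,[S]\wedge[T],$$
and then take the supremum over $K$.

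\emph{Step 1: replace $\zeta$ by $\tilde\zeta$ and the limit in $\ep$.} We have $\zeta_\ep=\tilde\zeta_\ep-\frac p2\sqrt{-1}\Theta_h\geq\tilde\zeta_\ep$. Since $\zeta_\delta\geq 0$ and $\chi\geq0$, this gives $\int\chi\,\zeta_\ep\wedge\zeta_\delta\geq\int\chi\,\tilde\zeta_\ep\wedge\zeta_\delta$. Fix $\delta$. Locally (using a partition of unity subordinate to trivialising charts) $\zeta_\delta=\ddbar w_\delta$, where $w_\delta=\log(\n t\n^m_{k_v}+\delta^2)+\frac m2 v+\text{(potential of }k)$ is Lipschitz. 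In particular $\zeta_\delta$ is a closed positive current with continuous potential. Similarly $\tilde\zeta_\ep=\ddbar g_\ep$, where $g_\ep=\log(\n s\n^p_{h_u}+\ep^2)+\frac p2(u+\text{local potential of }h)$. As $\ep\downarrow 0$, $g_\ep$ decreases to $p\log|s|+\text{(Lipschitz)}$, and this limit is a psh function that is locally bounded off the analytic set $S$. Integrating by parts in the Bedford--Taylor sense gives $\int\chi\,\ddbar g_\ep\wedge\zeta_\delta=\int g_\ep\,\zeta_\delta\wedge\ddbar\chi$. By the continuity of Monge--Amp\`ere operators along decreasing sequences (Bedford--Taylor, in Demailly's form allowing a small unbounded locus), the currents converge to $\ddbar g_0\wedge\zeta_\delta$. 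By the Poincar\'e--Lelong formula (Proposition \ref{prop:poincare-lelong}), this equals $\pi p\,[S]\wedge\zeta_\delta$. With multiplicities, this is $\pi p\sum_i b_i\int_{\tilde S_i}(\nu_i^*\chi)\,\nu_i^*\zeta_\delta$ on the normalisations $\nu_i:\tilde S_i\to S_i$ of the components of $S$.

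\emph{Step 2: the limit in $\delta$ on curves.} On each $\tilde S_i$, $\nu_i^*\zeta_\delta\geq\nu_i^*\tilde\zeta_\delta(t,m,v,k)$, and the latter is the one-dimensional version of $\tilde\zeta_\delta$ for the section $\nu_i^*t$. The factor $\nu_i^*t$ is not identically zero because $S$ and $T$ share no component. Applying Proposition \ref{prop:poincare-lelong} on the curve $\tilde S_i$, with the compactly supported continuous test function $\nu_i^*\chi$, gives
$$\lim_{\delta\to0}\int_{\tilde S_i}\nu_i^*\chi\,\nu_i^*\tilde\zeta_\delta=\pi m\sum_{y}\mathrm{ord}_y(\nu_i^*t)\,\chi(\nu_i(y)).$$
Summing with weights $b_i$ and using the formula for $\int\chi[S]\wedge[T]$ recalled above, with the roles of $S$ and $T$ swapped (which is harmless, since the local intersection number is symmetric), we obtain $\pi^2pm\int_X\chi[S]\wedge[T]$. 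Because only inequalities are used along the way, $\liminf$ suffices throughout.

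\emph{Main obstacle.} I expect the main difficulty to be the $\ep\to0$ passage in Step 1. There we must justify that $\ddbar g_\ep\wedge\zeta_\delta\to\ddbar g_0\wedge\zeta_\delta$ weakly on the support of $\chi$ and that the limit is exactly the restriction $[S]\wedge\zeta_\delta$. This step also requires checking that the Lipschitz (non-smooth) potentials $u,v$ do not spoil the convergence. I would handle it by using that $\zeta_\delta$ has continuous potential: the integration-by-parts identity then reduces the problem to monotone convergence of $\int g_\ep\,\zeta_\delta\wedge\ddbar\chi$, where $g_\ep$ is uniformly bounded above and $\log|s|$ is integrable against $\zeta_\delta$. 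Alternatively, one can smooth $u,v$ as in the proof of Lemma \ref{lem:currents} and pass to the limit afterwards.
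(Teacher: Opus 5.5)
Your proof is correct and is essentially the paper's argument with the roles of $S$ and $T$ interchanged. The paper fixes $\ep$ and lets $\delta\to 0$, so that $\zeta_\delta$ collapses onto $\pi m[T]$ (discarding the $\Theta_k$ term by sign); it then restricts the continuous potential of $\zeta_\ep$ to the normalised components of $T$ via Lemma \ref{lem:wedge-with-divisor}, and only afterwards lets $\ep\to 0$ on those curves. You instead collapse $\zeta_\ep$ (after passing to $\tilde\zeta_\ep$) onto $[S]$ first, which matches the order $\liminf_{\delta}\liminf_{\ep}$ written in the statement; the paper's proof actually ends with the opposite order.

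For your Step 1, the paper's choice of which potential to put outside removes the obstacle you anticipate. Write $\int_X\chi\,\tilde\zeta_\ep\wedge\zeta_\delta=\int_X w_\delta\,\tilde\zeta_\ep\wedge\ddbar\chi$, where $w_\delta=\log(\n t\n^m_{k_v}+\delta^2)+\frac m2 v$ is the global continuous potential of $\zeta_\delta$ (without the potential of $k$, which would give $\tilde\zeta_\delta$). Then apply Proposition \ref{prop:poincare-lelong} to the continuous test form $w_\delta\ddbar\chi$; no monotone-continuity theorem and no integrability of $\log|s|$ against $\zeta_\delta$ is needed.
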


We will need the following elementary observation.

\begin{lem}\label{lem:wedge-with-divisor}
Let $f$ be a locally bounded PSH function and let $\zeta = \ddbar f$. Let $T$ be an effective divisor. Then $$\zeta \wedge [T] := \ddbar(f[T])$$ is well defined and a positive Radon measure. Moreover, suppose $\chi\in C^\infty_c(X)$ is a non-negative function and that $T$ is given by the decomposition \eqref{eq:T-decomposition} in the support of $\chi$, then $$\int_X\chi \zeta\wedge [T] = \sum_j a_j\int_{\tilde C_j}\nu_j^*\chi \ddbar \nu_j^*f,$$ where $\nu_j:\tilde C_j\rightarrow C_j$ as above are the normalisations.
\end{lem}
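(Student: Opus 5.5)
The statement is local and linear in $T$, so I would first reduce to a single irreducible component $C$ of $T$ (multiplicity $a_j$ only scales). Since $f$ is locally bounded psh, it is in $L^\infty_{loc}$ and $f[T]$ is a well-defined order-zero current. By definition, $f[T]$ acts on a compactly supported smooth $(1,1)$-form $\eta$ by $\sum_j a_j\int_{C_j} f\,\eta$, where the integral over $C_j$ is understood as $\int_{\tilde C_j}\nu_j^*(f\eta)$ on the normalisation. The latter makes sense because $\nu_j^*f$ is psh on the smooth curve $\tilde C_j$. Either it is locally bounded psh, or it is identically $-\infty$; the second case is excluded since $f$ is bounded. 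Hence $\ddbar(f[T])$ is defined as a distribution by $\eta\mapsto \sum_j a_j\int_{\tilde C_j}\nu_j^*f\,\ddbar\nu_j^*\eta$. For a test function $\chi$ (a $(0,0)$-form, $X$ being a surface) this reads $\sum_j a_j\int_{\tilde C_j}\nu_j^*f\,\ddbar\nu_j^*\chi$. On the smooth Riemann surface $\tilde C_j$, integrating by parts twice against the compactly supported smooth function $\nu_j^*\chi$ gives $\int_{\tilde C_j}\nu_j^*\chi\,\ddbar\nu_j^*f$. Properness of $\nu_j$ guarantees that $\nu_j^*\chi$ is compactly supported. This proves the displayed formula.

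Positivity comes next. Since $\nu_j^*f$ is subharmonic on $\tilde C_j$, $\ddbar\nu_j^*f$ is a positive Radon measure on $\tilde C_j$, so the formula shows that $\zeta\wedge[T]$ is a positive distribution on functions. A positive distribution is a positive Radon measure, namely $\sum_j a_j(\nu_j)_*(\ddbar\nu_j^*f)$.

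I would also check consistency with the Bedford–Taylor approach, so that the definition agrees with later uses. Take smooth psh approximations $f_k\searrow f$, locally by convolution. For these, $\ddbar f_k\wedge[T]$ is classically given by the pullback formula. On each $\tilde C_j$, $\nu_j^*f_k$ decreases to $\nu_j^*f$, so $\ddbar\nu_j^*f_k\to\ddbar\nu_j^*f$ weakly. Equivalently, $f_k[T]\to f[T]$ by monotone convergence.

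The only subtle point is justifying that integration over $C_j$ can be transferred to $\tilde C_j$ near singular points of $C_j$. This holds because $\nu_j$ is a biholomorphism off a discrete set, and that set carries no mass for the bounded function $\nu_j^*f$. I expect this to be routine.
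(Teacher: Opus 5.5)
Your proof is correct. The derivation of the displayed formula is the same as the paper's: pair $f[T]$ with $\ddbar\chi$, transfer the integral over $C_{j,\mathrm{reg}}$ to $\tilde C_j$ (the discrete exceptional set carries no mass), and move $\ddbar$ onto the bounded subharmonic function $\nu_j^*f$.

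Where you differ is the first assertion. The paper does not prove that $\ddbar(f[T])$ is a well-defined positive measure; it cites the general Bedford--Taylor construction (Ko{\l}odziej, Prop.~1.7), which gives $\ddbar(fT)\geq 0$ for any closed positive current $T$ and any locally bounded psh $f$. You instead read positivity off the formula itself. You identify
\[
\zeta\wedge[T]=\sum_j a_j(\nu_j)_*\big(\ddbar\nu_j^*f\big),
\]
and positivity then follows from subharmonicity of $\nu_j^*f$ and the Riesz representation of positive distributions. Since positivity is local, it passes from neighbourhoods carrying the decomposition \eqref{eq:T-decomposition} to all of $X$ by a non-negative partition of unity.

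Your route is more elementary and self-contained, and it gives an explicit description of the measure. However, it is tied to currents of integration along divisors, whereas Bedford--Taylor handles general closed positive $T$. Your monotone-approximation check, $f_k\searrow f$ with $f_k[T]\to f[T]$, is the step that makes the two definitions agree. That agreement matters, because later arguments (e.g.\ the proof of Proposition \ref{prop:intersectionnumberestimates}) rely on Bedford--Taylor convergence properties of this product.
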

\begin{proof}
The proof of the first part is a standard Bedford-Taylor theory construction \cite{BT} (cf. \cite[Proposition 1.7]{Ko}). For the second part, we note that for any $(1,1)$ form $\eta$ with compact support contained in the support of $\chi$, we have:
\begin{align*}
\langle f[T],\eta\rangle &= \sum_j a_j \langle f[C_j],\eta\rangle\\
&= \sum_j a_j\int_{C_{j,reg}}f\eta\\
&=\sum_j a_j \int_{\tilde C_{j}}\nu_j^* f\nu_j^*\eta.
\end{align*}
Now applying this to $\eta = \ddbar\chi$ we have that
\begin{align*}
\int_X\chi \zeta\wedge [T] &= \langle f[T],\ddbar\chi\rangle\\
&= \sum_j\int_{\tilde C_j}\nu_j^*f \nu_j^*\ddbar\chi\\
&= \sum_j\int_{\tilde C_j} \nu_j^*\chi\ddbar \nu_j^* f,
\end{align*}
where note that $\nu_j^*f$ is a locally bounded subharmonic function and hence the final integral is well defined.
\end{proof}

\begin{proof}[Proof of Proposition \ref{prop:intersectionnumberestimates}]
It is enough to show that for any smooth function $0\leq \chi\leq 1$ with compact support $S\cdot_K T$ is less than the integral on the right where we set $K = \chi^{-1}(1).$ By the above calculation, if $T$ is given on the support of $\chi$ by the decomposition in \eqref{eq:T-decomposition}, then $$S\cdot_K T \leq \int_X \chi [S]\wedge[T] =\sum_{j}a_j\sum_{x\in C_j}\sum_{y\in \nu_j^{-1}(x)}\mathrm{ord}_{y}(\nu_j^*s)\chi(\nu_j(y)).$$ From the one dimensional Poincar\'e-Lelong formula, the final two summations can be replaced by a limit and we have
\begin{align*}
S\cdot_K T&\leq \frac{1}{\pi p}\lim_{\ep \rightarrow 0^+}\sum_j a_j\int_{\tilde C_j}\nu_j^*\chi \nu_j^*\Big(\zeta_\ep(s,p,u,h) + \sqrt{-1}\Theta_h\Big) \\
&\leq \frac{1}{\pi p}\lim_{\ep \rightarrow 0^+}\sum_j a_j\int_{\tilde C_j}\nu_j^*\chi \nu_j^*\zeta_\ep(s,p,u,h),
\end{align*}
where we use the hypothesis that $\sqrt{-1}\Theta_h\leq 0.$  Now let $$f_\ep = \log(\n s\n_{u}^p + \ep^2) + \frac{p}{2}u,$$ so that $\zeta_\ep(s,p,u,h) = \ddbar f_\ep$. Note that since $\sqrt{-1}\Theta_h\leq 0$, by Corollary \ref{cor:zeta-positive}, $f_\ep$ is PSH. Moreover it is also locally bounded. Then by Bedford-Taylor theory, the measures $$\zeta_\ep(s,p,u,h)\wedge\zeta_\delta(t,m,v,k) = \ddbar(f_\ep \zeta_\delta(t,m,v,k)) \rightharpoonup \pi m\ddbar(f_\ep([T] - \frac{\sqrt{-1}}{2\pi}\Theta_k))$$ weakly converge as $\delta\rightarrow 0^+$. So pairing with $\chi$ we have
\begin{align*}\label{eq:intersection-eq}
\lim_{\delta\rightarrow 0^+}\int_X\chi \zeta_\ep(s,p,u,h)\wedge\zeta_\delta(t,m,v,k) &= \pi m\int_X \chi \ddbar(f_\ep [T])  - \frac{m}{2}\int_X\chi \zeta_\ep \wedge \sqrt{-1}\Theta_k.
\end{align*}
But now since $\sqrt{-1}\Theta_k\leq 0$, we obtain the following estimate:
\begin{equation}\label{eq:int-number}\lim_{\delta\rightarrow 0^+}\int_X\chi \zeta_\ep(s,p,u,h)\wedge\zeta_\delta(t,m,v,k) \geq  \pi m\int_X \chi \ddbar(f_\ep [T])
\end{equation}
We now compute the integral on the right. Once again, suppose $T$ is given by the decomposition \eqref{eq:T-decomposition} on $\mathrm{Supp}(\chi)$, then by Lemma \ref{lem:wedge-with-divisor} we have $$\int_X \chi \ddbar(f_\ep [T])  = \sum_j a_j\int_{\tilde C_j} \nu_j^*\chi \ddbar \nu_j^* f_\ep = \sum_j a_j\int_{\tilde C_j}\nu_j^*\chi\nu_j^*\zeta_\ep(s,p,u,h).$$ Combining with our earlier estimate for $S\cdot_KT$ we then have that
\begin{align*}
S\cdot_K T&\leq \frac{1}{\pi p} \lim_{\ep\rightarrow 0^+} \int_X \chi \ddbar(f_\ep [T]) \\
&\leq \frac{1}{\pi^2 pm}\liminf_{\ep\rightarrow 0^+}\lim_{\delta\rightarrow 0^+}\int_X \chi \zeta_\ep(s,p,u,h)\wedge\zeta_\delta(t,m,v,k)\\
&\leq \frac{1}{\pi^2 pm}\liminf_{\ep \rightarrow 0^+}\liminf_{\delta\rightarrow 0^+}\int_X \zeta_\ep(s,p,u,h)\wedge\zeta_\delta(t,m,v,k).
\end{align*}


\end{proof}

Before we prove this we need the following Lemma.

\subsection{Heat flow estimates} Let $(X^m,g)$ be a complete Riemannian manifold satisfying $\Rc_g\geq 0$. Then it is well known that there is a positive, symmetric heat kernel $H(x,y,t)$. Moreover the heat kernel is unique and stochastically complete ie. $$\int_X H(x,y,t)\,dy = 1.$$  It is also well known that if $\phi:X\rightarrow \RR$ satisfies a sub-exponential growth at infinity, then the heat equation $$\begin{cases} \frac{\partial u}{\partial t} = \Delta u\\ u(x,0) = \phi\end{cases}$$ has a unique solution given by the representation formula $$u(x,t) = \int_X H(x,y,t) \phi(y)\,dy.$$

We will need the following theorem proved by Ni-Tam \cite{NT}.
\begin{thm} [Ni-Tam]
Let $(X^n,\omega)$ be a complete non-compact K\"ahler manifold with $BK >  0$. Let $\phi$ be a continuous  plurisubharmonic function on $X$ satisfying

$$\vert \phi  (x)\vert \le C e^{ar^2(x)}$$
for some positive constants $a, C$ and $r(x) $ denotes the distance to a fixed point. Then there exists $T_0>0$ depending only on $a$ such that the solution  $u(x,t)$ to the heat equation with initial condition $u(x,0) = \phi(x)$ is a smooth plurisubharmonic function defined on $X \times (0, \ T_0)$. 

\end{thm}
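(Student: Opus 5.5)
The plan is to show that the complex Hessian $u_{i\bar j}(\cdot,t)=\partial_i\partial_{\bar j}u$ stays non-negative by a tensor maximum principle. First I would settle existence and growth. The Li--Yau Gaussian upper bound $H(x,y,t)\le C\,|B(x,\sqrt t)|^{-1}e^{-d^2(x,y)/5t}$ holds on a manifold with $\Rc\ge 0$, and together with volume comparison it shows that the representation formula converges when $|\phi|\le Ce^{ar^2}$ and $t<T_0$, with $T_0$ of order $1/a$ (say $T_0=1/(10a)$). This yields a smooth solution on $X\times(0,T_0)$ with $|u(x,t)|\le C'e^{a'r^2(x)}$ for some $a'$ depending only on $a$. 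Local parabolic (Shi-type) estimates then give similar growth bounds for $\nabla u$ and $\nabla^2u$ on $X\times[\tau,T_0-\tau]$.

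The second step is the evolution equation. Since the metric is K\"ahler, $\Delta$ commutes with $\dbar$ up to curvature terms, and a direct commutation computation gives
\begin{equation*}
\Big(\frac{\partial}{\partial t}-\Delta\Big)u_{i\bar j}=R_{i\bar j k\bar l}\,u_{l\bar k}-\frac12\big(R_{i\bar p}u_{p\bar j}+u_{i\bar p}R_{p\bar j}\big).
\end{equation*}
That is, $\eta=\ddbar u$ solves the Lichnerowicz heat equation on $(1,1)$-forms. The reaction term satisfies the null-vector condition when $BK\ge0$: if $\eta\ge0$ and $\eta(v,\bar v)=0$ for a unit vector $v$, then after diagonalising $\eta$ in a unitary frame with $v=e_1$, the term evaluated at $(v,\bar v)$ equals $\sum_k R_{1\bar 1k\bar k}\,\eta_{k\bar k}\ge0$. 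The Ricci terms vanish because $\eta(v,\cdot)=0$.

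The third step, which I expect to be the main obstacle, combines non-compactness with the merely continuous initial data. For non-compactness I would run a weighted maximum principle on the smallest eigenvalue $\la(x,t)$ of $\eta$. Consider $\eta+\ep\,e^{Bt}\theta\,\omega$, where $\theta=e^{b(1+t')r^2}$ is a smoothed barrier chosen so that $(\partial_t-\Delta)\theta\ge\theta$ on a short time interval. The barrier dominates the allowed growth of $\eta$, so any first negative eigenvalue must occur at a point in a compact set. The null-vector condition then yields a contradiction, and letting $\ep\to0$ gives $\eta\ge0$; the time interval is extended by iteration.

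For the initial data, a weak statement suffices: $\ddbar u(t)\to\ddbar\phi\ge0$ as $t\to0$ in the sense of currents, which holds because $u(t)\to\phi$ locally uniformly. I would first try to avoid pointwise initial values altogether. Fix a compactly supported test form and use the dual (backward) Lichnerowicz heat flow, which preserves positivity of $(n-1,n-1)$-forms under $BK\ge0$ by the same maximum principle argument. This gives the representation $\langle\ddbar u(t),\chi\rangle=\langle\ddbar\phi,\chi(t)\rangle\ge0$. Justifying the integrations by parts requires the Gaussian decay of the Lichnerowicz heat kernel to beat the $e^{ar^2}$ growth of $\phi$, and this is where the restriction $t<T_0(a)$ enters. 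If the duality argument proves too delicate, the fallback is to approximate $\phi$ locally uniformly by smooth psh functions with the same growth (Greene--Wu smoothing on the strongly Stein $X$) and pass to the limit using the uniform heat kernel estimates.
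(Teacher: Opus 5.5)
The paper gives no proof of this statement; it quotes it from Ni--Tam. Your outline follows their strategy: $\ddbar u$ solves the Lichnerowicz heat equation, $BK\ge 0$ gives the null-vector condition, and non-compactness is handled by a maximum principle with growth control.

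One step fails as written. Local Shi-type estimates for the full Hessian $\nabla^2 u$ on a fixed manifold need bounds on $\Rm$ and $\nabla\Rm$. No such bounds are available here, since unbounded curvature is exactly the setting of the paper. So your first step does not give the pointwise growth of $\eta$ that your barrier argument relies on. The fix is to work only with $\eta=\ddbar u$. In an eigenframe of $\eta$,
$$\langle\mathcal{R}(\eta),\eta\rangle=-\frac12\sum_{i,k}R_{i\bar i k\bar k}(\lambda_i-\lambda_k)^2\le 0,$$
so $(\partial_t-\Delta)|\eta|^2\le -2|\nabla\eta|^2$ and $|\eta|$ is a heat subsolution. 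Shi's trick applied to $t|\eta|^2+A|\nabla u|^2$ (Bochner with $\Rc\ge0$ controls $|\nabla u|^2$), or the parabolic mean value inequality, then bounds $|\eta|$ for $t>0$. This uses only Laplacian and volume comparison.

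For continuous $\phi$, commit to the duality route; the fallback does not work as stated. Strong Steinness is not a hypothesis of the theorem, and for $BK>0$ it is open, as noted in Section~\ref{sec:discussion}. Moreover, Richberg/Greene--Wu smoothing controls only $|\phi_\delta-\phi|$. It says nothing about the growth of $\ddbar\phi_\delta$ at infinity, which your barrier argument run from $t=0$ would need.

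The duality route is sound, and it makes the forward barrier argument for $\eta$ unnecessary, together with all Hessian growth bounds on $u$. The maximum principle is then only needed for the flow $\chi(t)$ of a compactly supported positive form. Its norm is dominated by $P_t|\chi|$, by Kato's inequality and the same sign of the Weitzenb\"ock term.

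What you still have to supply is the justification of $\int u(t)\,\ddbar\chi=\int\phi\,\ddbar\chi(t)$ and the tail control in that pairing. The identity uses that $\ddbar$ intertwines the heat semigroups, which holds because $\Delta_d$ commutes with $\partial$ and $\dbar$ on a complete K\"ahler manifold. For positivity, write $\int\phi\,\ddbar\chi(t)$ as the limit of $\int\phi\,\ddbar(\rho_R\chi(t))$. Each of these is $\ge 0$, since it equals $\int\rho_R\,\ddbar\phi\wedge\chi(t)$ with $\ddbar\phi\ge0$ and $\chi(t)\ge0$.

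Because $\phi$ is only continuous, the cross terms involve $\nabla\chi(t)$ and $\ddbar\rho_R$, so Gaussian bounds on $\chi(t)$ alone are not enough. You need two further ingredients:
\begin{itemize}
\item cut-offs with bounded $\ddbar\rho_R$, obtained from the heat-flowed distance as in the remark after Lemma~\ref{lem:cut-off};
\item a Davies--Gaffney type weighted bound on $\int_0^T\!\int|\nabla\chi|^2e^{\xi}$ with $\xi\ge r^2/(CT)$, which beats $\phi^2\lesssim e^{2ar^2}$ once $T$ is a small multiple of $1/a$.
\end{itemize}
Positivity of the time-integrated pairing, together with continuity in $t$, then gives $\ddbar u(t)\ge 0$.
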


We will also need the following important observation from \cite{CZ3}.

\begin{lem} [Chen-Zhu]
Let $(X^n,\omega)$ be a complete non-compact K\"ahler manifold with $BK >  0$. Let $\phi$ be a Lipschitz  plurisubharmonic function on $X$ with Lipschitz constant $1$. Then the solution  $u(x,t)$ to the heat equation with initial condition $u(x,0) = \phi(x)$ exists on $X \times (0, \infty)$ and the smooth plurisubharmonic function $u(x,1)$ has bounded Levi form. i.e there exists $C>0$ such that
$$ \vert \partial \bar \partial u(x,1) \vert \le C.$$

\end{lem}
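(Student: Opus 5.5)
We need to prove the Chen–Zhu lemma. Let $\phi$ be Lipschitz with constant $1$ and plurisubharmonic. Then the heat flow $u$ exists for all time, and $u(\cdot,1)$ is plurisubharmonic with bounded Levi form.

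\textbf{Long-time existence and plurisubharmonicity.} Since $\phi$ is $1$-Lipschitz, $|\phi(x)|\le |\phi(o)|+r(x)$. This is sub-exponential growth, so by the earlier discussion the representation formula
\[
u(x,t)=\int_X H(x,y,t)\phi(y)\,dy
\]
defines the unique solution for all $t>0$. Convergence of the integral uses the Li–Yau Gaussian upper bound for $H$ together with the volume comparison coming from $\Rc\ge 0$.

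Plurisubharmonicity is handled by the Ni–Tam theorem. It gives $u$ smooth and plurisubharmonic on $(0,T_0)$, where $T_0$ depends only on $a$. Linear growth is dominated by $Ce^{ar^2}$ for every $a>0$, so $T_0$ can be taken arbitrarily large, or $\infty$. Alternatively one can iterate Ni–Tam's maximum principle argument on successive time intervals, since $u(\cdot,t)$ stays $1$-Lipschitz. Either way $u(\cdot,1)$ is plurisubharmonic.

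\textbf{Preservation of the Lipschitz bound.} Next I would show $|\nabla u(\cdot,t)|\le 1$ for all $t$. By the Bochner formula and $\Rc\ge 0$,
\[
(\partial_t-\Delta)|\nabla u|^2=-2|\nabla^2u|^2-2\Rc(\nabla u,\nabla u)\le 0 .
\]
A maximum principle on complete manifolds with $\Rc\ge0$, applied to the bounded subsolution $|\nabla u|^2$ with initial value at most $1$, gives the claim. Equivalently, one can use the gradient estimate for the heat semigroup, $|\nabla P_t\phi|\le P_t|\nabla\phi|$, which holds under $\Rc\ge0$.

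\textbf{Bounding the Levi form.} Since $u(\cdot,1)$ is plurisubharmonic, $\ddbar u\ge 0$. Hence it suffices to bound its trace, which is $\tfrac12\Delta u$ up to normalization. The eigenvalues are then nonnegative and sum to a bounded quantity, so each is bounded.

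To bound $\Delta u$ at $t=1$, I would use the semigroup property: $u(\cdot,1)=P_{1/2}\big(u(\cdot,1/2)\big)$, and $u(\cdot,1/2)$ is $1$-Lipschitz. Differentiating under the integral, using symmetry of $H$ and integrating by parts in $y$, gives
\[
\Delta u(x,1)=\int_X \Delta_x H(x,y,\tfrac12)\,u(y,\tfrac12)\,dy=-\int_X\langle\nabla_yH(x,y,\tfrac12),\nabla u(y,\tfrac12)\rangle\,dy .
\]
Therefore
\[
|\Delta u(x,1)|\le \int_X|\nabla_yH(x,y,\tfrac12)|\,dy .
\]
The Li–Yau gradient estimate $|\nabla \log H|^2-\partial_t\log H\le \tfrac{n}{t}$, combined with the Gaussian bounds, yields $\int_X|\nabla_yH(x,y,t)|\,dy\le C t^{-1/2}$ with $C$ uniform in $x$, since $\Rc\ge0$. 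This step is the main technical point. A more elementary alternative is Hamilton's matrix estimate or a Cheng–Yau type interior gradient estimate applied to $\partial_t u=\Delta u$, but the integral heat-kernel bound is the route I would try first. Combining this with plurisubharmonicity gives $0\le\ddbar u(\cdot,1)\le C\omega$, which proves the lemma.
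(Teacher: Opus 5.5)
The paper gives no proof of this lemma; it is quoted from Chen--Zhu \cite{CZ3}, so there is no in-paper argument to compare with. Your argument is correct, and it matches the bound $t|\ddbar u_t|^2\le A$ that the paper uses in Section \ref{sec:Bezout}. The key reduction is right: once $u(\cdot,1)$ is plurisubharmonic, $\ddbar u(\cdot,1)\ge 0$ is controlled by its trace, so only $\Delta u(\cdot,1)$ needs to be bounded.

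Some of it can be streamlined.

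\begin{itemize}
\item The step you call the main technical point is short. By symmetry, $y\mapsto H(x,y,t)$ is a positive solution of the heat equation, so Li--Yau gives $|\nabla_yH|^2/H\le \tfrac{n}{t}H+\partial_tH$. Integrate in $y$, using $\int_X H\,dy\equiv 1$ (hence $\int_X\partial_tH\,dy=0$), and apply Cauchy--Schwarz:
\begin{equation*}
\int_X|\nabla_yH(x,y,t)|\,dy\le\Big(\int_X H\,dy\Big)^{1/2}\Big(\int_X\frac{|\nabla_yH|^2}{H}\,dy\Big)^{1/2}\le\sqrt{n/t}.
\end{equation*}
The Gaussian bounds are only needed to justify differentiating under the integral and discarding boundary terms.

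\item You can drop the semigroup detour and the separate Lipschitz-preservation step. Integrate by parts at $t=1$ directly against $\nabla\phi$, which lies in $L^\infty$ with $|\nabla\phi|\le 1$ a.e. This gives $|\Delta u(x,1)|\le\sqrt n$.

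\item Your first route to plurisubharmonicity for all $t>0$ implicitly uses $T_0(a)\to\infty$ as $a\to 0$. That is not part of the Ni--Tam statement as quoted in the paper, which only says $T_0$ depends on $a$. The iterative route needs only what is quoted. The growth bound required at each restart, $|u(x,t)|\le|\phi(o)|+r(x)+C\sqrt t$, follows from the representation formula exactly as in Lemma \ref{lem:heat-est}. So no Lipschitz preservation is needed there either.
\end{itemize}
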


Finally, we need the following crucial estimate on comparing $u(x,t)$ to $\phi$ assuming $\phi$ is uniformly Lipschitz. Note that for our applications it is crucial that the constant multiplying $\phi$ is one. The following lemma was proven in \cite{DPS} but we include the proof here for the convenience of the reader.

\begin{lem}\label{lem:heat-est}
Let $(X^m,g)$ be a complete Riemannian manifold satisfying $\Rc_g\geq 0$. Let $u(x,t)$ be the heat flow with the initial function $\phi(x)$. Suppose $\phi$ is Lipschitz with Lipschitz constant $A$. Then there exists a dimensional constant $C(m)$ such that $$u(x,t) \leq \phi(x) + AC\sqrt{t}.$$
\end{lem}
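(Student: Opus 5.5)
The plan is to write the heat flow as an average of $\phi$ against the heat kernel and then use only the Lipschitz bound together with a Gaussian moment estimate. By stochastic completeness, $\int_X H(x,y,t)\,dy = 1$, so the representation formula gives
$$u(x,t) - \phi(x) = \int_X H(x,y,t)\big(\phi(y)-\phi(x)\big)\,dy \leq A\int_X H(x,y,t)\, d(x,y)\,dy,$$
since $|\phi(y)-\phi(x)|\leq A\,d(x,y)$. Here $\phi$ has linear growth, so the representation formula applies. It therefore suffices to prove the first-moment bound
$$\int_X H(x,y,t)\,d(x,y)\,dy \leq C(m)\sqrt{t},$$
uniformly in $x$ and $t$, using only $\Rc_g\geq 0$. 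Note that the coefficient of $\phi(x)$ is exactly one, which is what the application requires; it comes for free from stochastic completeness.

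\textbf{Route 1 (Li--Yau).} I would first try the Li--Yau Gaussian upper bound
$$H(x,y,t)\leq \frac{C_1}{|B(x,\sqrt t)|}\exp\Big(-\frac{d(x,y)^2}{5t}\Big),$$
valid for $\Rc\geq0$ with $C_1=C_1(m)$. Then split $X$ into the ball $B(x,\sqrt t)$ and the annuli $A_k = B(x,2^{k}\sqrt t)\setminus B(x,2^{k-1}\sqrt t)$ for $k\geq1$.
\begin{itemize}
\item On the ball, the integrand is at most $\sqrt t\,H$, so that piece contributes at most $\sqrt t$ by stochastic completeness.
\item On $A_k$, Bishop--Gromov gives $|B(x,2^k\sqrt t)|\leq 2^{km}|B(x,\sqrt t)|$. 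The contribution is therefore bounded by $C_1 2^{km}\, 2^k\sqrt t\, e^{-4^{k-1}/5}$.
\end{itemize}
This sum over $k$ converges to a dimensional constant times $\sqrt t$.

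\textbf{Route 2 (self-contained alternative).} Cauchy--Schwarz with $\int H\,dy=1$ reduces the claim to $\int H\,d^2\,dy\leq C t$. To get this, set $r(y)=d(x,y)$ and use the Laplacian comparison $\Delta r^2\leq 2m$, valid in the barrier/distributional sense under $\Rc\geq0$. Then $\frac{d}{dt}\int H r^2 = \int H\,\Delta r^2\leq 2m$, and since the moment vanishes at $t=0$, this gives $\int H r^2\leq 2mt$. The moment bound then follows with $C=\sqrt{2m}$.

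\textbf{Main obstacle.} In Route 2, the step I expect to need care is justifying the integration by parts on a noncompact manifold across the cut locus. I would handle it with cutoff functions and the Gaussian decay of $H$ and $\nabla H$; alternatively, I would simply invoke Route 1, which avoids this issue.
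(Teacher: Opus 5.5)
Your Route 1 is essentially the paper's proof: stochastic completeness and the Lipschitz bound reduce everything to the first moment $\int_X H(x,y,t)\,d(x,y)\,dy$, which is then bounded using the Li--Yau Gaussian upper bound and Bishop--Gromov on annuli. The paper uses unit-width shells $B(x,(k+1)\sqrt t)\setminus B(x,k\sqrt t)$ rather than dyadic ones, which makes no difference; Route 2 is a reasonable alternative, but besides the integration by parts you flag it also needs a justification that $\int_X H(x,y,t)\,d(x,y)^2\,dy\to 0$ as $t\to 0$, so Route 1 is the one to write up.
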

\begin{proof}
 From the representation formula, the fact that $\phi$ is Lipschitz and stochastic completeness, it follows that $$u(x,t) \leq \phi(x) + A\int_X H(x,y,t)d(x,y)\,dy.$$ So it suffices to estimate the integral. By the fundamental Li-Yau gradient estimates (cf. \cite[Corollary 3.1]{LY}), we have the following Gaussian estimate: $$H(x,y,t) \leq \frac{C_1(n)}{|B(x,\sqrt{t})|}e^{-c\frac{d^2(x,y)}{t}},$$ for some $c<1/4$. Now for integers $k\geq 0$, consider the annuli $$A_k = B(x,(k+1)\sqrt{t})\setminus B(x,k\sqrt{t}).$$ Then
\begin{align*}
\int_X H(x,y,t)d(x,y)\,dy &\leq \sum_k\int_{A_k}H(x,y,t)d(x,y)\,dy\\
&\leq \frac{C_1(n)\sqrt{t}}{|B(x,\sqrt{t})|} \sum_k (k+1) e^{-ck^2}|A_k|.
\end{align*}
But now by the Bishop-Gromov inequality, $$\frac{|A_k|}{|B(x,\sqrt{t})|} \leq \frac{|B(x,(k+1)\sqrt{t})|}{|B(x,\sqrt{t})|} \leq \omega_m(k+1)^m,$$ and so $$\int_X H(x,y,t)d(x,y)\,dy \leq C_1(m)\omega_m\sqrt{t}\sum_{k=1}^\infty (k+1)^{m+1}e^{-ck^2} \leq C(m)\sqrt{t}.$$

\end{proof}

\subsection{Cut-off functions} An important application of the heat flow technique is to construct good cut-off functions. We have the following standard result in that direction.
\begin{lem}\label{lem:cut-off}
Let $(X,\omega)$ satisfy $\sec_\omega \geq 0$. Fix $o\in X$. Then there exist $0< \theta < 1$, $A>0$ and $R_0>0$ such that the following holds: for all $R>R_0$ there exist a smooth function $\chi_R:X\rightarrow [0,1]$ having the following properties:
\begin{enumerate}
\item $\chi_R \equiv 1$ on $B(o,\theta R)$ and $\mathrm{Supp}(\chi_R)\subset B(o,\theta^{-1}R).$
\item There exists a constant $A$ such that $$|\nabla \chi_R|, |\ddbar \chi_R| \leq \frac{A}{R}. $$
\end{enumerate}
\end{lem}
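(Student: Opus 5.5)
The plan is to build $\chi_R$ by composing a one-variable cut-off with a smoothed distance function, the smoothing being done by the heat flow as in the preceding subsection. Let $r(x)=d(o,x)$. It is $1$-Lipschitz, so Lemma \ref{lem:heat-est} applies with $A=1$, and the solution $u(x,t)$ of the heat equation with initial data $r$ satisfies $u(x,t)\le r(x)+C\sqrt t$. The reverse inequality $u(x,t)\ge r(x)-C\sqrt t$ follows from the same argument, since $|u(x,t)-r(x)|\le\int_X H(x,y,t)d(x,y)\,dy$. I will take $t=R^2$ and set $\rho_R(x)=u(x,R^2)/R$. Then $|\rho_R - r/R|\le C_0$, with $C_0$ a dimensional constant. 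By the Li--Yau gradient estimate (or simply because the heat flow preserves the Lipschitz constant under $\Rc\geq0$), $|\nabla\rho_R|\le 1/R$.

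For the second derivatives I will use two facts. First, since $\sec\ge0$, the distance function is a Kähler-type ``almost psh'' function: by Laplacian/Hessian comparison, $\nabla^2 r\le \frac{1}{r}g$ in the barrier sense. This gives a one-sided bound $\ddbar u(\cdot,t)\ge -\frac{C}{\sqrt t}\omega$; the preservation of lower bounds on the Hessian under heat flow needs nonnegative curvature, via Ni--Tam or Chen--Zhu type arguments. Second, I need the complementary upper bound. I will get it from Li--Yau's Hessian estimate for positive solutions (Hamilton's matrix Harnack for $\sec\ge0$/$BK\geq 0$): for a positive solution $v$, $\nabla^2\log v + \frac{1}{2t}g\ge0$, together with the trace bound $\Delta u=\partial_t u$, where $|\partial_t u|\le C/\sqrt t$ follows from the gradient bound and the heat kernel estimates. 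Combining the one-sided Hessian bound with the trace bound yields $|\ddbar u(\cdot,R^2)|\le C/R$. Scaling by $1/R$ gives $|\ddbar\rho_R|\le C/R^2\le C/R$ for $R\geq 1$. Scale invariance is the cleanest way to organise this step: rescale $g$ by $R^{-2}$, prove the bounds at $t=1$ with uniform constants (the hypothesis $\sec\ge0$ is scale invariant), and scale back.

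Next I fix $\eta\in C^\infty(\RR)$ with $\eta=1$ on $(-\infty,a]$, $\eta=0$ on $[b,\infty)$, $0\le\eta\le1$, and set $\chi_R=\eta\circ\rho_R$. If $r\le\theta R$ then $\rho_R\le\theta+C_0$, and if $r\ge\theta^{-1}R$ then $\rho_R\ge\theta^{-1}-C_0$. So I choose $\theta$ small enough that $\theta+C_0<\theta^{-1}-C_0$, and put $a=\theta+C_0$, $b=\theta^{-1}-C_0$. This gives property (1). The chain rule gives $\nabla\chi_R=\eta'\nabla\rho_R$ and $\ddbar\chi_R=\eta'\ddbar\rho_R+\eta''\sqrt{-1}\partial\rho_R\wedge\dbar\rho_R$, so both are $O(1/R)$, which is property (2). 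The threshold $R_0$ is needed only to ensure $R\ge1$ and that the comparison estimates apply.

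The main obstacle is the two-sided Hessian bound for $u(\cdot,R^2)$, specifically the upper bound on $\ddbar u$. The lower bound comes from semiconcavity-type comparison of $r$ propagated by the flow, but the upper bound does not follow from the trace bound alone unless the lower bound holds. I would first try to establish the lower bound $\nabla^2 u\ge -\frac{C}{\sqrt t}g$ via Hamilton's matrix Harnack applied to the heat kernel representation; failing that, I would bound $\ddbar u$ directly by differentiating the representation formula twice and using the Hessian heat kernel estimates available under $\sec\ge0$. That is a weaker but sufficient route, since only the complex Hessian is needed and the bound $C/R$ is far from sharp.
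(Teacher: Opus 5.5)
Your overall architecture is sound and genuinely different from the paper's. The following are all fine: flowing $r=d(o,\cdot)$ to time $R^2$; the two-sided form of Lemma \ref{lem:heat-est}, giving $|\rho_R-r/R|\le C_0$; the Bakry--\'Emery gradient bound; the trace bound $|\Delta u|=|\partial_t u|\le C/\sqrt t$ (from the Gaussian bound on $\partial_t H$ integrated against $r(y)-r(x)$); and the final composition with $\eta$.

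\textbf{The gap.} The problem is in the one step that carries the weight, the two-sided bound on $\ddbar u(\cdot,R^2)$, where your inequalities point the wrong way.
\begin{enumerate}
\item Hessian comparison under $\sec\ge 0$ gives $\nabla^2 r\le \frac1r g$, an \emph{upper} bound. So $r$ is semiconcave, and in general it is not almost psh: the distributional $\Delta r$ carries a negative singular part on the cut locus. Propagating this along the flow can only give an upper bound on $\ddbar u$, never $\ddbar u\ge -\frac{C}{\sqrt t}\omega$. Your first ``fact'' is therefore false as stated.
\item The matrix Harnack inequality is a \emph{lower} bound on the Hessian of $\log v$, not the complementary upper bound. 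Your last paragraph repeats the mislabelling (``the lower bound comes from semiconcavity'').
\item The lower bound you would actually get is not uniform. Hamilton's real-Hessian matrix Harnack needs parallel Ricci curvature in addition to $\sec\ge 0$. What is available is the K\"ahler complex-Hessian version of Cao--Ni under $BK\ge0$. Applied to $v=u>0$, it only gives $\ddbar u\ge -\frac{c\,u}{t}\omega$, which is not $-\frac{C}{\sqrt t}\omega$ globally because $u$ grows linearly.
\item Your fallback through Gaussian bounds on $\nabla^2 H$ is not standard without an upper curvature bound.
\end{enumerate}

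\textbf{Repair.} With Cao--Ni, observe that you only need the Hessian bound where $\eta'(\rho_R)$ or $\eta''(\rho_R)$ is nonzero, which lies in $\overline{B(o,\theta^{-1}R)}$. There, at $t=R^2$, you have $u\le r+CR\le C'R$, so $\ddbar u\ge -\frac{C}{R}\omega$. Combined with $\Delta u\le C/R$, this gives $|\ddbar u|\le C/R$, hence $|\ddbar\chi_R|\le C/R^2$.

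Alternatively, use semiconcavity in the correct direction. Flow $\sqrt{R^2+r^2}$, whose real Hessian is at most $\frac1R g$ in the barrier sense. Propagate $\ddbar u\le\frac CR\omega$ by the Ni--Tam maximum principle for the Lichnerowicz heat equation, of which $\omega$ is a static solution. Then take the lower bound from the trace.

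\textbf{Comparison with the paper.} The paper sidesteps all of this by flowing the Busemann function $b$, which is convex, hence psh, and has linear growth. Ni--Tam then gives $\ddbar\eta\ge0$ for $\eta=u(\cdot,1)$ for free. The Chen--Zhu lemma (positivity plus bounded trace) gives $|\ddbar\eta|\le C$, and $\chi(\eta/R)$ satisfies $|\ddbar\chi_R|\le C/R$ by the chain rule. Your repaired route dispenses with Busemann functions and, in the Cao--Ni version, needs only $BK\ge0$ (as anticipated in the remark after the lemma). It also gives the sharper $C/R^2$ bound, at the cost of the matrix Harnack input.
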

\begin{proof}
This is standard, so we only sketch the proof. We proceed as in \cite{CZ4}. Let $b(x)$ be Busemann function given by $$b(x) = \sup_{\ga}\lim_{t\rightarrow \infty}(t - d(x,\ga(t)),$$ where the supremum is taken over all geodesic rays $\ga$ emanating from a fixed point $o\in X$. Then it is well known that $b$ is uniformly Lipschitz and plurisubharmonic. Moreover, there exists a constant $C_1>0$ such that $$\frac{1}{2}d(x,o) - C_1 \leq b(x) \leq d(x,o).$$  Let $u(x,t)$ solve the heat equation with $u(x,0) = b(x)$, and let $\eta(x) = u(x,1)$. Then there exists a constant $C>1$ such that
\begin{itemize}
\item $C^{-1}(1+ d(x,o))\leq \eta(x) \leq C(1+d(x,o))$.
\item $|\nabla \eta|, |\ddbar\eta| < C.$
\end{itemize}
Now let $\chi:\RR\rightarrow [0,1]$ be the usual cut-off function such that $\chi \equiv 1$ on $t\leq 1$ and $\mathrm{Supp}(\chi) \subset (-\infty, 2]$. Then $$\chi_R(x) := \chi \Big(\frac{\eta(x)}{R}\Big)$$ does the job with $\theta = (2C)^{-1}$ and $R_0 = 2C$.
\end{proof}

\begin{rem}
It is possible to prove the above Lemma under only the assumption of positive bisectional curvature by working with $u(x,0) = d(x,o)$. In this case, while $u(x,0)$ is not plurisubharmonic one still has the required growth estimates on the time one heat flow, and also the estimates on the gradient and the complex Hessian.
\end{rem}

\begin{rem}
Recall that under the assumption of $Ric \geq 0$, it is well known \cite{CC1} that one can construct such a cut-off function with $R^2|\Delta\chi_R| < C$. Similarly in our situation, by using the Ni-Tam theory, it should be possible to construct a $\chi_R$ with $R^2|\ddbar \chi_R| < C$ under the hypothesis of positive bisectional curvature. For the present work however, the weaker version above suffices.
\end{rem}

\subsection{Mean value inequality and applications} We also need the following basic mean value inequality. We work with a Riemannian manifold $(X^m,g)$ of real dimension $m$.

\begin{prop}
Let $(X^m,g)$ be a complete Riemannian manifold with $\Rc_g \geq0$. Let $f \in C^0(X)$ be non-negative and quasi subharmonic in the distributional sense ie. there exists a constant $A>0$ such that for any compactly supported, non-negative function $\chi\in C^\infty_c(X)$ then $$\int_X (f\Delta\chi + Af\chi)\omega^n \geq 0.$$ Then for any $x\in X$ and any $r>0$ we have $$f(x) \leq 2^m\cdot \frac{e^{\sqrt{A}r} - e^{-\sqrt{A}r}}{\sqrt{A}r}\cdot\frac{1}{|B(x,r)|}\int_{B(x,r)} f.$$
\end{prop}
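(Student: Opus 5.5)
The plan is to absorb the zeroth order term $Af$ by adding one real dimension. This reduces the statement to a mean value inequality for non-negative subharmonic functions on a manifold with $\Rc\geq 0$. Let $N = X\times\RR$ with the product metric $g + dt^2$. Then $N$ is complete, has real dimension $m+1$, and satisfies $\Rc_N\geq 0$. Set
$$F(y,t) := f(y)\cosh(\sqrt{A}\,t).$$
For a non-negative test function $\Psi\in C^\infty_c(N)$, integrate by parts in $t$ and use $\partial_t^2\cosh(\sqrt A t) = A\cosh(\sqrt A t)$. Applying the hypothesis on $X$ slice by slice to $\chi = \Psi(\cdot,t)\geq 0$, and then Fubini, gives
$$\int_N F\,\Delta_N\Psi = \int_\RR\cosh(\sqrt A t)\int_X\big(f\Delta_X\Psi + Af\Psi\big)\,dt \geq 0.$$
Hence $F$ is a continuous, non-negative, distributionally subharmonic function on $N$.

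Next I will establish the sharp mean value inequality: for a continuous non-negative subharmonic $F$ on a complete $n$-manifold with $\Rc\geq0$,
$$F(p)\leq \frac{1}{|B(p,\rho)|}\int_{B(p,\rho)}F .$$
I would prove this via the Laplacian comparison $\Delta d_p\leq (n-1)/d_p$, which holds in the barrier (hence distributional) sense. It gives the following.
\begin{itemize}
\item For $\rho\mapsto \frac{1}{\rho^{n-1}}\int_{\partial B(p,\rho)}F$: the derivative is at least $\rho^{1-n}\int_{\partial B}\partial_\nu F$, which is $\geq 0$ by the divergence theorem and subharmonicity.
\item Integrating, $\rho^{-n}\int_{B(p,\rho)}F$ is bounded below by its limit $\omega_n F(p)$ as $\rho\to0$, where $\omega_n$ is the volume of the Euclidean unit ball.
\item Bishop--Gromov gives $|B(p,\rho)|\leq\omega_n\rho^n$, which yields the inequality above.
\end{itemize}
The main technical obstacle is that $d_p$ and $F$ are not smooth. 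I would handle it by first mollifying $F$ locally (or running the heat flow for a short time, which preserves subharmonicity when $\Rc\geq0$), and by working with the weak form of the comparison, $\int \Delta\Psi\, G(d_p) \ge \ldots$, against cut-offs of the radial function. Cut-locus issues are then absorbed exactly as in the standard proof of Bishop--Gromov.

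Finally I apply this at $p=(x,0)$ with $\rho = \sqrt2\, r$. Since $B_X(x,r)\times(-r,r)\subset B_N(p,\sqrt2 r)\subset B_X(x,\sqrt2 r)\times(-\sqrt2 r,\sqrt2 r)$, this step is slightly delicate. To avoid enlarging the $X$-ball, I would instead take $\rho=r$ and use the inclusions
$$B_X(x,r/\sqrt2)\times(-r/\sqrt2,r/\sqrt2)\subset B_N(p,r)\subset B_X(x,r)\times(-r,r).$$
These give
$$f(x)=F(p)\leq \frac{1}{|B_N(p,r)|}\int_{B_X(x,r)}f\int_{-r}^{r}\cosh(\sqrt A t)\,dt,$$
where $\int_{-r}^r\cosh = (e^{\sqrt A r}-e^{-\sqrt A r})/\sqrt A$. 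For the volume, $|B_N(p,r)|\geq \sqrt2\, r\,|B_X(x,r/\sqrt2)|\geq \sqrt2\, r\,2^{-m/2}|B_X(x,r)|$ by Bishop--Gromov on $X$. Combining,
$$f(x)\leq 2^{(m-1)/2}\cdot\frac{e^{\sqrt A r}-e^{-\sqrt A r}}{\sqrt A\, r}\cdot\frac{1}{|B(x,r)|}\int_{B(x,r)}f.$$
Since $2^{(m-1)/2}\leq 2^m$, this implies the stated inequality.
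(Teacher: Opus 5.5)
\textbf{The gap is the constant-one mean value inequality.} Your reduction is the same as the paper's. You pass to $X\times\RR$ and multiply by a factor solving $h''=Ah$; your $\cosh(\sqrt A t)$ and the paper's $e^{\sqrt A t}$ both integrate to $(e^{\sqrt A r}-e^{-\sqrt A r})/\sqrt A$ over $(-r,r)$. You then apply a mean value inequality on the product and convert back using the cylinder inclusions and Bishop--Gromov. Your Fubini check of subharmonicity is correct, and so are your inclusions with $r/\sqrt2$; the latter even improve $2^m$ to $2^{(m-1)/2}$.

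The problem is the one step you set out to prove rather than quote: $F(p)\le|B(p,\rho)|^{-1}\int_{B(p,\rho)}F$ for nonnegative subharmonic $F$ under $\Rc\ge0$. Laplacian comparison points the wrong way for this. Write $\int_{\partial B(p,\rho)}F=\int_{S^{n-1}}F\,J\,\mathbf{1}_{\{\rho<c(\theta)\}}\,d\theta$. Comparison gives $\partial_\rho(\rho^{1-n}J)=\rho^{1-n}J\,(\Delta d_p-\tfrac{n-1}{\rho})\le 0$, and the cut-locus indicator is nonincreasing in $\rho$. So for $F\ge 0$ you get
$$\frac{d}{d\rho}\Big(\rho^{1-n}\int_{\partial B(p,\rho)}F\Big)\le\rho^{1-n}\int_{\partial B(p,\rho)}\partial_\nu F,$$
which is an upper bound, not a lower bound. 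This monotonicity suits nonnegative \emph{superharmonic} $u$, where it yields $u(p)\ge(\omega_n\rho^n)^{-1}\int_{B(p,\rho)}u$.

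Your second bullet is in fact false. For $F\equiv 1$ it asserts $|B(p,\rho)|\ge\omega_n\rho^n$. Combined with Bishop--Gromov, this forces $B(p,\rho)$ to be isometric to a Euclidean ball, so it fails whenever the metric is not flat near $p$. In the paper's setting $X\times\RR$ is nowhere flat, so it fails for every $\rho>0$. Mollifying $F$ or running a short-time heat flow cannot repair a reversed inequality.

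\textbf{How to repair it.} What is actually available under $\Rc\ge 0$ is the Li--Schoen mean value inequality, proved by Moser iteration. It gives $F(p)\le C(n)\,|B(p,\rho)|^{-1}\int_{B(p,\rho)}F$ for nonnegative, distributionally subharmonic $F$, with a dimensional constant rather than $1$.

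The paper does not derive this step either; it simply quotes it from Li--Schoen. It displays the inequality with constant one, but the Li--Schoen inequality carries a constant $C(n)$. So what is justified by either route is the statement with $2^m$ replaced by a dimensional constant $C(m)$.

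If you replace your sketched lemma by that citation, the rest of your argument goes through unchanged. That weaker constant is all that is used later: Corollary \ref{cor:poly-growth-general} only needs some constant depending on $m$, $A$ and $|B(o,1)|$.
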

\begin{proof}

The proof is standard, and we argue as in \cite{CZ2}. Consider the product manifold $\tilde X = X\times \RR$ with the product metric $\tilde g = g + dt^2$. Let $F(x,t) = e^{\sqrt{A}t}f(x)$. Then $F$ is continuous on $\tilde X$ and satisfies $$\tilde \Delta F = \Delta f + Af \geq 0$$ in a distributional sense. Then by the mean value inequality for subharmonic functions (cf. \cite{LS}) we have  $$F(x,0) \leq \frac{1}{|\tilde B ((x,0),r)|}\int_{\tilde B((x,0),r)}F(x,s)\,dx\,ds.$$ To estimate the integral we note that $\tilde B ((x,0),r)$ is contained in the cylinder $B(x,r) \times [-r,r]$ and so $$\int_{\tilde B((x,0),r)}F(x,s)\,dx\,dt \leq \int_{-r}^re^{\sqrt{A}s}\int_{B(x,r)}f(x)\,dx = \frac{e^{\sqrt{A}r} - e^{-\sqrt{A}r}}{\sqrt{A}}\int_{B(x,0)}f(x)\,dx .$$ On the other hand, we also have that $B(x,r/2) \times [-r/2,r/2] \subset \tilde B((x,0),r)$, and so applying volume comparison we have $$|\tilde B((x,0),r)| \geq r |B(x,r/2)| \geq \frac{r}{2^m}|B(x,r)|.$$

\end{proof}

\begin{cor}\label{cor:poly-growth-general}
Let $(X^m,g)$ be a complete, non-compact Riemannian manifold with $\Rc_g \geq0$. Let $f \in C^0(X)$ be a non-negative function such that  $$\Delta f\geq -Af$$ holds in the distributional sense for some constant $A>0$. Fix a point $o\in X$. Then there exists a constant $C>0$ such that $$f(x) \leq C(1+ d(o,x)^m)\int_{B(x,1)}f.$$
\end{cor}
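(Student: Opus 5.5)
Apply the mean value inequality of the preceding Proposition with $r=1$ at the point $x$. Since $\Delta f\geq -Af$ in the distributional sense means $\int_X (f\Delta\chi + Af\chi)\geq 0$ for every non-negative $\chi\in C^\infty_c(X)$, the hypotheses hold verbatim, and we get
$$f(x)\leq 2^m\cdot\frac{e^{\sqrt A}-e^{-\sqrt A}}{\sqrt A}\cdot\frac{1}{|B(x,1)|}\int_{B(x,1)}f.$$
So the whole statement reduces to a lower bound on $|B(x,1)|$ that decays at most polynomially in $d(o,x)$.

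For this I use relative volume comparison (Bishop--Gromov), which is valid since $\Rc_g\geq 0$. Set $d=d(o,x)$. Then $B(o,1)\subset B(x,d+1)$, and comparing balls centred at $x$ gives
$$\frac{|B(x,d+1)|}{|B(x,1)|}\leq (d+1)^m.$$
Hence
$$|B(x,1)|\geq \frac{|B(x,d+1)|}{(d+1)^m}\geq \frac{|B(o,1)|}{(1+d)^m}.$$
Substituting this into the mean value inequality yields
$$f(x)\leq 2^m\cdot\frac{e^{\sqrt A}-e^{-\sqrt A}}{\sqrt A}\cdot\frac{(1+d)^m}{|B(o,1)|}\int_{B(x,1)}f.$$
Finally, $(1+d)^m\leq 2^{m}(1+d^m)$, so the claim holds with
$$C=\frac{4^m\bigl(e^{\sqrt A}-e^{-\sqrt A}\bigr)}{\sqrt A\,|B(o,1)|}.$$

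The only step needing care is the mean value inequality itself, which we may assume from the preceding Proposition. The volume comparison is routine; the one point to get right is to centre the comparison at $x$, not at $o$, so that Bishop--Gromov applies directly to concentric balls.
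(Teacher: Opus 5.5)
Your proof is correct and is essentially the paper's argument: the mean value inequality at $r=1$, combined with Bishop--Gromov centred at $x$ and the inclusion $B(o,1)\subset B(x,1+d(o,x))$, gives $|B(x,1)|\geq |B(o,1)|/(1+d(o,x))^m$. You also correctly include the factor from $(1+d)^m\leq 2^m(1+d^m)$ in your constant, which the paper's stated explicit constant silently omits.
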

\begin{proof}
Let $R = 1 + d(o,x)$. We apply the above mean value inequality with $r = 1$. Note that $B(x,1)\subset B(x,R)$ and $B(o,1)\subset B(x,R)$ and so by volume comparison, $$|B(x,1)| \geq \frac{|B(x,R)|}{R^m}\geq \frac{|B(o,1)|}{R^m}.$$ Then applying the mean value inequality we prove the Corollary with the explicit constant
 $$C:= 2^m\frac{e^{\sqrt{A}} - e^{-\sqrt{A}}}{\sqrt{A}|B(o,1)|}.$$
\end{proof}

\section{B\'ezout estimates} \label{sec:Bezout}
\subsection{Weights and function spaces} In \cite{DPS}, we proved the following theorem.

\begin{thm}[Theorem 3 in \cite{DPS}]\label{thm:main-ma}
Let $(X^n,\omega)$ be a complete, non-compact $n$-dimensional K\"ahler manifold. Suppose
\begin{enumerate}
\item $BK>0$ and
\item $X$ is strongly Stein in the sense that there exists a smooth exhaustion function $\rho:X\rightarrow \RR$ with uniformly bounded gradient.
\end{enumerate} Then there exists a uniformly Lipschitz, strictly plurisubharmonic function $\phi$ on $X$ such that $$\int_X(\ddbar \phi)^n < \infty.$$ In particular, in view of the theorem of Green and Wu, such a $\phi$ exists on any complete, non-compact K\"ahler manifold with positive sectional curvature.
\end{thm}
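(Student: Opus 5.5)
The plan is to produce $\phi$ as a limit of solutions of complex Monge--Amp\`ere Dirichlet problems on the sublevel sets of the Green--Wu type exhaustion. The Monge--Amp\`ere mass will be fixed in advance by the right-hand side, and the Lipschitz bound will be kept uniform using $BK>0$.

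\emph{Setup.} Let $\rho$ be the exhaustion from hypothesis (2). After adding a constant and replacing it by a heat-flow smoothing (Ni--Tam), which keeps it plurisubharmonic (psh) with $|\nabla\rho|\le C$, I may assume $\rho$ is smooth and psh. This needs $\rho$ psh; Green--Wu gives this under $\sec>0$, and in general one uses the strongly Stein hypothesis as in the statement. Choose a density $F=c\,e^{-n\rho}$ with $c>0$ small. Then $F$ is positive and $\int_X F\omega^n<\infty$: the volume of $\{\rho<t\}$ grows at most exponentially in $t$, by Bishop--Gromov and $\rho\ge c'd(o,\cdot)-C$. I also want $F^{1/n}=c^{1/n}e^{-\rho}$ to have bounded gradient.

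\emph{Approximating problems.} On $\Omega_j=\{\rho<j\}$, with smooth boundary for generic $j$, solve
\[
(\ddbar u_j)^n=F\omega^n \text{ in } \Omega_j,\qquad u_j=\rho \text{ on }\partial\Omega_j,
\]
using Caffarelli--Kohn--Nirenberg--Spruck/Guan type existence. Pseudoconvexity of $\Omega_j$ and a subsolution are available:
\begin{itemize}
\item \emph{Subsolution.} Take $\underline u=\rho+\lambda(\rho-j)$-type corrections, or $\rho+\ep\eta$ with $\eta$ a strictly psh bounded-Levi-form function from the Chen--Zhu lemma. Since $F$ is small, $(\ddbar\underline u)^n\ge F\omega^n$ holds after tuning constants.
\item \emph{Upper barrier.} On the other side, $u_j\le \rho$ by the comparison principle, since $\rho$ is psh with the same boundary values but smaller Monge--Amp\`ere measure only where needed; if necessary I compare instead with the maximal psh extension of $\rho|_{\partial\Omega_j}$.
\end{itemize}
Two-sided barriers that agree with $\rho$ on $\partial\Omega_j$ give a boundary gradient bound $|\nabla u_j|\le C$ on $\partial\Omega_j$, independent of $j$.

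\emph{Interior gradient and limit.} Apply B\l ocki's gradient estimate for the complex Monge--Amp\`ere equation on K\"ahler manifolds. The bad curvature term is controlled by a lower bound on bisectional curvature, here $BK>0$, so it is harmless, and the inhomogeneous term involves only $\sup|\nabla F^{1/n}|$. This yields $\sup_{\Omega_j}|\nabla u_j|\le C$ with $C$ independent of $j$. Normalize by subtracting $u_j(o)$. By Arzel\`a--Ascoli, a subsequence converges locally uniformly to a Lipschitz psh $\phi$ with Lipschitz constant at most $C$. Bedford--Taylor continuity under uniform convergence gives $(\ddbar\phi)^n=F\omega^n$, so
\[
\int_X(\ddbar\phi)^n=\int_X F\omega^n<\infty .
\]
Strict plurisubharmonicity in the pointwise sense is obtained, if needed, by adding $\ep\eta$, with $\eta$ the bounded-Levi-form time-one heat flow of the Busemann function. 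This keeps the function Lipschitz, and its Monge--Amp\`ere mass stays finite after expanding $(\ddbar\phi+\ep\ddbar\eta)^n$. That finiteness is a mixed-mass issue; I would arrange it by using $e^{-\rho}$-type damping in place of $\eta$.

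\emph{Main obstacle.} I expect the uniform gradient estimate to be the crux, for two reasons. Running B\l ocki's maximum-principle argument requires the equation to be non-degenerate with smooth data, hence the positive $F$. More seriously, the boundary gradient needs barriers uniform in $j$ on the non-compact, possibly non-smooth family $\partial\Omega_j$. If the barrier construction fails, I would first try replacing the Dirichlet problems by an envelope construction, $\phi=\sup\{v\ \text{psh}:\ v\le\rho\}$-type, adjusted by the mass constraint. The Lipschitz bound would then follow from a Lipschitz-envelope argument using that $\rho$ is Lipschitz, while finiteness of mass would come from comparing $(\ddbar\phi)^n$ with $F\omega^n$.
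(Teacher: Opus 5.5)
The paper does not prove this statement; it is quoted from \cite{DPS}, so I can only assess your outline on its own terms.

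\textbf{The main gap.} It sits at exactly the step that carries the theorem: the $j$-independent bound $\sup_{\Omega_j}|\nabla u_j|\le C$. B\l ocki's gradient estimate does not give it. Besides the curvature lower bound and $\sup|\nabla F^{1/n}|$, its constant depends on an upper bound for $\mathrm{osc}\,u$, and in the Dirichlet version also on $\sup_{\partial\Omega}|\nabla u|$.

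Here $\mathrm{osc}_{\Omega_j}u_j\to\infty$ is forced. Indeed $u_j=j$ on $\partial\Omega_j$, and the function you want in the limit is a non-constant psh function, which on $\CC^n$ has infinite oscillation.

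The hypothesis $BK\ge 0$ does not remove this. With $L=u_j^{i\bar k}\partial_i\partial_{\bar k}$, the curvature term has a good sign, but you only get the Kato-type inequality $L|\nabla u_j|\ge -|\nabla\log F|$. Absorbing the right-hand side requires an auxiliary $w$ with $Lw\ge|\nabla\log F|$, and both natural choices fail:
\begin{itemize}
\item Taking $w=\frac{K}{n}u_j$, using $Lu_j=n$ and $K=\sup|\nabla\log F|$, gives only $|\nabla u_j|(x)\le\sup_{\partial\Omega_j}|\nabla u_j|+\frac{K}{n}\bigl(j-u_j(x)\bigr)$. This blows up at every fixed $x$.
\item Alternatively, use $|\nabla\log F|\le|\nabla F^{1/n}|\,\mathrm{tr}_{u_j}\omega$, which is where the $\sup|\nabla F^{1/n}|$-dependence comes from. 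Then you need $w$ with $\ddbar w\ge\omega$, and the bound grows with $\mathrm{osc}_{\Omega_j}w$.
\end{itemize}
So the Arzel\`a--Ascoli step, and with it the Lipschitz bound for $\phi$, is unsupported. You do flag the gradient estimate as the crux, but for the wrong reasons. Non-degeneracy and boundary barriers are the easy parts. The missing ingredient is a way to control $\nabla u_j$ that does not pass through the global oscillation.

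\textbf{Secondary problems.}
\begin{itemize}
\item \emph{Barriers.} Yours are reversed. For $\rho$ to be a subsolution you need $(\ddbar\rho)^n\ge F\omega^n$, and then comparison gives $u_j\ge\rho$, not $u_j\le\rho$. The upper barrier is the constant $j$, the maximal psh extension of $\rho|_{\partial\Omega_j}\equiv j$. With $\rho\le u_j\le j$ the boundary gradient is indeed bounded by $\sup|\nabla\rho|$. However, this needs $(\ddbar\rho)^n\ge F\omega^n$ on all of $X$, which $F=ce^{-n\rho}$ does not guarantee. The correction $\rho+\lambda(\rho-j)$ forces $\lambda$, and hence the boundary slope, to depend on $j$.
\item \emph{Growth of $\rho$.} The bound $|\nabla\rho|\le C$ gives only $\rho\le C(1+d)$, not $\rho\ge c'd-C$. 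For $\rho=\log(1+|z|^2)$ on $\CC^n$, $e^{-n\rho}$ is not integrable, so build $F$ from a smoothed distance instead.
\item \emph{Strict plurisubharmonicity.} A Lipschitz limit does not inherit it, and even solutions with smooth positive right-hand side can fail it. For example, $(1+|z_1|^2)|z_2|$ on $\CC^2$ has constant positive Monge--Amp\`ere density yet vanishes on $\{z_2=0\}$.
\item \emph{Adding $\ep\eta$.} This destroys finite mass. On $\CC^n$ one has $\eta\approx|z|$ and $\int_{B_R}(\ddbar|z|)^n\sim R^n$. Even the mixed term $\int_{B_R}(\ddbar\log(1+|z|^2))^{n-1}\wedge\ddbar|z|$ grows like $R$.
\item \emph{Envelope fallback.} It is empty as stated, since $\sup\{v\ \text{psh}: v\le\rho\}=\rho$.
\end{itemize}
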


 We now let $H(x,y,t)$ be the heat kernel on $(X,\omega)$ with total integral one, and consider
 \begin{align}
 u(x,t) = \int_X H(x,y,t)\phi(y)\,dy.
 \end{align} Then $u(x,t)$  is a  solution to the heat equation with initial condition $u(x,0) = \phi(x)$. We let $\psi(x):= u(x,1)$. Then there exists a constant $A>0$ such that for all $t\in (0,1]$ we have the following estimates:
 \begin{align*}
 |\nabla u_t|^2,~t|\ddbar u_t|^2 \leq A\\
 u_{t}(x) \leq \phi(x) + A\sqrt{t} \text{ and }\\
 \psi(x)\leq u_t(x) + A \sqrt{1-t}.
 \end{align*}
 We will work with both the non-smooth weight $\phi$ and the smooth weight $\psi$. For the proof of our main B\'ezout estimate, we will work with all the weights $u$.

Next, we introduce the relevant function spaces. Let $L$ be the trivial line bundle equipped with the Hermitian metric $e^{-u}$. To emphasise the metric we will sometimes denote the line bundle by $L_u$. For any section $s$ of $L$ we denote by $\n s\n_{u}$ the norm of $s$ with respect to the Hermitian metric $e^{-u}$.  Note that a section of $L_u$ is simply a holomorphic function, but it is helpful to think of it as a section of a line bundle. We define
\begin{align*}\Ga^p_{qu}:= \{s\in H^0(X,L)~\Big |~ \int_X \n s\n_{qu}^p\omega^n:= \int_X |s|^p e^{-\frac{pqu}{2}}\omega^n < \infty\}.
\end{align*} We will also need to consider sections of the canonical bundle $K_X$. Given a metric $e^{-u}$ on $L$ we will use the Hermitian metric $e^{-u}(\omega^{n})^{-1}$ on $K_X$, but we will often suppress the dependence of $\omega$ for notational clarity. For instance, we use the notation $$\n\sigma\n^2_{q\phi} = \frac{\sigma\wedge\bar\sigma}{\omega^n}e^{-q\phi}$$ for the norm. We then set
\begin{align*}\Ga^2_{qu}(K_X):= \{\sigma\in H^0(X,K_X)~\Big |~ \int_X ||\sigma||_{qu}^2\omega^n < \infty\}
\end{align*} Note that $||\sigma||_{qu}^2\omega^n  = \sigma\wedge\bar\sigma e^{-qu},$ and hence the integral in the second line above is independent of $\omega$ even though the point-wise norm of course depends on $\omega$. The aim of this section is to prove two kinds of B\'ezout estimates, one on $X$ (Proposition \ref{prop:Bezout}) while the other one on the projectivized tangent bundle $\PP(T_X)$ (Proposition \ref{prop:Bezout-proj}).
\subsection{Preliminary estimates}

\begin{lem}\label{lem:section-est-non-smt}
Let $(X^n,\omega)$ be a K\"ahler manifold with $Ric\geq 0$. Let $s \in \Ga^p_{q\psi}$. Then there exists a constant $C>0$ depending only on $p,q$, the Lipschitz constant of $\phi$ and $\int_X\n s\n_{q\psi}^p$ such that for all $0\leq t\leq 1$, $$\int_X \n s\n_{qu_t}^p,~\int_X\n s\n_{qu_t}^{p-2}||\partial_{qu_t} s||_{qu_t}^2 < C,$$ where $\partial_{qu_t} s := \partial s + qs\partial u_t$ is the $(1,0)$ part of the covariant derivative of $s$ with respect to the Chern connection of $e^{-qu_t}$.
\end{lem}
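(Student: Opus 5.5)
The first bound is a direct weight comparison; the second is an integration by parts against the Bochner-type identity of Lemma \ref{lem:currents}, arranged so that only the gradient bound $|\nabla u_t|^2\leq A$ is ever used. The Hessian bound $t|\ddbar u_t|^2\leq A$ degenerates as $t\to 0$, so it must be avoided.

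\emph{Step 1 (the $L^p$ norm).} From $\psi\leq u_t+A\sqrt{1-t}\leq u_t+A$ we get $e^{-\frac{pq u_t}{2}}\leq e^{\frac{pqA}{2}}e^{-\frac{pq\psi}{2}}$ pointwise. Hence
$$\int_X\n s\n^p_{qu_t}\omega^n\leq e^{pqA/2}\int_X\n s\n_{q\psi}^p\omega^n,$$
uniformly in $t\in[0,1]$. At $t=0$ we have $u_0=\phi$, and the same inequality holds.

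\emph{Step 2 (the gradient term).} Apply the last identity of Lemma \ref{lem:currents} to the trivial bundle with metric $h_{qu_t}=e^{-qu_t}$ and $\ga=p/2$. This gives, as currents,
$$\frac{p^2}{4}\n s\n^{p-2}_{qu_t}\,\sqrt{-1}\,\partial_{qu_t}s\wedge\overline{\partial_{qu_t}s}\,e^{-qu_t}=\ddbar\n s\n^p_{qu_t}+\frac{pq}{2}\n s\n^p_{qu_t}\ddbar u_t .$$
For $t>0$, $u_t$ is smooth. For $t=0$ the identity holds in the Bedford--Taylor sense, since $\phi$ is Lipschitz psh. For $p<2$ the left side is still locally integrable, by the remarks at the start of the proof of Lemma \ref{lem:currents}.

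Wedge both sides with $\chi_R\,\omega^{n-1}$, where $\chi_R$ is the cut-off of Lemma \ref{lem:cut-off}, and integrate. The left side becomes $c_n\frac{p^2}{4}\int\chi_R\n s\n^{p-2}\n\partial_{qu_t}s\n^2\omega^n$. On the right:
\begin{itemize}
\item[(a)] The first term becomes $\int\n s\n^p\ddbar\chi_R\wedge\omega^{n-1}$. It is bounded by $\frac{A}{R}\int\n s\n^p_{qu_t}$, which is at most $C/R$ by Step 1.
\item[(b)] The second term must not be estimated using $\ddbar u_t$ directly. Instead, integrate by parts once more to get $-\frac{pq}{2}\int\partial(\chi_R\n s\n^p_{qu_t})\wedge\dbar u_t\wedge\omega^{n-1}$.
\end{itemize}
By the first identity of Lemma \ref{lem:currents}, $|\partial\n s\n^p_{qu_t}|\leq \frac p2\n s\n^{p-1}_{qu_t}\n\partial_{qu_t}s\n_{qu_t}$. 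Together with $|\dbar u_t|\leq\sqrt A$ and $|\nabla\chi_R|\leq A/R$, term (b) is bounded by
$$C\int\chi_R\n s\n^{p-1}\n\partial_{qu_t} s\n+\frac{C}{R}\int\n s\n^p .$$
Young's inequality gives $\n s\n^{p-1}\n\partial s\n\leq\delta\n s\n^{p-2}\n\partial s\n^2+\delta^{-1}\n s\n^p$. Taking $\delta$ small, the first piece is absorbed into the left side and the second is controlled by Step 1.

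Letting $R\to\infty$ via monotone convergence yields
$$\int_X\n s\n^{p-2}_{qu_t}\n\partial_{qu_t}s\n^2_{qu_t}\,\omega^n\leq C(p,q,A)\int_X\n s\n^p_{q\psi}\omega^n ,$$
uniformly in $t$.

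\emph{Main obstacle.} The delicate point is the rigorous justification of the integrations by parts when $0<p<2$ and when $t=0$. In that regime $\n s\n^{p-2}$ is singular along $\{s=0\}$ and $u_0=\phi$ is merely Lipschitz. The absorption step needs the left side to be finite on $\mathrm{Supp}\,\chi_R$ first, which holds by local integrability. To be safe, I would run the whole argument with $\log(\n s\n^p+\ep^2)$-type regularisations, i.e.\ $(\n s\n^2+\ep^2)^{p/2}$, together with smoothings $u_\delta$ as in the proof of Lemma \ref{lem:currents}. The constants above are independent of $\ep$ and $\delta$, so the estimate passes to the limit by Fatou's lemma.
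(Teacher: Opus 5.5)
Your argument is correct, but it is organised differently from the paper's. The paper runs the Bochner argument only for the smooth weight $\psi=u_1$. There it uses the Levi-form bound $\ddbar\psi\leq A\omega$ (the Chen--Zhu lemma) to turn the curvature term into $-\frac{Ap}{2}\n s\n^p_{q\psi}$. Integrating against $\chi_R$, which only requires the bound on $\ddbar\chi_R$, it gets $\int_X\n s\n^{p-2}_{q\psi}\n\partial_{q\psi}s\n^2<\infty$. It then passes to every $u_t$, including $t=0$, pointwise: $\partial_{qu_t}s$ and $\partial_{q\psi}s$ differ by $q\,s\,\partial(u_t-\psi)$, and combined with the uniform Lipschitz bounds and $e^{-qu_t}\leq Ce^{-q\psi}$ this costs at most an extra $C\n s\n^p_{q\psi}$ term. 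You instead keep the weight $u_t$ throughout and remove the curvature term $\frac{pq}{2}\n s\n^p_{qu_t}\ddbar u_t$ by a second integration by parts plus Young absorption, using only $|\nabla u_t|^2\leq A$. In effect you merge this lemma with the integration-by-parts argument of Lemma \ref{lem:est-lpnorm-ddbarphi-omega}, which the paper proves afterwards using this lemma as input. Your route needs no Hessian bound on $\psi$, gives a bound linear in $\int_X\n s\n^p_{q\psi}$, and yields the estimate of Lemma \ref{lem:est-lpnorm-ddbarphi-omega} along the way. The price is that at $t=0$ you must justify the identity and the integration by parts against the measure $\ddbar\phi$ with the non-smooth factor $\chi_R\n s\n^p_{q\phi}$, as you flag. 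The paper's pointwise transfer avoids this, since it only needs $\phi$ to be differentiable a.e. You can sidestep the $t=0$ regularisation within your own scheme the same way: prove the bound for $t\in(0,1]$, where $u_t$ is smooth, then reach $t=0$ by the paper's pointwise comparison.
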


\begin{proof}
Note that the heat flow estimate above gives $e^{-qu_t} \leq Ce^{-q\psi}$. The estimate on $\n s\n_{qu_t}^p$ follows immediately from this estimate. By the Bochner identity, in the distributional sense we have $$\Delta \n s\n_{q\psi}^{p} \geq \frac{p^2}{4} \n s\n_{q\psi}^{(p-2)}||\partial_{q\psi} s||_{q\psi}^2 - \frac{Ap}{2}\n s\n_{q\psi}^{p},$$ where we assume that $\ddbar \psi\leq A\omega .$ Let $\chi_R$ be the family of cut-off functions from Lemma \ref{lem:cut-off}. Then multiplying the above inequality by $\chi_R$ (for $R\geq R_0)$ and integrating by parts $$\int_{B(o,\theta^{-1}R)}\n s\n_{q\psi}^{p-2} ||\partial_{q\psi} s||_{q\psi}^2 \leq \frac{4}{p^2}\int_X \n s\n_{q\psi}^p \Delta\chi_R + \frac{2A}{p}\int_X\n s\n_{q\psi}^p \leq C.$$ Letting $R\rightarrow \infty$ we the  have that $$\int_{X}\n s\n_{q\psi}^{p-2} ||\partial_{q\psi} s||_{q\psi}^2<C.$$ To have a similar estimate with the metric $e^{-qu_t}$ instead of $e^{-q\psi}$, we note that $$\partial_{qu_t}s = \partial_{q\psi} s + q\partial(u_t - \psi)\cdot s,$$ and so using the fact that $|\partial u_t|,|\partial \psi| \leq 1$ and also $e^{-qu_t} \leq Ce^{-q\psi}$ we have $$\n s\n_{qu_t}^{p-2}\n\partial_{qu_t}s\n_{qu_t}^2 \leq C \n s\n_{q\psi}^{p-2}\n\nabla_{q\psi} s\n_{q\psi}^2,$$ and the required estimate follows by integrating.

\end{proof}

We also need a slight variation of the above integral estimates, which follow from the fact that $u_t$ and $\psi$ are uniformly Lipschitz.

\begin{cor}
Let $s\in  \Gamma^p_{q\psi}$. Then there exists a constant $C>0$ such that for all $0\leq t\leq 1$,
\begin{gather}
\int \Vert ds \Vert_{qu_t}^2 \Vert s \Vert_{qu_t}^{p-2} \omega^2\leq \int \Vert ds \Vert_{q\psi}^2 \Vert s \Vert_{q\psi}^{p-2} \omega^2 \leq C.
\label{eq:integralofds}
\end{gather}
Hence (using Cauchy-Schwarz),
\begin{gather}
\int \Vert ds \Vert_{qu_t} \Vert s \Vert_{qu_t}^{p-1} \omega^2\leq \int \Vert ds \Vert_{q\psi} \Vert s \Vert_{q\psi}^{p-1} \omega^2 \leq C.
\end{gather}
\label{lem:integrability}
\end{cor}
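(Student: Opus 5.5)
The plan is to reduce everything to the two estimates already proved in Lemma \ref{lem:section-est-non-smt}, and then apply Cauchy--Schwarz.

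\emph{Step 1: pointwise comparison.} Here $s$ is a holomorphic function and $ds=\partial s$; its norm with respect to $e^{-qu}$ is $\n ds\n_{qu}^2=|\partial s|^2e^{-qu}$. Hence
\[
\n ds\n_{qu_t}^2\n s\n_{qu_t}^{p-2}=|\partial s|^2|s|^{p-2}e^{-\frac{pq}{2}u_t}.
\]
The heat flow estimate $\psi\le u_t+A\sqrt{1-t}\le u_t+A$ gives $e^{-\frac{pq}{2}u_t}\le e^{\frac{pqA}{2}}e^{-\frac{pq}{2}\psi}$. So the first inequality holds pointwise, up to a harmless constant $C(p,q,A)$, which I will absorb. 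The constant in the display should be read as such.

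\emph{Step 2: bounding the $\psi$-integral.} Write $\partial s=\partial_{q\psi}s-qs\partial\psi$ and use $|\partial\psi|\le\sqrt A$. This gives
\[
|\partial s|^2e^{-q\psi}\le 2\n\partial_{q\psi}s\n_{q\psi}^2+2q^2A\n s\n_{q\psi}^2 .
\]
Multiplying by $\n s\n_{q\psi}^{p-2}$ yields
\[
\n ds\n_{q\psi}^2\n s\n_{q\psi}^{p-2}\le 2\n s\n_{q\psi}^{p-2}\n\partial_{q\psi}s\n^2_{q\psi}+2q^2A\n s\n_{q\psi}^p .
\]
Both terms on the right are integrable by Lemma \ref{lem:section-est-non-smt} (the $t=1$ case) and by the hypothesis $s\in\Ga^p_{q\psi}$. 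This proves \eqref{eq:integralofds}.

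\emph{Step 3: the second display.} Cauchy--Schwarz gives
\[
\int\n ds\n\,\n s\n^{p-1}\le\Big(\int\n ds\n^2\n s\n^{p-2}\Big)^{1/2}\Big(\int\n s\n^p\Big)^{1/2},
\]
for either weight. Both factors are bounded by Step 2 and by Lemma \ref{lem:section-est-non-smt}. The comparison between the $u_t$ and $\psi$ versions again follows pointwise from Step 1.

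\emph{Main obstacle.} The only delicate point is the case $p<2$. There $\n s\n^{p-2}$ blows up on the zero set of $s$, so the Bochner inequality used in Lemma \ref{lem:section-est-non-smt} holds only distributionally. That lemma already handles this, so here we simply invoke it; the local integrability remarks in Lemma \ref{lem:currents} ensure that all the integrands are in $L^1_{loc}$.
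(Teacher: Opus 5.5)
Your proposal is correct and is essentially the argument the paper intends. You decompose $ds=\partial_{q\psi}s-qs\,\partial\psi$, use the uniform Lipschitz bound on $\psi$ together with Lemma \ref{lem:section-est-non-smt} and the heat-flow comparison $e^{-qu_t}\le Ce^{-q\psi}$, and finish with Cauchy--Schwarz. You are also right that the first inequality in each display must be read with a multiplicative constant $C(p,q,A)$: since $\phi$ is plurisubharmonic the heat flow is nondecreasing in $t$, so $u_t\le\psi$, and without a constant the comparison would actually go the other way.
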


A consequence of this estimate that we will need in the proof of B\'ezout estimates is the following.

\begin{lem}\label{lem:est-lpnorm-ddbarphi-omega}
Let $s\in \Ga^p_{q\psi}$. Then there exists a constant $C>0$ such that for all $0\leq t\leq 1$, \[\int_X \log(1 + \n s\n^p_{qu_t})\ddbar u_t\wedge \omega < C$$ and $$\int_X \n s\n^{p}_{qu_t}\ddbar u_t\wedge\omega< C.\]
\end{lem}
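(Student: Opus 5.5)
Since $\omega$ is K\"ahler, the form $\ddbar u_t\wedge\omega$ is, up to a dimensional constant, $\Delta u_t\,\omega^n$. The plan is to integrate by parts against the cut-off functions $\chi_R$ of Lemma \ref{lem:cut-off}, moving the derivatives off $u_t$ onto the integrand. The point is that the resulting terms involve only $|\nabla u_t|\le A$ (uniform in $t$) and quantities already controlled in Lemma \ref{lem:section-est-non-smt} and Corollary \ref{lem:integrability}. This matters because the bound $|\ddbar u_t|^2\le A/t$ degenerates as $t\to0$. For $t=0$, where $u_0=\phi$ is only Lipschitz, the identity should be read in the Bedford--Taylor/distributional sense. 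Alternatively, one proves the estimate for $t>0$ with constants independent of $t$ and passes to the limit $t\to 0$ using weak convergence of $\ddbar u_t\to\ddbar\phi$ and Fatou.

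I will treat the second integral; the first follows from it, since $\log(1+x)\le x$ and $\ddbar u_t\wedge\omega\ge0$ (as $u_t$ is psh by Ni--Tam). Set $f=\n s\n^p_{qu_t}$, which is nonnegative. Since $\chi_R f\,\ddbar u_t\wedge\omega\ge0$, it suffices to bound $\int\chi_R f\,\ddbar u_t\wedge\omega$ uniformly in $R$ and $t$ and then let $R\to\infty$ by monotone convergence. Integrating by parts once, this integral equals
\[-\int \sqrt{-1}\,\partial(\chi_R f)\wedge\dbar u_t\wedge\omega,\]
which is bounded by
\[C\int \chi_R|\partial f||\nabla u_t| + C\int f|\nabla\chi_R||\nabla u_t|.\]
The second term is at most $CA\cdot\frac{A}{R}\int f\le C$, by the $L^p$ bound of Lemma \ref{lem:section-est-non-smt}.

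For the first term, Lemma \ref{lem:currents} gives
\[\partial\n s\n^{p}_{qu_t}=\tfrac p2\n s\n^{p-2}_{qu_t}\langle s,\partial_{qu_t}s\rangle_{qu_t},\]
so $|\partial f|\le \frac p2\n s\n^{p-1}_{qu_t}\n\partial_{qu_t}s\n_{qu_t}$. Using $|\nabla u_t|\le A$ and Cauchy--Schwarz,
\[\int\chi_R|\partial f||\nabla u_t|\le C\Big(\int\n s\n^{p}_{qu_t}\Big)^{1/2}\Big(\int \n s\n^{p-2}_{qu_t}\n\partial_{qu_t}s\n^2_{qu_t}\Big)^{1/2},\]
which is bounded by Lemma \ref{lem:section-est-non-smt}. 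Alternatively, one can write $\partial_{qu_t}s=ds+qs\,\partial u_t$ and use Corollary \ref{lem:integrability} together with the $L^p$ bound. Either way the constant depends only on $p,q,A$ and $\int\n s\n^p_{q\psi}$.

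The main obstacle is justifying the integration by parts when $p<2$, where $f$ is not smooth along $\{s=0\}$, and at $t=0$, where $u_0$ is not smooth. For the first issue I would replace $f$ by $(\n s\n^2_{qu_t}+\eta^2)^{p/2}-\eta^p$. Its gradient is dominated pointwise by the same integrable quantity $\frac p2\n s\n^{p-1}\n\partial s\n$, so dominated convergence as $\eta\to0$ applies, exactly as in the proof of Lemma \ref{lem:currents}. For $t=0$ I would use the uniform-in-$t$ bounds and the weak continuity of Bedford--Taylor products under uniform convergence $u_t\to\phi$, on each compact set $\mathrm{Supp}\,\chi_R$, to pass the estimate to the limit.
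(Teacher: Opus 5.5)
Your proposal is correct and follows essentially the same route as the paper. You integrate by parts against the cut-offs $\chi_R$, bound the two resulting terms using $|\nabla u_t|\le A$, $|\nabla\chi_R|\le A/R$ and the uniform-in-$t$ bounds of Lemma~\ref{lem:section-est-non-smt} (with Cauchy--Schwarz for the gradient term), obtain the logarithmic estimate from $\log(1+x)\le x$, and handle $t=0$ by Bedford--Taylor continuity under the locally uniform convergence $u_t\to\phi$. Your explicit $\eta$-regularization of $\n s\n^p_{qu_t}$ for $p<2$ only spells out a justification that the paper leaves to Lemma~\ref{lem:currents}.
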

\begin{proof}
We prove the second estimate. The first one then follows from the inequality $\ln(1+x) \leq x.$ Let $\chi_R$ be the family of cut-off functions from Lemma \ref{lem:cut-off}, and let  $$I_t(R) := \int_X \chi_R  \n s \n^{p}_{qu_t}\ddbar u_t \wedge\omega.$$ The required estimate follows from the following claim: There exists a constant $C>0$ such that  $I_t(R) \leq C$ for all $0\leq t\leq 1$ and for all $R\geq R_0$, where $R_0$ is as in Lemma \ref{lem:cut-off}. First assume that $t>0$. Lemma \ref{lem:currents} implies that we can integrate by parts. Doing so, we see that $$ I_t(R) = \int_X\n s\n^p_{qu_t}\partial\chi_R\wedge\bar\partial u_t\wedge\omega + \int_X\chi_R\partial\n s\n_{q u_t}^p \wedge\dbar u_t\wedge\omega.$$ The first term is clearly uniformly bounded by Lemma \ref{lem:section-est-non-smt} and the uniform Lipschitz control on $u_t$. For the second term we note that if $\al,\be$ are two $(1,0)$ forms then $|\al\wedge\bar\be\wedge\omega| \leq |\al||\be|\omega^2,$ and since $u_t$ is Lipschitz with Lipschitz constant bounded by $A$, we then have that $$ \Big|\int_X\chi_R\partial\n s\n_{qu_t}^p \wedge\dbar u_t \wedge\omega\Big| \leq A\int_X\chi_R|\nabla \n s\n^p_{qu_t}| \omega^2.$$ It is easy to see (using Lemma \ref{lem:currents}) that  $$\int_X\chi_a|\nabla \n s\n^p_{qu_t}| \omega^2\leq C\left(\int_X\chi_R\n s\n^p_{qu_t} \omega^2 + \int_X\chi_R\n\nabla_{qu_t}s\n_{qu_t} \Vert s \Vert_{qu_t}^{p-1} \omega^2 \right).$$ Each term is uniformly bounded by the lemmata above. Finally, the claimed estimate at $t = 0$ follows from standard Bedford-Taylor theory (cf. \cite{Dem-book}) and the fact that $u(x,t)$ converges uniformly to $\phi(x)$ on compact sets.

\end{proof}

\begin{lem}\label{lem:poly-growth-sections}Let $(X^n,\omega)$ be a K\"ahler manifold with $BK_{\omega}>0$. Fix a $o\in X$. There exists a constant $C>0$ such that for any $x\in X$, $r>0$ and $t\in [0,1]$, we have the following:
\begin{enumerate}
\item For any $s\in \Ga^p_{q\psi}$,
\begin{align*}
\n s\n_{qu_t}^p(x) \leq C(1 + d(x,o)^{2n}).
\end{align*}
\item For any $s\in \Ga^p_{q\psi}$ and integer $N \geq p-1$,
$$\Vert ds^{N+1} \Vert_{qu_t}(x) = (N+1)\Vert ds\Vert_{qu_t} \Vert s \Vert_{qu_t}^{N} (x)\leq C (1 + d(x,o)^{2n})^{\frac{N+1}{p}}.$$
\item For any $\sigma\in \Ga^2_{q\psi}(K_X)$, $$\n \sigma \n_{qu_t}^2(x) \leq C(1+d(o,x)^{2n}).$$
\end{enumerate}
\end{lem}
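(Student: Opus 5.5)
We use one mechanism for all three parts: find a holomorphic section of a Hermitian bundle whose curvature is bounded above by a constant multiple of $\omega$, so that a suitable power of its norm $f$ satisfies $\Delta f\geq -A'f$ distributionally. Corollary \ref{cor:poly-growth-general} then gives $f(x)\leq C(1+d(o,x)^{2n})\int_{B(x,1)}f\leq C(1+d(o,x)^{2n})\int_X f$ (here $m=2n$). We bound $\int_X f$ by the integral estimates of Lemma \ref{lem:section-est-non-smt} and Corollary \ref{lem:integrability}. We work throughout with the smooth weight $q\psi$. At the end we pass to $qu_t$ using $\psi\leq u_t+A\sqrt{1-t}$, i.e. $e^{-qu_t}\leq e^{qA}e^{-q\psi}$ uniformly in $t\in[0,1]$; every norm in the statement is monotone in the weight factor, so the constant only changes.

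\textbf{Part (1).} Set $f=\n s\n_{q\psi}^p$. Apply the last identity of Lemma \ref{lem:currents} with $\ga=p/2$ and $\sqrt{-1}\Theta_{h_{q\psi}}=q\ddbar\psi\leq qA\omega$. Taking the trace gives $\Delta f\geq -\frac{pqA}{2}\,c_n f$ distributionally, and $f$ is continuous. The mean value argument then yields $f(x)\leq C(1+d^{2n})\int_X\n s\n^p_{q\psi}$, which is finite because $s\in\Ga^p_{q\psi}$.

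\textbf{Part (3).} Take $f=\n\sigma\n^2_{q\psi}$, the norm for the metric $e^{-q\psi}(\omega^n)^{-1}$ on $K_X$. Its curvature is $q\ddbar\psi-Ric\leq qA\omega$ because $Ric\geq 0$. The same identity gives $\Delta f\geq -C f$, and $\int_X f=\int_X\sigma\wedge\bar\sigma e^{-q\psi}<\infty$.

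\textbf{Part (2).} This is the main point, since the exponent must be exactly $\frac{N+1}{p}$. Consider the holomorphic section $\xi=s^N ds$ of $T^*X\otimes L^{N+1}$, with metric induced by $\omega$ and $e^{-(N+1)q\psi}$. Here $ds$ is the ordinary holomorphic differential; for a holomorphic function it is a holomorphic section of $T^*X$, and this is the $ds$ appearing in Corollary \ref{lem:integrability}.
\begin{enumerate}
\item \emph{Curvature bound.} Since $BK>0$, $T^*X$ is Griffiths semi-negative. Hence the curvature of this bundle is bounded above by $(N+1)qA\omega$ in the Griffiths sense.
\item \emph{Quasi-subharmonicity of a fractional power.} Instead of $\n\xi\n$ itself (Lemma \ref{lem:bochner-Weitzenbock-distributional}), I use that $\log\n\xi\n$ is quasi-plurisubharmonic: $\ddbar\log\n\xi\n\geq -A''\omega$, which is the standard Griffiths-curvature computation. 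Consequently, for $\ga=\frac{p}{N+1}$, the function $g=\n\xi\n^{\ga}=e^{\ga\log\n\xi\n}$ satisfies $\ddbar g\geq -\ga A'' g\,\omega$, and so $\Delta g\geq -A'g$ distributionally.
\item \emph{Justification at the zeros.} The inequality for $g$ must be made rigorous across the zero set of $\xi$. I would regularize with $(\n\xi\n^2+\ep^2)^{\ga/2}$, exactly as in the proofs of Lemmas \ref{lem:currents} and \ref{lem:bochner-Weitzenbock-distributional}. This step is where I expect the main technical care; the sign works because $\ga>0$.
\item \emph{Integral bound by H\"older.} We have
$$\int_X g=\int_X\n ds\n^{\frac{p}{N+1}}_{q\psi}\,\n s\n_{q\psi}^{\frac{pN}{N+1}}.$$
Write the integrand as $(\n ds\n^2\n s\n^{p-2})^{a}\,\n s\n^{b}$ with $a=\frac{p}{2(N+1)}$. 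The hypothesis $N\geq p-1$ gives $p\leq 2(N+1)$, so $a\leq1$. A direct computation gives $b/(1-a)=p$. H\"older's inequality with exponents $1/a$ and $1/(1-a)$ then yields
$$\int_X g\leq\Big(\int_X\n ds\n^2_{q\psi}\n s\n^{p-2}_{q\psi}\Big)^{a}\Big(\int_X\n s\n^p_{q\psi}\Big)^{1-a}\leq C,$$
by Corollary \ref{lem:integrability}.
\item \emph{Conclusion.} The mean value bound gives $g(x)\leq C(1+d^{2n})$. Therefore
$$\n ds^{N+1}\n=(N+1)\,g^{\frac{N+1}{p}}\leq C(1+d^{2n})^{\frac{N+1}{p}}.$$
The exponent is exact because the mean value inequality is applied to $g$ and not to $\n\xi\n$. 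Finally, transfer to the weight $qu_t$ as described at the start.
\end{enumerate}
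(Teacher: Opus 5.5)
Your proof is correct. Parts (1) and (3) follow the paper's argument: Lemma \ref{lem:currents}, then Corollary \ref{cor:poly-growth-general}, then the comparison $e^{-qu_t}\le Ce^{-q\psi}$. Part (2) takes a genuinely different route.

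\textbf{The paper's route for part (2).} The paper applies the mean value inequality to $\|s^N ds\|$ itself, using Lemma \ref{lem:bochner-Weitzenbock-distributional}. It then splits $\|s\|^N=\|s\|^{N-p+1}\|s\|^{p-1}$ inside the local integral $\int_{B(x,1)}\|ds\|\|s\|^N$. The first factor is bounded pointwise on $B(x,1)$ by part (1). The remaining integral $\int_X\|ds\|\|s\|^{p-1}$ is bounded by the Cauchy--Schwarz estimate of Corollary \ref{lem:integrability}. This already gives the exact exponent $1+\frac{N-p+1}{p}=\frac{N+1}{p}$. So your remark that exactness requires applying the mean value inequality to $g$ rather than to $\|\xi\|$ is not accurate.

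\textbf{Your route for part (2).} You need quasi-subharmonicity of the fractional power $\|\xi\|^{p/(N+1)}$. This is not in the paper, but it does hold. Suppose the curvature is bounded above by $A\omega$ in the Griffiths sense, and set $F=\|\xi\|^2+\epsilon^2$. The pointwise Cauchy--Schwarz bound $\sqrt{-1}\partial F\wedge\dbar F\le \|\xi\|^2\sqrt{-1}\langle D'\xi,D'\xi\rangle$ gives $\ddbar F^{\gamma/2}\ge -\frac{\gamma A}{2}F^{\gamma/2}\omega$ for every $\gamma>0$. Letting $\epsilon\to 0$ then justifies your step (c).

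\textbf{Comparison.} In exchange for that extra estimate, you obtain a global integral bound by H\"older. Your bookkeeping $b/(1-a)=p$ is right, and when $N+1=p$ it reduces exactly to the paper's Cauchy--Schwarz step. Your part (2) no longer depends on part (1). It also only uses $p\le 2(N+1)$, which is weaker than $N\ge p-1$. The paper's route has the advantage of reusing Lemma \ref{lem:bochner-Weitzenbock-distributional} verbatim. In both arguments the constant depends on $s$, through $\int_X\|s\|^p$ and $\int_X\|ds\|^2\|s\|^{p-2}$. That is the only sensible reading of the statement.
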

Note that $ds$ is well defined since $L$ is the trivial line bundle. Indeed, $ds$ is a section (though not necessarily holomorphic) of $L\otimes T^*M$ and we equip the bundle with the family of metrics $e^{-qu_t}\omega^{-1}$.

\begin{proof} In each of the cases it suffices to obtain polynomial bounds for $\n\cdot\n_{q\psi}$ since $e^{-qu_t}\leq C e^{-q\psi}$.
\begin{enumerate}
\item We first prove this for the weight $u_1 = \psi$. By Lemma \ref{lem:currents}, since $\ddbar\psi \leq A\omega$ for some $A>0$, we have that $$\Delta\n s\n_{q\psi}^p \geq -A\n s\n_{q\psi}^p$$ holds in the distributional sense for some constant $A>0$. The Lemma is then proved by applying Corollary \ref{cor:poly-growth-general} with $f = \n s\n_{q\psi}^p$, and using the fact that $$\int_{B(x,1)}f = \int_{B(x,1)} |s|^p e^{-pq\psi/2} < C$$ for some uniform constant $C>0$.
\item The bundle $L\otimes T^*M$ with the metric $e^{-q\psi}\omega^{-1}$ has curvature bounded above by $A$ since $T^*M$ is Griffiths negative with the Hermitian metric $\omega^{-1}$, and so by Lemma \ref{lem:bochner-Weitzenbock-distributional}, we have that $$\Delta\n ds^{N+1}\n \geq -A \n ds^{N+1}\n,$$ and so by Corollary \ref{cor:poly-growth-general},
\begin{align*}
\n ds^{N+1}\n &\leq C(1+ d(x,o)^{2n})\int_{B(x,1)}\n ds\n\n s\n^{N}\\
&\leq C(1+d(x,o)^{2n})(1+ d(x,o)^{2n})^{\frac{N-p+1}{p}}\int_{B(x,1)}\n ds \n \n s\n^{p-1} \\
&\leq  C(1+d(x,o)^{2n})^{\frac{N+1}{p}}.
\end{align*}
Note that we used the first part to obtain the bound in the second line.
\item Since the curvature of the metric $(\omega^n)^{-1}$ is $Ric \leq 0$, once again by  Lemma \ref{lem:currents}, $$\Delta\n\sigma\n^2 \geq -A\n \sigma\n^2 $$ for some $A>0$. Once again by Corollary  \ref{cor:poly-growth-general} we obtain the desired polynomial bound.
\end{enumerate}
\end{proof}

\subsection{The main estimate}\label{subsec:mainestimate}  We now prove a general B\'ezout estimate. This next proposition is really the key technical input in the proof of the main theorem, and will be used several times. First we introduce some more notation. We let $$\Ga_{qu}:= \sum_{p\in(0,\infty)} \Ga^p_{qu},$$ where the right hand side denotes the vector space spanned by $\Ga^p_{qu}$ as $p$ varies over $(0, \infty)$. For any element $s\in \Ga_{qu}$ for the form $s = \sum_{i=1}^n s_i$, we set $$\tilde s := \prod_{i=1}^ns_i.$$ If $s_i\in \Ga^{p_i}_{qu}$, then clearly $\tilde s\in \Ga^{r}_{nqu}$ where $r^{-1} = \sum_{i=1}^n p_i^{-1}$. We also set $p(s) = \max_i p_i$.
\begin{prop}\label{prop:Bezout}
 Let $(X^2,\omega)$ be a K\"ahler surface satisfying $\sec_\omega > 0.$  Let $s_i\in \Gamma^{p_{i}}_{q\psi}$ and $t_j\in \Gamma^{a_j}_{b\psi}$ for $i\in\{1,\cdots,n\}$ and $j\in \{1,\cdots,m\}$. Let $p_{max} = \max_ip_i$ and $a_{max} = \max_j a_j$.  We set $$s = \sum_{i=1}^ns_i,~t = \sum_{j=1}^lt_j,~\tilde s = \prod_{i=1}^ns_i\text{ and }\tilde t = \prod_{j=1}^mt_j.$$ Finally, let $\sigma\in \Ga^2_{\la\psi}(K_X)$ and $\tau \in \Ga^2_{\mu\psi}(K_X)$. Now suppose that $v_1,v_2$ are non-negative continuous functions with the following properties:
  \begin{enumerate}
  \item There exists a constant $N_1\geq 17p_{max} $, $M_1\geq p_{max}/2$, and constants $A_1,B_1,C_1 \geq 0$ and $\ep_1>0$ such that $$v_1 \leq A_1 \log \Big(1 + \frac{\n s\n_{q\phi}^{2M_1}}{\ep_1^2}\Big)+ B_1\log\Big(1+ \frac{\Vert ds \Vert_{q\phi}^2 \n \tilde s \n_{q\phi}^{2N_1}}{\ep_1^2}) + C_1\log\Big(1+\frac{\Vert \sigma \Vert_{\la\phi}^{2}}{\ep_1^2}\Big).$$
  \item Similarly suppose there are constants  $N_2\geq 17a_{max}$, $M_2\geq a_{max}/2$ and constants $A_2,B_2,C_2\geq 0$ and $\ep_2>0$ such that $$v_2 \leq A_2 \log \Big(1 + \frac{\n t\n_{b\phi}^{2M_2}}{\ep_2^2}\Big)+ B_2\log\Big(1+ \frac{\Vert dt \Vert_{b\phi}^2 \n \tilde t \n_{b\phi}^{2N_2}}{\ep_2^2}) + C_2\log\Big(1+\frac{\Vert \tau \Vert^{2}_{\mu\phi}}{\ep_2^2}\Big)$$ where the norms are with respect to $\phi$.
  \item Finally suppose that there exist constants $\be_1,\be_2\geq 0$ such that $$\zeta_i:= \ddbar v_i + \be_i\ddbar\phi\geq 0.$$
  \end{enumerate}Then
  \begin{equation*}
\int_X \zeta_1\wedge\zeta_2 \leq \be_1\be_2\int_X(\ddbar\phi)^2 \leq C.
  \end{equation*}
where $C$ is independent of the constants $\ep_1,\ep_2$ and the only dependence on the sections is via $\be_1,\be_2$ ie. the positivity of $v_1$ and $v_2$. Here the integral is interpreted in the sense of Bedford-Taylor theory.
  \end{prop}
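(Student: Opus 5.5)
Write $\zeta_i = \ddbar v_i + \be_i\ddbar\phi$ and $\om_\phi := \ddbar\phi$. Expanding gives
$$\int_X\zeta_1\wedge\zeta_2 = \int_X \ddbar v_1\wedge\zeta_2 + \be_1\int_X \om_\phi\wedge\ddbar v_2 + \be_1\be_2\int_X\om_\phi^2 .$$
The plan is to show that every cross term involving $\ddbar v_i$ has non-positive contribution in the limit. The natural tool is integration by parts against the cut-off functions $\chi_R$ of Lemma \ref{lem:cut-off}. Concretely, we first prove
$$\int_X\chi_R\,\zeta_1\wedge\zeta_2 \le \be_1\be_2\int_X\om_\phi^2 + E(R), \qquad E(R)\to 0 \text{ as } R\to\infty .$$
Since $\zeta_1\wedge\zeta_2\ge 0$ is a positive measure (Bedford--Taylor, as the potentials $v_i+\be_i\phi$ are continuous psh), monotone convergence then gives the claim. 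The bound $\be_1\be_2\int\om_\phi^2\le C$ follows from Theorem \ref{thm:main-ma}.

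\textbf{Reducing the cross terms to boundary terms.} Integration by parts in the Bedford--Taylor sense gives
$$\int\chi_R\,\ddbar v_1\wedge\zeta_2 = \int v_1\,\ddbar\chi_R\wedge\zeta_2 .$$
To avoid working directly with the non-smooth $\phi$ and $v_i$, we will first do the computation with $\phi$ replaced by $u_t$, and analogously with $v_i$ built from $u_t$-norms. These are smooth for $t>0$, and we pass to the limit $t\to 0$ using uniform convergence together with Bedford--Taylor continuity. Since $v_1\ge 0$ but $\ddbar\chi_R$ has no sign, we bound the term by
$$\frac{A}{R}\int_{B(o,\theta^{-1}R)\setminus B(o,\theta R)} v_1\,(\omega\wedge\zeta_2).$$
Expanding $\zeta_2$ in the same way leaves two kinds of integrals:
\begin{itemize}
\item $\frac1R\int_{\mathrm{ann}} v_1\,\omega\wedge\om_\phi$, and
\item $\frac1R\int v_1\,\omega\wedge\ddbar v_2$. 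Integrating by parts once more, this becomes $\frac1R\int v_2\,\ddbar(\chi_R v_1)\wedge\omega$, which produces terms like $\frac{1}{R^2}\int v_1v_2\,\omega^2$, $\frac1R\int |\nabla v_1||\nabla v_2|\,\omega^2$, and $\frac1R\int v_2\,\ddbar v_1\wedge\omega$.
\end{itemize}

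\textbf{Controlling the terms via the hypotheses on $v_i$.} We use $\log(1+x/\ep^2)\le C_\delta\,\ep^{-2\delta}x^\delta$ for small $\delta>0$; the constant depends on $\ep_i$, which is harmless since the extra terms vanish as $R\to\infty$. This bounds $v_1$ by small powers of $\n s\n^{2M_1}$, $\n ds\n^2\n\tilde s\n^{2N_1}$ and $\n\sigma\n^2$. Lemma \ref{lem:poly-growth-sections} then gives:
\begin{itemize}
\item polynomial growth of each such quantity, at most $(1+d)^{K}$;
\item hence $v_1\le C\log(2+d(x,o))$ pointwise, once we use $\log$ itself rather than a power.
\end{itemize}
Here $ds^{N+1}$ growth is controlled because $N_1\ge 17p_{max}$ ensures each $N_1\ge p_i-1$. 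We apply the lemma to each $s_i$ and to $\tilde s$, using $\n ds\n\n\tilde s\n^{N_1}\le\sum_i \n ds_i\n\,\n\tilde s\n^{N_1}$ and absorbing the factors of $\tilde s$.

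Gradients are handled similarly. By Lemma \ref{lem:currents}, $|\nabla v_1|$ is bounded by combinations like $|\partial\phi|$ plus $\n\nabla s\n/\n s\n$-type quotients. These are integrable against the weights thanks to Lemma \ref{lem:section-est-non-smt}, Corollary \ref{lem:integrability} and Lemma \ref{lem:est-lpnorm-ddbarphi-omega}.

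The $\om_\phi$ terms are bounded using Lemma \ref{lem:est-lpnorm-ddbarphi-omega}, applied with $\log(1+\n s\n^p)$. Since $\log(1+x^{2M/p})\le (2M/p)\log(1+x)+C$ for $M\ge p/2$, this is exactly where the condition $M_1\ge p_{max}/2$ enters.

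\textbf{Main obstacle: the volume factor.} The genuine difficulty is that terms like $\frac1{R^2}\int_{\mathrm{ann}} v_1v_2\,\omega^2$ need not be small when the volume grows like $R^4$. They must instead be controlled through the integrability of the sections themselves rather than through pointwise growth. The idea is to split where $\n s\n\le 1$ and where $\n s\n\ge 1$:
\begin{itemize}
\item on $\{\n s\n\ge 1\}$, bound $\log(1+x)\le C x^{p}$ with $x=\n s\n$, giving an integrable quantity;
\item on $\{\n s\n\le 1\}$, we would rather use $v_1\lesssim\ep^{-2}\cdot(\text{quantities})$.
\end{itemize}
The latter is again integrable, since $\n ds\n^2\n\tilde s\n^{2N_1}\le\n ds\n^2\n s_i\n^{p_i-2}\times(\text{bounded})$ for large $N_1$; this is presumably where the specific constant $17$ enters. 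With $v_1$ dominated by $L^1$ functions plus polynomially-growing ones, the $R^{-1}$ and $R^{-2}$ prefactors, together with the tails of convergent integrals over annuli, should give $E(R)\to0$. I expect this bookkeeping to be the hardest step, especially keeping the $\ep$-dependence out of the final bound: it appears only in the vanishing error terms, never in the main term.
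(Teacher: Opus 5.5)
Your outer frame matches the paper. You pair with a cut-off and integrate by parts in the Bedford--Taylor sense, leaving $\be_1\be_2\int\chi_R^3(\ddbar\phi)^2$ plus two error terms $I_1(R)=\int v_1\ddbar\chi_R^3\wedge\zeta_2$ and $I_2(R)=\int v_2\ddbar\chi_R^3\wedge\ddbar\phi$. You then conclude by monotone convergence and Theorem \ref{thm:main-ma}. The way you propose to kill the error terms, however, has a genuine gap.

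\textbf{You differentiate $v_i$, which the hypotheses do not allow.} The hypotheses give only an \emph{upper bound} on $v_i$ together with $\ddbar v_i+\be_i\ddbar\phi\ge 0$. The function $v_i$ is not equal to the logarithmic expression; in the applications, e.g.\ Lemma \ref{lem:finite-closure}, it is only dominated by a sum of such terms. So Lemma \ref{lem:currents} tells you nothing about $\nabla v_i$, and the only upper bound on $\ddbar v_1$ is $\ddbar v_1\le\zeta_1$. For the same reason you cannot \emph{build $v_i$ from $u_t$-norms}; the paper instead smooths $v_2+\be_2\phi+\psi$ by Richberg.

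Your second integration by parts of $\frac1R\int v_1\,\omega\wedge\ddbar v_2$ moves derivatives onto $v_1$. It produces terms like $\frac1R\int|\nabla v_1||\nabla v_2|\,\omega^2$ that you cannot estimate. Even $\frac1R\int\chi v_2\,\ddbar v_1\wedge\omega\le\frac{C\log R}{R}\int\chi\,\zeta_1\wedge\omega$ is not small with the information available, since $\int_{B(o,R)}\zeta_1\wedge\omega$ can grow like a volume.

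\textbf{The order of operations matters.} You must use the upper bound on $v_1$ while $\zeta_2$ is still a positive current, i.e.\ $I_1(R)\le\frac{C}{R}\int\chi_R W\,\omega\wedge\zeta_2$ for a suitable weight $W\ge v_1$. The cube in $\chi_R^3$ gives $\ddbar\chi_R^3\le\frac CR\chi_R\,\omega$ and so keeps a smooth cut-off, which your annulus indicator loses. Only after this should you split $\zeta_2$ and integrate by parts, so that $\ddbar$ falls on $\chi_R W$ and never on $v_1$ or $v_2$.

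\textbf{Your domination for the volume problem is also not correct as stated.} The set $\{\n s\n\le1\}$ does not control the individual $\n s_i\n$, and $\tilde s/s_i$ is not bounded. A weight like $\n ds\n^2\n s\n^{p-2}$ also lacks the one-sided $\ddbar$ bound the next integration by parts needs. The missing idea is Lemmas \ref{lem:v-log-est} and \ref{aux}. Apply $\log(1+x)\le C_\delta x^\delta$ with $\delta=\frac{(4+\al)p}{8(N+1)}$ to get $\log(1+\frac{2}{\ep^2}\n ds\n^2\n s\n^{2N})\le C(\n ds\n\,\n s\n^{N'})^{2\delta}\n s\n^{p}$, where $N'=N-\frac{p}{2\delta}\ge p-1$. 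By Lemma \ref{lem:poly-growth-sections} the first factor is $O(R^{\al})$ on $B(o,\theta^{-1}R)$. With $\al=1/4$ this needs exactly $N+1\ge 17p$, which is where the $17$ comes from.

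Combined with $\delta=p_i/2M_1\le 1$ for the $\n s\n^{2M_1}$ term, this gives $v_1\le CR^{1/4}\sum_i\n s_i\n^{p_i}+C\n\sigma\n^2$. The $\delta=p_i/2M_1\le1$ step, not the $\ddbar\phi$ terms, is where $M_1\ge p_{max}/2$ enters. The weights $\n s_i\n^{p_i}$ and $\n\sigma\n^2$ have exactly the structure needed:
\begin{itemize}
\item they are integrable and have integrable gradients (Lemma \ref{lem:section-est-non-smt});
\item they satisfy $\ddbar\n s_i\n^{p_i}+\frac{p_iq}{2}\n s_i\n^{p_i}\ddbar\phi\ge 0$;
\item they are integrable against $\ddbar\phi\wedge\omega$ (Lemma \ref{lem:est-lpnorm-ddbarphi-omega}).
\end{itemize}
Integrate by parts onto $\chi_R\n s_i\n^{p_i}$ and use only the pointwise bound $v_2\le C\sqrt{R}$. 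This gives $\int\chi_R\n s_i\n^{p_i}\omega\wedge\zeta_2=O(\sqrt R)$, hence $I_1(R)=O(R^{-1}\cdot R^{1/4}\cdot R^{1/2})\to 0$. The term $I_2(R)$ is handled the same way.
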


  \begin{lem}\label{lem:v-log-est}
Let $s\in \Ga^p_{q\psi}$, $\ep>0$ and $\al \in (0,1)$. Let $N+1\geq (4+\al)\al^{-1}p$. Then there exists constant $C>0$ independent of $R$ and $t\in [0,1]$ such that $$\log\Big(1 + \frac{2}{\ep^2}\n ds\n_{qu_t}^2 \n s\n_{qu_t}^{2N}\Big) \leq C (1+R)^\al\n s\n_{qu_t}^p$$ on $B(o,\theta^{-1}R)$.  In particular, if $N+1 \geq 17p$, then $$\log\Big(1 + \frac{2}{\ep^2}\n ds\n_{qu_t}^2 \n s\n_{qu_t}^{2N}\Big) \leq C (1+R)^{\frac{1}{4}}\n s\n_{qu_t}^p$$
  \end{lem}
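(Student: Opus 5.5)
We combine the pointwise polynomial bounds of Lemma \ref{lem:poly-growth-sections} with the elementary inequality $\log(1+x)\leq C_\al x^{\al'}$ for small exponents $\al'$. The argument uses a case split according to whether $\n s\n_{qu_t}$ is large or small, since the left-hand side involves $\n s\n^{2N}$ while the right-hand side only has $\n s\n^p$.

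First, by Lemma \ref{lem:poly-growth-sections}(2), applied with the integer $N\geq p-1$, we have on $B(o,\theta^{-1}R)$ that $\n ds\n_{qu_t}\n s\n_{qu_t}^{N}\leq C(1+R)^{2n(N+1)/p}$, uniformly in $t\in[0,1]$. Write $x:=\n s\n_{qu_t}(y)$ and $X:=\frac{2}{\ep^2}\n ds\n^2_{qu_t}\n s\n^{2N}_{qu_t}$, and note that $n=2$ here.

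\emph{Case $x^p\geq (1+R)^{-\al}$.} Use only the crude bound $X\leq C\ep^{-2}(1+R)^{8(N+1)/p}$, so that $\log(1+X)\leq C\log(2+R)+C_\ep$. Then
\[
C\log(2+R)+C_\ep\leq C'(1+R)^{\al}(1+R)^{-\al}\leq C'(1+R)^{\al}x^p .
\]
This already gives the claim whenever $x^p$ is not too small.

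\emph{Case $x^p<(1+R)^{-\al}$.} This is the main obstacle, because now $\log$ of a polynomial in $R$ cannot be absorbed by $x^p$. Here we exploit the extra vanishing factor. Write $X=\frac{2}{\ep^2}\big(\n ds\n\n s\n^{N'}\big)^2 x^{2(N-N')}$, choosing $N'$ as the least integer $\geq p-1$, so that the gradient factor is bounded by $C(1+R)^{4(N'+1)/p}\leq C(1+R)^{8}$. Since $\log(1+X)\leq X$, it suffices to have
\[
\ep^{-2}(1+R)^{8}x^{2(N-N')}\leq C(1+R)^{\al}x^p ,
\]
that is, $x^{2(N-N')-p}\leq C(1+R)^{\al-8}$. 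Using $x<(1+R)^{-\al/p}$, this holds provided $\frac{\al}{p}\big(2(N-N')-p\big)\geq 8-\al$. This is exactly the type of condition encoded in $N+1\geq(4+\al)\al^{-1}p$, up to adjusting how the exponent of $(1+R)$ in the gradient bound is tracked. If the constants do not match, we refine the case threshold to $x^p\geq(1+R)^{-\al}$ versus a smaller power, or split $\n ds\n\n s\n^N$ as in Lemma \ref{lem:poly-growth-sections}(2) with the $N$-power kept. The dependence on $\ep$ enters only through multiplicative constants, which is acceptable since $C$ may depend on $\ep$.

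Finally, substituting $\al=1/4$ gives $(4+\al)\al^{-1}p=17p$, which yields the stated special case.
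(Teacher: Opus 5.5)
Your Case 1 contains a false step. The inequality $C\log(2+R)+C_\varepsilon\le C'(1+R)^{\alpha}(1+R)^{-\alpha}=C'$ cannot hold uniformly in $R$, since $\log(2+R)\to\infty$. So at points where $\Vert s\Vert^p_{qu_t}$ is of size $(1+R)^{-\alpha}$, the crude bound $X\lesssim_\varepsilon (1+R)^{8(N+1)/p}$ gives a left-hand side of order $\log R$ against a right-hand side of order one.

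Moving the threshold does not rescue the argument under the stated hypothesis, because your Case 2 has no slack. For threshold $(1+R)^{-\alpha}$, and integer $p$ so that $N'=p-1$, your condition $\frac{\alpha}{p}\big(2(N-N')-p\big)\ge 8-\alpha$ is \emph{equivalent} to $N+1\ge(4+\alpha)\alpha^{-1}p$. Suppose you raise the threshold to $(1+R)^{-\alpha'}$ with $\alpha'<\alpha$, so that Case 1 works. Then Case 2, which uses $\log(1+X)\le X$, requires $2\alpha'(N+1)\ge(8-\alpha+3\alpha')p$, and this is not implied by the hypothesis. Raising the threshold only by a factor $\log(2+R)$ instead leaves Case 2 off by a power of $\log(2+R)$.

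The missing idea is to keep the factor $\Vert s\Vert^{2(N-N')}_{qu_t}$ in \emph{both} regimes rather than bounding $X$ crudely. Set $y=(1+R)^{\alpha}\Vert s\Vert^{p}_{qu_t}$ and $K=2(N-N')/p$. With $N'+1=p$, your Case 2 computation actually gives, everywhere on $B(o,\theta^{-1}R)$, the bound $X\le C_\varepsilon\, y^{K}(1+R)^{8-\alpha K}\le C_\varepsilon\, y^{K}$, because $\alpha K\ge 8$ is exactly the hypothesis. Since $K\ge 1$, one has $\log(1+C_\varepsilon y^{K})\le C y$ for all $y\ge 0$, so no case split is needed.

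This is equivalent to the paper's one-line proof. There one applies $\log(1+x)\le C_\delta x^{\delta}$ with $\delta=\frac{(4+\alpha)p}{8(N+1)}$ and writes $\Vert ds\Vert^{2\delta}\Vert s\Vert^{2N\delta}=\big(\Vert ds\Vert\,\Vert s\Vert^{N'}\big)^{2\delta}\Vert s\Vert^{p}$ with $N'=N-\frac{p}{2\delta}$. Then $N'+1=\frac{\alpha}{4+\alpha}(N+1)\ge p$, and the power of $(1+R)$ produced by Lemma~\ref{lem:poly-growth-sections}(2) is exactly $\frac{8\delta}{p}(N'+1)=\alpha$.

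Separately, choosing $N'$ as the least integer $\ge p-1$ gives $N'+1=\lceil p\rceil$. The squared gradient factor entering $X$ is then only bounded by $(1+R)^{8\lceil p\rceil/p}$, which equals $(1+R)^{8}$ only for integer $p$. For non-integer $p$, such as $p=2/k$ with $k\ge 3$ (the case relevant to $R^k_q$), the condition you would need, $N+1\ge(4+\alpha)\alpha^{-1}\lceil p\rceil$, does not follow from the hypothesis. The paper avoids this by applying Lemma~\ref{lem:poly-growth-sections}(2) directly with the generally non-integer exponent $N'$.
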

\begin{proof}
We will use the following elementary inequality: For any $\delta\in (0,1]$, there exists a constant $C_\delta>0$ such that for all $x\in [0,\infty)$,
\begin{align}\label{eq:log-poly}
 \log(1+x)\leq C_\delta x^\delta.
 \end{align}
 Applying this, for a $\delta$ to be specified shortly, we have that there exists a $C>0$ such that
 \begin{align*}
  \log\Big(1 + \frac{2}{\ep^2}\n ds\n_{qu_t}^2\n s\n_{qu_t}^{2N}\Big)&\leq \frac{C}{\ep^{2\delta}} \n ds\n_{qu_t}^{2\delta}\n s\n_{qu_t}^{2N\delta}\\
  &=\frac{C}{\ep^{2\delta}} \Big(\n ds\n_{qu_t} \n s \n_{qu_t}^{N - \frac{p}{2\delta}}\Big)^{2\delta}\n s\n_{qu_t}^{p}\\
  &\leq \frac{C}{\ep^{2\delta}} (1+R)^{\frac{8\delta(N'+1)}{p}}\n s\n_{qu_t}^{p}
 \end{align*}
by Lemma \ref{lem:poly-growth-sections} as long as $$N' := N- \frac{p}{2\delta}\geq p-1.$$
Now let $$\delta = \frac{(4+\al)p}{8(N+1)}.$$ Then we verify that $$N'+1 = \frac{\al}{4+\al}(N+1) \geq p$$ since $N+1 \geq (4+\al)\al^{-1}p$. Furthermore, we also have that $$\frac{8\delta}{p}(N'+1) = \al.$$
\end{proof}

\begin{lem}\label{aux}

There exists $C>0$ independent of $R$ such that
$$ v_1 \leq C(1+R)^{1/4}\sum_i \n s_i\n^{p_i} + C\n \sigma\n^2$$

 and

 $$v_2 < C \sqrt{1+R}$$ on $B(o,\theta^{-1}R).$

\end{lem}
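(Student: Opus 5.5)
The plan is to bound each of the three logarithmic terms in the hypotheses on $v_1,v_2$ separately. We use Lemma \ref{lem:v-log-est} and Lemma \ref{lem:poly-growth-sections}, both applied at $t=0$, where $u_0=\phi$; both are stated for all $t\in[0,1]$, so they apply directly to the $\phi$-norms in the hypotheses. Constants may depend on $\ep_1,\ep_2$, the sections and the exponents, but not on $R$. Two elementary inequalities will be used throughout:
\begin{align*}
\log\Big(1+\sum_i x_i\Big)&\leq \sum_i\log(1+x_i),\\
\log(1+xy)&\leq \log(1+x)+\log(1+y)\quad (x,y\geq 0),
\end{align*}
together with \eqref{eq:log-poly}, namely $\log(1+x)\leq C_\delta x^\delta$ for $\delta\in(0,1]$.

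\emph{The $A_1$ term.} Since $\n s\n_{q\phi}^{2M_1}\leq n^{2M_1}\sum_i\n s_i\n_{q\phi}^{2M_1}$, the first inequality gives
$$\log\big(1+\ep_1^{-2}\n s\n^{2M_1}_{q\phi}\big)\leq \sum_i\log\big(1+n^{2M_1}\ep_1^{-2}\n s_i\n^{2M_1}_{q\phi}\big).$$
We then apply \eqref{eq:log-poly} with $\delta_i=p_i/(2M_1)$. This is admissible, i.e.\ $\delta_i\leq 1$, precisely because $M_1\geq p_{max}/2$. The result is a bound $C\sum_i\n s_i\n^{p_i}$.

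\emph{The $B_1$ term.} We expand
$$\n ds\n^2\n\tilde s\n^{2N_1}\leq n\sum_i \Big(\n ds_i\n^2\n s_i\n^{2N_1}\Big)\prod_{j\neq i}\n s_j\n^{2N_1}.$$
The first inequality splits the logarithm over $i$, and the second splits each summand into two kinds of pieces. The pieces $\log(1+\ep_1^{-2}C\n s_j\n^{2N_1})$ for $j\neq i$ are bounded by $C\n s_j\n^{p_j}$ via \eqref{eq:log-poly} with $\delta=p_j/(2N_1)\leq 1$. The piece $\log\big(1+C\ep_1^{-2}\n ds_i\n^2\n s_i\n^{2N_1}\big)$ is exactly the situation of Lemma \ref{lem:v-log-est}, since $N_1+1\geq 17p_{max}\geq 17p_i$. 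That lemma bounds it by $C(1+R)^{1/4}\n s_i\n^{p_i}$ on $B(o,\theta^{-1}R)$.

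\emph{The $C_1$ term.} Here $\log(1+x)\leq x$ gives $\ep_1^{-2}\n\sigma\n^2_{\la\phi}$. Summing the three contributions, and absorbing the $R$-independent terms into $C(1+R)^{1/4}$, yields the first claimed bound.

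\emph{The bound on $v_2$.} We do not pass through the analogous linear bound, since $\n t_j\n^{a_j}$ grows like $(1+R)^4$, which is far too large. Instead we bound each argument of the logarithm directly by a polynomial in $R$ on $B(o,\theta^{-1}R)$. Lemma \ref{lem:poly-growth-sections}(1) controls $\n t\n_{b\phi}$ and $\n t_j\n_{b\phi}$, and part (3) controls $\n\tau\n_{\mu\phi}$. For $\n dt\n^2\n\tilde t\n^{2N_2}$ we use the same expansion as above; each $\n dt_j\n\n t_j\n^{N_2}$ is polynomially bounded by part (2), using $N_2\geq a_{max}-1$, and the remaining factors by part (1). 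Hence $v_2\leq C\log(2+R)\leq C\sqrt{1+R}$.

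The main obstacle is the mixed term $\n ds\n^2\n\tilde s\n^{2N_1}$. Here $ds$ involves all of the $s_i$, whereas Lemma \ref{lem:v-log-est} only handles a single section paired with its own derivative. The step $\log(1+xy)\leq\log(1+x)+\log(1+y)$ decouples the factors, so that each resulting logarithm is bounded by either Lemma \ref{lem:v-log-est} or \eqref{eq:log-poly}, without multiplying together the polynomial growth rates of the different $s_j$.
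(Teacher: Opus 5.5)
Your proposal is correct and follows the paper's proof essentially step by step. You split $v_1$ into the same three terms, use \eqref{eq:log-poly} with $\delta=p_i/(2M_1)$, decouple $\n ds\n^2\n\tilde s\n^{2N_1}$ via \eqref{eq:elem-ineq-2}--\eqref{eq:elem-ineq-3} before invoking Lemma \ref{lem:v-log-est}, and bound $v_2$ through the polynomial growth of Lemma \ref{lem:poly-growth-sections}; your constant $n$ in $\n ds\n^2\le n\sum_i\n ds_i\n^2$ (where the paper writes $2$) and your direct bound $v_2\le C\log(2+R)$ are harmless refinements.
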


\begin{proof}

 We use Lemma \ref{lem:v-log-est}, \eqref{eq:log-poly} and the following elementary estimates:
 For any finite set $c_1,\cdots,c_n$ of non-negative real numbers and $M>0$ we have
  \begin{align}
 (\sum_i c_i)^{2M} \leq n^{2M-1}\sum_i c^{2M}_{i},\label{eq:elem-ineq-1}\\
 \log(1 + \sum_i c_i) \leq \sum_{i}\log(1+c_i)\label{eq:elem-ineq-2} \text{ and }\\
 \log(1+\prod_{i}c_i)\leq \sum_i\log(1+c_i).\label{eq:elem-ineq-3}
 \end{align}
By our hypothesis, $$v_1\leq A_1 f + B_1 g + C_1h,$$ where
 \begin{align*}
 f &=  \log \Big(1 + \frac{\n s\n_{q\phi}^{2M_1}}{\ep_1^2}\Big)\\
 g &= \log\Big(1+ \frac{\Vert ds \Vert_{q\phi}^2 \n \tilde s \n_{q\phi}^{2N_1}}{\ep_1^2}\Big) \text{ and }\\
 h&= \log\Big(1+\frac{\Vert \sigma \Vert_{\la\phi}^{2}}{\ep_1^2}\Big).
 \end{align*}
 By \eqref{eq:elem-ineq-1} and  \eqref{eq:elem-ineq-2} above,

 \begin{align*}
 f&\leq \log\Big(1 + \frac{n^{2M_1-1}}{\ep_1^2}\sum_i\n s_i\n^{2M_1}_{q\phi}\Big)\\
 &\leq \sum_i\log\Big(1+ \frac{n^{2M_1-1}}{\ep_1^2}\n s_i\n^{2M_1}_{q\phi}\Big)\\
 &\leq C\sum_i\n s_i\n^{p_i},
 \end{align*}
 where in the final line we apply \eqref{eq:log-poly} with $\delta = p_i/2M_1\in (0,1]$ for the $i^{th}$ term, and $C$ depends on $\ep_1$ but is independent of $R$. Trivially we also have that $$h \leq \frac{C}{\ep_1^2}\n \sigma\n^2.$$ Finally, applying all of \eqref{eq:elem-ineq-1},  \eqref{eq:elem-ineq-2} and  \eqref{eq:elem-ineq-3}, we have that
 \begin{align*}
 g&\leq \log\Big( 1+ \frac{2}{\ep_1^2}\n \tilde s\n^{2N_1}\sum_i\n ds_i\n^2\Big)\\
 &\leq \sum_i\log\Big( 1 + \frac{2}{\ep_1^2}\n ds_i\n^2 \n \tilde s\n^{2N_1}\Big)\\
  &\leq \sum_i \Big( \log\Big(1 + \frac{2}{\ep_1^2}\n ds_i\n^2\n s_i\n^{2N_1}\Big) + \sum_{j\neq i} \log(1+ \n s_j\n^{2N_1}) \Big).
 \end{align*}
 By Lemma \ref{lem:v-log-est} we then have that
 \begin{align*}
 g &\leq \sum_i \Big( C (1+R)^{1/4} \n s_i\n^{p_i} + \sum_{j\neq i} \log(1+ \n s_j\n^{2N_1}) \Big)\\
 &\leq C (1+R)^{1/4}\sum_i \n s_i\n^{p_i}.
 \end{align*}
for a possible bigger constant $C$ in the second line. Note that we once again applied the elementary inequality \eqref{eq:log-poly} in the second line. Putting everything together we see that there exist constants $C>0$ such that

 \begin{align*}
 v_1 \leq C(1+R)^{1/4}\sum_i \n s_i\n^{p_i} + C\n \sigma\n^2
 \end{align*}
  The proof of the estimate for $v_2$ is similar, and in fact easier, than that for $v_1$.  In the above estimate for $v_1$, the choice of $\delta$  was delicate  since we needed to peel off terms of the form $\n s_i\n^{p_i}$ (cf. proof of Lemma \ref{lem:v-log-est}). On the other hand, for the estimate for $v_2$ we have no such requirement and hence the above estimate easily follows from the polynomial growth of sections (Lemma \ref{lem:poly-growth-sections}) and the elementary estimates \eqref{eq:log-poly} - \eqref{eq:elem-ineq-3}.
\end{proof}

 \begin{proof}[Proof of Proposition \ref{prop:Bezout}] Let $\chi_R$ be a cut-off function as in Lemma \ref{lem:cut-off}, and let $$I(R):= \int_{X}\chi^3_R \zeta_1\wedge\zeta_2.$$  It is enough to show that
 $$\lim_{R\rightarrow\infty}I(R) \leq \be_1\be_2\int_X(\ddbar\phi)^2.$$Now $\zeta_1 = \ddbar(v_1+\be_1\phi)$ and so by Bedford-Taylor theory since $\zeta_1$ and $\ddbar\phi$ are both positive $(1,1)$ currents,
 \begin{align*}
 I(R) &= \int_X (v_1+\be_1\phi) \ddbar\chi^3_R \wedge\zeta_2\\
 &=\int_X v_1\ddbar\chi^3_R\wedge\zeta_2 + \be_1\int_X\chi_R^3\ddbar\phi\wedge\zeta_2\\
 &= \int_X v_1\ddbar\chi^3_R\wedge\zeta_2 + \be_1\int_X\chi_R^3\ddbar\phi\wedge \ddbar(v_2+\be_2\phi)\\
 &=\int_X v_1\ddbar\chi^3_R\wedge\zeta_2 + \be_1\int_X(v_2+\be_2\phi)\ddbar\chi_R^3\wedge\ddbar\phi\\
 &= \int_X v_1\ddbar\chi^3_R\wedge\zeta_2 + \be_1 \int_Xv_2\ddbar\chi_R^3\wedge\ddbar\phi + \be_1\be_2\int_X\chi_R^3(\ddbar\phi)^2.
 \end{align*}
 We write $I(R) = I_1(R) + \be_1I_2(R) + \be_1\be_2I_3(R),$ where
 \begin{align*}
 I_1(R) &=  \int_X v_1\ddbar\chi^3_R\wedge\zeta_2\\
 I_2(R) &= \int_Xv_2\ddbar\chi_R^3\wedge\ddbar\phi  \text{ and } \\
 I_3(R) &= \int_X\chi_R^3(\ddbar\phi)^2.
 \end{align*}
Clearly $$\lim_{R\rightarrow\infty}I_3(R) = \int_X(\ddbar\phi)^2.$$ It is thus enough to prove that $$\lim_{R\rightarrow\infty}I_1(R)= \lim_{R \rightarrow \infty} I_2(R) = 0.$$
By Lemma \ref{aux}, $$v_1 \leq CR^{1/4}\sum_i \n s_i\n^{p_i} + C\n \sigma\n^2.$$ Therefore,
  \begin{align*}
  I_1(R) &\leq \frac{C}{R^{3/4}}\sum_i\int_X\chi_R\n s_i\n^{p_i}\omega\wedge \zeta_2 + \frac{C}{R}\int_X\chi_R \n \sigma\n^2 \omega\wedge \zeta_2\\
\end{align*}
 Next, we let $$J_{i}(R) = \int_X\chi_R \n s_i\n^{p_i}\omega\wedge\zeta_2. $$ We now consider the forms $\tilde\zeta_2 = \zeta_2 + \ddbar\psi = \ddbar \tilde h_2$, where $\tilde h_2 = v_2 + \be_2\phi + \psi$. This is a strictly positive form with continuous potential $\tilde h_2$. So by Richberg's approximation (cf. \cite[pg. 43]{Dem-book}) there is a sequence of strictly PSH functions $\tilde h_{2,k}$ such that $\tilde h_{2,k}\rightarrow \tilde h_2$ uniformly on compact sets, in particular on $B(o,\theta^{-1}R).$ We now let $$v_{2,k} = \tilde h_{2,k} - \be_2 u_{1/k} - \psi$$ where as usual $u_t$ is the heat flow solution with initial condition $\phi$. Clearly $v_{2,k}\rightarrow v_2$ uniformly on compact sets. By Bedford-Taylor theory (cf. \cite[pg. 147]{Dem-book}), $$\omega \wedge \ddbar \tilde h_{2,k} \rightarrow \omega\wedge(\zeta_2 + \ddbar\psi)\text{ and }\omega \wedge u_{1/k}\rightarrow \omega\wedge\ddbar\phi$$weakly as measures, and so
 \begin{align*}
 J_i(R) &= \lim_{k\rightarrow\infty}\int_X\chi_R\ s_i\n^{p_i}\omega \wedge\ddbar \tilde h_{2,k} - \int_X \chi_R \n s_i\n^{p_i}\omega\wedge\ddbar\psi\\
&= \lim_{k\rightarrow\infty} \int_X \chi_R \n s_i\n^{p_i}_{q\phi} \omega \wedge \ddbar v_{2,k} +\be_2 \int_X\chi_R \n s_i\n^{p_i}_{q\phi} \ddbar\phi\wedge  \omega.
 \end{align*}By Lemma \ref{lem:est-lpnorm-ddbarphi-omega}, the second integral above is bounded. We now let $$J_{i,k}(R) =  \int_X \chi_R \n s_i\n^{p_i}_{q\phi} \omega \wedge \ddbar v_{2,k}.$$ Note that since $J_i(R)$ is non-negative, to prove an upper bound on $J_1(R)$, it is enough to prove an upper bound on $J_{i,k}(R)$ which is independent of $k$. By approximating $\n s_i\n^{p_i}$ uniformly on the support of $\chi_R$ by smooth functions one can integrate by parts and write $$J_{i,k}(R) = \int_X\chi_R v_{2,k}\ddbar\n s_i\n^{p_i}\wedge \omega + 2\Re\int_X v_{2,k} \sqrt{-1}\partial \n s_i\n_{q\phi}^{p_i}\wedge \dbar \chi_R\wedge\omega + \int_X v_{2,k}\n s_i\n^{p_i}_{q\phi} \ddbar\chi_R\wedge\omega.$$

 Next, by Lemma \ref{aux},  there exists a constant $C>0$ independent of $R$ such that $$v_2 < C \sqrt{R}$$ on $B(o,\theta^{-1}R)$.  Since $v_{2,k}\rightarrow v_2$ uniformly on compact sets, for all $k>>1$ we also have that $$v_{2,k}\leq C \sqrt{R}$$ on $B(o,\theta^{-1}R)$. Now, since $|\ddbar\chi_R| \leq CR^{-1}$ it then easily follows that $$ \int_X v_{2,k}\n s_i\n^{p_i}_{q\phi} \ddbar\chi_R\wedge\omega = O\Big(\frac{1}{R^{3/4}}\Big).$$ For the second term above we use the estimate $|\nabla \chi_R| \leq A/R$ and obtain $$2\Re\int_X v_{2,k} \sqrt{-1}\partial \n s_i\n_{q\phi}^{p_i}\wedge \dbar \chi_R\wedge\omega  \leq \frac{C}{\sqrt{R}}\int_X \Big|\nabla \n s_i\n^{p_i}_{q\phi}\Big| \omega^2 = O\Big(\frac{1}{\sqrt{R}}\Big),$$ where the final integral on the right can be proved to be bounded by using the Kato inequality and Cauchy-Schwarz inequality, exactly as in the proof of Lemma \ref{lem:est-lpnorm-ddbarphi-omega}. Finally we need to control the first integral above in the expression of $J_{i,k}$. The issue is that $\ddbar \n s_i\n^{p_i}$ itself may not be positive. However clearly, $$ \int_X\chi_R v_{2,k}\ddbar\n s_i\n^{p_i}\wedge \omega  \leq  \int_X\chi_R v_{2,k}\Big(\ddbar\n s_i\n^{p_i} + \frac{p_iq}{2}\n s_i\n^{p_i}\ddbar\phi\Big)\wedge \omega.$$
 By Lemma \ref{lem:currents}, we have that $$\ddbar\n s_i\n^{p_i} + \frac{p_iq}{2}\n s_i\n^{p_i}\ddbar\phi\geq 0$$ and so
 \begin{align*}
 \int_X\chi_R v_{2,k}\ddbar\n s_i\n^{p_i}\wedge \omega  \leq C\sqrt{R}\int_X \chi_R \ddbar \n s_i\n^{p_i}_{q\phi}\wedge\omega + C\sqrt{R}\int_X \chi_R \n s_i\n^{p_i}\ddbar\phi\wedge\omega.
 \end{align*}
 The second integral above is bounded above by Lemma \ref{lem:est-lpnorm-ddbarphi-omega}.Integrating by parts and arguing as before it is easy to see that the first integral is $O(1/\sqrt{R})$, and so $$ \int_X\chi_R v_{2,k}\ddbar\n s_i\n^{p_i}\wedge \omega = O(\sqrt{R}),$$ where the constant is of course independent of $k$. Putting everything together we have that $J_i(R) = O(\sqrt{R})$ and so $$I_1(R) = O\Big(\frac{1}{R^{3/4}}\Big) + \frac{C}{R}\int_X\chi_R \n \sigma\n^2 \omega\wedge \zeta_2.$$ Arguing exactly as above one can again show that the second integral is also $O(\sqrt{R})$, and hence $$I_1(R) = O\Big(\frac{1}{R^{1/4}}\Big).$$ Finally we estimate $I_2(R)$. This follows along the same lines as the estimate of $I_1(R)$ as in fact easier. We therefore only outline the main steps. Firstly by integrating by parts and using the estimates on $|\nabla \chi_R|$ and $|\ddbar\chi_R|$ we have that $$I_2(R) \leq C\int_X \chi_R v_2 \ddbar\phi\wedge\omega.$$ Next, exactly as above, we get the following estimate: $$  I_2(R) \leq \frac{C}{R^{3/4}}\sum_j\int_X\chi_R\n t_j\n^{a_j}_{b_j\phi}\ddbar\phi\wedge \omega + \frac{C}{R}\int_X\chi_R \n \tau\n^2_{\mu \phi} \ddbar\phi\wedge \omega.$$ But the integrals on the right are all bounded above by constants independent of $R$ by Lemma  \ref{lem:est-lpnorm-ddbarphi-omega}, and so $$I_2(R) = O\Big(\frac{1}{R^{1/4}}\Big).$$

   \end{proof}

 An immediate corollary, which will be used repeatedly, is the following.

 \begin{cor}\label{cor:Bezout-ricci-bound}
Let $s\in \Ga^p_{q\psi}$ and $\sigma\in \Ga^2_{\la\psi}(K_X)$. Then for any $\ep,\delta>0$, $$\int_X\zeta_\ep(s,p,q\phi)\wedge \zeta_\delta(\sigma,2,\la\phi)\leq \frac{pq\la}{2}\int_X(\ddbar\phi)^2.$$
In particular, $$\int_X \zeta_\ep(s,p,q\phi) \wedge Ric < \infty.$$
 \end{cor}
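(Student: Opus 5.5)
The corollary follows by applying Proposition \ref{prop:Bezout} with one ``section'' on each side. On the first side I take $n=1$, $s_1=s$, $p_1=p$, and set
$$v_1 = \log(\n s\n_{q\phi}^p+\ep^2) - \log \ep^2 = \log\Big(1+\frac{\n s\n^{p}_{q\phi}}{\ep^2}\Big).$$
This is non-negative and continuous. It satisfies hypothesis (1) with $A_1=1$, $M_1=p/2$ (allowed since $M_1\geq p_{max}/2$), $B_1=C_1=0$ and $\ep_1=\ep$. Since $v_1$ differs from $\log(\n s\n^p_{q\phi}+\ep^2)$ by a constant, we get $\ddbar v_1 + \frac{pq}{2}\ddbar\phi = \zeta_\ep(s,p,q\phi)$. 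By Corollary \ref{cor:zeta-positive} (trivial bundle, so $\Theta_h=0$, and $\phi$ is Lipschitz and psh) this current is $\geq 0$. Hence hypothesis (3) holds with $\be_1 = pq/2$.

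On the second side I take no sections $t_j$ (or $A_2=B_2=0$), $\tau=\sigma$, $\mu=\la$, and
$$v_2 = \log\Big(1+\frac{\n\sigma\n^2_{\la\phi}}{\delta^2}\Big).$$
Hypothesis (2) then holds with $C_2=1$, $\ep_2=\delta$. Here $\n\sigma\n^2_{\la\phi}$ uses the metric $e^{-\la\phi}\omega^{-2}$ on $K_X$, whose reference curvature is $-Ric\le0$. By Corollary \ref{lem:ric-zeta-upperbound}, $\zeta_\delta(\sigma,2,\la\phi) = \ddbar v_2 + \la\ddbar\phi \geq 0$, so $\be_2=\la$. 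Proposition \ref{prop:Bezout} then gives
$$\int_X\zeta_\ep(s,p,q\phi)\wedge\zeta_\delta(\sigma,2,\la\phi)\le \frac{pq\la}{2}\int_X(\ddbar\phi)^2,$$
and the right-hand side is finite by Theorem \ref{thm:main-ma}.

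The only point needing care is degenerate parameter choices. The proposition is stated with $N_1\ge 17p_{max}$ and sums over $t_j$. With $B_1=0$ the value of $N_1$ is irrelevant, so I simply pick $N_1=17p$. Likewise an empty family of $t_j$ (or a dummy $t$ with zero coefficients) only enters through vanishing terms. I should check that the proof of Lemma \ref{aux} still gives $v_2\le C\sqrt{1+R}$ from the $\sigma$ term alone. It does: by Lemma \ref{lem:poly-growth-sections}(3), $\log(1+\n\sigma\n^2/\delta^2)\le C\log(2+d(o,x))$.

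For the \emph{in particular} statement, I fix a nonzero $\sigma\in\Ga^2_{\la\psi}(K_X)$, which exists by Proposition \ref{prop:section-existence-hormander}(1). Here I use that $\psi$ is strictly psh; if it is not, I replace $\psi$ by $\psi$ plus a small multiple of the Busemann-heat potential, or note that Theorem \ref{thm:main-ma} gives strict plurisubharmonicity of $\phi$, which passes to $\psi$ via Ni--Tam. By Corollary \ref{lem:ric-zeta-upperbound}, $\zeta_\delta(\sigma,2,\la\phi)\ge \frac{\n\sigma\n^2}{\n\sigma\n^2+\delta^2}Ric$. Wedging with the positive current $\zeta_\ep(s,p,q\phi)$ and letting $\delta\to0$, Fatou's lemma (the weights increase to $1$ off the discrete zero set of $\sigma$) gives $\int_X\zeta_\ep(s,p,q\phi)\wedge Ric\le \frac{pq\la}{2}\int_X(\ddbar\phi)^2<\infty$. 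The zero locus of $\sigma$ has measure zero for the Bedford--Taylor measure $\zeta_\ep\wedge Ric$, since $Ric$ is smooth and $\zeta_\ep$ has continuous potentials.
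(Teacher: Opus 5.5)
Your proposal is correct and is the paper's own argument. The paper states this corollary as an immediate application of Proposition \ref{prop:Bezout} with $\be_1=pq/2$ and $\be_2=\la$ (cf.\ the proof of Proposition \ref{prop:mult-est}); your subtraction of $\log\ep^2$ and $\log\delta^2$ to make $v_1,v_2\geq 0$ is harmless since it leaves $\ddbar v_i$ unchanged. The paper obtains the Ricci bound from Corollary \ref{lem:ric-zeta-upperbound} and Fatou's lemma, just as you do.

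Two minor slips. First, $\{\sigma=0\}$ is a curve, not a discrete set; it is still null for $\zeta_\ep\wedge Ric$, because mixed Monge--Amp\`ere measures with locally bounded psh potentials charge no pluripolar set. Second, adding a multiple of the Busemann potential to $\psi$ would put $\sigma$ in the wrong weighted space. So only your Ni--Tam strictness justification works, and that is what the paper uses implicitly.
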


 In applications, we shall need the following generalization of the main estimate, where we allow sums of functions that satisfy the hypothesis of Proposition \ref{prop:Bezout}. The proof follows along exactly the same lines as the proof of the above Proposition, and we skip it for brevity.

 \begin{prop}\label{prop:Bezout-gen}
 Let $v_1,v_2$ be continuous, non-negative functions on $X$ such that for each $i=1,2$, $$v_{i}\leq \sum_{j=1}^{n_i} v_{ij},$$ for some non-negative continuous functions $\{v_{ij}\}$ satisfying the following estimate: There exist sections $s_{ij}\in \Ga_{q_i\psi}$ and $\sigma_{ij}\in \Ga^2_{\la_{i}\phi}(K_X)$, and constants $\ep_{ij}>0$, $N_{ij}\geq 17p(s_{ij})$, $M_{ij}\geq p(s_{ij})/2$ and $A_{ij},B_{ij},C_{ij}\geq 0$ such that $$v_{ij}\leq A_{ij}\log\Big(1 + \frac{\n s_{ij}\n^{2M_{ij}}}{\ep_{ij}^2}\Big)+ B_{ij}\log\Big(1 + \frac{\n ds_{ij}\n^2\n \tilde s_{ij}\n^{2N_{ij}}}{\ep_{ij}^2}\Big) + C_{ij}\log\Big(1+ \frac{\n \sigma_{ij}\n^2}{\ep_{ij}^2}\Big).$$
 Suppose further that for some $\be_i>0$, $$\ddbar v_i + \be_i\ddbar\phi \geq 0.$$ Let $\zeta_i = \ddbar v_i + \be_i\ddbar\phi$. Then $$\int_X \zeta_1\wedge\zeta_2  \leq \be_1\be_2\int_X(\ddbar\phi)^2.$$
 \end{prop}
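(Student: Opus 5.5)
The plan is to rerun the cut-off argument from the proof of Proposition \ref{prop:Bezout}, using linearity in the potentials together with the fact that every growth estimate used there is additive in the pieces $v_{ij}$. Fix the cut-off functions $\chi_R$ from Lemma \ref{lem:cut-off} and set $I(R)=\int_X\chi_R^3\zeta_1\wedge\zeta_2$. Write $\zeta_i=\ddbar(v_i+\be_i\phi)$; both potentials are continuous and plurisubharmonic. Bedford--Taylor integration by parts then gives, exactly as before,
\[
I(R)=\int_X v_1\ddbar\chi_R^3\wedge\zeta_2+\be_1\int_X v_2\ddbar\chi_R^3\wedge\ddbar\phi+\be_1\be_2\int_X\chi_R^3(\ddbar\phi)^2 .
\]
The last term tends to $\be_1\be_2\int_X(\ddbar\phi)^2$. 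It therefore suffices to show that the first two terms are $O(R^{-1/4})$.

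The only place where the structure of $v_i$ entered was Lemma \ref{aux}. We apply that lemma (via Lemma \ref{lem:v-log-est} and the elementary inequalities \eqref{eq:log-poly}--\eqref{eq:elem-ineq-3}) to each $v_{ij}$ separately. Since $v_i\le\sum_j v_{ij}$ with finitely many terms, this yields on $B(o,\theta^{-1}R)$
\[
v_1\le C(1+R)^{1/4}\sum_{j}\sum_{k}\n s_{1j,k}\n^{p_{1j,k}}+C\sum_j\n\sigma_{1j}\n^2,\qquad v_2\le C\sqrt{1+R}.
\]
Here $s_{1j}=\sum_k s_{1j,k}$ with $s_{1j,k}\in\Ga^{p_{1j,k}}_{q_1\psi}$. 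The hypotheses $N_{ij}\ge 17p(s_{ij})$ and $M_{ij}\ge p(s_{ij})/2$ are precisely what Lemmas \ref{lem:v-log-est} and \ref{aux} require for each summand. Nonnegativity of $\chi_R$, of $\zeta_2$ and of $\omega$ lets us replace $v_1$ by this upper bound in the first term, because $|\ddbar\chi_R^3|\le CR^{-1}\chi_R\,\omega$ up to gradient terms, handled as in the original proof. This reduces $I_1(R)$ to $R^{-3/4}$ times finitely many integrals $J(R)=\int_X\chi_R\n s_{1j,k}\n^{p}\omega\wedge\zeta_2$, plus $R^{-1}$ times analogous integrals involving $\n\sigma_{1j}\n^2$.

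Each $J(R)$ is shown to be $O(\sqrt R)$ exactly as in the proof of Proposition \ref{prop:Bezout}. We regularize the potential of $\zeta_2+\ddbar\psi$ by Richberg approximation and write $v_{2,k}=\tilde h_{2,k}-\be_2u_{1/k}-\psi$. We then integrate by parts onto $\n s\n^p$ and use the bound $v_{2,k}\le C\sqrt R$. Two more ingredients complete the estimate. The first is the positivity $\ddbar\n s\n^p+\frac{pq}{2}\n s\n^p\ddbar\phi\ge0$ from Lemma \ref{lem:currents}. The second is the pair of $L^1$ bounds from Lemma \ref{lem:est-lpnorm-ddbarphi-omega} and Corollary \ref{lem:integrability}. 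None of these steps uses that $v_2$ has the specific single form; they use only the growth bound $v_2\le C\sqrt R$ and the positivity of $\zeta_2$. The term $I_2(R)$ is handled identically, with Lemma \ref{lem:est-lpnorm-ddbarphi-omega} bounding the resulting integrals against $\ddbar\phi\wedge\omega$.

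The main point to check is the regularity bookkeeping, and I expect it to be the chief obstacle. First, the $v_{ij}$ need not individually have potentials $\ge-\be\phi$. Only $v_i$ does, so every integration by parts must be performed with the whole $v_i$, never with the $v_{ij}$; the $v_{ij}$ enter only through pointwise upper bounds multiplied against nonnegative measures. Second, the constants: the bound is uniform in $\ep_{ij}$ only after letting $R\to\infty$, because all $\ep$-dependence sits in the $O(R^{-1/4})$ error terms. Third, the $\sigma_{ij}$ are indexed by $\la_i\phi$ rather than $\la_i\psi$. I would note that $e^{-\la\phi}\le Ce^{-\la\psi}$ by Lemma \ref{lem:heat-est}, which gives the polynomial growth needed from Lemma \ref{lem:poly-growth-sections}(3).
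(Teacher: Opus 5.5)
Your proposal is correct and is essentially the argument the paper intends (the paper omits the proof, saying it follows the proof of Proposition \ref{prop:Bezout} verbatim). You apply Lemma \ref{aux} to each $v_{ij}$ separately, use the resulting bounds only as pointwise upper bounds against the positive measures $\chi_R\,\omega\wedge\zeta_2$ and $\chi_R\,\omega\wedge\ddbar\phi$, and integrate by parts only with the full potentials $v_i+\be_i\phi$. One small correction: to pass from $\sigma_{ij}\in\Ga^2_{\la_i\phi}(K_X)$ to $\sigma_{ij}\in\Ga^2_{\la_i\psi}(K_X)$, which Lemma \ref{lem:poly-growth-sections}(3) and the gradient bounds require, you need $e^{-\la\psi}\le e^{-\la\phi}$, i.e.\ $\psi\ge\phi$; this holds because $\phi$ is plurisubharmonic, hence subharmonic, so its heat flow satisfies $\partial_t u=\Delta u_t\ge 0$, whereas the inequality $e^{-\la\phi}\le Ce^{-\la\psi}$ from Lemma \ref{lem:heat-est} that you cite goes the wrong way for this.
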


 \subsection{B\'ezout estimates on $\PP(T_X)$}\label{sec:Bezout-proj}  Following the strategy of Mok, we will also need B\'ezout estimates on the projectivised tangent bundle. Let $(X^2,\omega)$ be a K\"ahler surface with $\sec_\omega>0$. Let $T_X$ denote the holomorphic tangent bundle of $X$ and let $M=\PP(T_X)$ be its projectivisation. Then $M$ carries a tautological line bundle $T = O_{\PP(T_X)}(-1)$ whose fibre over a point $(x,[v])\in M$ is simply the complex line spanned by $v$. We denote its dual by $T^*$. The metric $\omega$ induces a fibre-wise  Hermitian metric $\theta$ on $T^*$ such that the first Chern form $$\xi:= -\ddbar\log\theta$$ restricts to the standard Fubini-Study metric on each $\PP^1$ fiber with the normalization chosen so that $$\int_{\PP^1}\xi = 2\pi.$$ We consider the following form $$\nu := \xi + 2\pi^*Ric.$$  Then by some calculations in \cite{Mok89} and \cite{CZ1} it follows that $\nu$ is a K\'ahler form on $\PP (T_X)$. Next, note that any holomorphic one-form $\al$ on $X$ corresponds to a section of $T^*$ which we denote by $\mu_\al$. In coordinates, if $\al = f_idz^i$, then $\mu_\al = f_iz^i$, where we interpret $z^i$ as hyperplane sections on $\PP(T_X)$. Suppose we now endow $T^*$ with the metric $h = e^{-q\phi}\theta$, then an easy application of Cauchy Schwarz gives the following elementary estimate:
 \begin{equation}\label{eq:est-mu-sec}
 ||\mu_\al||_h^2 \leq e^{-q\phi}\pi^*||\al||_\omega^2.
 \end{equation}

 \begin{prop}\label{prop:Bezout-proj}
 Let $s_i \in \Ga^{p_i}_{q\psi}$ for $i=1,\cdots,n$, and set $$s = \sum_{i=1}^n s_i \text{ and }\tilde s = \prod_{i=1}^ns_i.$$ Let $\mu(s)$ be the section of $T^*:= \mathcal{O}_{\PP(T_X)}(1)$ corresponding to $\tilde s^Nds$ for some $N$. Let $p= \max_i p_i$,
 \begin{align*}
 \zeta_\ep(s):= \Big(\ddbar \log(\Vert s\Vert ^p + \ep^2) + \frac{pq}{2}\ddbar\phi\Big) \text{ and } \\
 \eta_\delta(s):= \ddbar\log(\delta^2 + \n \mu \n^2) + \nu + (N+1)q\ddbar\pi^*\phi,
 \end{align*}
where the norms as usual are calculated with weights $e^{-q\phi}$. If $N \geq 17p$, then $$\int_{\PP(T_X)}\pi^*\zeta_\ep(s)\wedge \eta_\delta(s) \wedge \nu < \infty.$$
 \end{prop}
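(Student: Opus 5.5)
Following the strategy of the proof of Proposition \ref{prop:Bezout}, we transplant the integration by parts argument to $M=\PP(T_X)$, using cut-offs pulled back from $X$. Let $\chi_R$ be as in Lemma \ref{lem:cut-off} and set
$$I(R)=\int_M \pi^*\chi_R^3\,\pi^*\zeta_\ep(s)\wedge\eta_\delta(s)\wedge\nu .$$
Since $\zeta_\ep(s)\geq 0$ by Corollary \ref{cor:zeta-positive}, each integrand is a positive measure in the Bedford--Taylor sense. It therefore suffices to bound $I(R)$ uniformly in $R$. Write $\zeta_\ep(s)=\ddbar(v_1+\frac{pq}{2}\phi)$ with $v_1=\log(\n s\n^p+\ep^2)-\log\ep^2\geq0$, and
$$\eta_\delta=\ddbar w+\nu+(N+1)q\,\pi^*\ddbar\phi,\qquad w=\log(1+\n\mu\n^2/\delta^2).$$
Note that $\eta_\delta\geq0$ by Lemma \ref{lem:currents} applied to $T^*$ with metric $e^{-(N+1)q\phi}\theta$, since the curvature of that metric is $\xi+(N+1)q\,\pi^*\ddbar\phi\leq\nu+(N+1)q\,\pi^*\ddbar\phi$.

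Expanding by Bedford--Taylor integration by parts as in the proof of Proposition \ref{prop:Bezout} gives three kinds of terms.
\begin{enumerate}
\item A main term, which is $\frac{pq}{2}$ times
$$\int_M\pi^*\chi_R^3\,\pi^*\ddbar\phi\wedge\big(\nu+(N+1)q\,\pi^*\ddbar\phi\big)\wedge\nu .$$
\item Error terms in which $\ddbar$ falls on $\pi^*\chi_R^3$, multiplied by $v_1$ or $w$.
\item Terms containing $\ddbar w$ paired against $\pi^*\ddbar\phi\wedge\nu$, which are treated by a further integration by parts.
\end{enumerate}

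\emph{Main term.} Push forward by $\pi$ using fibre integration. We have $\pi_*(\nu^2)=c_1+c_2\,Ric$ after using $\int_{\PP^1}\xi=2\pi$, and $\pi_*(\nu)$ is a constant function. Hence the main term reduces to integrals of the forms
$$\int_X\chi_R^3\,\ddbar\phi\wedge(\omega\text{ or } Ric)\qquad\text{and}\qquad \int_X\chi_R^3(\ddbar\phi)^2 .$$
The Monge--Amp\`ere integral is finite by Theorem \ref{thm:main-ma}. The mixed term $\int\ddbar\phi\wedge Ric$ is finite by Corollary \ref{cor:Bezout-ricci-bound}: choose $\sigma\in\Ga^2_{\la\psi}(K_X)$ from Proposition \ref{prop:section-existence-hormander} and use $Ric\leq\zeta_\delta(\sigma,2,\la\phi)$ in the limit, as in Corollary \ref{lem:ric-zeta-upperbound}.

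The genuinely problematic term is $\int_X\chi_R^3\,\ddbar\phi\wedge\omega$. This is not a priori finite, because $\omega$ has infinite volume. So I will not expand $\nu$ naively. Instead I will keep $\nu$ only in combinations that pair with the forms $\zeta$, and use that $\pi_*(\xi\wedge\xi)$ is expressible via $Ric$. I expect $\pi_*(\xi^2)=-c\,\pi^*Ric$-type identities, as in \cite{Mok89}, so that only $Ric$ and $\ddbar\phi$ survive; both pair to finite totals by Proposition \ref{prop:Bezout} and Corollary \ref{cor:Bezout-ricci-bound}.

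\emph{Error terms.} These require growth bounds. By \eqref{eq:est-mu-sec},
$$\n\mu\n^2\leq \pi^*\big(\n ds\n^2_{q\phi}\,\n\tilde s\n^{2N}_{q\phi}\big),$$
so $w\leq \pi^*\log(1+\n ds\n^2\n\tilde s\n^{2N}/\delta^2)$. Since $N\geq17p$, Lemma \ref{aux} and Lemma \ref{lem:v-log-est} give
$$w\leq C(1+R)^{1/4}\sum_i\n s_i\n^{p_i}\quad\text{and}\quad v_1\leq C\sqrt{1+R}\quad\text{on } \pi^{-1}B(o,\theta^{-1}R).$$
The $\ddbar\chi_R$ factors contribute $R^{-1}$. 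The remaining integrals are of the form
$$\int\chi_R\n s_i\n^{p_i}\,\pi^*(\cdot)\wedge\nu\wedge(\text{positive}),$$
and after fibre integration these become integrals of $\n s_i\n^{p_i}$ against $\omega\wedge\zeta$ or $\ddbar\phi\wedge\omega$. Such integrals are $O(\sqrt R)$ or $O(1)$ by the same Richberg-approximation and integration-by-parts argument as the estimate of $J_i(R)$, together with Lemma \ref{lem:est-lpnorm-ddbarphi-omega} and Lemma \ref{lem:section-est-non-smt}. The resulting total is $O(R^{-1/4})$.

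\emph{Main obstacle.} I expect the main obstacle to be the curvature bookkeeping on $\PP(T_X)$. One must ensure that every term which survives as $R\to\infty$ pairs $\ddbar\phi$ only with $\ddbar\phi$ or with $Ric$, never with $\omega$ alone. This requires an explicit fibre-integration formula for $\nu^2$ and $\xi\wedge\pi^*(\cdot)$, which I would extract from \cite{Mok89,CZ1}. A secondary difficulty is justifying the Bedford--Taylor manipulations on $M$, since $\n\mu\n$ is only continuous and $\phi$ is only Lipschitz. I would handle this as before: smooth $\phi$ by $u_{1/k}$, apply Richberg approximation to the potentials, and pass to the limit using the uniform bounds above.
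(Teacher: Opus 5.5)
\textbf{There is a genuine gap in the boundary terms created by integrating $\zeta_\ep(s)$ by parts.} Your main term $\frac{pq}{2}\int\pi^*\chi_R^3\,\pi^*\ddbar\phi\wedge(\nu+(N+1)q\pi^*\ddbar\phi)\wedge\nu$ only appears after you write $\zeta_\ep(s)=\ddbar(v_1+\frac{pq}{2}\phi)$ and move $\ddbar$ off $v_1$. That step also produces
$$\int_{\PP(T_X)}\pi^*v_1\,\pi^*\ddbar\chi_R^3\wedge\eta_\delta(s)\wedge\nu .$$
Since $\eta_\delta(s)\wedge\nu$ contains $\nu^2+(N+1)q\,\pi^*\ddbar\phi\wedge\nu$, fibre integration turns this into a term containing $\int_X v_1\,\ddbar\chi_R^3\wedge\big(6\pi\,Ric+2\pi(N+1)q\,\ddbar\phi\big)$.

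With your bound $v_1\le C\sqrt{1+R}$, you would need $\int_{B(o,\theta^{-1}R)}\omega\wedge Ric$ and $\int_{B(o,\theta^{-1}R)}\omega\wedge\ddbar\phi$ to be $o(\sqrt R)$. That is exactly the infinite-volume problem you flagged, now sitting in an error term. The sharper bound $v_1\le C\sum_i\n s_i\n^{p_i}$ (the estimate for $f$ in Lemma \ref{aux}) rescues the $\ddbar\phi$ piece via Lemma \ref{lem:est-lpnorm-ddbarphi-omega}. It still leaves $\frac{C}{R}\int_X\chi_R\n s_i\n^{p_i}\,\omega\wedge Ric$, and none of the available tools controls $\int\n s\n^p\,\omega\wedge Ric$. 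There is no upper curvature bound, and dominating $Ric$ by $\zeta_{\delta'}(\sigma,2,\la\phi)$ and running the $J_i(R)$ argument gives a bound that blows up like $\log(1/\delta')$ as $\delta'\to 0$. So your claim that every remaining integral becomes $\n s_i\n^{p_i}$ against $\omega\wedge\zeta$ or $\omega\wedge\ddbar\phi$ fails for this term.

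\textbf{The fix is the one you half-state (``keep $\nu$ only in combinations that pair with $\zeta$''), and it is exactly the paper's proof: never integrate $\zeta_\ep(s)$ by parts, and split only $\eta_\delta(s)$.} Indeed, $\int_X v_1\ddbar\chi_R^3\wedge Ric=\int_X\chi_R^3\big(\zeta_\ep(s)-\frac{pq}{2}\ddbar\phi\big)\wedge Ric$ is bounded only by undoing the integration by parts.
\begin{itemize}
\item The pushforward identities $\int\pi^*\al\wedge\xi^2=-2\pi\int_X\al\wedge Ric$ and $\int\pi^*\be\wedge\xi=2\pi\int_X\be$ give $\pi_*(\nu^2)=6\pi\,Ric$ exactly, with no $\omega$ term. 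So your worry about $\int\ddbar\phi\wedge\omega$ in the main term is unfounded.
\item Hence the $\nu$-part of $I(R)$ equals $6\pi\int_X\chi_R^3\zeta_\ep(s)\wedge Ric$, which is bounded by Corollary \ref{cor:Bezout-ricci-bound}.
\item The $\pi^*\ddbar\phi$-part equals $2\pi(N+1)q\int_X\chi_R^3\zeta_\ep(s)\wedge\ddbar\phi$, which is bounded by Proposition \ref{prop:Bezout} with $v_2=0$, $\be_2=1$.
\item Only the $\ddbar\log(\delta^2+\n\mu\n^2)$ part is integrated by parts. By \eqref{eq:est-mu-sec}, Lemma \ref{lem:v-log-est} and the $J_i(R)$ argument, it is at most $\frac{C}{R}\int_X\chi_R\log\big(1+\delta^{-2}\n ds\n^2\n\tilde s\n^{2N}\big)\,\zeta_\ep(s)\wedge\omega=O(R^{-1/4})$.
\end{itemize}
This last step is your $w$-estimate, and that part of your argument is correct, as is your positivity check for $\eta_\delta(s)$.
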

 \begin{proof}
 Let $\chi_R$ be the family of cut-off functions as  before (cf. Lemma \ref{lem:cut-off}). It is enough to show that $$I(R) := \int_{\PP(T_X)}\pi^*\chi_R^3\pi^*\zeta_\ep(s)\wedge \eta_\delta(s) \wedge \nu$$ is uniformly bounded. We write $I = I_1+(N+1)qI_2+I_3$ where
 \begin{align*}
 I_1(R)&:= \int_{\PP(T_X)}\pi^*\chi_R^3\pi^*\zeta_\ep(s) \wedge\nu^2\\
 I_2(R) &:= \int_{\PP(T_X)}\pi^*\chi_R^3\pi^*\zeta_\ep\wedge\pi^*\ddbar\phi\wedge\nu \text{ and }\\
 I_3(R)&:= \int_{\PP(T_X)}\pi^*\chi_R\pi^* \zeta_\ep(s) \wedge  \ddbar\log(\delta^2 + \n \mu \n^2) \wedge\nu.
 \end{align*}
To estimate the integrals we use the following basic pushforward formulae: For any closed $(1,1)$ and $(2,2)$ forms $\al$ and $\be$ on $X$, $$\int_{\PP(T_X)}\pi^*\al \wedge \xi^2 =-2\pi\int_X \al \wedge Ric  \text{ and } \int_{\PP(T_X)}\pi^*\be \wedge\xi = 2\pi\int_X\be.$$
Then
\begin{align*}
I_1(R) &= \int_X \pi^*\chi_R^3\pi^*\zeta_\ep(s) \Big( \xi^2  +4\xi \wedge\pi^*Ric\Big)\\
&=  6\pi\int_X\chi_R^3\zeta_{\ep}(s)\wedge Ric.
\end{align*}  The integral above is then finite by Corollary \ref{cor:Bezout-ricci-bound}. Next, $$I_2(R) = \int_X\chi_R^3 \zeta_\ep(s)\wedge\ddbar \phi \leq C,$$ where the bound follows from the arguments in the proof of Proposition \ref{prop:Bezout}. Finally, integrating by parts (by Bedford Taylor theory) and using the elementary estimate \eqref{eq:est-mu-sec} we have
\begin{align*}
I_3(R) &\leq \frac{C}{R}\int_{\PP(T_X)}\pi^*\log(1 + \frac{\n \tilde s\n^{2N}\n ds \n^2}{\ep^2})\pi^*\chi_R\pi^*\omega\wedge \pi^*\zeta_\vep(s) \wedge\nu\\
&=\frac{C}{R}\int_{X}\chi_R \log(1 + \frac{\n \tilde s\n^{2N}\n ds \n^2}{\ep^2})\zeta_\vep(s)\wedge\omega \xrightarrow{R\rightarrow \infty}0,
\end{align*}
once again by the arguments in the proof of Proposition \ref{prop:Bezout}.

\end{proof}

 \section{Consequences of the B\'ezout estimates} \label{sec:Bezout-consequences} For the rest of the paper we assume that $(X^2,\omega)$ is a K\"ahler surface with positive sectional curvature. In this section we note some consequences of the B\'ezout estimates.
 \subsection{Multiplicity estimate}

 Recall that the Lelong number of a closed, positive $(1,1)$ current $T$ at a point $o\in X$ is defined by $$\nu(T,o) = \lim_{r\rightarrow 0}\frac{1}{r^{2n-2}}\int_{B_{\omega_{\CC^n}}(o,r)}T\wedge \omega_{\CC^n}^{n-1},$$ where $\omega_{\CC^n}$ is the Euclidean metric in some local holomorphic coordinates. For any holomorphic section $s$ the multiplicity $\mult_{x}(s)$ of $s$ at $x\in X$ is given by $$\mult_x(s) = \nu\Big(\frac{\sqrt{-1}}{2\pi}\partial\overline{\partial}\log|s|^2;x\Big),$$ where $\nu(T,x)$ denotes the Lelong number of a positive current $T$ at the point $x$.

\begin{prop}\label{prop:mult-est}
Given any $o\in X$, there exists $C=C(X,o)$ such that the following holds: For any non-trivial section $s = \sum_{i=1}^n s_i,$ where each $s_i\in \Ga^{p_i}_{q\psi}$ we have
$$\mult_{o}(s) \le C q.$$
\end{prop}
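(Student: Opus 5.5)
The plan is to pair the current of $s$ against a current built from a fixed holomorphic $2$-form, using Corollary \ref{cor:Bezout-ricci-bound} (or rather its generalization, Proposition \ref{prop:Bezout-gen}, for sums of sections). By Proposition \ref{prop:section-existence-hormander}(1), fix once and for all $\la\ge q_0$ and a non-trivial $\sigma\in\Ga^2_{\la\psi}(K_X)$. The idea is that $\int_X\zeta_\ep(s)\wedge\zeta_\delta(\sigma,2,\la\phi)$ is bounded by $Cq$. The left side dominates a local Lelong-number contribution at $o$ of size proportional to $\mult_o(s)$ times a fixed positive quantity depending only on $\sigma$, $\omega$ and $o$.

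\textbf{Step 1: the current and its global bound.} Put $p=\max_ip_i$ and
$$v_1=\log\big(1+\n s\n_{q\phi}^{p}/\ep^2\big),$$
so that $\zeta_1:=\ddbar v_1+\frac{pq}{2}\ddbar\phi=\zeta_\ep(s,p,q\phi)$ (up to the constant $\log\ep^2$). This current is positive by Corollary \ref{cor:zeta-positive}. The hypothesis of Proposition \ref{prop:Bezout} holds with $M_1=p/2$ and $A_1=1$. For the second current take $v_2=\log(1+\n\sigma\n^2_{\la\phi}/\delta^2)$ with $\be_2=\la$. Then $\zeta_2=\zeta_\delta(\sigma,2,\la\phi)\ge \frac{\n\sigma\n^2}{\n\sigma\n^2+\delta^2}Ric$ by Corollary \ref{lem:ric-zeta-upperbound}. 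Proposition \ref{prop:Bezout} gives
$$\int_X\zeta_1\wedge\zeta_2\le \tfrac{pq\la}{2}\int_X(\ddbar\phi)^2 .$$
Dividing by $p$ yields a bound $C'q$ with $C'$ independent of $s$, $p$, $\ep$ and $\delta$. Here $\la$ and $\sigma$ are fixed, and Proposition \ref{prop:Bezout} explicitly allows $s$ to be a sum.

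\textbf{Step 2: localisation at $o$.} Near $o$ we have $\n\sigma\n>0$ on a small coordinate ball $B$, after choosing $\sigma$ with $\sigma(o)\ne0$, as the construction in Proposition \ref{prop:section-existence-hormander}(1) provides. Letting $\delta\to0$, on $B$ we get $\zeta_2\ge Ric\ge c_0\omega_{\CC^2}$, using positivity of $Ric$ and compactness of $\bar B$. Hence
$$\frac1p\int_B\zeta_\ep(s)\wedge\omega_{\CC^2}\le C''q .$$
Letting $\ep\to0$ and using the Poincar\'e--Lelong type statement (Proposition \ref{prop:poincare-lelong}, or directly weak convergence $\frac1p\zeta_\ep\to \pi[\{s=0\}]+\frac q2\ddbar\phi$), the left side dominates $\pi\int_{B}[s=0]\wedge\omega_{\CC^2}$. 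The $\ddbar\phi$ part is nonnegative and can be dropped. By monotonicity of Lelong numbers, $\int_{B(o,r)}[s=0]\wedge\omega_{\CC^2}\ge \pi r^2\,\mult_o(s)$ up to normalisation. With $r$ fixed we conclude $\mult_o(s)\le Cq$.

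\textbf{Main obstacle.} The delicate points are the order of limits (Fatou in $\delta$, weak convergence in $\ep$ with Bedford--Taylor products) and the uniformity of constants. The constant must not depend on the $L^{p_i}$ norms of the $s_i$, and this is the feature of Proposition \ref{prop:Bezout} emphasised there: only $\be_1=pq/2$ enters. I would handle the limits by first fixing $\delta$, using $\zeta_2\ge\frac{\n\sigma\n^2}{\n\sigma\n^2+\delta^2}Ric\ge\frac12 c_0\omega_{\CC^2}$ on $B$ once $\delta$ is small relative to $\min_B\n\sigma\n$. This avoids the $\delta$-limit altogether, and then only the $\ep\to0$ limit against a fixed smooth form remains.
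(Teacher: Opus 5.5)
Your proposal is correct and follows the paper's proof. Both arguments fix $\sigma\in\Ga^2_{\la\psi}(K_X)$ once and for all, use Proposition \ref{prop:Bezout} (with $\be_1=pq/2$, $\be_2=\la$) to bound $\frac1p\int_X\zeta_\ep(s,p,q\phi)\wedge\zeta_\delta(\sigma,2,\la\phi)$ by $\frac{q\la}{2}\int_X(\ddbar\phi)^2$, and then pass to $\mult_o(s)$ via Lelong monotonicity. The only differences are cosmetic: the paper compares $\omega_{Euc}\le C\,Ric$ on $B(o,1)$ and sends $\delta\to0$ by Fatou, while you fix $\delta$ small using $\sigma(o)\neq0$. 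Also, $\frac1p\zeta_\ep\to\pi[s=0]$ exactly, because the $\frac{q}{2}\ddbar\phi$ term cancels against the weight in $\log\n s\n^p_{q\phi}$; this slip does not affect your inequality.
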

Note that crucially $C$ is independent of $s,n,p_i$ and of course $q$.
\begin{proof}
 Let $B(o,1)$ be a coordinate neighbourhood around the point $o$ and let $p = \max_i\{p_i\}$. Then by monotonicity of Lelong numbers,
 \begin{equation}\label{eq:mult-est}
 \mult_o(s) \leq \frac{1}{\pi p}\int_{B(o,1)}\ddbar\log|s|^p\wedge\omega_{Euc}\leq \frac{C}{p\pi}\int_{B(o,1)}\ddbar\log|s|^p\wedge Ric,
 \end{equation}
for some constant $C>0$ that depends only on $o$ and $\omega$. Note that we are using the fact that $\Rc$ is strictly positive. By Lemma \ref{lem:currents}, for any $\ep>0$, $$\ddbar \log(\n s\n^{p} + \ep^2) + \frac{pq}{2}\ddbar\phi\geq 0.$$  To estimate the integral we use Proposition \ref{prop:Bezout}. First, by Proposition \ref{prop:section-existence-hormander}, there exists $\la>>1$ and $\sigma\in H^0(X,K_X)$ such that $$\int_X \sigma\wedge\bar\sigma ~e^{-\la\psi} < \infty.$$ We fix this $\sigma$ once and for all. In particular, the $\sigma$ does not depend on $s$. By Corollary \ref{lem:ric-zeta-upperbound} we have that $$\zeta_\delta(\sigma,2,\la\phi) \geq \frac{\n \sigma\n_{\la\phi}^2}{\n \sigma\n_{\la\phi}^2 + \delta^2}Ric.$$
Multiplying by the cut-off function $\chi_R$ for $R>>1$, by Fatou's lemma
\begin{align*}
 \frac{C}{p\pi}\int_{B(o,1)}\ddbar\log|s|^p\wedge Ric &\leq   \frac{C}{p\pi}\int_{X}\chi_R\ddbar\log|s|^p\wedge Ric \\
 &=  \frac{C}{p\pi}\lim_{\ep\rightarrow 0}\int_{X}\chi_R\zeta_\ep(s,p,q\phi)\wedge Ric \\
 &\leq \frac{C}{p\pi}\lim_{\ep\rightarrow 0}\liminf_{\delta\rightarrow 0}\int_{X}\chi_R\zeta_\ep(s,p,q\phi)\wedge\zeta_\delta(\sigma,2,\la\phi)\\
 &\leq \frac{C}{p\pi}\lim_{\ep\rightarrow 0}\liminf_{\delta\rightarrow 0}\int_{X}\zeta_\ep(s,p,q\phi)\wedge\zeta_\delta(\sigma,2,\la\phi)\\
 &\leq Cq,
\end{align*} for some possibly larger constant $C$, where we applied Proposition \ref{prop:Bezout} with $v_1 =\log(\n s\n^{p}_{q \psi} + \ep^2)$, $v_2 = \log (\n \sigma \n^2_{\lambda \phi}+ \delta^2)$, $\be_1 = pq/2$ and $\be_2 = \la$. \end{proof}

 \subsection{Finiteness of a Gauss-Bonnet integral}

 \begin{prop}\label{prop:finiteGB}
 Let $s = \sum_{i=1}^n s_i$ where $s_i\in \Ga^{p_i}_{q\psi}$, and let $\Omega\subset X$ be an open subset such that $S = \{s = 0\}\cap \Omega$ is a smooth Riemann surface. Then  $$\int_S |K|\,dA < \infty.$$
 \end{prop}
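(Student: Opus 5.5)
The plan is to reduce the Gauss--Bonnet integral to an intersection number on $\PP(T_X)$, following Mok, and then bound that number with Proposition \ref{prop:Bezout-proj}.

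\emph{Step 1: pointwise bound on $|K|$.} Let $T$ be a unit tangent vector of $S$. By the Gauss equation for the complex curve $S$, $K = H(T) - |\mathrm{II}|^2 \le H(T)$, where $H$ is the holomorphic sectional curvature. Since $\sec_\omega>0$, we have $0 < H(T) \le \Rc(T,\bar T)$ (up to the normalisation of $Ric$). Hence:
\begin{itemize}
\item where $K\ge 0$, $|K| = K \le \Rc(T,\bar T)$;
\item where $K<0$, $|K| = -K$.
\end{itemize}
In both cases
$$|K|\,dA \le \big(-K + 2\Rc(T,\bar T)\big)\,dA = -K\,dA + 2\,Ric|_S.$$

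\emph{Step 2: lift to $\PP(T_X)$.} Let $S'\subset \PP(T_X)$ be the tangential lift $x\mapsto (x,[T_xS])$. Then $\pi:S'\to S$ is biholomorphic. The tautological bundle $\sO(-1)$ restricted to $S'$ is $T_S$, with the metric induced by $\omega$, so its first Chern form is $K\,dA$. Consequently $\xi|_{S'} = -K\,dA$, and
$$\int_S|K|\,dA \le \int_{S'}\big(\xi + 2\pi^*Ric\big) = \int_{S'}\nu.$$

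\emph{Step 3: realise $S'$ inside an intersection and apply the B\'ezout estimate.} Take $\mu=\mu(s)$, the section of $\sO_{\PP(T_X)}(1)$ corresponding to $\tilde s^N ds$, with $N\ge 17p$. On $\pi^{-1}(S)$, at a point where $ds\neq 0$ and $\tilde s\ne 0$, $\mu$ vanishes exactly on the direction $\ker ds = T S$, i.e. on $S'$. So $S'$ is (up to components over the discrete set where $\tilde s$ or $ds$ vanishes on $S$) a component of the intersection of the divisors $\pi^*\{s=0\}$ and $\{\mu=0\}$ in the threefold $\PP(T_X)$.

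By the Poincar\'e--Lelong formula (Proposition \ref{prop:poincare-lelong}) applied twice, with the Bedford--Taylor limiting argument of Proposition \ref{prop:intersectionnumberestimates} (now with one extra factor $\nu$), we get
$$\int_{S'}\nu \le C\liminf_{\delta\to0}\liminf_{\ep\to 0}\int_{\PP(T_X)}\pi^*\zeta_\ep(s)\wedge\eta_\delta(s)\wedge\nu.$$
This uses three positivity facts:
\begin{itemize}
\item the currents $\pi^*\zeta_\ep$ and $\eta_\delta$ are positive (Lemma \ref{lem:currents}, $\phi$ psh, and $\nu>0$ together with $-\sqrt{-1}\Theta$ of $\theta$ being controlled by $\nu$);
\item the extra components of the intersection contribute non-negatively, since $\nu>0$;
\item the curvature correction terms have favourable sign.
\end{itemize}
Proposition \ref{prop:Bezout-proj} then gives finiteness of the right-hand side.

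\emph{Main obstacle.} If $s$ vanishes to order $m>1$ along $S$, then $ds$ vanishes on $S$, so $\mu$ vanishes identically on $\pi^{-1}(S)$, and the two divisors share a component. I would handle this first by working with the lowest-order nonvanishing normal derivative. Concretely, on $\Omega$ write $s=f^m g$ locally and replace $\tilde s^Nds$ by $\tilde s^{N}\,d s$ divided by the appropriate power, i.e. use the section corresponding to $s^{N}\,ds$ with the divisor $m\,S'$ plus the vertical component $(m-1)\pi^{-1}(S)$ removed. Checking that the B\'ezout bound survives this subtraction (via the $\liminf$ and local positivity) is the delicate point. A secondary technical point is making the Poincar\'e--Lelong/intersection inequality rigorous in dimension three with the non-smooth weight $\phi$; there I would mimic the proof of Proposition \ref{prop:intersectionnumberestimates}, restricting to normalisations of the components of $\{\mu=0\}$.
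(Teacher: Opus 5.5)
Your route is the paper's: Mok's tangential lift $S'\subset\PP(T_X)$, the section $\mu$ of $\tilde s^N ds$, and Proposition \ref{prop:Bezout-proj}. Steps 1--2 are fine. Your pointwise bound $|K|\,dA\le\nu|_{S'}$ even makes the paper's separate treatment of $K^+$ via an auxiliary $\sigma\in\Ga^2_{\la\psi}(K_X)$ redundant.

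The gap is in Step 3. You treat the zeros of $\tilde s$ on $S$ as a discrete set. But $\tilde s=\prod_i s_i$ vanishes identically on $S$ as soon as a single $s_i$ does, and always when $n=1$, where $\tilde s=s$. In that case $\mu=\mu(\tilde s^N ds)$ vanishes to order $\ge N$ along all of $\pi^{-1}(S)$. So $\pi^*\{s=0\}$ and $\{\mu=0\}$ share the component $\pi^{-1}(S)$ even when $s$ vanishes simply along $S$. The real obstruction is therefore the factor $\tilde s^N$, which you cannot drop because Proposition \ref{prop:Bezout-proj} needs $N\ge 17p$; the case $m>1$ you single out is only one instance.

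With your order of limits ($\ep\to0$ innermost), $\pi^*\zeta_\ep\to\pi p[\pi^*\mathrm{div}\,s]$, and you are left with $\int_{\pi^{-1}(S)}\pi^*\chi\,\eta_\delta\wedge\nu$. On $\pi^{-1}(S)$, however, $\eta_\delta$ restricts to $\nu+(N+1)q\ddbar\pi^*\phi$ for every $\delta$. After fibre integration this only involves $\int_S\chi\,Ric$ and $\int_S\chi\,\ddbar\phi$, so it never sees $S'$. Your displayed lower bound for $\int_{S'}\nu$ therefore does not follow. The paper's own chain of equalities takes the limits in the same order and passes over the same point.

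The repair is to reverse the order of limits, as is done in the \emph{proof} (not the statement) of Proposition \ref{prop:intersectionnumberestimates}. This is legitimate because the bound of Proposition \ref{prop:Bezout-proj} is uniform in $\ep$ and $\delta$, as its proof shows.
\begin{itemize}
\item Fix $\ep$. Then $\pi^*\zeta_\ep$ has a continuous psh potential, so by Bedford--Taylor theory $\pi^*\zeta_\ep\wedge\eta_\delta\wedge\nu\to\pi^*\zeta_\ep\wedge(2\pi[\mathrm{div}\,\mu]+2\pi^*Ric)\wedge\nu$ as $\delta\to0$.
\item Work near a point of $S$ where $s=f^mg$ with $g\neq0$; this covers $S$ outside a discrete set. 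Write $ds=f^{m-1}\alpha$ with $\alpha=mg\,df+f\,dg\neq0$. Locally $\mathrm{div}\,\mu=N\pi^*\mathrm{div}\,\tilde s+(m-1)\pi^{-1}(S)+D'$, where $D'=\{\mu_\alpha=0\}$ is the graph of the local section $x\mapsto[\ker\alpha_x]$ of $\pi$.
\item Since $\ker\alpha_x=T_xS$ for $x\in S$, one has $D'\cap\pi^{-1}(S)=S'$.
\item Because $\pi^*\zeta_\ep\wedge\nu\ge0$, every term other than $2\pi[D']$ can be discarded, including the vertical components and the $Ric$ term.
\item Identify $D'$ with an open subset of $X$ via $\pi$; there $\pi^*s$ vanishes to order exactly $m$ along $S'$. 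Letting $\ep\to0$ and applying Poincar\'e--Lelong on $D'$ (as in Lemma \ref{lem:wedge-with-divisor}) gives $\liminf_{\ep}\lim_{\delta}\int\pi^*\chi\,\pi^*\zeta_\ep\wedge\eta_\delta\wedge\nu\ge 2\pi^2pm\int_{S'}\chi\,\nu$.
\end{itemize}
This handles the $\tilde s^N$ factor and $m>1$ at once, and makes your ``remove the vertical divisor'' idea precise. With it, the rest of your argument goes through.
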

 \begin{proof}
 Let $K^{\pm}:= \max(\pm K, 0).$ We need to prove that the integrals of both $K^+$ and $K^-$ are bounded. Since $TS$ is a subbundle of $TM$ and the curvature decreases in subbundles, we have $$\int_S K^+\,dA \leq \int_S Ric.$$ The integral on the right is bounded above by the B\'ezout estimates. Indeed, by Proposition \ref{prop:section-existence-hormander} there exists a $\sigma\in \Ga^2_{\la\psi}(K_X)$ for some $\la>>1$. By Corollary \ref{lem:ric-zeta-upperbound} $$\zeta_\delta(\sigma,2,\la \phi) \geq \frac{\n\sigma\n^2_{\la\phi}}{\delta^2 + \n\sigma\n^2_{\la\phi}}Ric.$$ So by the Poincar\'e-Lelong and Fatou's Lemma we have
  \begin{align*}
 \int_S Ric &= \lim_{\ep\rightarrow 0}\frac{1}{\pi p}\int_X\zeta_\ep(s,p,q\phi) \wedge Ric ~\text{ (where $p = \max_i p_i$)}\\
 &\leq  \lim_{\ep\rightarrow 0}\liminf_{\delta\rightarrow 0}\frac{1}{\pi p}\int_X \zeta_\ep(s,p,q\phi) \wedge \zeta_\delta(\sigma,2,\la\phi)
 \end{align*} The last integral is of course finite by Proposition \ref{prop:Bezout}. To obtain a lower bound on the integral of the Gauss curvature we use the idea of Mok to transfer the estimate to an integral on the projectivized tangent bundle $\pi:\PP(T_X)\rightarrow X$. We lift $S$ to $\hat S \subset \PP(T_X)$ by the assignment $S\ni x\mapsto (x,[T_xS]).$ Recall that holomorphic one-forms on $X$ correspond to holomorphic sections of $\mathcal{O}_{\PP(T_X)}(1)$. Let $\nu$ be the K\"ahler form on $\PP(T_X)$ defined in Section \ref{sec:Bezout-proj}. By a calculation in Mok, $$\int_S K\,dA = 2\pi\Big(2\int_S Ric - \int_{\hat S}\nu\Big)\geq -2\int_{\hat S}\nu.$$ The integral on the right is simply the area $\Ar(\hat S,\nu)$ of $\hat S$ with respect to the K\"ahler form $\nu$. Let $\hat\mu$ be the holomorphic section corresponding to the one-form $\hat\al = ds$. Then $\hat S = \pi^{-1}(S)\cap\{\hat\mu = 0\}.$ Let $\mu$ be the section corresponding to the holomorphic form $\al = \tilde s^N ds$ for some large $N$, where as usual, $\tilde s = \prod_{i=1}^ns_i.$ We let
 \begin{align*}
 \hat\eta_\delta(s) &=  \ddbar\log(\n \hat \mu \n^2 + \delta^2) + \nu + q\pi^*\ddbar\phi\text{ and }\\
 \eta_\delta(s) &= \ddbar\log(\n \mu \n^2 + \delta^2) + \nu + (N+1)q\pi^*\ddbar\phi,
 \end{align*} and define $\hat\eta_\delta(s)$ similarly by replacing $\mu$ by $\hat\mu$. Note that $\eta_\delta(s)$ and $\hat\eta_\delta(s)$ are positive $(1,1)$ forms on $\PP(T_X)$. We also let $\hat\eta_0(s),\eta_0(s)$ denote the respective currents of integration of the divisors $\hat\mu = 0$ and $\mu = 0$. Note that $\hat\eta_0(s)\leq \eta_0(s)$ as currents. We also let $$\zeta_\ep(s) = \ddbar\log(\n s\n^p + \ep^2) + \frac{pq}{2}\ddbar\phi,$$ where $p = \max_i p_i$. Then,
 \begin{align*}
 \Ar(\hat S,\nu) &=\lim_{R\rightarrow\infty}\int_{\hat S}(\pi^*\chi_R^3)~ \nu \\
& = \lim_{R\rightarrow\infty}\int_{\pi^{-1}(S)}(\pi^*\chi_R^3)\hat\eta_0(s)\wedge \nu\\
& \leq  \lim_{R\rightarrow\infty}\int_{\pi^{-1}(S)}(\pi^*\chi_R^3)\eta_0(s)\wedge \nu \\
&=  \lim_{R\rightarrow\infty}\lim_{\delta\rightarrow 0}\int_{\pi^{-1}(S)}(\pi^*\chi_R^3)\eta_\delta(s)\wedge \nu\\
&=\frac{1}{p} \lim_{R\rightarrow\infty}\lim_{\delta\rightarrow 0}\lim_{\ep\rightarrow 0}\int_{\PP(T_X)}(\pi^*\chi_R^3)\pi^*\zeta_{\ep}(s)\wedge \eta_\delta(s)\wedge \nu.
 \end{align*}
 By Proposition \ref{prop:Bezout-proj} this final integral has a uniform bound independent of $R,\ep,\delta$ if we choose $N \geq 17p$, and hence the area of $\Ar(\hat S,\nu)$ is finite.

 \end{proof}

\section{Proof of the main theorem} \label{sec:proof}

 With the B\'ezout estimates, and their consequences, in hand, we now follow the argument in \cite{CZ1} closely \footnote{The argument in \cite{CZ1}  in turn closely mimics the argument by Mok \cite{M} on affine embeddings.}. For the rest of the paper we let $(X,\omega)$ be a K\"ahler surface with positive sectional curvature. For $k,q\in \mathbb{Z}_{+}$, let $R_{q}^k:= \Ga_{q\psi}^{\frac{2}{k}}$,

$$R_q=\mathrm{sp}_{\RR}\displaystyle \bigcup_{k\in \mathbb{Z}_+} R^k_q\text{ and }R :=\bigoplus_{q\in \ZZ_+} R_q.$$
We note that $R$ is a graded algebra with the usual multiplication of functions. Indeed, by H\"older's inequality,  $$s \in R_{q}^k, \ s' \in R_{q'}^{k'} \implies ss' \in R_{q+q'}^{k+k'},$$ and hence in particular if $s\in R_q$ and $s'\in R_{q'}$ then $ss'\in R_{q+q'}.$ We let $$M:= \Bigl \{\frac{s}{t}~\Big| ~ s,t\in R_q\text{ for some } q \Bigr \}.$$  Then it is easy to check that $M$ is a subfield of the field of meromorphic functions on $X$.

\subsection{Finite generation}

The main goal of this section is to prove the following.
\begin{prop}[Siegel type result]\label{prop:siegel}
There exists a $q\in \ZZ_+$, sections $s_0,s_1,s_2\in R^1_q$ such that $$\Big[M:\CC\Big(\frac{s_1}{s_0},\frac{s_2}{s_0}\Big)\Big] < \infty.$$ Consequently, by the primitive element theorem there exists $\frac{s}{t}\in M$ such that $$M = \CC\Big(\frac{s_1}{s_0},\frac{s_2}{s_0},\frac{s}{t}\Big).$$
\end{prop}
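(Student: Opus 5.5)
We follow Mok's classical Siegel-type argument, using Proposition \ref{prop:mult-est} as the key input. First we produce $s_0,s_1,s_2\in R^1_q=\Ga^2_{q\psi}$ with $s_1/s_0,s_2/s_0$ algebraically independent. By Proposition \ref{prop:section-existence-hormander}(3), for $q\gg1$ there are $f_1,f_2\in\Ga^2_{q\psi}$ vanishing at $o$ with $df_1\wedge df_2(o)\neq0$. By part (2), or by a further H\"ormander argument with a cut-off equal to $1$ near $o$, there is $s_0\in\Ga^2_{q\psi}$ with $s_0(o)\neq 0$. To keep all three sections in the same $R_q$, we apply H\"ormander with the same weight $q\psi$, which is strictly psh since $\phi$ is strictly psh and Ni--Tam preserves this after smoothing. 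Set $s_i=f_i$. Then $g_i=s_i/s_0$ are local holomorphic coordinates near $o$, so they are algebraically independent over $\CC$, and $\CC(g_1,g_2)\subset M$ has transcendence degree $2$.

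Next we bound the degree of any $h=s/t\in M$ over $K:=\CC(g_1,g_2)$. First, $M$ has transcendence degree $2$: any three elements of $M$ are meromorphic functions on a surface, and the counting below forces an algebraic relation among them. Hence $h$ is algebraic over $K$, and it suffices to bound $[K(h):K]$ uniformly. Write $h=s/t$ with $s,t\in R_{q'}$. Following Mok, suppose there is no nontrivial relation $P(g_1,g_2,h)=0$ with $\deg_h P\le d$. Clearing denominators, consider the sections
\[
\sum_{a+b\le k,\ c\le d}\lambda_{abc}\, s_1^a s_2^b s_0^{k-a-b} s^c t^{d-c}\in R_{kq+dq'},
\]
which form a space of dimension about $(d+1)k^2/2$. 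By the independence assumption, the map from the coefficients $\lambda$ to sections is injective. Imposing vanishing to order $m$ at $o$ costs about $m^2/2$ linear conditions. Choosing $m\approx \sqrt{d+1}\,k$ gives a nonzero section $F$ with $\mult_o F\ge c\sqrt{d}\,k$. Proposition \ref{prop:mult-est} gives $\mult_o F\le C(kq+dq')$, with $C$ independent of everything, which is crucial since $F$ is a finite sum of sections in different $L^p$ spaces. Letting $k\to\infty$ with $d$ fixed yields $c\sqrt{d}\le Cq$, so $d\le C'q^2$. Hence every element of $M$ has degree at most $D:=C'q^2$ over $K$.

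Finally, $M$ is a finitely generated separable extension. Take a finite subextension $K\subset L\subset M$ of maximal degree; the degrees of such subextensions are bounded by $D$ via the primitive element theorem in characteristic $0$. Then $L=M$, since otherwise adjoining an element of $M\setminus L$ would enlarge $L$. So $[M:K]\le D<\infty$, and the primitive element theorem gives $M=K(s/t)$.

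The main obstacle is the counting step. One must check that the dependence on $q'$ through $dq'$ is harmless: $d$ is fixed while $k\to\infty$, so it is. One must also confirm that the vanishing-order condition is expressed by linear conditions on the $\lambda$'s in the local coordinates at $o$, which holds. The injectivity of the coefficient map is where the assumed absence of low-degree relations enters.
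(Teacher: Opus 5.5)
Your proposal is correct and follows essentially the same route as the paper. Both use the same H\"ormander choice of $s_0,s_1,s_2\in R^1_q$ making $s_1/s_0,s_2/s_0$ coordinates at $o$, and both run Mok's Siegel-type count of the sections $s_1^{a}s_2^{b}s_0^{k-a-b}\cdot(\text{numerator})$ against the multiplicity estimate. Your family is the paper's family with $f_c=h^c$, $c=0,\dots,d$, and your ``vanish to order $m\approx\sqrt{d+1}\,k$'' count is just the jet-map bound $\dim_\CC R_q\le Cq^2$ of Lemma~\ref{lem:finite-dimensionality}, rephrased.

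The only real difference is that the paper takes arbitrary $K$-linearly independent $f_1,\dots,f_N\in M$ and so bounds $[M:K]$ directly. That makes your last step (a degree bound for single elements, followed by a maximal finite subextension via the primitive element theorem) unnecessary. Your last step is nevertheless valid, although your assertion there that $M$ is finitely generated is neither justified at that point nor needed.
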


We need the following consequence of the multiplicity estimate.

\begin{lem}\label{lem:finite-dimensionality}
There exists a constant $C>0$ such that $$ \dim_\CC R_q < Cq^2.$$
\end{lem}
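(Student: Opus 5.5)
The plan is the standard linear-algebra argument: a section of $R_q$ vanishing to high order at a fixed point must be zero. Fix $o\in X$ and let $C_0 = C(X,o)$ be the constant from Proposition \ref{prop:mult-est}. Consider the evaluation-of-jets map
$$J_q: R_q \longrightarrow \sO_{X,o}/\mathfrak{m}_o^{m_q}, \qquad m_q := \lfloor C_0 q\rfloor + 1,$$
which sends a section to its Taylor expansion at $o$ up to order $m_q-1$, computed in local holomorphic coordinates $(z^1,z^2)$ near $o$. This map is clearly linear. The target is the space of polynomials in two variables of degree $< m_q$, so its dimension is $\binom{m_q+1}{2} \le (C_0q+2)^2/2$.

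The main step is injectivity of $J_q$. Let $s\in R_q$ be in the kernel. By definition of $R_q$ as a span, we can write $s=\sum_{i=1}^n s_i$ with each $s_i\in R^{k_i}_q = \Ga^{2/k_i}_{q\psi}$, so $s_i \in \Ga^{p_i}_{q\psi}$ with $p_i = 2/k_i$. Any real coefficients in the span can be absorbed into the $s_i$, since each $\Ga^{p}_{q\psi}$ is closed under scalar multiplication. Thus $s$ has exactly the form allowed in Proposition \ref{prop:mult-est}. If $s\not\equiv 0$, that proposition gives $\mult_o(s)\le C_0 q < m_q$. On the other hand, $J_q(s)=0$ means every Taylor coefficient of $s$ at $o$ of order $<m_q$ vanishes, so $\mult_o(s)\ge m_q$, which is a contradiction. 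Hence $s\equiv 0$ and $J_q$ is injective. I will use the standard fact that the Lelong number of $\frac{\sqrt{-1}}{2\pi}\partial\dbar\log|s|^2$ at $o$ equals the vanishing order of $s$ at $o$, which matches the definition of $\mult_o$ given above.

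The dimension bound then follows:
$$\dim_\CC R_q \le \tfrac12 (C_0q+2)^2 \le C q^2 \quad (q\ge 1),$$
with $C$ chosen suitably large. This also yields finite dimensionality. If one wants to treat $R_q$ as a real span, the same argument bounds the real dimension of its image, and the conclusion over $\CC$ is identical once we note that $\Ga^p_{q\psi}$ is a complex vector space.

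The only point needing care, and the natural obstacle, is that the constant in Proposition \ref{prop:mult-est} is uniform in $n$ and in the exponents $p_i$. Without this uniformity, sections built from arbitrarily small $p_i$ (large $k$) could escape the bound. The statement of Proposition \ref{prop:mult-est} explicitly asserts this uniformity, so I will simply invoke it. Everything else is routine.
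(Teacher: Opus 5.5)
Your proposal is correct and is essentially the paper's own proof: the paper uses the same Poincar\'e--Siegel jet map at $o$, which is injective by the multiplicity estimate of Proposition \ref{prop:mult-est}, and counts polynomials of degree $O(q)$ in two variables. The only difference is bookkeeping: the paper records jets up to order $m=\lfloor Cq\rfloor+1$, one more than needed, whereas you stop at order $m_q-1$.
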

\begin{proof} Recall that from the multiplicity estimate, we have a uniform constant $C>0$ such that for any $s\in R_q$, $$\mult_o(s) \leq Cq.$$
Now let $$m = \lfloor Cq\rfloor + 1\text{ and }d(m) = \dim_\CC(\mathcal{O}_m{(\CC^2)})\leq C'q^2,$$ and where $\mathcal{O}_m(\CC^2)$ is the space of polynomials in two variables of degree at most $m$. To prove the dimension estimate we consider the Poincar\'e-Siegel map: $P:R_q\rightarrow \CC^{d(m)}$ given by $$P(s) =  (s(o), Ds(o),D^2s_(o),\cdots, D^m(o)). $$ Clearly the map is linear. By the multiplicity estimate, the map $P$ is injective, and hence $$d_{q}:= \dim_\CC R_q  \leq d(m) \leq C'q^2.$$

\end{proof}

\begin{proof}[Proof of Proposition \ref{prop:siegel}]  The proof is standard (cf. \cite[pg. 384]{Mok90}), but we include it for the convenience of the reader. Fix a point $o\in X$. By an application of Proposition \ref{prop:section-existence-hormander}, we obtain a $q>>1$ and sections $s_0,s_1,s_2\in R_q^1$ such that  $\{s_1/s_0,s_2/s_0\}$ form holomorphic coordinates centred at $o$. We will prove that $M$ is a finite extension of $\CC(s_1/s_0,s_2/s_0)$ with the degree $$\Big[M:\CC\Big(\frac{s_1}{s_0},\frac{s_2}{s_0}\Big)\Big] \leq Cq^2$$ for some constant $C$. Let $f_1,\cdots,f_N \in M$ be linearly independent over $\CC(s_1/s_0,s_2/s_0)$. By multiplying out the denominators one can write $f_i = t_i/\tau$ for some $t_i,\tau\in R_\la$ and $i=1,\cdots,N$. For any positive integer $l$, any pair $\al:= (\al_1,\al_2)$ of indices with $|\al| = \al_1 + \al_2 \leq l$ and $i = 1,\cdots,N$ we let $$S_{i,\al} = s_1^{\al_1}s_2^{\al_2}s_0^{l - |\al|}t_i \in  R_{lq + \la}.$$ We claim that the collection $\{S_{i,\al}\}$ is linearly independent over $\CC$. Suppose not. Then there exist $c_{i,\al}\in \CC$ such that $$\sum_{i,\al}c_{i,\al}S_{i,\al} = 0.$$ Dividing by $\tau s_0^l$ we have that $$\sum_i f_i\Big(\sum_{\al}c_{i,\al}\Big(\frac{s_1}{s_0}\Big)^{\al_1}\cdot\Big(\frac{s_2}{s_0}\Big)^{\al_2}\Big) = 0.$$ By linear independence of $\{f_i\}$ and the fact that $s_1/s_0$ and $s_2/s_0$ are coordinates near $o$ we see that $c_{i,\al} = 0$ for all $i$ and $\al$. Since there are exactly ${l+2\choose 2}$ choices for the multi-index we have that $$N\frac{(l+2)(l+1)}{2} = N{l+2\choose 2} \leq  \dim_\CC R_{lq + \la} \leq C'(lq+\la)^2.$$ Letting $l\rightarrow\infty$ we see that $N\leq Cq^2$ for some $C>0$ independent which is universal and independent of $\la$ and the sections $\{f_i\}$.

\end{proof}

\subsection{Quasi-surjectivity} By clearing out denominators we may assume without loss of generality that $$M = \CC\Big(\frac{s_1}{s_0}, \frac{s_2}{s_0},\frac{s_3}{s_0}\Big).$$
Now, consider the rational map $F_0:X\dashrightarrow \PP^3$ defined by $$F_0(x) = [s_0(x):s_1(x):s_2(x):s_3(x)].$$ We first claim that the image lies inside a hypersurface in $\PP^3$. Indeed since $M$ is a finite extension over $\CC(s_1/s_0,s_2/s_0)$, the element $s_3/s_0$ is algebraic and hence satisfies a monic polynomial equation $$\Big(\frac{s_3}{s_0}\Big)^k + \sum_{j=0}^{k-1}R_j\Big(\frac{s_1}{s_0},\frac{s_2}{s_0}\Big)\Big(\frac{s_3}{s_0}\Big)^j = 0,$$ where for each $j$, $R_j$ is a rational function of two variables.  After clearing the denominators, one can see that the image $F_0(X)$ is contained in the zero set of a homogeneous polynomial on $\PP^3$. We denote the connected component containing $F(X)$ by $Z_0$.

Let $V$ be the union of $F^{-1}(Z_0^{sing})$ with the base and branching loci of $F$. Then $V$ is contained in the analytic subvariety of the form $V = \{f = 0\}$, where
\begin{equation}\label{eq:vanishing-V}
f := s_0^NQ(s_0,s_1,s_2,s_3)d\Big(\frac{s_1}{s_0}\Big)\wedge d\Big(\frac{s_2}{s_0}\Big),
\end{equation} where $Q$ is some polynomial corresponding to the singular set $Z_0^{sing}$ of $Z_0$, and $N$ is chosen large enough so that the section on the right is holomorphic. We let $U = X\setminus V$.
An elementary but key observation is the following.
\begin{lem}
$U$ is Stein.
\end{lem}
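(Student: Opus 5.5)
The plan is to reduce the statement to two standard facts: $X$ itself is Stein, and the complement of the zero set of a single holomorphic function on a Stein manifold is Stein. Steinness of $X$ comes from the Green--Wu theorem recalled in the introduction. It provides a smooth plurisubharmonic exhaustion $\rho$ with $|\nabla\rho|\le 1$; by Theorem \ref{thm:main-ma} we may also add the strictly plurisubharmonic $\phi$ (or its smoothing $\psi$) to make the exhaustion strictly plurisubharmonic. Grauert's solution of the Levi problem then shows that $X$ is Stein.

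Next, I will verify that $V$ (or rather the larger set $\{f=0\}$ containing it, which I will take as the definition of $V$) is the zero set of a genuine holomorphic function on $X$. Each $s_i$ is a holomorphic function on $X$, since it is a section of the trivial bundle, so $Q(s_0,\dots,s_3)$ is holomorphic. The form $d(s_1/s_0)\wedge d(s_2/s_0)$ is a meromorphic section of $K_X$, and after multiplying by $s_0^N$ with $N$ large it becomes a holomorphic section of $K_X$. To obtain a function I will divide by a fixed nowhere-vanishing holomorphic $2$-form. Because $X$ is diffeomorphic to $\RR^4$ it is contractible and Stein, so $H^1(X,\sO)=0$ and $H^2(X,\ZZ)=0$; hence every holomorphic line bundle is trivial, and in particular $K_X\cong\sO_X$. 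Writing $\Omega$ for a global trivialising section, $g:=f/\Omega$ is a holomorphic function with $\{g=0\}=\{f=0\}$. If the authors intend $f$ only as a section of $K_X$, I will still use this trivialisation; alternatively, I can replace the wedge by the Jacobian of $(s_1/s_0,s_2/s_0)$ with respect to global coordinates in which $K_X$ is trivialised.

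Finally, $U=X\setminus\{g=0\}$ is Stein. Concretely, if $\rho'$ is a strictly plurisubharmonic exhaustion of $X$, then $\rho'-\log|g|^2$ is a strictly plurisubharmonic exhaustion of $U$, since $-\log|g|^2$ is pluriharmonic on $U$ and tends to $+\infty$ at $\{g=0\}$. Equivalently, $U$ is isomorphic to the closed submanifold $\{(x,w)\in X\times\CC : wg(x)=1\}$ of the Stein manifold $X\times\CC$.

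The main obstacle is the bookkeeping ensuring that $V$ really is globally cut out by one holomorphic function, rather than being merely locally analytic. This requires that the base locus, the branching locus and $F^{-1}(Z_0^{sing})$ all lie in $\{f=0\}$ (or that we enlarge $V$ to $\{f=0\}$, which is harmless) and that $K_X$ is trivial. I will handle the first point by noting that the base locus lies in $\{s_0=0\}$ once $N\ge1$, the branching locus lies in the zero set of the Jacobian factor, and the preimage of the singular locus lies in $\{Q(s)=0\}$.
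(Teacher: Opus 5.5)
Your proposal is correct and follows the paper's proof. It gets Steinness of $X$ from Green--Wu and triviality of $K_X$, which you justify more carefully than the paper via $H^1(X,\sO)=0$ and $H^2(X,\ZZ)=0$. Hence $f$ is a global holomorphic function, and the complement of its zero set is Stein.

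One claim is false, though it does not affect the conclusion. For an arbitrary exhaustion $\rho'$, the function $\rho'-\log|g|^2$ need not be an exhaustion of $U$. Nothing prevents $\log|g|^2$ from tending to $+\infty$ faster than $\rho'$ along a sequence leaving every compact set. The sections $s_i$ are only controlled by the weights $e^{q\psi}$, so they may grow like $e^{c\,d(o,x)}$ with $c$ large, whereas Green--Wu's $\rho$ has slope at most $1$. The paper's $\max(\rho,\rho-\log|f|^2)\geq\rho$ (then Richberg-smoothed) avoids this, as does $\rho'+|g|^{-2}$. In any case, your closed embedding $\{wg=1\}\subset X\times\CC$ already proves the statement on its own.

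The same caveat applies to ``adding $\phi$'' to $\rho$ in your first step, since $\phi$ is not known to be bounded below. Use $\chi\circ\rho+\phi$ with $\chi$ convex, increasing and growing fast enough, or simply quote Green--Wu's Steinness as the paper does.
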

\begin{proof}
By the Gromoll-Meyer \cite{GM} theorem, $X$ is diffeomorphic to $\RR^4$, and since by Green-Wu \cite{GW} it is also Stein, $K_X$ is a (holomorphically) trivial line bundle. So $f$ is actually a holomorphic function. Finally, the complement of the zero-set of a holomorphic function on a Stein manifold is itself Stein by a theorem of Grauert-Remmert.  Alternately, one could also simply consider the function $$u = {\max}(\rho, \rho - \log|f|^2)$$ which gives a continuous strictly PSH exhaustion function on $U$. To obtain a smooth exhaustion function one can regularize using the method of Richberg \cite{Rich} (cf. \cite[Theorem 5.11]{Dem-book}).
\end{proof}
\begin{lem}\label{lem:bihol-U}
$F:U\rightarrow F(U)$ is a biholomorphism.
\end{lem}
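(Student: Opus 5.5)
The plan is to show three things on $U$: $F$ is holomorphic, $F$ is a local biholomorphism onto the smooth part of $Z_0$, and $F$ is injective. An injective local biholomorphism is then a biholomorphism onto its (open) image $F(U)\subset Z_0^{reg}$.

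\emph{Step 1: $F$ is holomorphic and étale on $U$.} By \eqref{eq:vanishing-V}, $f$ contains the factor $s_0^N$. Hence on $U$ we have $s_0\neq 0$, and $F$ is given by the holomorphic affine map $x\mapsto (s_1/s_0,s_2/s_0,s_3/s_0)(x)\in\CC^3$. Since $Q\circ F\neq 0$ on $U$, the image lies in $Z_0^{reg}$, which is a smooth surface. Since $d(s_1/s_0)\wedge d(s_2/s_0)\neq 0$ on $U$, the composition of $F$ with the projection to the first two affine coordinates is a local biholomorphism. Therefore $F:U\to Z_0^{reg}$ is a local biholomorphism between surfaces, and in particular an open map.

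\emph{Step 2: injectivity, following Mok's argument.} Suppose $x\neq y\in U$ with $F(x)=F(y)=:P$. By Proposition \ref{prop:section-existence-hormander}(2), applied with $q'$ a large multiple of $q$, say $q'=kq$, there is a holomorphic $s$ with $\int_X|s|^2e^{-q'\psi}<\infty$, $s(x)=0$ and $s(y)\neq 0$. Then $s\in R^1_{kq}$, and $s_0^k\in R^k_{kq}$ by H\"older, so
$$h:=s/s_0^k\in M.$$
Moreover $h$ is holomorphic near $x$ and $y$, because $s_0$ does not vanish there, and $h(x)=0\neq h(y)$. Since $M=\CC(s_1/s_0,s_2/s_0,s_3/s_0)$, there is a rational function $G$ on $\PP^3$ with $h=G\circ F$ as meromorphic functions on $X$.

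Choose neighbourhoods $W_x\ni x$ and $W_y\ni y$ on which $F$ is a biholomorphism onto open sets of $Z_0^{reg}$ containing $P$, and let $W$ be the intersection of these two images. On $W$ the germ $G|_{Z_0}$ is a meromorphic function on a smooth surface. Its pullback under the biholomorphism $(F|_{W_x})^{-1}$ is $h|_{W_x}$, which is holomorphic. Hence $G|_W=h\circ (F|_{W_x})^{-1}$ is holomorphic with value $h(x)=0$ at $P$. By the same argument with $W_y$, its value at $P$ is $h(y)\neq 0$, which is a contradiction.

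\emph{Main obstacle.} The delicate point is justifying that $G|_{Z_0}$ has a well-defined value at $P$: the rational function $G$ may have indeterminacy on $Z_0$, and the identity $h=G\circ F$ holds a priori only off a proper analytic set. This is handled by the local argument above. Two meromorphic functions that agree on a dense open set agree as meromorphic germs, and pulling back by a biholomorphism preserves holomorphy, so the germ of $G|_{Z_0}$ at $P$ is forced to be holomorphic. This is exactly where it is essential that $P$ is a smooth point of $Z_0$ and that $F$ is étale at both $x$ and $y$. The remaining checks, that $s_0^k$ and $s$ lie in the same graded piece so that $h\in M$, and that $V$ contains $\{s_0=0\}$, are routine.
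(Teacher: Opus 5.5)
Your proof is correct and follows the same route as the paper: $F$ is a local biholomorphism on $U$ because $s_0\neq 0$ and $d(s_1/s_0)\wedge d(s_2/s_0)\neq 0$ there, and injectivity follows because $M=\CC(s_1/s_0,s_2/s_0,s_3/s_0)$ separates points of $U$. The paper separates points with a quotient $t_2/t_1$ of sections from Proposition \ref{prop:section-existence-hormander}, whereas you use $s/s_0^k$, whose denominator is automatically nonzero at $x$ and $y$. Your local argument, via the two local inverses of $F$, shows that $G|_{Z_0}$ is holomorphic at the smooth point $P=F(x)=F(y)$. This makes explicit a step the paper passes over when it simply evaluates ``polynomials'' in the $s_i/s_0$ at $x$ and $y$, without addressing possible indeterminacy of the rational function $G$ at $P$.
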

\begin{proof}
Since $s_0 \neq 0$ on $U$ and $$d\Big(\frac{s_1}{s_0}\Big)\wedge d\Big(\frac{s_2}{s_0}\Big) \neq 0,$$ $s_1/s_0$ and $s_2/s_0$ form coordinates near any point on $U$. In particular, the map $dF$ is of full rank. By the inverse function theorem, $F$ is a local biholomorphism. To complete the proof we need to show that the map is injective. First note that $M$ separates points. Indeed given any $x,y\in U$, by Proposition \ref{prop:section-existence-hormander} one can find sections $t_1,t_2 \in R_{q}$ for some large $q>>1$ such that $$t_1(x),t_1(y)\neq 0, t_2(x) = 0 \text{ and }t_2(y) \neq 0.$$ Then $t_2/t_1\in M$ separates $x$ and $y$. Now suppose $F(x) = F(y)$. Then since $s_0(x),s_0(y)\neq 0$, we must have $$\frac{s_i(x)}{s_0(x)} = \frac{s_i(y)}{s_0(y)},i = 1,2,3.$$ But since $M$ is generated by polynomials in $s_1/s_0,s_2/s_0,s_3/s_0$, this contradicts the fact that $M$ separates points.
\end{proof}
Our goal now is to prove quasi-surjectivity ie. $Z_0\setminus F_0(U)$ is an algebraic set. By GAGA, it is enough to show that this is an analytic set. The main idea is to use Simha's criterion \cite{Simha}:

\begin{thm}[Simha]
Let $\Delta^2 = \{(z,w)~|~ z,w\in \Delta\} \subset \CC^2$ be a polydisc and let $D\subset \Delta^2$ be a domain of holomorphy. Suppose for each $a\in \Delta$, $$\#\Big(\{z = a\}\cap (\Delta^2 - D)\Big)< \infty.$$ Then $\Delta^2 - D$ is an analytic set.

\end{thm}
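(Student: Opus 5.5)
The statement is Simha's criterion, which the paper quotes from \cite{Simha}, so in the paper one would simply cite it. If I had to prove it, I would reduce it to a local statement and prove it by a Hartogs--Oka type argument; the plan follows.

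Set $E=\Delta^2-D$, a closed subset of $\Delta^2$ with finite fibres $E_a=E\cap\{z=a\}$.

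\emph{Step 1 (localisation).} Fix $a\in\Delta$ and a point $(a,w_0)\in E_a$. Since $E_a$ is finite, choose $r>0$ so that the circle $\{|w-w_0|=r\}$ misses $E_a$. Because $E$ is closed, the same circle misses $E_z$ for all $z$ in a small disc $U\ni a$. Then
$$E'=E\cap\bigl(U\times\{|w-w_0|<r\}\bigr)$$
projects properly onto $U$ with finite fibres. It therefore suffices to show that such an $E'$ is analytic in $U\times\{|w-w_0|<r\}$. Note that the complement of $E'$ there is still pseudoconvex, being an intersection of pseudoconvex domains.

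\emph{Step 2 (generic structure).} Write $U=\bigcup_k\{z:\#E'_z\le k\}$. By Baire category, some of these sets has nonempty interior, and more precisely the set $U_0$ where $z\mapsto\#E'_z$ is locally constant is open and dense. Near a point of $U_0$, properness and the constancy of the count together show that $E'$ is the union of graphs of finitely many disjoint continuous functions $g_1,\dots,g_k$.

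The key point is that each $g_j$ is holomorphic. Pseudoconvexity of the complement makes $-\log d_w\bigl((z,w),E'\bigr)$ plurisubharmonic, where $d_w$ is the distance to $E'$ measured in the $w$-direction. Near the graph of $g_j$ this is $-\log|w-g_j(z)|$. A continuous function whose graph has pseudoconvex complement is holomorphic (Hartogs' lemma). Concretely, for each fixed $c$ the function $z\mapsto -\log|g_j(z)-c|$ is subharmonic, which forces $g_j$ to be holomorphic.

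\emph{Step 3 (removing the exceptional set).} The elementary symmetric functions $\sigma_m(z)$ of the points of $E'_z$, counted in the generic region, are holomorphic on $U_0$ and bounded by $(|w_0|+r)^m$. I expect the main obstacle to be that $U\setminus U_0$ is only known to be closed and nowhere dense, not polar, so Riemann extension does not apply directly.

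To handle this, I would first show that the fibre count $\#E'_z$, summed with suitable multiplicities, is constant on $U$. The tool is the argument principle on the circle $|w-w_0|=r$ applied to the locally defined polynomials $\prod_j(w-g_j(z))$: it gives a locally constant count on $U_0$. I would then use pseudoconcavity of $E'$ (a Kontinuitätssatz argument with small analytic discs) to rule out points of $E'$ over $U\setminus U_0$ that are not limits of the branches, and to rule out the number of branches jumping.

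Once the count is constant, each $\sigma_m$ extends continuously to $U$, and a continuous function holomorphic off a closed set whose graph has pseudoconvex complement is holomorphic. Then $E'$ coincides with the zero set of the Weierstrass polynomial $w^k-\sigma_1w^{k-1}+\cdots\pm\sigma_k$, and is therefore analytic.
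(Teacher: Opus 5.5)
The paper does not prove this statement; it quotes it from Simha's paper and uses it as a black box, so your argument has to stand on its own. Your localisation in Step 1 is fine. Step 2, however, already relies on something you have not proved. Baire only gives density of $U_0$ if the sets $\{z:\#E'_z\le k\}$ are closed, i.e.\ if $z\mapsto\#E'_z$ is lower semicontinuous. The same property is needed to see that each small disc around a point of $E'_{z_1}$, $z_1\in U_0$, contains exactly one point of $E'_z$ for $z$ near $z_1$. Properness only gives the opposite semicontinuity: points cannot appear from nowhere, but they may disappear or pile up. Lower semicontinuity does hold, but it comes from pseudoconvexity. Let $D'$ be a disc about a point of $E'_{z_0}$ whose boundary circle misses $E'_z$ for all $z$ in a disc $W\ni z_0$. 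Then $\{z\in W: E'_z\cap D'=\emptyset\}$ is open, and the Kontinuit\"atssatz applied to the discs $\{z\}\times\overline{D'}$ shows it is also closed, hence empty. Separately, your ``concrete'' form of Hartogs' lemma is false: for $g(z)=\bar z$, the function $-\log|g(z)-c|=-\log|z-\bar c|$ is harmonic for every constant $c$. You need translates by holomorphic functions, e.g.\ $-\log|g(z)-c-bz|$; simply citing Hartogs' theorem suffices.

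The genuine gap is Step 3, which is the whole content of the theorem and which you only assert. Lower semicontinuity says the count can only drop at points of $U\setminus U_0$. Nothing in your sketch prevents two components of $U_0$ from carrying different numbers of branches. Nor does it prevent the branches from redistributing among the points of a fibre over $U\setminus U_0$ according to the side of approach. The argument principle adds nothing, because $\prod_j(w-g_j(z))$ is only defined over $U_0$, where constancy of the count is a tautology. So neither the constancy of the count nor the continuous extension of the $\sigma_m$ is established. The last step is also unsupported. The hypothesis gives pseudoconvexity of the complement of $E'$, not of the graph of $\sigma_m$. Continuity plus holomorphy off a closed nowhere dense set gives nothing by itself: the Cauchy transform of area measure on a compact set of positive area is continuous, holomorphic off that set, and not holomorphic.

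What is missing is a potential-theoretic input that makes the exceptional set polar. The standard one, from the theory of analytic multifunctions (S{\l}odkowski; Aupetit's scarcity theorem), is that $z\mapsto\log\delta_n(E'_z)$ is subharmonic for each $n$ because the complement is pseudoconvex. Here $\delta_n(K)=\sup_{w_1,\dots,w_n\in K}\prod_{i<j}|w_i-w_j|^{2/(n(n-1))}$. Since $\#E'_z<n$ exactly when $\delta_n(E'_z)=0$, and $U=\bigcup_n\{\#E'_z<n\}$ is not polar, some $\log\delta_n(E'_z)$ is identically $-\infty$. Taking $n$ minimal gives $k=n-1$ with $\#E'_z\le k$ everywhere and $\#E'_z=k$ off a polar set $F$, which is closed by lower semicontinuity. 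The $\sigma_m$ are then bounded and holomorphic on $U\setminus F$, so they extend holomorphically across $F$. Finally, the Kontinuit\"atssatz argument above shows that $E'$ has no points over $F$ besides limits of the branches. Hence $E'$ is exactly the zero set of the resulting Weierstrass polynomial.
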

\begin{rem}
By following the arguments below carefully, one can show that in fact not only do the ``vertical slices" intersect the complement of $D$ in finite sets, but also, the number of points is locally bounded above. Therefore, instead of Simha's criterion, one can instead use a weaker criterion due to Narasimhan \cite{Nar} as in the proof of quasi-surjectivity in \cite{Mok89} (see also the remarks by Mok in \cite[pg.\ 425]{Mok89}).
\label{rem:AfterSimha}
\end{rem}

To apply Simha's criterion, we take a projective resolution of singularities $\pi:Z\rightarrow Z_0$. Then $F_0$ lifts to a birational map $F:X\dashrightarrow Z\subset \PP^N$ for some large $N$ such that once again $F:U\rightarrow F(U)$ is a biholomorphism. Note that the map is again given by sections in $M$ (in fact rational functions in $s_0,s_1,s_2,s_3$). The new goal then is to show that $Z\setminus F(U)$ is an algebraic subvariety.

  Let $\zeta_0 \in Z \setminus F(U)$ and let $\Delta^2$ be any coordinate polydisc in $Z$ centred around $\zeta_0$. Note that $D = \Delta^2 \cap F(U)$ is open. Moreover since $U$ is Stein and $F$ is a biholomorphism, $D$ is also Stein and hence a domain of holomorphy. Now, we claim that there exist linear homogeneous polynomials $T_0, T_1, T_2$ such that $T_0(\zeta_0)\neq 0$, $T_1(\zeta_0)=T_2(\zeta_0)=0$, $\frac{T_1}{T_0}, \frac{T_2}{T_0}$ produce a coordinate polydisc near $\zeta_0$, and the hyperplane $T_1=aT_0$ (for all $a$ small enough) intersects $F(U)$ non-trivially and $Z$ in a smooth connected (hence irreducible) curve. Indeed, consider any linear polynomial $T_0$ that does not vanish at $\zeta_0$. By Bertini's theorem the generic element of the linear system of hyperplanes passing through $\zeta_0$ is smooth away from $\zeta_0$. In fact, by genericity, it is smooth everywhere and is connected by the Lefschetz hyperplane theorem. Next, we consider any point in $F(U)$ and join it to $\zeta_0$ by means of a hyperplane.  By perturbing this hyperplane slightly and considering another generic hyperplane passing through $\zeta_0$ we arrive at the (defining) polynomials $T_1, T_2$. Let the corresponding sections in $R^1_q$ be denoted by $t_0, t_1, t_2$. The vertical slices in this coordinate system are given by $T_1 = \lambda T_0$ for $\lambda\in \CC$, or equivalently the vanishing of $T = T_1 - \lambda T_0$ (which for small $\lambda$ are still smooth and connected). Denote the corresponding sections in $R^1_q$ as $t=t_1-\lambda t_0$. So one has to prove that the hyperplane $T= 0$ (whose intersection $S_0$ with $Z$ is a smooth compact Riemann surface) intersects the complement of  $F(U)$ in $Z$ at most finitely many points. Let $\Omega_0$ be one of the connected components of  $S_0\cap F(U)$.  It is enough to show that  $S_0\setminus \Omega_0$ contains finitely many points. (Indeed, that would also prove that $\Omega_0$ is the only such component, and every vertical slice in  the small coordinate polydisc around $\zeta_0$ \emph{also} intersects $F(U)$. Therefore, Simha's criterion will be applicable.)
  Next, we need the following result.

 \begin{prop}\label{prop:huber} Let $S$ be a compact Riemann surface and $\Omega\subset S$ an open set with a complete K\"ahler metric $\eta$ such that $$\int_\Omega|K_\eta|\,dA_\eta < \infty,$$ and there exist a family of smooth compactly supported cutoff functions $\xi_a: \Omega\rightarrow [0,1]$ where $0<D\leq a <\infty$ such that
 \begin{enumerate}
 \item $\Vert \nabla \xi_a \Vert +\Vert \Delta \xi_a \Vert \leq \frac{C}{a}$
 \item For all $a<b$, $$\xi_a^{-1}(1) \subset \xi_b^{-1}(1)  \text{ and } \mathrm{Supp}(\xi_a)\subset \mathrm{Supp}(\xi_b)$$
 \item $\displaystyle \cup_{a\geq D} \xi_a^{-1}(1)=\Omega$.
 \end{enumerate}Then $S\setminus\Omega$ consists of finitely many points.

  \end{prop}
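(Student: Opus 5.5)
The plan is to combine Huber's theorem with a Cohn-Vossen type Gauss--Bonnet argument, using the cut-off functions $\xi_a$ in place of exhaustion by compact domains with controlled boundary. By Huber's theorem, a complete surface with $\int_\Omega |K_\eta|\,dA_\eta<\infty$ is conformally a compact Riemann surface $\bar S'$ with finitely many points removed. However, $\Omega$ is given as an open subset of a fixed compact surface $S$, and $S\setminus\Omega$ could a priori be large, e.g.\ contain a Cantor set or curves. So the real task is to bound the topology of $\Omega$, namely its genus and number of ends, and then deduce that the complement consists of finitely many punctures.

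The first step is to show that $\chi(\Omega)$ is finite and bounded below in the relevant sense. For this I will integrate the Gauss curvature against $\xi_a$. Using $K\,dA = -\frac12\Delta\log(\text{conformal factor})$ locally is awkward globally, so I will instead use the following smooth Gauss--Bonnet approach. For a regular value $c\in(0,1)$ of $\xi_a$, the domain $\Omega_{a,c}=\{\xi_a\ge c\}$ is compact with smooth boundary, and
\[
2\pi\chi(\Omega_{a,c}) = \int_{\Omega_{a,c}}K\,dA + \int_{\partial\Omega_{a,c}}k_g\,ds .
\]
Averaging over $c\in(0,1)$ via the coarea formula converts the boundary term into an integral of the form $\int \mathrm{div}(\nabla\xi_a/|\nabla\xi_a|)\,|\nabla\xi_a|$, which is a bulk quantity controlled by $\Delta\xi_a$ and $|\nabla\xi_a|$, up to a Hessian term. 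The bounds $\|\nabla\xi_a\|+\|\Delta\xi_a\|\le C/a$ then give that the averaged boundary term is $O(1)\cdot\mathrm{Area}(\mathrm{Supp}\,\xi_a)/a$. This requires an area growth bound. As an alternative that avoids area growth, I will attempt to pick good level sets with $\int_{\partial}|k_g|$ bounded. I expect this step to be the main obstacle: controlling geodesic curvature of level sets using only $\Delta\xi_a$ rather than the full Hessian. What I would try first is to use finite total curvature itself, which by Huber/Shiohama gives quadratic area growth, so that the averaged boundary contribution is uniformly bounded.

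Granting a uniform bound $|\chi(\Omega_{a,c})|\le C'$ for good $c$, property (2) makes the $\Omega_{a,c}$ essentially nested, and property (3) makes them exhaust $\Omega$. Therefore $\Omega$ has finitely generated topology: genus at most $g(S)$, and $\#\text{ends}\le C''$. Each end of $\Omega$ is then a neighbourhood of a connected component $E_i$ of $S\setminus\Omega$; there are finitely many such components, since the complement components correspond to ends of $\Omega$.

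Finally, it remains to show that each $E_i$ is a single point. Since $\Omega$ is complete with finite total curvature, Huber's theorem says each end is conformally a punctured disc. On the other hand, the end $A_i\subset S$ is an annulus-type region $U_i\setminus E_i$ with $U_i$ a topological disc in $S$. By uniqueness of the conformal structure, $U_i\setminus E_i$ is biholomorphic to a punctured disc $\Delta^*$. This biholomorphism $\Delta^*\to U_i\setminus E_i\subset S$ is bounded into local coordinates near $E_i$, so by the Riemann removable singularity theorem it extends holomorphically over $0$. Its image point must then fill $E_i$, so $E_i$ is a single point. Hence $S\setminus\Omega$ is finite.
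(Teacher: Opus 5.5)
Your overall route (Huber plus a removability argument) could work, but as written it has a genuine gap at the decisive step, and your first step is both unnecessary and not closable.

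\textbf{The first step.} It aims at the wrong target. Huber's theorem, which you quote, already gives that (each component of) $\Omega$ is conformally $\bar S'\setminus\{q_1,\dots,q_m\}$. So finite genus and finitely many ends come for free; the whole content of the proposition is how these ends sit inside $S$. The averaged Gauss--Bonnet estimate also fails on its own terms:
\begin{itemize}
\item the geodesic curvature of $\{\xi_a=c\}$ involves $\nabla^2\xi_a(\nu,\nu)$, which hypothesis (1) does not control;
\item quadratic area growth of metric balls gives no bound on $\mathrm{Area}(\mathrm{Supp}\,\xi_a)/a$, since nothing ties $\mathrm{Supp}\,\xi_a$ to balls of radius comparable to $a$.
\end{itemize}

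\textbf{The genuine gap is in the last step.} You assert that each end is of the form $U_i\setminus E_i$ with $U_i$ a topological disc in $S$. You then say the parametrisation $\Delta^*\to U_i\setminus E_i$ is bounded in a coordinate near $E_i$. This presupposes the conclusion. A priori $E_i$ need not lie in a chart, and the core curve of an end need not bound a disc in $S$ on the side of the puncture.

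For example, let $S$ be a torus and $\Omega$ the complement of a closed essential band. Then $\Omega$ has two ends abutting the same component of $S\setminus\Omega$, and no such $U_i$ exists. This $\Omega$ is excluded only because its ends are not parabolic. So parabolicity must be used to \emph{produce} the disc, or to show the cluster set is a point. Your argument invokes it only after the disc has already been assumed.

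\textbf{The missing idea is a length--area (extremal length) argument.}
\begin{enumerate}
\item Let $f:\{0<|z|<1\}\to S$ be the injective holomorphic parametrisation of an end given by Huber. Fix a smooth metric $g_S$ on $S$, and let $L(r)$ be the $g_S$-length of $f(\{|z|=r\})$.
\item Cauchy--Schwarz and injectivity give $\int_{r_1}^{r_2}L(r)^2\,\frac{dr}{r}\le 2\pi\,\mathrm{Area}(S,g_S)$.
\item Applying this on $[\ep^2,\ep]$ with $\ep\to 0$ produces radii $r_k\to 0$ with $L(r_k)\to 0$.
\item Hence $f(\{|z|=r_k\})$ bounds a tiny embedded disc $D_k$. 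Since $f(\{|z|=1/2\})$ has definite diameter, the inner part satisfies $f(\{0<|z|<r_k\})\subset D_k$.
\item So the cluster set at $0$ is a single point, and Riemann's removable singularity theorem extends $f$.
\item The resulting nonconstant holomorphic map $\bar\Phi:\bar S'\to S$ is surjective. Therefore $S\setminus\Omega\subset\{\bar\Phi(q_1),\dots,\bar\Phi(q_m)\}$, and no bookkeeping of the components of $S\setminus\Omega$ is needed.
\end{enumerate}

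\textbf{Comparison with the paper.} Repaired this way, your route differs genuinely from the paper's. It never uses the cut-offs (1)--(3), but it rests on Huber's theorem. The paper avoids Huber and follows Mok:
\begin{itemize}
\item The cut-offs and $\int_\Omega|K_\eta|<\infty$ give a B\'ezout-type bound on $\int_\Omega\ddbar\log(\ep^2+\n q\n^2)$ for $L^1$ holomorphic quadratic differentials.
\item This yields a multiplicity estimate, hence $\dim\Ga^1(K_\Omega^2)<\infty$.
\item Riemann--Roch for $K_S^2\otimes\sO(p_1+\cdots+p_k)$, whose sections are integrable on $\Omega$, then bounds the number $k$ of points of $S\setminus\Omega$ directly, with no point-set topology of the complement.
\end{itemize}
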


  This follows from Huber's theorem \cite{Hub}. In the present setting, one can provide a direct and self-contained proof using an elegant argument due to Mok \cite{Mok89}. We postpone the proof to the end of the section so as not to interrupt the flow of the rest of the proof.

  Continuing with the proof of quasi-surjectivity, we need to produce a complete metric $\eta$ on $\Omega_0 \subset S_0$ satisfying the Gaussian integral estimate, and also produce the required cut-off functions. Let $\Sigma$ be the closure of  $F\Big|_U^{-1}(\Omega_0)$ in $X$.

  \begin{lem}\label{lem:finite-closure}
  $\Sigma \setminus F\Big|_U^{-1}(\Omega_0)$ is a finite set.
  \end{lem}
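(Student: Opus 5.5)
We have to show that the closure $\Sigma$ of $\Sigma_0 := F|_U^{-1}(\Omega_0)$ in $X$ adds only finitely many points. The plan is to identify $\Sigma$ inside the zero locus of a section in $R_q$, and to bound the extra points by an intersection count controlled by the B\'ezout estimates.

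First, $\Sigma_0$ lies in the curve $\{t = 0\}\subset X$, where $t = t_1-\lambda t_0\in R^1_q$ is the section pulled back from $T=T_1-\lambda T_0$. Since $t$ is holomorphic on $X$, the set $\{t=0\}$ is closed, so $\Sigma\subset\{t=0\}$. Moreover $\Sigma_0$ is an open piece of a one-dimensional analytic set, so $\Sigma$ is a union of irreducible components of $\{t=0\}$ meeting $\Sigma_0$. Hence $\Sigma$ is a closed analytic curve in $X$. Any point of $\Sigma\setminus\Sigma_0$ then lies either in $V=\{f=0\}$, or in $U$ but not in $F|_U^{-1}(\Omega_0)$.

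The second alternative cannot occur. If $x\in U\cap\Sigma$, then $F(x)$ lies in the closure of $\Omega_0$ within $F(U)\cap S_0$. Because $F$ is a biholomorphism on $U$ (Lemma \ref{lem:bihol-U}) and $\Omega_0$ is a connected component of the smooth curve $S_0\cap F(U)$, which is open and closed in $S_0\cap F(U)$, we get $F(x)\in\Omega_0$. Therefore
\[
\Sigma\setminus\Sigma_0 \subset \Sigma\cap\{f=0\}.
\]
This reduces the lemma to showing that $N(\Sigma,\{f=0\})<\infty$.

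To bound this count, choose $f = s_0^N Q(s_0,\dots,s_3)\,\frac{ds_1\wedge ds_2\text{-type terms}}{s_0^{\cdot}}$ as in \eqref{eq:vanishing-V}. Written through a trivialisation of $K_X$ (holomorphically trivial on $X$), $f$ is a finite sum of products of sections of $R_q$ with derivatives $ds_i$. Concretely, $f$ is expressed via $s_0^{N'}Q\,(s_0\,ds_1\wedge ds_2 - \cdots)$. Note first that $\Sigma$ and $\{f=0\}$ share no component, since $\Sigma_0\subset U$ is not contained in $V$. Proposition \ref{prop:intersectionnumberestimates} then bounds $N(\{t=0\},\{f=0\})$ by
\[
\liminf \int_X \zeta_\ep(t,p,q\phi)\wedge \zeta_\delta',
\]
where $\zeta_\delta' = \ddbar\log(\n f\n^2+\delta^2) + \beta\ddbar\phi$ for a suitable $\beta$, with norms in the weight $e^{-q'\phi}\omega^{-2}$.

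The main obstacle is verifying the hypotheses of Proposition \ref{prop:Bezout-gen} for $v_2=\log(\n f\n^2+\delta^2)$. I would expand $f$ as a polynomial in the $s_i$ and $ds_i$ and use the elementary inequalities \eqref{eq:elem-ineq-1}--\eqref{eq:elem-ineq-3} to dominate $v_2$ by a sum of terms of the form $\log(1+\n s\n^{2M}/\ep^2)$ and $\log(1+\n ds\n^2\n\tilde s\n^{2N}/\ep^2)$. For the second type, $N$ has to be large, and I would obtain this by multiplying $f$ by a high power of $s_0$, which does not change $U$. Positivity of $\zeta_\delta'$ is handled as follows. Where $ds$ factors appear, I would view them as sections of $T^*X\otimes L$, whose metric has curvature bounded above. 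Combining Lemma \ref{lem:currents} with the Griffiths negativity of $T^*X$ (as in Lemma \ref{lem:bochner-Weitzenbock-distributional}) gives $\ddbar v_2+\beta\ddbar\phi\ge 0$, at least after adding a multiple of $\ddbar\phi$ that absorbs $\ddbar\psi\le A\omega$ via $\phi$'s strict plurisubharmonicity. If this last step proves delicate, I would instead lift to $\PP(T_X)$ and use Proposition \ref{prop:Bezout-proj}, as in the Gauss--Bonnet argument. Either route gives a finite bound, which proves the lemma.
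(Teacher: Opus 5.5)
Your overall route is the paper's. You show that the extra points of $\Sigma$ lie in $\{t=0\}\cap V$, and you bound their number with Proposition \ref{prop:intersectionnumberestimates} combined with Proposition \ref{prop:Bezout-gen}. Your argument that a point of $U$ in the closure already lies in $F|_U^{-1}(\Omega_0)$ is a welcome justification of something the paper only asserts: a connected component of $S_0\cap F(U)$ is closed in it. However, two of your checks of the hypotheses of Proposition \ref{prop:Bezout-gen} fail as written.

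\textbf{Positivity.} You want $\ddbar v_2+\beta\ddbar\phi\ge 0$ by absorbing $\ddbar\psi\le A\omega$ into $\beta\ddbar\phi$. This cannot work. A bound $\ddbar\phi\ge c\,\omega$ with constant $c>0$ would force $\int_X(\ddbar\phi)^2\ge c^2\int_X\omega^2=\infty$. Moreover, Lemma \ref{lem:bochner-Weitzenbock-distributional} only gives lower bounds with a $-A\omega$ error, which is the wrong direction. The fallback to $\PP(T_X)$ is not needed either. The immediate fix is to note that $f$ is a holomorphic $(2,0)$-form and give $K_X$ the metric $e^{-\beta\phi}\omega^{-2}$, whose reference curvature is $-Ric\le 0$. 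Corollary \ref{lem:ric-zeta-upperbound} then gives $\ddbar v_2+\beta\ddbar\phi=\zeta_\delta(f,2,\beta\phi)\ge 0$ directly. The same $\omega$-norms also supply $\n ds_i\wedge ds_j\n\le\n ds_i\n\,\n ds_j\n$ for the domination step.

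\textbf{Padding.} Padding $f$ only by a high power of $s_0$ does not put $v_2$ into the form required by Proposition \ref{prop:Bezout-gen}. The terms must be $\log(1+\n ds\n^2\n\tilde s\n^{2N}/\ep^2)$, where $\tilde s$ is built from the same section that is differentiated. This is because Lemma \ref{lem:v-log-est} rests on the polynomial bound for $\n ds\n\,\n s\n^{N}$ from Lemma \ref{lem:poly-growth-sections}. A product such as $\n ds_1\n^2\n s_0\n^{2N}$ is not covered. Since $f$ involves $ds_0,ds_1,ds_2$, the paper pads by every such section, taking $\tilde f=f\,\tilde s_0^M\prod_{i=1}^3 s_i^M\tilde s_i^M$, whose zero set still contains $V$.

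\textbf{Common components.} You verify that $\Sigma$, rather than $\{t=0\}$, shares no component with $\{f=0\}$, but then apply Proposition \ref{prop:intersectionnumberestimates} to $\{t=0\}$. This is harmless if, in its proof, you pair $\zeta_\ep(f)$ with $[\Sigma]\le[\{t=0\}]$ instead of $[\{t=0\}]$. That only discards the nonnegative contributions $\ddbar(f_\ep[C])$ of the remaining components $C$, since $f_\ep$ is locally bounded and plurisubharmonic.
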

  \begin{proof}
Note that $\Sigma \setminus F\Big|_U^{-1}(\Omega_0)$  is contained in $V_0:= \{t = 0\}\cap V$, where $t$ is chosen as above. Here $F$ and $V$ are as in (\ref{eq:vanishing-V}). It suffices to prove that $V_0$ is a finite set. Note that $\{t = 0\}$ is a smooth Riemann surface, and furthermore it contains points from $U$. In particular, $\{t= 0 \}$ and $V$ do not have any component in common. Therefore we can use Proposition  \ref{prop:intersectionnumberestimates} and our Bezout estimates to prove that the intersection is finite. To use the B\'ezout estimates we note that $V$ is contained in $\{\tilde f = 0\}$ where $$\tilde f = f\tilde s_0^M\prod_{i=1}^3 s_i^M\tilde s_i^{M},$$ and for a section $s\in R_q$ we use the notation $\tilde s$ as introduced in  Section \ref{subsec:mainestimate}. By Proposition \ref{prop:intersectionnumberestimates}, $$\#V_0\leq \lim_{(\ep_1,\ep_2)\rightarrow(0,0)}\int_X\tilde{\zeta}_{\ep_1}(t,2,q\phi)\wedge\tilde{\zeta}_{\ep_2}(\tilde f,2,\be_2\phi)$$ for some $\be_2>>1$ that we will specify shortly. Next, we note that $$\n ds_i \wedge ds_j\n^2 \leq \n ds_i\n^2 \n ds_j\n^2,$$ and so we have an estimate of the form
\begin{align*}
||\tilde f||^2_{\be_2\phi} \leq \sum_{i,j} \prod_{k=0}^3\n s_k\n^{2M^k_{ij}}\cdot\prod_{k\neq i,j} \n \tilde s_k\n^{2M}\cdot \n ds_i\n^2 \n \tilde s_i\n^{2M} \cdot \n ds_j\n^2\n\tilde s_j\n^{2M},
\end{align*}
where on the right we use norms for the appropriate degree. Now using the elementary inequalities \eqref{eq:elem-ineq-2} and \eqref{eq:elem-ineq-3}, if we set $v_2 = \log(1 + \ep_2^{-2}\n \tilde f\n^2)$, then we have that $v_2\leq \sum_{j=1}^{n_1} v_{2j}$ where each $v_{2j}$ satisfies the hypotheses in Proposition \ref{prop:Bezout-gen} for some choice of $\ep_{2j}>0$. Then by Proposition \ref{prop:Bezout-gen} applied to $v_2$ and $v_1 = \log(1+ \ep^{-2}_1\n t\n^2_{q\phi})$ we have that $\#V_0$ is finite.
  \end{proof}

From now on we identify $\Omega_0$ and $F^{-1}(\Omega_0)$. In general, $\Sigma$ may be singular. However, note that $\Sigma - \Omega_0$ is finite and hence $\Omega_0$ is an open subset of $\Sigma$. Since $\Omega_0$ is smooth, it then implies that $\Omega_0\subset \Sigma^{reg}$. Let $\pi:\Sigma'\rightarrow\Sigma$ be the normalization. Then $\Omega_0$ sits as an open subset of $\Sigma'$ and moreover, $\Sigma'\setminus \Omega_0$ is a finite collection of points, say $\{p_1,\cdots,p_m\}$. The restriction $\omega\Big|_{\Omega_0}$ defines an incomplete metric on $\Omega_0$. Consider neighbourhoods $D_i$ of $p_i$ isomorphic to discs $D(0,r_i) \subset \CC$. On each punctured disc $D_i^*$ one has a Poincar\'e-type hyperbolic metric of finite area with a complete end at $p_i$. Using partitions of unity one can glue these metrics to $\omega$ to obtain a complete metric $\eta$ on $\Omega_0\subset \Sigma'.$

We first produce the desired cutoff functions. Given the Poincar\'e hyperbolic metric $g$ on the punctured unit disc, let $s(p)$ be the hyperbolic distance of a point $p$ from the circle $S_{1/2}$ of Euclidean radius $\frac{1}{2}$. Note that $s(p)$ is smooth in a punctured disc of Euclidean radius $3/4$. (Indeed, by radial symmetry, $s(p)=d_{g}(\Vert p \Vert, \frac{3}{4})-d_g(\frac{1}{2},\frac{3}{4})$.) Let $\tilde{\chi}:\mathbb{R}\rightarrow [0,1]$ be a smooth function equal to $1$ on $(-\infty, 1]$ and $0$ on $[2,\infty)$. Define $\tilde{\xi}_a$ to be $1$ outside the punctured disc of Euclidean radius $\frac{1}{2}$ and $\tilde{\xi}_a(p)=\tilde\chi \left(\frac{s(p)}{a} \right)$ inside it. One can verify that indeed $\tilde{\xi}_a$ satisfies the desired derivative bound. Now define $\xi_a=\chi_a \Pi_{i=1}^m \tilde{\xi}_{a,p_i}$. This family of functions does the job. Finally,  we claim that $$\int_{\Omega_0}|K_\eta|<\infty.$$ Indeed, since the area of $\eta$ near $p_i$ is finite and the curvature near $p_i$ is constant, we see using Proposition \ref{prop:finiteGB} that the Gauss-Bonnet integral is finite. The finiteness of $S_0\setminus\Omega_0$ then follows from Proposition \ref{prop:huber}.

  \begin{proof}[Proof of Proposition \ref{prop:huber}]
Following Mok, we consider the space of integrable quadratic differentials on $\Omega$. This space is independent of $\eta$. Indeed, for any quadratic differential $q\in H^0(\Omega,K_\Omega^2)$ one can canonically (and independent of any metric) associate
a $(1,1)$ form $|q|$. For, suppose $q = f(z)dz^2$ locally, then we simply set $|q| = |f(z)|dz\wedge d\bar z$. One can easily check that this defines a global (positive) $(1,1)$. form. We then define $$\Ga^1(K_\Omega^2) = \{q\in H^0(\Omega, K_\Omega^2)~|~ \int_\Omega |q|<\infty\}.$$ We claim that $$\dim \Ga^1(K_\Omega^2) < \infty.$$ To prove this, by the Poincar\'e-Seigel argument, it is enough to prove a multiplicity estimate. As in the proof of Proposition \ref{prop:mult-est}, it in turn is enough to prove the following B\'ezout type estimate: For any $q\in \Ga^1(K_\Omega^2)$, $$\int_\Omega \ddbar\log(\ep^2+ \n q\n^2) - 2\int_\Omega K_\eta\,dA < C$$ for some $C$ independent of $\ep$.  Here the norm of $q$ is measured using the metric $\eta$. If $\eta = \sqrt{-1}h dz\wedge d\bar z$, then $$\n q\n^2 = |f(z)|^2h^{-2}.$$ $h^{-2}$ defines a Hermitian metric on $K_\Omega^2$ with curvature $-2K_\eta$. The second term above is bounded by hypothesis. To estimate the first term, fix a point $x_0 \in\Omega$ let $\xi_a$ be the family of cut-off functions as in the statement of the Proposition. Then
\begin{align*}
\int_\Omega \ddbar\log(\ep^2+ \n q\n^2) &= \lim_{a\rightarrow\infty}\int_{B_\eta(x_0,2a)}\xi^3_a \ddbar\log(1+ \frac{\n q\n^2}{\ep^2})
\end{align*}

We claim that the integral on the right converges to zero as $a\rightarrow\infty$. Indeed, integrating by parts, and using the elementary estimate $\log(1+x^2)\leq Cx$ for some $C>0$ and all $x>0$,
\begin{align*}
\int_{B_\eta(x_0,2a)}\xi^3_a \ddbar\log\left(1+\frac{ \n q\n^2}{\ep^2}\right) &\leq  \frac{C}{a}\int_{B_\eta(x_0,2a)}\xi_a \log\left(1+ \frac{\n q\n^2}{\ep^2}\right)\eta\\
&\leq   \frac{C}{a\ep}\int_{B_\eta(x_0,2a)}\xi_a\n q\n\eta\\
&\leq  \frac{C}{a\ep}\int_\Omega |q| \leq \frac{C'}{a\ep} \xrightarrow{a\rightarrow \infty}0.
\end{align*}

Now let $\{p_1,\cdots,p_k\} \in S\setminus\Omega.$ Following Mok's argument verbatim we can prove that $$k\leq \dim \Ga^1(K_\Omega^2) + 3-3g(S).$$ For the convenience of the reader, we outline the proof. The main idea is to construct a quadratic differential with at most simple poles at each $p_k$. The space of such quadratic differentials is $H^0(S,K_S^2\otimes \sO(p_i)\otimes \cdots\otimes\sO(p_k))$, where $\sO(p_j)$ denotes the line bundle corresponding to the divisor $p_j$. We denote the line bundle $K_S^2\otimes \sO(p_i)\otimes \cdots\otimes\sO(p_k))$ by $L_k$. Since $1/|z|$ is integrable near zero in the complex plane, clearly $H^0(S,L_k) \subset \Ga^1(K_\Omega^2)$. By Serre duality, $\dim H^1(S,L_k) = \dim H^0(S,L_k^*\otimes K_S)$. But $$\deg(L_k^*\otimes K_S) = -k - \deg(K_S) = - k + 2 - 2g(S)< 0$$ if $k > 2-2g(S)$ and hence $\dim H^1(S,L_k) = 0$ in this case. In particular this holds if $k\geq 3$.  Then by Riemann-Roch, $$\dim H^0(S,L_k) = \deg(L_k) - g(S) + 1 = 3g(S) - 3 + k,$$ and we get the required upper bound on $k$.
  \end{proof}


\subsection{Completion of the proof} We now complete the proof following the basic strategy adopted by Mok in \cite{Mok89}. To summarize the results from the previous section: we have a birational map $F_1:X\dashrightarrow Z_1 \subset \PP^{N_1}$ which is a biholomorphism on $U_1 = X\setminus V_1$, where $V_1$ is an analytic subvariety of $X$ defined by the vanishing of $$f_1 := Q_1(s_0,s_1,s_2,s_3)\sigma_1,$$ where $Q_1$ is homogeneous polynomial, say of degree $d_1$, and $\sigma$ is a holomorphic section of $K_X$ given by some $R_{2q}$-linear combination of $ds_i\wedge ds_j$. Moreover, $F(U_1)$ is Zariski dense in $Z_1$. We first observe that $V_1$ is a union of analytic curves and does not have any isolated points because $U_1$ is Stein (by construction), and so is $X$. Indeed, around an isolated point of $V_1$, there exists a coordinate ball $B$. The punctured ball is the intersection of $B$ with $U_1$ but it cannot be Stein by Hartog's phenomenon.

\begin{lem}
$V_1$ has finitely many irreducible components.
\end{lem}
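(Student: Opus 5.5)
We will show that $V_1=\{f_1=0\}$ has finitely many irreducible components by intersecting it with a single auxiliary curve $T=\{t=0\}$, $t\in R^1_q$, and bounding the intersection number with the B\'ezout estimates. Since $V_1$ has no isolated points, every irreducible component $C$ of $V_1$ is a curve. Because $X$ is non-compact and the components need not be compact, the argument is not just a global degree count. The idea is to choose $t$ so that its zero set meets each component of $V_1$ in at least one point while sharing no component with $V_1$. Then the number of components is bounded by $N(T,V_1)$. This in turn is bounded by $T\cdot V_1$ through Proposition \ref{prop:intersectionnumberestimates}.

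\textbf{Choice of $t$.} We take $t=T(s_0,\dots,s_3)$ to be (the pull-back of) a generic linear form, or more generally a generic element of a fixed finite-dimensional subspace of $R_q$.
\begin{enumerate}
\item Since $T$ is generic, $\{t=0\}$ does not contain any component of $V_1$, because each component is a curve on which the sections do not all vanish identically.
\item We need $t$ to vanish somewhere on each component $C$. Suppose $t$ had no zero on $C$. Then on $C$ the sections $s_i/t$ would be holomorphic, and since $X$ is Stein, $C$ is a Stein curve. This alone gives no contradiction. Instead we argue as follows: $s_1/s_0$ restricted to $C$ is a non-constant meromorphic function, because the $s_i/s_0$ separate points and $C$ has positive dimension. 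Hence for all but countably many $\lambda\in\CC$, the section $t_\lambda=s_1-\lambda s_0$ vanishes at some point of $C$.
\item Since the components are countable, a single generic $\lambda$ works for all of them simultaneously. We also choose $\lambda$ so that $t_\lambda$ shares no component with $V_1$.
\end{enumerate}

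\textbf{Bounding the count.} We then have
$$\#\{\text{components of }V_1\}\le N(T_\lambda,V_1)\le T_\lambda\cdot V_1 .$$
To bound the right-hand side we repeat the proof of Lemma \ref{lem:finite-closure} verbatim. The set $V_1$ is contained in $\{\tilde f_1=0\}$, where $\tilde f_1$ is $f_1$ multiplied by powers of the $\tilde s_i$. The norm $\n\tilde f_1\n^2$ is dominated by products of $\n s_k\n^{2M}$, $\n\tilde s_k\n^{2M}$ and $\n ds_i\n^2\n\tilde s_i\n^{2M}$, using $|ds_i\wedge ds_j|\le|ds_i||ds_j|$. Hence $\log(1+\ep_2^{-2}\n\tilde f_1\n^2)$ splits, by \eqref{eq:elem-ineq-2} and \eqref{eq:elem-ineq-3}, into a sum of terms that satisfy the hypotheses of Proposition \ref{prop:Bezout-gen}. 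Since $K_X$ is trivial, $\sigma_1$ is a function, and Proposition \ref{prop:intersectionnumberestimates} applies with flat reference metrics. Proposition \ref{prop:Bezout-gen} then bounds the relevant integral by $\be_1\be_2\int_X(\ddbar\phi)^2<\infty$, so the number of components is finite.

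\textbf{Main obstacle.} The delicate step is ensuring that each component actually meets $\{t_\lambda=0\}$. This needs $s_1/s_0$ (or some generator of $M$) to be non-constant on every component $C$, including components contained in $\{s_0=0\}$. For those we replace the pair by another, such as $s_2-\lambda s_1$. Since the $s_i$ have no common zeros off the base locus and $M$ separates points, some ratio is non-constant on $C$. We must also handle the base locus itself by enlarging the family of admissible $t$ to a generic combination $\sum c_i s_i$ and using countability again.
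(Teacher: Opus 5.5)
There is a genuine gap in the step that makes every component of $V_1$ meet one fixed curve $\{t_\lambda=0\}$.

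\textbf{Good values of $\lambda$ need not be co-countable.} Away from common zeros of $s_0,s_1$, the set of $\lambda$ for which $t_\lambda=s_1-\lambda s_0$ has a zero on a component $C$ is the image of $C$ under $s_1/s_0|_C$. Every component of $V_1$ is non-compact, since $X$ is Stein. The open mapping theorem therefore only tells you that this image is a non-empty open subset of $\PP^1$. It need not be co-countable: think of $z$ on the unit disc, which omits an open set of values. So the good $\lambda$ form countably many non-empty open sets, one per component. Their intersection can be empty, and countability of the components gives no common $\lambda$.

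\textbf{The ratios may be constant on a component.} In fact $s_1/s_0$, and every $s_i/s_0$, may be constant on $C$. The map $F_0$ is only known to be an injective immersion on $U_1$. The set $V_1$ contains the branching and base loci, and $F_0$ may contract components of these to points. The fact that $M$ separates points does not transfer to the fixed generators on $V_1$. The restriction to $C$ of a rational expression in $s_1/s_0,s_2/s_0,s_3/s_0$ can be indeterminate. Also, the sections separating two given points depend on those points. On a contracted component, a generic linear combination of the $s_i$ has no zero, while a non-generic one vanishes identically on $C$. Your ``main obstacle'' paragraph does not resolve this. More basically, with possibly infinitely many components, nothing in the B\'ezout machinery forces all of them to meet one curve. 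The estimates bound $T\cdot V_1$, but a component contributes nothing unless it actually meets $T$. As a minor point, $N(T,V_1)$ counts points, so several components through one point are undercounted; you should use $T\cdot V_1$ directly.

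\textbf{The paper's route.} The paper avoids this by using a global topological input that your argument lacks: the quasi-surjectivity already established. Since $U_1\cong F_1(U_1)$ is Zariski open in the projective surface $Z_1$, $U_1$ has finite-dimensional cohomology. Because $X\cong\RR^4$, the long exact sequence of the pair $(X,U_1)$ together with duality shows that $V_1$ has finite topological type. Concretely, its top Borel--Moore homology is finite-dimensional, and its rank is the number of irreducible components. So finiteness of the components comes from algebraicity of the complement, not from a B\'ezout count against a single curve.
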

\begin{proof}
This is essentially due to Demailly (cf. \cite{D}) and we repeat his argument. $Z_1$ and $Z_1\setminus F_1(U_1)$, being projective subvarieties, are of finite topological type, i.e., have finite-dimensional singular cohomology groups. Therefore $F_1(U_1)$, and hence $U_1$, are also of finite topological type.  Now, $X$ is diffeomorphic to $\RR^4$ by Gromoll-Meyer \cite{GM} and hence of finite topological type. By using relative cohomology, it then follows that $V_1$ is of finite topological type and hence must have finitely many irreducible components.
\end{proof}

Let $\{C_i\}_{i=1}^n$ be the irreducible curves in $V_1$ and let $x_i\in C_i$ be smooth points. Then there exist sections $t_0,t_1, t_2\in R_{b}$ for some $b >> 1$ such that for all $i$, $t_0(x_i) \neq 0$, $t_1(x_i) = t_2(x_i) = 0$ and $t_1/t_0,t_2/t_0$ form coordinates near $x_i$. By composing $F_1$ with a Veronese embedding, we may assume that $q$ is large enough so that we may choose $b = q$. Then by following the same argument as before we have that $$M = \CC\Big(\frac{t_1}{t_0},\frac{t_2}{t_0},\frac{t_3}{t_0}\Big),$$ for some $t_3\in R_{b}$. Again as before, we then have a birational map $F_2: X  \dashrightarrow \hat Z_2\subset \PP^{ N_2}$ defined over $U_2 = X\setminus V_2$ for some subvariety $V_2$ given again by the vanishing of $$f_2 := Q_2(t_0,t_1,t_2,t_3)\sigma_2,$$ where as before $Q_2$ is a homogeneous polynomial of degree $d_2$, and $\sigma_2$ is a $R_b$-linear combination of $dt_i \wedge dt_j$.   Moreover $F_2(U_2)$ is Zariski open in $Z_2$. Note that by construction $V_1$ and $V_2$ share no component. In particular the (set-theoretic) intersection $V_1\cap V_2$ contains isolated points.

\begin{lem}
$V_1\cap V_2$ is finite.
\end{lem}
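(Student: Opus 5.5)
The plan is to argue exactly as in the proof of Lemma \ref{lem:finite-closure}: bound the set-theoretic count $N(V_1,V_2)$ by an intersection number, and bound that intersection number by a B\'ezout integral via Proposition \ref{prop:intersectionnumberestimates} and Proposition \ref{prop:Bezout-gen}.

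First, $V_1\subset\{f_1=0\}$ and $V_2\subset\{f_2=0\}$. As in the proof of Lemma \ref{lem:finite-closure}, $K_X$ is holomorphically trivial. Hence $f_1$ and $f_2$ are holomorphic functions, i.e.\ sections of the trivial bundle with $\Theta_h=0$, so the curvature hypotheses of Proposition \ref{prop:intersectionnumberestimates} hold. To make the weights fit the Bézout framework, I would replace $f_1$ and $f_2$ by
$$\tilde f_1 = f_1\,\tilde s_0^M\prod_{i=1}^3 s_i^M\tilde s_i^M,\qquad \tilde f_2 = f_2\,\tilde t_0^M\prod_{i=1}^3 t_i^M\tilde t_i^M,$$
with $M\ge 17\max p$. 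This only enlarges the zero sets. The enlarged zero sets must still share no common irreducible component. The added factors $s_i$ and $t_i$ could in principle introduce common components. To rule this out, I would first shrink $V_1$ and $V_2$ to $\{Q_1\sigma_1=0\}$ and $\{Q_2\sigma_2=0\}$ intersected with their supports. Alternatively, I would use $N(V_1,V_2)\le N(\{\tilde f_1=0\},\{\tilde f_2=0\})$ only after checking there are no common components. If a common component did appear, it could be handled by choosing the $t_i$ generic so that they do not vanish identically on any of the finitely many $C_i$. This is possible since $V_1$ has finitely many components and the sections $t_i$ can be perturbed within $R_b$.

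Next, set $v_k=\log(1+\ep_k^{-2}\n\tilde f_k\n^2)$ with the weight $\be_k\phi$, where $\be_k$ is the total degree. Lemma \ref{lem:currents} gives $\ddbar v_k+\be_k\ddbar\phi\ge 0$. Expanding $\sigma_k$ as a combination of $ds_i\wedge ds_j$ and using $\n ds_i\wedge ds_j\n\le\n ds_i\n\n ds_j\n$ together with \eqref{eq:elem-ineq-1}--\eqref{eq:elem-ineq-3}, each $v_k$ is dominated by a finite sum of functions of the form required in Proposition \ref{prop:Bezout-gen}. The coefficients of $\sigma_k$ lie in $R_{2q}$, and they are absorbed as extra factors of $\n s\n^{2M}$-type terms. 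Proposition \ref{prop:intersectionnumberestimates} with $p=m=2$ then gives
$$N(V_1,V_2)\le \frac{1}{4\pi^2}\liminf\int_X\zeta_{\ep_1}\wedge\zeta_{\ep_2}\le\frac{\be_1\be_2}{4\pi^2}\int_X(\ddbar\phi)^2<\infty.$$

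The main obstacle is the bookkeeping in the domination step. The factor $\sigma_k$ carries derivatives of two different sections multiplied by $R_{2q}$ coefficients, and each derivative term must be paired with enough powers $\n\tilde s_i\n^{2M}$ to meet the condition $N\ge 17p$. This pairing is exactly why the $\tilde s_i^M$ padding is introduced. A secondary point is ensuring that the common-component hypothesis survives this padding, as discussed above.
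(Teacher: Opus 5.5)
Your main line (padding $f_k$ to $\tilde f_k$, dominating $v_k=\log(1+\ep_k^{-2}\n\tilde f_k\n^2)$ by the admissible terms of Proposition \ref{prop:Bezout-gen}, and concluding with Proposition \ref{prop:intersectionnumberestimates}) is exactly the paper's proof. You are also right that it tacitly needs $\{\tilde f_1=0\}$ and $\{\tilde f_2=0\}$ to share no component, which the paper never checks. Your patch, however, does not deliver this.

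The ``shrinking'' option is vacuous, since $V_k$ already is $\{Q_k\sigma_k=0\}$. Making the $t_i$ non-vanishing on the $C_i$ only stops the $t$-padding from containing a component of $V_1$. It says nothing about a component of some $\{s_i=0\}$ or $\{s_{ij}=0\}$ (a summand entering $\tilde s_i$) being a component of $V_2$ or of the $t$-padding. Such curves need not lie in $V_1$, are not known to be finitely many, and $V_2$ depends on the $t$'s in a complicated way. So no genericity in the $t_i$ along the $C_i$ rules them out, and Proposition \ref{prop:intersectionnumberestimates} as stated still cannot be invoked.

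What closes the gap is a one-sided version of Proposition \ref{prop:intersectionnumberestimates}. Take $T=\{\tilde f_1=0\}$ and $s=\tilde f_2$. In that proof, Lemma \ref{lem:wedge-with-divisor} gives $\int_X\chi\,\ddbar(f_\ep[T])=\sum_j a_j\int_{\tilde C_j}\nu_j^*\chi\,\ddbar\nu_j^*f_\ep$. Every term is non-negative because $f_\ep$ is continuous and plurisubharmonic, so you may discard all components of $T$ except the finitely many components $C_1,\dots,C_n$ of $V_1$. The one-variable Poincar\'e--Lelong step is then needed only on the $C_i$, and it gives $\#(V_1\cap V_2)\le\sum_i\#\big(C_i\cap\{\tilde f_2=0\}\big)\le\frac{1}{4\pi^2}\liminf\liminf\int_X\zeta_{\ep_2}(\tilde f_2,2,\be_2\phi)\wedge\zeta_{\ep_1}(\tilde f_1,2,\be_1\phi)$. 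The only hypothesis used is that $\tilde f_2\not\equiv0$ on each $C_i$; common components elsewhere become harmless.

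Since $V_1$ and $V_2$ share no component, $f_2\not\equiv0$ on each $C_i$. So the remaining requirement is your genericity condition. It must hold for every summand $t_{kl}$ entering $\tilde t_k$, not just for $t_k$, and perturbing $t_k$ inside $R_b$ does not by itself control its summands.

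A smaller point: keep the metric $e^{-\be_k\phi}\omega^{-2}$ on $K_X$ rather than trivializing $K_X$. Your domination via $\n ds_i\wedge ds_j\n\le\n ds_i\n\,\n ds_j\n$ is in $\omega$-norms, and $-\sqrt{-1}\Theta=Ric\ge0$ already gives the required curvature sign.
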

\begin{proof} We can write $$f_1 = \sum_{i,j}Q_{ij}(s_0,s_1,s_2,s_3)ds_i\wedge ds_j,$$ where $Q_{ij}$ are homogeneous polynomials of common degree $d$ such that the exponent of $s_0$ in each term of $Q_{ij}$ is large, bigger than $M>>1$. We now modify $f_1$ in two ways: firstly we want the exponents of all $s_i$'s to be large, and we also need to multiply by large exponents of $\tilde s_i^M$ for all $i$. More specifically, for large enough $M$, we consider $$\tilde f_1 = f_1\tilde s_0^M\prod_{i=1}^3 s_i^M\tilde s_i^{M}.$$We define $\tilde f_2$ in a similar way. Clearly $V_i\subset \{\tilde f_i = 0\}$, and so by Proposition \ref{prop:intersectionnumberestimates} $$\# V_1\cap V_2 \leq \lim_{(\ep_1,\ep_2)\rightarrow(0,0)}\int_X \zeta_{\ep_1}(\tilde f_1, 2, \be_1\phi)\wedge\zeta_{\ep_2}(\tilde f_2, 2,\be_2\phi),$$ for some (explicitly computable) $\be_i>>1$. We now estimate the integral using our B\'ezout estimates. Firstly note that by Lemma \ref{lem:currents}, each of the $\zeta_{\ep_i}(\tilde f_i,2,q_i\phi) \geq 0$.  Next, exactly as in the proof of Lemma \ref{lem:finite-closure} we have the estimate
\begin{align*}
||\tilde f_1||^2_{\be_1\phi} \leq \sum_{i,j} \prod_{k=0}^3\n s_k\n^{2M^k_{ij}}\cdot\prod_{k\neq i,j} \n \tilde s_k\n^{2M}\cdot \n ds_i\n^2 \n \tilde s_i\n^{2M} \cdot \n ds_j\n^2\n\tilde s_j\n^{2M},
\end{align*}
where on the right we use norms for the appropriate degree. Once again, as in the proof of Lemma \ref{lem:finite-closure}, using the elementary inequalities \eqref{eq:elem-ineq-2} and \eqref{eq:elem-ineq-3}, if we set $v_1 = \log(1 + \ep_1^{-2}\n \tilde f_1\n^2)$, then we have that $v_1\leq \sum_{j=1}^{n_1} v_{1j}$ where each $v_{1j}$ satisfies the hypotheses in Proposition \ref{prop:Bezout-gen} for some choice of $\ep_{1j}>0$. Similarly we have the estimate that $$v_2:= \log\Big((1 + \frac{\n \tilde f_2\n^2}{\ep_2^2}\Big) \leq \sum_{k=1}^{n_2} v_{2k}$$ for some choice of functions $v_{2k}$ satisfying the hypothesis of the Proposition \ref{prop:Bezout-gen}. Then by Proposition \ref{prop:Bezout-gen} we have that $$\#V_1\cap V_2  \leq \be_1\be_2 \int_X(\ddbar\phi)^2 < \infty.$$
\end{proof}
Let $\{y_1,\cdots,y_m\}$  be the finitely many points in $V_1\cap V_2$. Possibly by increasing $q$ once again using Veronese embedding, we may assume that there exists a section $s\in R_q$ that does not vanish at any of these points. Moreover, sections in $R_q$ also separate tangent vectors at these points. By construction there is a point $x\in X$ and sections, which by an abuse of notation we once again label $S_0,S_1,S_2\in R_q$, such that $S_1/S_0,S_2/S_0$ form coordinates near $x$.  Then by the argument of
Proposition \ref{prop:siegel} we once again have that $$M = \CC\Big(\frac{S_1}{S_0},\frac{S_2}{S_0},\frac{S_3}{S_0}\Big).$$ Once again, possibly by raising $q$ and making sure that the denominators are the same we may further assume that all $S_0,S_1,S_2,S_3\in R_q$ ie. they all belong to the same degree $q$ piece. We now extend $\{S_0,S_1,S_2\}$ to a basis $\{s_0,\cdots,s_N\}$ of $R_q$, where $s_i = S_i$ for $i=0,1,2$,  and consider the corresponding Kodaira map: $\Phi:X\rightarrow \PP^N$ given by $$\Phi(x) = [s_0(x):s_1(x):\cdots:s_N(x)].$$ If $S_3$ is linearly independent of $S_1, S_2$, then we let $s_3=S_3$. In either case, $s_j/s_0$ is given by a rational function of $s_1/s_0, s_2/s_0, s_3/s_0$ for $j=4,\cdots,N$ (and for $j=3$ if $S_3$ is linearly dependent on $S_1, S_2$) and hence $\Phi(X)$ is contained in an algebraic subset of $\PP^N$. Since on a Zariski open subset of $X$, $\Phi$ is biholomorphic to its image, by connectedness, $\Phi(X)$ is contained in an irreducible $2$-dimensional component $Z$. We would like to show that $\Phi$ is injective following the argument in the proof of \cite[Proposition 3.3]{Mok90}. The argument in \cite{Mok90} does not directly apply since the image $\Phi(X)$ may not be contained completely in the smooth locus $Z_{reg}$. For instance, it may very well happen that $z= \Phi(x_1) = \Phi(x_2)\in Z\setminus Z_{reg}$ and that disjoint open neighbourhoods $U_{x_1}$ and $U_{x_2}$ around $x_1,x_2$ are mapped to two  different branches containing $z$ ie. $\Phi(U_{x_1}) \cap \Phi(U_{x_2}) = \{z\}$. In other words the germ $(Z,z)$ may not be irreducible. Note that it would be irreducible if $Z$ were normal. So to adapt the argument in \cite{Mok90}, we pass to the normalization \footnote{Passing to the resolution would not help since one would not get a lift of $\Phi$ from all of $X$ to the resolution}.

Now let $\mu:\tilde Z\subset \PP^{\tilde N}\rightarrow Z$ be the normalization and let $\tilde\Phi:X\rightarrow \tilde Z$ be the canonical lift defined by virtue of the universal property of normalizations. Since it will be useful later on, we say a few words about the construction of $\tilde Z$ and its embedding into $\PP^{\tilde N}$. Let
\[
A=\bigoplus_{r\ge 0}A_r=S/I(Z),\qquad S=\mathbb C[z_0,\dots,z_N],
\]
be the homogeneous coordinate ring of \(Z\). Since \(Z\) is irreducible,
\(A\) is a graded domain. Let \(\widetilde A\) be the integral closure of
\(A\) in its field of fractions. Choose \(m\ge 1\) such that the Veronese subring
\[
\widetilde A^{(m)}:=\bigoplus_{k\ge 0}\widetilde A_{km}
\]
is generated by its degree-one piece. Then
\[
\widetilde Z=\operatorname{Proj}(\widetilde A)
=\operatorname{Proj}(\widetilde A^{(m)}),
\]
and the embedding \(\widetilde Z\subset \mathbb P^{\widetilde N}\) is induced
by a basis of \(\widetilde A^{(m)}_1=\widetilde A_m\). \begin{lem}
$\tilde \Phi$ is an injective immersion.
 \end{lem}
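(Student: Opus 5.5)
The plan is to handle the immersion property and injectivity separately, using throughout that $\Phi=\mu\circ\tilde\Phi$.

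\emph{Immersion.} Since $\Phi=\mu\circ\tilde\Phi$, the chain rule gives $d\Phi=d\mu\circ d\tilde\Phi$. So it suffices to show that $\Phi$ is an immersion. I would argue point by point, using the cover $X=U_1\cup U_2\cup(V_1\cap V_2)$.
\begin{enumerate}
\item On $U_1$, the map $F_1$ is a biholomorphism onto its image. After the Veronese raising, each affine coordinate of $F_1$ is a quotient of elements of $R_q$. Near a point of $U_1$ where $s_0\neq 0$, such a quotient is a linear combination of the $s_j/s_0$, or at least a holomorphic function of them. So $dF_1$ factors through $d\Phi$, and $d\Phi$ is injective there.
\item The same argument applies on $U_2$.
\item At the finitely many points $y_1,\dots,y_m$ of $V_1\cap V_2$, we arranged a section of $R_q$ that does not vanish there and sections of $R_q$ that separate tangent vectors. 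Since $s_0,\dots,s_N$ is a basis of $R_q$, this gives injectivity of $d\Phi$ at the $y_i$.
\end{enumerate}
If the factorization in step 1 fails at points where a denominator vanishes, I would enlarge $q$ once more by a Veronese map so that every point of $U_1\cup U_2$ has a nonvanishing denominator among the generators.

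\emph{Generic injectivity.} Since $M=\CC(S_1/S_0,S_2/S_0,S_3/S_0)$ and $S_0,\dots,S_3$ are among the $s_j$, every element of $M$ is a rational function of the coordinates of $\Phi$. In particular $M$ separates points of $U_1$, as in Lemma \ref{lem:bihol-U}. Hence there is a proper analytic subset $E\subset X$ such that $\Phi$, and therefore $\tilde\Phi$, is injective on $X\setminus E$. Here $E$ collects $V_1$ together with the loci where the relevant denominators vanish.

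\emph{Injectivity (following Mok).} Suppose $x_1\neq x_2$ and $\tilde\Phi(x_1)=\tilde\Phi(x_2)=z$. Choose disjoint neighbourhoods $U_{x_1}$ and $U_{x_2}$. Since $\tilde\Phi$ is an immersion between $2$-dimensional spaces, after shrinking each $\tilde\Phi|_{U_{x_i}}$ is injective and finite, hence proper onto its image near $z$. By Remmert's proper mapping theorem, $\tilde\Phi(U_{x_i})$ then defines a $2$-dimensional analytic germ at $z$ inside $(\tilde Z,z)$. Because $\tilde Z$ is normal, the germ $(\tilde Z,z)$ is irreducible of dimension $2$. So each image germ equals $(\tilde Z,z)$, and both images contain a common neighbourhood $W$ of $z$.

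Next I would pick $w\in W$ outside the proper analytic set $\tilde\Phi(E\cap(U_{x_1}\cup U_{x_2}))$. This is possible because $\tilde\Phi$ is finite on these neighbourhoods, so that image is analytic of dimension at most one. The point $w$ then has preimages in both $U_{x_1}\setminus E$ and $U_{x_2}\setminus E$. These preimages are distinct, since the neighbourhoods are disjoint, and this contradicts injectivity on $X\setminus E$.

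The main obstacle is the local step at singular points. One must show rigorously that an immersion of a smooth germ onto a normal surface germ has image containing a neighbourhood. This is exactly where normality is used, and it is what fails for $Z$ itself. One must also confirm that $E$ is genuinely a proper analytic subset, which requires care with the poles of the rational expressions.
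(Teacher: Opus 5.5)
Your proposal is correct and follows essentially the paper's proof: the immersion property comes from $d\Phi = d\mu\circ d\tilde\Phi$, injectivity off $V_1$ comes from Lemma \ref{lem:bihol-U}, and normality (local irreducibility) of $\tilde Z$ forces the two local images to contain a common neighbourhood of $z$. The paper phrases that last step as $\tilde\Phi(X)\subset\tilde Z_{reg}$ plus the inverse function theorem, whereas you use Remmert; you also spell out two points the paper leaves implicit, namely why $\Phi$ is an immersion and how to choose a point $w$ avoiding $\tilde\Phi(E)$, and there simply taking $E=V_1$ suffices, so the vaguer description of $E$ via ``denominators'' is unnecessary.
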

 \begin{proof}
 We first prove that $\tilde\Phi$ is an immersion. Indeed, this follows immediately from the fact that  $$d\mu\circ d\tilde\Phi = d\Phi$$ and $\Phi$ is an immersion, where we interpret the equality as an equality of linear maps between the tangent space of $X$ and the Zariski tangent space of $Z$. Similarly $d\mu$ is the linear map between the Zariski tangent spaces of $\tilde Z$ and $Z$.

To prove injectivity, we first, we note that a consequence of Lemma \ref{lem:bihol-U} is that $\Phi\Big|_{U_1}$ is injective. This is because the injectivity on $U_1$ is preserved under the Veronese embeddings. This forces $\tilde\Phi\Big|_{U_1}$ to also be injective. Next, note (and this is the reason we needed to pass to the normalization) that $\tilde \Phi(X)$ is contained in the smooth locus of $\tilde Z$. This follows from the fact that $\tilde Z$ is normal, $\tilde\Phi$ is an immersion and that $\dim X = \dim \tilde Z$. Now suppose $z = \tilde\Phi(x_1) = \tilde\Phi(x_2)$. Then by the inverse function theorem there are disjoint open sets $W_i$ containing $x_i$ such that $\tilde \Phi(W_1) = \tilde\Phi(W_2)$ is an open neighbourhood of $z$. Since $U_1$ is dense in $X$, $U_1\cap W_i$ is non-empty for each $i=1,2$. But this contradicts the injectivity of $\tilde\Phi\Big|_{U_1}$.
  \end{proof}

Finally, to complete the proof we need to prove quasi-surjectivity again.

\begin{lem}\label{lem:qs-2}
$\tilde \Phi(X)$ is Zariski open in $\tilde Z$.
\end{lem}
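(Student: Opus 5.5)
Since $\tilde\Phi$ is an injective immersion between complex surfaces, it is an open embedding, so $W:=\tilde\Phi(X)$ is an open subset of $\tilde Z^{reg}$ biholomorphic to $X$, hence Stein. The plan is to repeat the Simha-criterion argument of the previous subsection with $F(U)\subset Z$ replaced by $W\subset\tilde Z$. By GAGA it suffices to show that $\tilde Z\setminus W$ is analytic near each boundary point $\zeta_0\in\tilde Z\setminus W$. A point of $\tilde Z^{sing}$ is harmless: it is a finite set since $\tilde Z$ is a normal surface, and after removing it we may work near smooth points. For a smooth $\zeta_0$ we take a coordinate polydisc $\Delta^2$ centred at $\zeta_0$. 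Then $D=\Delta^2\cap W$ is Stein, being the intersection of a Stein open set with a polydisc, hence a domain of holomorphy. It remains to check that every vertical slice meets $\Delta^2\setminus D$ in finitely many points.

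\emph{Choosing the slices.} The embedding $\tilde Z\subset\PP^{\tilde N}$ is given by a basis of $\tilde A_m$. These elements are homogeneous of degree $m$ in the fraction field of $A$, integral over $A$. The pullbacks $\tilde\Phi^*T$ of hyperplane sections are meromorphic functions on $X$ of the form (section of $R_{mq}$-type)$/s_0^{m}$-like quotients, and they are integral over $\CC[s_0,\dots,s_N]$. Since $\tilde\Phi$ is holomorphic on all of $X$ and $X$ is normal, the pullbacks $\tilde\Phi^*T$ are holomorphic. This is where the algebro-geometric Lemma \ref{lem:algebro-geometric} is needed: the key point to establish is that $\tilde\Phi^*T=s$ for an honest holomorphic function $s$ lying in $R_{mq}$, or at least in $\Gamma_{mq\psi}$. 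I expect this follows by bounding $|s|$ via the integral equation $s^k+a_1s^{k-1}+\dots+a_k=0$ with $a_i\in R_{imq}$. The root bound $|s|\le 2\max_i|a_i|^{1/i}$ gives $\|s\|_{mq\phi}\lesssim \max_i\|a_i\|^{1/i}$, and $\|a_i\|^{1/i}$ lies in $L^{2/(ki)}$. This yields $s\in\Gamma^{r}_{mq\psi}$ for some $r>0$, after converting $\phi$ to $\psi$ via Lemma \ref{lem:heat-est}. As before, by Bertini and Lefschetz we then choose linear forms $T_0,T_1,T_2$ on $\PP^{\tilde N}$ with $T_0(\zeta_0)\ne0$ such that the slices $T_1-\la T_0=0$ cut $\tilde Z$ in smooth connected curves $S_\la$ meeting $W$, with corresponding functions $t=t_1-\la t_0$ in $\Gamma_{mq\psi}$.

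\emph{Finiteness on a slice.} Let $\Omega_0$ be a component of $S_\la\cap W$, identified with an open subset of the curve $\{t=0\}\subset X$. Since $\tilde\Phi$ is an embedding, $\Omega_0$ is closed in $X$ minus nothing, so Lemma \ref{lem:finite-closure} is not even needed. Its closure is itself, and $\Omega_0$ is a connected component of the smooth curve $\{t=0\}$ in $X$. Proposition \ref{prop:finiteGB} applies to $t\in\Gamma_{mq\psi}$ (a finite sum of $L^p$ sections), giving $\int_{\Omega_0}|K|\,dA<\infty$ for the induced metric. This metric is complete, as it is induced from the complete metric $\omega$ on the closed submanifold. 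The cut-offs $\xi_a=\chi_a|_{\Omega_0}$ from Lemma \ref{lem:cut-off} satisfy the hypotheses of Proposition \ref{prop:huber}. For this we use that $|\ddbar\chi_a|\le A/a$ controls the intrinsic Laplacian on a complex curve, and that $|\nabla\chi_a|\le A/a$. Proposition \ref{prop:huber} then gives that $S_\la\setminus\Omega_0$ is finite. Hence there is a single component, and the slice meets the complement in finitely many points, so Simha's criterion concludes.

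The main obstacle is the integrality step: showing that the coordinate functions of the normalization pull back to functions in the spaces $\Gamma_{\cdot\psi}$. Only with this in hand do the B\'ezout machinery and Proposition \ref{prop:finiteGB} apply, and the handling of sums of $L^p$ sections with varying exponents must be checked carefully there.
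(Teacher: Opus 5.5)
Your overall architecture matches the paper's proof: $\tilde\Phi$ is an open embedding into $\tilde Z_{reg}$, $\Delta^2\cap\tilde\Phi(X)$ is Stein, $\Omega_0$ is closed in $X$ and so carries a complete metric, the cut-offs are restricted, and Huber/Mok finishes. The gap is in the step you yourself flag as the crux: showing that $t=\tilde\Phi^*(T_1-\lambda T_0)$ lies in $\Gamma^r_{mq\psi}$ for a single $r$ via the root bound $\|t\|\le 2\max_i\|a_i\|^{1/i}$, $a_i\in R_{imq}$. This does not go through.

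Each $a_i$ is only a finite sum of elements of $\Gamma^{2/k}_{imq\psi}$ with varying $k$. So $\max_i\|a_i\|^{1/i}$ is majorised by a sum of functions lying in $L^{2i/k}$ for various $i,k$ (incidentally $L^{2i/k}$, not $L^{2/(ki)}$). Since $X$ has infinite volume and $\phi$ is not known to be an exhaustion, there are no inclusions between the spaces $\Gamma^{p}_{q\psi}$ for different $p$. Lemma~\ref{lem:poly-growth-sections} only gives polynomial growth. A function dominated by $g_1+g_2$ with $g_j\in L^{r_j}$, $r_1\neq r_2$, need not lie in any single $L^r$. A pointwise majorant also does not decompose the holomorphic function $t$ into holomorphic pieces in the span $\Gamma_{mq\psi}$.

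This is not just bookkeeping. Proposition~\ref{prop:finiteGB} needs $t=\sum t_i$ with $t_i\in\Gamma^{p_i}_{q\psi}$ exactly. Its lower bound on the Gauss curvature runs through Proposition~\ref{prop:Bezout-proj}, which uses the section $\tilde t^N dt$ and the $L^p$ control of $\|dt_i\|\,\|t_i\|^{p_i-1}$ coming from Lemma~\ref{lem:section-est-non-smt}. A bound on $|t|$ alone supplies none of this.

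The missing idea is that you never need $\tilde\Phi^*T$ itself to lie in the function spaces. You only need $\Omega_0$ to sit inside the zero set of an honest element of $R$, and that is exactly what Lemma~\ref{lem:algebro-geometric} gives.

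\begin{itemize}
\item Take a homogeneous monic relation $T^d+a_1T^{d-1}+\dots+a_d=0$ over $A^{(m)}$ of minimal degree $d$. Then $a_d\neq 0$ and $V_{\tilde Z}(T)\subset V_{\tilde Z}(a_d)$.
\item Hence $\{t=0\}\subset\{\Phi^*Q=0\}$, where $Q$ lifts $a_d$.
\item $\Phi^*Q=Q(s_0,\dots,s_N)\in R_{dmq}$ holds automatically, because $R$ is a graded algebra. No estimate is needed.
\end{itemize}

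You then apply Proposition~\ref{prop:finiteGB} to $\Phi^*Q$ on an open set $\Omega$, for instance $X$ minus the other irreducible components of $\{\Phi^*Q=0\}$. On $\Omega$ the zero set is $\Omega_0$ minus a discrete set, which does not change $\int|K|\,dA$. With this substitution the rest of your argument is the paper's.
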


To prove the Lemma we need to re-run the argument in the previous section using Simha's result. In order to do so, we will need the following elementary observation.

\begin{lem}\label{lem:algebro-geometric}
With the set-up as above, for each linear hyperplane $\tilde \xi = 0$ in $\PP^{\tilde N}$, there exists a homogeneous polynomial $Q \in \CC[z_0,\cdots,z_N]$ not identically vanishing on $Z$ such that $$\{\tilde\Phi^*\tilde\xi = 0\} \subset \{\Phi^*Q = 0\}.$$
\end{lem}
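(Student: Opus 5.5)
The plan is to use the standard description of the integral closure of a graded domain: every element of $\widetilde A$ is a quotient of elements of $A$. A linear form $\tilde\xi$ on $\PP^{\tilde N}$ is an element $a\in\widetilde A_m=\widetilde A^{(m)}_1$. Since $\widetilde A$ lies in the fraction field of $A$ and is graded, we will write $a = P/D$, where $P\in A_{r+m}$ and $D\in A_r$ are homogeneous and $D\neq 0$. Concretely, we first write $a=P'/D'$ with $P',D'\in A$. Because $a$ is homogeneous, replacing $P',D'$ by suitable homogeneous components gives a homogeneous representation. Alternatively, $\widetilde A$ is a finite $A$-module, so there is a homogeneous conductor element $D\in A_r$, $D\neq 0$, with $D\widetilde A\subset A$; then $P:=Da\in A_{r+m}$. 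Lift $P$ and $D$ to homogeneous polynomials in $\CC[z_0,\dots,z_N]$, still denoted $P$ and $D$. Neither vanishes identically on $Z$, provided $a\neq 0$; if $a=0$ on $\tilde Z$ the statement is vacuous or degenerate, and we may take $Q$ arbitrary.

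Next we compare the pullbacks. On the dense open set $\tilde Z^0:=\mu^{-1}(Z\setminus\{D=0\})$, the map $\mu$ is an isomorphism wherever $Z$ is normal. In any case, the rational-function identity $a\cdot D = P$ says that, after choosing local lifts of homogeneous coordinates, $\tilde\xi\cdot\mu^*D = \mu^*P$ as sections of line bundles on $\tilde Z$. Here $\mu^*\mathcal{O}_Z(1)^{\otimes m}$ is identified with $\mathcal{O}_{\tilde Z}(1)$, which is exactly how $\widetilde A_m$ produces the embedding. Pulling back by $\tilde\Phi$ and using $\mu\circ\tilde\Phi=\Phi$ gives
\[
\tilde\Phi^*\tilde\xi\cdot\Phi^*D=\Phi^*P
\]
as sections of the appropriate powers of the trivial bundle $L$.

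Finally, set $Q:=P\cdot D$, or simply $Q:=PD$ up to a power; this is homogeneous and not identically zero on $Z$, since $A$ is a domain. We show $\{\tilde\Phi^*\tilde\xi=0\}\subset\{\Phi^*Q=0\}$. If $x$ lies in the left-hand set and $\Phi^*D(x)\neq 0$, the identity forces $\Phi^*P(x)=0$. If instead $\Phi^*D(x)=0$, then $x$ already lies in $\{\Phi^*D=0\}$. In either case $\Phi^*Q(x)=0$.

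The main obstacle is making the identification of line bundles precise, namely that elements of $\widetilde A_m$ are honest sections of $\mu^*\mathcal O(m)$ satisfying the identity above. We handle this by working on the affine cones, or on the standard affine charts $\{z_i\neq0\}$. On such a chart, $a/z_i^m$ is a regular function on the normalization of the affine piece, it equals $(P/z_i^{r+m})/(D/z_i^r)$ there, and the identity holds pointwise.
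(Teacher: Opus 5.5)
Your argument is correct, and it reaches the conclusion by a different algebraic mechanism than the paper.

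\textbf{The paper's route.} The paper uses only that $\widetilde A$ is integral over $A$. It takes a homogeneous monic relation $\widetilde\xi^d+a_1\widetilde\xi^{d-1}+\cdots+a_d=0$ with $a_i\in A_{im}$, chosen of minimal degree so that $a_d\neq 0$. It then checks on a chart $D_+(\widetilde\eta)$ of $\widetilde Z$ that $\widetilde\xi(p)=0$ forces $a_d(p)=0$, and takes $Q$ to be a lift of $a_d$.

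\textbf{Your route.} You instead use that $\widetilde A$ lies in the fraction field of $A$. You write $\widetilde\xi=P/D$ with $P\in A_{r+m}$ and $D\in A_r$ homogeneous; both your homogeneous-component trick and the homogeneous conductor element work. This gives $\widetilde\xi\, D=P$ in $\widetilde A$, and you verify vanishing on the pulled-back charts $\mu^{-1}(\{z_i\neq 0\})=\operatorname{Spec}\widetilde A_{(z_i)}$.

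\textbf{Comparison.} Both proofs produce a nonzero element of $A$ that is divisible by $\widetilde\xi$ in $\widetilde A$: $a_d$ in the paper, $P$ for you. Your route is shorter and needs no minimality argument, since $P\neq 0$ is automatic because $\widetilde A$ is a domain. With the conductor variant, a single $D$ also serves for every hyperplane at once. The paper's route never uses birationality, so it would apply verbatim to any graded domain integral over $A$, not just the normalization.

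\textbf{Two small corrections.} First, the identity $\widetilde\Phi^*\widetilde\xi\cdot\Phi^*D=\Phi^*P$ by itself shows that $\Phi^*P$ vanishes wherever $\widetilde\Phi^*\widetilde\xi$ does. So $Q=P$ already suffices, and both the extra factor $D$ and the case split on $\Phi^*D(x)$ are unnecessary.

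Second, the degenerate case $a=0$ cannot occur. $\widetilde Z\subset\PP^{\widetilde N}$ is embedded by a basis of $\widetilde A_m$, so a nonzero linear form gives a nonzero element $a\in\widetilde A_m$. Your remark that one could then ``take $Q$ arbitrary'' is actually false: the left-hand set would be all of $X$, forcing $\Phi^*Q\equiv 0$ and hence $Q\in I(Z)$. The point is moot, but you should drop that sentence.
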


This is probably standard, but we include a proof for the sake of completeness. 

\begin{proof}

Any linear form
\(\widetilde \xi\) on \(\mathbb P^{\widetilde N}\) may be viewed as a nonzero
homogeneous element
\[
\widetilde \xi\in \widetilde A_m .
\]

Since \(\widetilde A\) is integral over \(A\), the element \(\widetilde \xi\)
is integral over \(A\). Hence there exists a monic relation
\[
\widetilde \xi^d+b_1\widetilde \xi^{d-1}+\cdots+b_d=0,
\qquad b_i\in A.
\]
Write each \(b_i\) as a finite sum of homogeneous pieces,
\[
b_i=\sum_{\nu} b_{i,\nu},\qquad b_{i,\nu}\in A_\nu .
\]
Since \(\deg(\widetilde \xi)=m\) in the original grading, taking the
homogeneous component of total degree \(dm\) in the above identity gives a
new monic relation
\[
\widetilde \xi^d+a_1\widetilde \xi^{d-1}+\cdots+a_d=0,
\qquad a_i:=b_{i,im}\in A_{im}.
\]
Equivalently, \(\widetilde \xi\) is integral over the Veronese subring
\[
A^{(m)}:=\bigoplus_{k\ge 0}A_{km}.
\]

Choose such a homogeneous monic relation with \(d\) minimal. Then
\(a_d\neq 0\). Indeed, if \(a_d=0\), then
\[
\widetilde \xi\bigl(\widetilde \xi^{d-1}+a_1\widetilde \xi^{d-2}
+\cdots+a_{d-1}\bigr)=0.
\]
Since \(\widetilde A\) is a domain and \(\widetilde \xi\neq 0\), this would
give a homogeneous monic relation of smaller degree, contradicting
minimality. We now prove that
\[
V_{\widetilde Z}(\widetilde \xi)\subset V_{\widetilde Z}(a_d).
\]
Let \(p\in V_{\widetilde Z}(\widetilde \xi)\). Choose
\(\widetilde \eta\in \widetilde A_m\) such that \(\widetilde \eta(p)\neq 0\).
On the affine chart \(D_+(\widetilde \eta)\), the relation above may be
divided by \(\widetilde \eta^d\), yielding
\[
\left(\frac{\widetilde \xi}{\widetilde \eta}\right)^d
+\frac{a_1}{\widetilde \eta}
\left(\frac{\widetilde \xi}{\widetilde \eta}\right)^{d-1}
+\cdots+
\frac{a_d}{\widetilde \eta^d}
=0.
\]
Here each \(a_i/\widetilde \eta^i\) is a regular function on
\(D_+(\widetilde \eta)\), because \(a_i\in A_{im}\). Since
\((\widetilde \xi/\widetilde \eta)(p)=0\), evaluating at \(p\) gives
\[
\left(\frac{a_d}{\widetilde \eta^d}\right)(p)=0,
\]
hence \(a_d(p)=0\). Therefore
\[
V_{\widetilde Z}(\widetilde \xi)\subset V_{\widetilde Z}(a_d).
\]
Now choose a homogeneous polynomial \(Q\in S_{dm}\) whose image in
\(A_{dm}=S_{dm}/I(Z)_{dm}\) is \(a_d\). Since \(a_d\neq 0\), we have
\(Q\notin I(Z)\), so \(Q\) does not vanish identically on \(Z\). Moreover,
viewing \(a_d\) inside \(\widetilde A_{dm}\), and using
\(\mu\circ \widetilde \Phi=\Phi\), we get
\[
\{\widetilde \Phi^*\widetilde \xi=0\}
\subset
\{\widetilde \Phi^*a_d=0\}
=
\{\Phi^*Q=0\}.
\]
This proves the lemma.
 \end{proof}

\begin{proof}[Proof of Lemma \ref{lem:qs-2}] The proof follows along the lines of the quasi-surjectivity argument from the previous section. In fact, as we shall see, the argument in this case is even simpler since the map is now defined on all of $X$. Since $\tilde Z$ is normal, the singular set consists of finitely many points. Therefore, to prove that $\tilde Z\setminus \tilde\Phi(X)$ is analytic (and hence algebraic), it suffices to prove that $\tilde Z_{reg} \setminus \Phi(X)$ is analytic. We use Simha's criterion once again. Note that since $\tilde\Phi:X\rightarrow \tilde Z_{reg}$ is an injective immersion and $\dim \tilde Z = 2$, it is a biholomorphism, and hence the image $\tilde X = \tilde \Phi(X)$ is Stein. Let $\zeta_0\in \tilde Z_{reg}\setminus\tilde X$ and let $\Delta^2$ be a two-dimensional coordinate polydisc (obtained as before following the arguments after Remark \ref{rem:AfterSimha} after noting that the singular points of $\tilde{Z}$ are finitely many and hence a generic hyperplane will avoid them) in $\tilde Z_{reg}$ centred at $\zeta_0$. Once again to show that the complement is analytic (and hence algebraic) subvariety it suffices to show that the intersection of any hyperplane $H$ with $\Delta^2 \setminus \tilde X$ is finite. As before, let $S_0=H\cap \tilde{Z}$ be a  smooth Riemann surface. It is again enough to show  that the complement of a connected component $\Omega_0$ of $S_0\cap \tilde\Phi(X) $ in $S_0$ is a  finite set. Since $\tilde \Phi$ is a biholomorphism onto its image, we can consider $\Omega_0$ to be a Riemann surface sitting inside $X$. Moreover $\Omega_0$ is a closed subset of $X$ and hence $\omega$ restricts to a complete metric on $\Omega_0$. By Lemma \ref{lem:algebro-geometric} above, $\Omega_0$ is contained in the vanishing set of a section $t\in R_q$ for some $q>>1$, and hence by Proposition \ref{prop:finiteGB} it has finite total Gaussian curvature. An application of  Proposition \ref{prop:huber} completes the proof. The cut-off functions $\xi_a$ needed in Proposition \ref{prop:huber} can simply be taken to be restrictions to $\Omega_0$ of the cut-off functions $\chi_a$ constructed in Lemma \ref{lem:cut-off}.
\end{proof}

\section{Discussion}\label{sec:discussion}

\subsection{Some generalities} In light of the main PDE result from our previous work \cite{DPS},
one can view Yau’s strong conjecture as consisting of three broad
steps. Each step of course is a major open question in its own right.

\begin{itemize}

\item
Let $(X,\omega)$ be a complete non-compact K\"ahler manifold with
$BK>0$. The first step would be to show that $X$ is
\emph{strongly Stein}, in the sense that it admits a smooth
plurisubharmonic exhaustion function
$\rho \colon X \to \RR$ with $|\nabla \rho| \leq 1$.
At present, this appears to be completely open, except of course in the
special case of positive sectional curvature. Wu proved in
\cite{Wu}, using Busemann functions, the existence of a smooth
strictly plurisubharmonic function on such manifolds, but the
properness of the function produced there remains unclear.
One notable feature of Wu’s method is its flexibility: it
associates a strictly plurisubharmonic Busemann-type function to
any family $\{C_t\}_{t>0}$ of closed sets tending to infinity. In the Euclidean
case, this construction recovers the standard linear growth exhaustion
function for instance by considering geodesic spheres.

\item
Assuming a strongly Stein structure, our result in
\cite{DPS} would then produce a uniformly Lipschitz
plurisubharmonic function with finite Monge--Amp\`ere mass.
If, in addition, the B\'ezout-type estimates developed in the
present paper could be extended to higher dimensions, then our
methods should be enough to prove quasi-projectivity, at least modulo
suitable finiteness assumptions on the topology.
\item
The final step would be to pass from quasi-projectivity to the
biholomorphic identification with $\CC^n$. In complex dimension
two, this is closely related to Ramanujam’s theorem. Unfortunately there is no analogue of Ramanujam's theorem in higher dimensions. Indeed, there exist
affine complex threefolds that are contractible and simply
connected at infinity---in fact even diffeomorphic to $\RR^6$---but
 not biholomorphic to $\CC^3$. Thus, even after
quasi-projectivity is established, a substantial rigidity problem
remains. An alternative strategy would be to construct
sufficiently many complete holomorphic vector fields on $X$.
This is the approach taken in \cite{L4}, but the argument there relies crucially on maximal volume growth and the structure theory of {\em non-collapsed} K\"ahler manifolds with lower curvature bounds.
\end{itemize}
\subsection{Pluricomplex Green's functions}
The model singular weight
$\log |z-a|^2$ on $\CC^n$ may be viewed as the basic example of a
pluricomplex Green function. More generally, the use of solutions
to singular Monge--Amp\`ere equations as weights goes back to
Demailly \cite{DMA}, and provides a natural nonlinear replacement
for logarithmic poles in several complex variables.

For bounded domains, the existence theory for such objects is
well developed: in particular, Phong--Sturm \cite{PS} established
the existence of pluricomplex Green functions for compact domains
with boundary. In our setting, one can likewise solve a sequence
of Monge--Amp\`ere equations whose right-hand sides concentrate to
a Dirac mass. The main difficulty is then to extract a useful
limit. At present, obtaining a convergent subsequence with enough
control to serve as a global weight appears to be a rather subtle
problem.

Such a function would be valuable for several reasons. Beyond
providing a canonical singular weight, an exhaustive pluricomplex
Green function would furnish a globally defined analogue of the
Lelong number, opening the door to multiplicity estimates via
monotonicity. From this perspective, the problem is closely tied
to the global complex geometry of the manifold. It is therefore
striking that Stoll’s characterization of $\CC^n$ \cite{Stoll}
is formulated precisely in terms of the existence of an exhaustive
pluricomplex Green function with suitable additional properties.
Related constructions also appear in the work of Griffiths--King
\cite{GriffK}, who produced pluricomplex Green functions on general affine
varieties.

\subsection{Topological aspects}
Very little appears to be known about the topology of complete noncompact K\"ahler manifolds with $BK>0$. For instance, it is unknown whether such manifolds are simply connected. We end with an observation that the B\"ochner formula for harmonic $2$-forms on K\"ahler manifolds with $BK>0$ leads to a restriction on the topology of such manifolds.  Our observation and its proof are similar to a result of Yau \cite{Yau76} for complete Riemannian manifolds with $Ric \ge 0$. Yau proved that the de Rham cohomology group $H^{n-1}(M; \RR)=0$ for such a manifold $M$.

To state our result, recall that $$H^1_{\infty}(M; \RR) := \varinjlim\limits_{K \subset M \text{ compact}} H^1(M \setminus K; \RR).$$
We then have the following observation.
\begin{prop}\label{bochner}

 Let $X$ be a complete noncompact K\"ahler $n$-manifold with $BK\geq 0$. If $H^1_\infty(X;\RR)=0$, then $H^{2n-2}(X;\RR)=0$.

\end{prop}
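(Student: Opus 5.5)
The plan is to follow Yau's argument for $H^{n-1}$ under $Ric\geq 0$, with $2$-forms in place of $1$-forms, combining Poincar\'e duality, the cohomology exact sequence at infinity, and an $L^2$ Bochner vanishing argument.

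First, since $X$ is an oriented $2n$-manifold, Poincar\'e duality gives $H^{2n-2}(X;\RR)\cong \big(H^2_c(X;\RR)\big)^*$, so it suffices to show $H^2_c(X;\RR)=0$. Take the long exact sequence relating compactly supported cohomology, ordinary cohomology and cohomology at infinity (the direct limit over compact $K$ of the sequences of the pairs $(X,X\setminus K)$):
$$\cdots\to H^1(X;\RR)\to H^1_\infty(X;\RR)\to H^2_c(X;\RR)\xrightarrow{\ \iota\ } H^2(X;\RR)\to\cdots.$$
Since $H^1_\infty(X;\RR)=0$ by hypothesis, $\iota$ is injective. The claim therefore reduces to showing that the natural map $\iota$ from $H^2_c$ to $H^2$ is zero.

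Second, use the Hodge--de Rham--Kodaira theory on the complete manifold $X$. For a closed compactly supported $2$-form $\al$, minimize the $L^2$ norm in the class $\al+\overline{dC^\infty_c}$. This produces an $L^2$ harmonic form $h$ with $dh=d^*h=0$, and the standard decomposition shows that the image of $\iota$ injects into the space $\mathcal H^2_{L^2}(X)$ of $L^2$ harmonic $2$-forms. So it is enough to prove that $\mathcal H^2_{L^2}(X)=0$.

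Third, and this is the main step, prove this vanishing by a Bochner argument. On a K\"ahler manifold the Hodge Laplacian preserves bidegree, so $h$ splits into harmonic components of types $(2,0)$, $(1,1)$ and $(0,2)$, each in $L^2$. We show that each component is parallel.

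1. For the $(2,0)$ part, and by conjugation the $(0,2)$ part, the Weitzenb\"ock curvature term is built from the Ricci curvature acting on holomorphic $2$-forms. It is nonnegative because $BK\geq 0$ implies $Ric\geq 0$.
2. For a real $(1,1)$ form $\eta$, diagonalize $\eta$ at a point as $\sqrt{-1}\sum\la_i\, e^i\wedge\bar e^i$ in a unitary frame. The curvature term then becomes, up to a positive constant, $\sum_{i<j}(\la_i-\la_j)^2 R_{i\bar i j\bar j}$. This is nonnegative exactly when $BK\geq 0$.

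This computation is the crux, and I expect it to be the main obstacle: the sign and normalization must be checked carefully, with the trace part $\sum_i\la_i$ handled separately (that part is harmonic and the Weitzenb\"ock term kills only the trace-free part). Given it, the Weitzenb\"ock identity yields $\Delta|h|^2\geq 2|\nabla h|^2$. Multiply by $\chi_R^2$, using cutoffs with $|\nabla\chi_R|\leq C/R$ as in Lemma \ref{lem:cut-off} or standard distance cutoffs, and integrate by parts; since $h\in L^2$, this gives $\int\chi_R^2|\nabla h|^2\leq \frac{C}{R^2}\int|h|^2\to 0$, so $\nabla h=0$.

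Finally, a parallel form has constant pointwise norm. Since $Ric\geq 0$, Yau's theorem gives $X$ infinite volume (at least linear volume growth), so an $L^2$ form of constant norm must vanish. Hence $h=0$, so $\mathcal H^2_{L^2}(X)=0$. By the second step $\iota=0$; since $\iota$ is also injective, $H^2_c(X;\RR)=0$, and by duality $H^{2n-2}(X;\RR)=0$.
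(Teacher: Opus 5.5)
Your proposal is correct and follows essentially the same route as the paper. You use Poincar\'e duality to reduce to $H^2_c(X;\RR)=0$, take the $L^2$ Hodge decomposition of a compactly supported closed $2$-form, and kill its $L^2$ harmonic part by a Bochner argument: the curvature term $\sum_{i,j}R_{i\bar i j\bar j}(\la_i-\la_j)^2\geq 0$ handles the $(1,1)$ part, only Ricci enters for the $(2,0)$ part, and infinite volume finishes it off. You then use $H^1_\infty(X;\RR)=0$ to obtain a compactly supported primitive. The differences from the paper are cosmetic:
\begin{itemize}
\item you phrase the last step as injectivity of $H^2_c\to H^2$ via the long exact sequence at infinity, rather than correcting the primitive by hand;
\item you get $\nabla h=0$ from a direct cutoff estimate, instead of applying Yau's $L^2$ Liouville theorem to $\sqrt{|\be^{1,1}|^2+\ep^2}-\ep$;
\item the trace part needs no separate treatment, since the curvature term already vanishes on it.
\end{itemize}
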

Note that unlike Yau's aforementioned result for $Ric>0$, the above result is valid only for noncompact manifolds. For instance, if $n=2$,  it implies $H^2(X;\RR)=0$ which is obviously false if $X$ is compact.

\begin{proof} By Poincar\'e Duality, it's enough to show that $H^2_c(X;\RR)=0$. Let $\alpha$ be a real closed $2$-form with compact support. Then by the $L^2$ decomposition theorem there exists a harmonic $L^2$ real $2$-form $\beta$ and a sequence $\{\theta_k\}$ of real smooth, $L^2$ 1-forms such that  $$ \alpha = \beta + \lim_{k\rightarrow\infty}d \ga_k,$$ where the limit is in $L^2$. We first claim that $\be = 0$. We argue as in Yau \cite{Yau76}. We write $\be = \be^{2,0} + \be^{1,1} + \be^{0,2}$ according to type. Note that since $\be$ is real we have $\be^{0,2} = \overline{\be^{2,0}}$ and $\overline{\be^{1,1}} = \be^{1,1}$. Moreover, these forms are all in $L^2$. We will prove separately that $\be^{2,0} = 0$ and $\be^{1,1} = 0$. By a standard B\"ochner formula (cf.\ \cite{BY, GK}), at any $p\in X$, $$\frac{1}{2}\Delta |\be^{1,1}|^2 = |\nabla \be^{1,1}|^ 2+ \sum_{i,j} R_{i\bar i j\bar j}(\la_i-\la_j)^2,$$ where we diagonalize so that $\be^{1,1} = \sum_i \la_i\theta^i\wedge \bar \theta^j,$ where $\{\theta^i\}$ is a unitary frame for $(T_p^*X)^{1,0}$. Note that this step crucially uses the fact that $\be^{1,1}$ is a real form. If $BK\geq 0$, it then follows that $|\be^{1,1}|^2$ is subharmonic. By Cauchy-Schwarz, it also follows that for $\ep>0$, $u_\ep = \sqrt{|\be^{1,1}|^2 + \ep^2} - \ep$ is subharmonic and non-negative. Furthermore, $u_\ep \leq|\be^{1,1}|$ and hence $u_\ep$ is in $L^2$. Then by \cite[Theorem3]{Yau76}, since $Ric \geq  0$, $u_\ep$ is a constant, and hence $|\be^{1,1}|$ is also constant. On the other hand, by a well known result \cite{Calabi,Yau76}, complete noncompact Riemannian manifolds with $Ric \ge 0$ have infinite volume, and hence $\be^{1,1} = 0.$ One can argue similarly to show that $\be^{2,0} = 0$. In fact the B\"ochner formula in this case involves only the Ricci curvature. So we now have that $$\al = \lim_{k\rightarrow \infty}d\ga_k.$$ Then by a classical result of de Rham \cite[Theorem 24]{deRham} (cf. \cite[Lemma 1.11]{Carron} for a proof) it follows that there exists a {\em smooth} $\ga$ such that $\al = d\ga.$

We now finally use the fact that $\al$ is compactly supported. Let $K$ denote the support of $\alpha$, then $\gamma$ is closed on $X \setminus K$. Since $H^1_\infty(X; \RR)=0$, there exists a compact $L$ containing $K$ such that $\gamma$ is exact on $X \setminus L$. Hence there is a smooth function $f: X \setminus L \rightarrow {\mathbb R}$ such that $ df =\gamma \vert_{X \setminus L}$. Extend $f$ smoothly to all of $X$ and let $\gamma_1 = \gamma - df$. Then $\gamma_1$ is compactly supported and $\alpha = d \gamma_1$. It follows that $[\alpha] =0 \in H^2_c(X;\RR)$.
\end{proof}

\begin{rem}
Note that the first part of the argument in fact proves the following statement: If $(X,\omega)$ is a complete non-compact K\"ahler manifold with $BK\geq 0$, there exist no non-trivial real harmonic $2$-forms which are $L^2$ integrable.
\end{rem}

\begin{rem}
In \cite{Yau76}, Yau showed that on a complete Riemannian manifold
$(X,g)$ with $Ric \geq 0$, every closed smooth $L^2$-integrable
$1$-form is exact. Under the stronger assumption $Ric>0$, he
proved moreover that if the form is compactly supported, then one
may choose a compactly supported primitive. The final step relies
on the Cheeger--Gromoll splitting theorem through the fact that
$X$ has only one end, equivalently $H^0_\infty(X,\RR)=\RR$. Since our arguments runs parallel to Yau's argument,  it is natural to
ask the following question in the K\"ahler setting:  Let $(X,\omega)$ be a complete non-compact
K\"ahler manifold with $BK>0$. Must one have
$H^1_\infty(X,\RR)=0$?

If Yau’s conjecture holds, this is immediate, since
$X \simeq \CC^n$. A positive answer, however, would already be
interesting in its own right: it would give an unconditional
vanishing theorem at infinity for complete non-compact K\"ahler
manifolds with positive bisectional curvature, and by the Proposition above, also the vanishing of $H^{2n-2}(X,\RR)$.
\end{rem}

We end with two applications of Proposition \ref{bochner}.

\begin{ex} \normalfont Let $X = \PP^1\times \PP^1 \setminus\Delta$, where $\Delta$ is the diagonal. We claim that $X$ admits a complete K\"ahler metric with $Ric>0$ but admits no such metric with $BK\geq 0$. To prove the first part, write $\bar X = \PP^1\times\PP^1$ and $\sO(a,b):=\pi_1^*\sO_{\PP^1}(a)\otimes \pi_2^*\sO_{\PP^1}(b)$. Since $K_{\bar X}^{-1}\cong \sO(2,2)$ and $\Delta\in |\sO(1,1)|$, we have
$$
\bigl(K_{\bar X}\otimes \sO(\Delta)\bigr)^{-1}\cong \sO(1,1).
$$
Now $\sO(a,b)$ is ample if and only if $a,b>0$, hence $\sO(1,1)$ is ample. Then \cite[Theorem 4.3]{TY} (cf. \cite[Theorem 0.1]{Yeung}) yields a complete K\"ahler metric on $X$ with $Ric>0$.

For the second part, we identify $X$ with a smooth affine quadric. Under the Segre embedding
$$
\sigma\bigl([u_0:u_1],[v_0:v_1]\bigr)=[u_0v_0:u_0v_1:u_1v_0:u_1v_1],
$$
$\PP^1\times\PP^1$ is realized as the quadric $ad-bc=0$ in homogeneous coordinates $[a:b:c:d]$ on $\PP^3$, while $\Delta$ is the hyperplane section $b=c$. Thus on $X$ we may normalize so that $b-c=1$ and set $u=2a$, $v=b+c$, $w=2d$. Substituting $a=u/2$, $b=(v+1)/2$, $c=(v-1)/2$, $d=w/2$ into $ad-bc=0$ gives
$$
Q' := \{(u,v,w)\in \CC^3 : v^2-uw=1\}.
$$
The linear change of coordinates
$$
x=\frac{u-w}{2},\qquad y=v,\qquad z=\frac{i(u+w)}{2}
$$
identifies $Q'$ with
$$
Q_1:=\{(x,y,z)\in \CC^3 : x^2+y^2+z^2=1\}.
$$
Hence $X\cong Q_1$ biholomorphically.

Now $Q_1$ is diffeomorphic to $T^*S^2$, hence also to the total space of $\sO_{\PP^1}(-2)$. In particular, $Q_1$ deformation retracts onto the zero section, so
\begin{equation}\label{eq:bochner-1}
H^2(Q_1,\RR)\cong H^2(S^2,\RR)\cong \RR.
\end{equation}
Moreover, $\sO_{\PP^1}(-2)$ is the minimal resolution of the quadric cone
$$
Q_0:=\{(x,y,z)\in \CC^3 : x^2+y^2+z^2=0\},
$$
and the resolution map restricts to a biholomorphism $\sO_{\PP^1}(-2)\setminus \PP^1\cong Q_0\setminus\{0\}$. It follows that the end of $Q_1$ is diffeomorphic to the punctured cone $Q_0\setminus\{0\}$. Next, $Q_0\setminus\{0\}$ deformation retracts onto its link $Q_0\cap S^5\cong \RR\PP^3$, and hence
\begin{equation}\label{eq:bochner-2}
H^1_\infty(Q_1,\RR)\cong H^1(\RR\PP^3,\RR)=0.
\end{equation}
By \eqref{eq:bochner-1}, \eqref{eq:bochner-2} and Proposition \ref{bochner}, there is no complete K\"ahler metric on $Q_1$, and hence on $X$, with $BK\geq 0$. We conclude by noting that  $Q_1$ is a classical and well-studied example. It carries the complete Ricci-flat K\"ahler metric constructed by Stenzel \cite{Ste}; in complex dimension two this metric is hyperk\"ahler and, after a hyperk\"ahler rotation, is precisely the Eguchi--Hansen metric. In particular, it is asymptotic to the flat K\"ahler cone 
$$
Q_0 \cong \CC^2/\{\pm 1\}=C(\RR\PP^3).
$$\end{ex}
\begin{ex} \normalfont
Next, we consider the Brieskorn surface $$X=F_{2,3,5} =\{(z_1,z_2,z_3) \in \CC^3   :  z_1^2+z_2^3+z_3^5=1\}.$$ By results of Milnor \cite{Mil}, $X$ has the homotopy-type of a wedge of $8$ $2$-spheres, which implies $H_2(X;\RR)={\mathbb R}^8$. Moreover, there  is a compact set $K \subset X$ and a homology $3$-sphere $M$ such that $X \setminus K$ is diffeomorphic to $M \times (0, \infty)$. T In particular, $H^1_\infty(X;\RR)=0$. herefore by Proposition \ref{bochner} above, there exists non-complete K\"ahler metric with $BK \geq 0$. On the other hand, since $X$ has exactly one end and $H_3(X;\RR)=0$, there are no known obstructions to the existence of complete Riemannian metrics with $Ric>0$ on $X$. It appears to be unknown if $X$ actually admits such a metric.
\end{ex}

\section*{Acknowledgements}  The first author (Datar) thanks Jian Song for an interest in the work and useful discussions. The second author (Pingali) thanks Zakarias Sj\"ostr\"om Dyrefelt for his kind hospitality in Aarhus, as well as Tamas Darvas and Claude Lebrun for useful email exchanges. He is also grateful to M.C. Rohan for providing access to ChatGPT pro. We thank ChatGPT for useful feedback and extensive discussions.  The third author (Seshadri) thanks Fangyang Zheng for helpful discussions. The authors also thank Gautam Bharali and Purvi Gupta for discussions, support, and interest in the work.

\end{document}